\theoremstyle{plain} %default
\newtheorem{theorem}{Theorem}[section]
\newtheorem{lemma}[theorem]{Lemma}
\newtheorem{proposition}[theorem]{Proposition}
\theoremstyle{definition}
\newtheorem{definition}[theorem]{Definition}
\newtheorem{remark}[theorem]{Remark}
\newcommand{\C}{\mathbb{C}}
\newcommand{\R}{\mathbb{R}}
\newcommand{\N}{\mathbb{N}}
\newcommand{\Z}{\mathbb{Z}}
\renewcommand{\d}[1][x]{\,\operatorname{d}\!#1}
\newcommand{\ddt}{\frac{\d[]}{\d[t]}}
\newcommand{\intRd}{\int_{\R^d}}
\newcommand{\Rd}{\R^d}
\newcommand{\eps}{\varepsilon}
\newcommand{\spn}{\operatorname{span}}
\newcommand{\corank}{\operatorname{corank}}
\newcommand{\tr}{\operatorname{Tr}}
\newcommand{\A}{\mathbf{A}}
\newcommand{\B}{\mathbf{B}}
\newcommand{\CC}{\mathbf{C}}
\newcommand{\DD}{\mathbf{D}}
\newcommand{\II}{\mathbf{I}}
\newcommand{\K}{\mathbf{K}}
\newcommand{\LL}{\mathbf{L}}
\newcommand{\Null}{\mathbf{0}}
\renewcommand{\P}{\mathbf{P}}
\newcommand{\Q}{\mathbf{Q}}
\newcommand{\Qlin}{\mathbf{Q_{lin}}}
\newcommand{\RR}{\mathbf{R}}
\newcommand{\Smatrix}{\mathbf{S}}
\newcommand{\U}{\mathbf{U}}
\newcommand{\ip}[2]{\langle {#1}\ ,\, {#2} \rangle}
\newcommand{\norm}[1]{\| {#1} \|}
\DeclareMathOperator*{\argmax}{arg\,max}
\DeclareMathOperator{\diver}{div}
\DeclareMathOperator{\diag}{diag}
\DeclareMathOperator{\rank}{rank}
\newcommand{\T}{\mathbf{T}}
\newcommand{\cK}{\mathcal{K}}
\newcommand{\cE}{\mathcal{E}}
\begin{document}

\title{On multi-dimensional hypocoercive BGK models}

\author{Franz Achleitner\thanks{University of Vienna, Faculty of Mathematics, Oskar-Morgenstern-Platz 1, A-1090 Wien, Austria, franz.achleitner@univie.ac.at},
 Anton Arnold\thanks{Vienna University of Technology, Institute of Analysis and Scientific Computing, Wiedner Hauptstr. 8-10, A-1040 Wien, Austria, anton.arnold@tuwien.ac.at},
 and Eric A.\ Carlen\thanks{Department of Mathematics, Rutgers University, 110 Frelinghuysen Rd., Piscataway NJ 08854, USA, carlen@math.rutgers.edu}
} % end-author

% Use the package "url.sty" to avoid
% problems with special characters
% used in your e-mail or web address

\maketitle

\numberwithin{equation}{section} % - does not work yet!

We study hypocoercivity for a class of linearized BGK models for continuous phase spaces. 
We develop methods for constructing entropy functionals that enable us to prove exponential relaxation to equilibrium with explicit and physically meaningful rates. 
In fact, we not only estimate the exponential rate,
 but also the second time scale governing the time one must wait before one begins to see the exponential relaxation in the $L^1$ distance. 
This waiting time phenomenon, with a long plateau before the exponential decay ``kicks in'' when starting from initial data that is well-concentrated in phase space,
 is familiar from work of Aldous and Diaconis on Markov chains, but is new in our continuous phase space setting. 
Our strategies are based on the entropy and spectral methods, and we introduce a new ``index of hypocoercivity'' that is relevant to models of our type involving jump processes and not only diffusion. 
At the heart of our method is a decomposition technique that allows us to adapt Lyapunov's direct method to our continuous phase space setting in order to construct our entropy functionals. 
These are used to obtain precise information on linearized BGK models.
Finally, we also prove local asymptotic stability of a nonlinear BGK model.

\medskip \noindent
\textbf{keywords:} kinetic equations, BGK models, hypocoercivity, Lyapunov functionals, perturbation methods for matrix equations %, entropy method

%%%%%%%%%%%%%%%%%%%%%%%%%%%%%%%%%%%%%%%%%%%%%%%%%%%%%%%%%%%%%%%%%%%%%%%%%%%%%%%%%

\section{Introduction}
%\label{sec:1}

This paper is concerned with the large time behavior of nonlinear BGK models (named after the physicists Bhatnagar-Gross-Krook \cite{BGK54}) and their linearizations around their Maxwellian steady state. 
With respect to position, we consider here only models on $\tilde\T^d:=\big(\frac{L}{2\pi}\T\big)^d$, the $d$-dimensional torus of side length $L$ without confinement potential. 
Then, the usual BGK model for a phase space density $f(x,v,t)$; $x\in\tilde\T^d,\,v\in\Rd$ satisfies the kinetic evolution equation
\begin{equation}\label{bgk}
  \partial_t f+v\cdot\nabla_x f= \Q f := M_f(x,v,t) -f(x,v,t)\ ,\quad t\ge0\ ,
\end{equation}
where $M_f$ denotes the local Maxwellian corresponding to $f$; i.e., the
local Maxwellian with the same hydrodynamic moments as $f$:
$$
  M_f(x,v,t)=\frac{\rho(x,t)}{(2\pi T(x,t))^{\frac{d}2}}\,e^{-\frac{|v-u(x,t)|^2}{2T(x,t)}}
  =\frac{\rho(x,t)^{1+\frac{d}{2}}}{(2\pi P(x,t))^\frac{d}2}\,e^{-\frac{|v-u(x,t)|^2\rho(x,t)}{2P(x,t)}}\ ,
$$
with density 
$$
  \rho(x,t):=\intRd f(x,v,t)\,\d[v]\ ,
$$
mean velocity
$$
  u(x,t):=\frac1{\rho(x,t)}\intRd vf(x,v,t)\,\d[v]\ ,
$$
temperature
$$
  T(x,t):=\frac1{d\rho(x,t)}\intRd |v-u(x,t)|^2 f(x,v,t)\,\d[v]\ ,
$$
and pressure (setting the gas constant $R=1$)
$$
  P(x,t):=T(x,t)\rho(x,t)=\frac1{d}\intRd |v-u(x,t)|^2 f(x,v,t)\,\d[v]\ .
$$

Let $\d[\tilde x]:=L^{-d}\d$ denote the normalized Lebesgue measure on $\tilde\T^d$,
 and consider normalized initial data $f^I(x,v)$ such that 
 \begin{equation}\label{f0-normal}
  \int_{\tilde\T^d\times\R^d} f^I(x,v)\d[\tilde x] \d[v] =1\ ,\quad 
  \int_{\tilde\T^d\times\R^d} vf^I(x,v)\d[\tilde x] \d[v] =0\ ,\quad 
  \int_{\tilde\T^d\times\R^d} |v|^2f^I(x,v)\d[\tilde x] \d[v] =d\ .
 \end{equation}
This means, our system has unit mass, zero mean momentum,
 and unit position-averaged pressure (w.l.o.g.\ this can be obtained by a Galilean transformation and choice of units).
One easily checks that this normalization is conserved under the flow of \eqref{bgk}. 
Hence the system \eqref{bgk} is expected to have the unique, space-homogeneous steady state 
 \[ f^\infty(v) = M_1(v):=(2\pi)^{-\frac{d}2}e^{-\frac{|v|^2}2} \ , \]
the centered Maxwellian at unit temperature, which clearly has the same normalization as \eqref{f0-normal}. 
A standard argument involving the Boltzmann entropy confirms that this is indeed the case, but it gives no information on the rate of convergence to equilibrium, nor does it even prove convergence. 
We remark that \eqref{bgk} involves two different time scales: the generic transport time is $O(L)$, while the relaxation time is $O(1)$.
The goal of this paper is to prove the large time convergence to this $f^\infty$ for solutions of \eqref{bgk} and its linearizations in 1, 2, and 3D with explicitly computable exponential rates. 

This extends our previous work \cite{AAC16}, which considered the 1D linear BGK model:
\begin{equation}\label{bgk:linear}
  \partial_t f +v\cdot\nabla_x f = \Qlin f := M_{T}(v)\,\int_\R f(x,v,t)\,\d[v] -f(x,v,t)\ ,\quad t\ge0\ ,
\end{equation}
where $M_{T}$ denotes the normalized Maxwellian at some temperature $T>0$:
\begin{equation*}
  M_T(v) = (2\pi T)^{-1/2}e^{-|v|^2/2T}\ .
\end{equation*}
In \cite{AAC16} we studied the rate at which normalized solutions of 
\eqref{bgk:linear} approach the steady state $f^\infty = M_T$ as $t\to\infty$. 
This problem is interesting since the collision mechanism drives the local velocity distribution towards $M_T$,
 but a more complicated mechanism involving the interaction of the streaming term $v \partial_x$
 and the collision operator $\Qlin$ is responsible for the emergence of spatial uniformity. 

To elucidate this key point, let us define the operator $\LL$ by
$$ \LL f(x,v) := -v\ \partial_x f (x,v) + \Qlin f(x,v)\ .$$
The evolution equation \eqref{bgk:linear} can be written $\partial_t f = \LL f$.  
Let $\mathcal{H}$ denote the weighted space $L^2(\tilde\T\times\R^d;M_T^{-1}(v)\d[\tilde x]\d[v])$, where
% $\d[\tilde x]$ denotes {\em normalized} Lebesgue measure on $\tilde T$, and where
 in the current discussion $d=1$.
Then $\Qlin$ is self-adjoint on $\mathcal{H}$, $\LL f^\infty = 0$, 
 and a simple computation shows that if $f(t)$ is a solution of \eqref{bgk:linear},
\begin{equation*}%\label{Hnorm-decay}
  \frac{{\rm d}}{{\rm d}t} \| f(t) - f^\infty\|_\mathcal{H}^2 = 2 \langle f(t), \LL f(t)\rangle_\mathcal{H} = 2 \langle f(t), \Qlin f(t)\rangle_\mathcal{H}
    = -2\| f  - M_T \rho\|_{\mathcal{H}}^2\ ,
\end{equation*}
where, as before, $\rho(x,t) := \int_{\R}f(x,v,t) \d[v]$. Thus, while the norm $ \| f(t)- f^\infty\|_\mathcal{H}$ is monotone decreasing, the derivative is zero
whenever $f(t)$ has the form $f(t) = M_T \rho$ for {\em any} smooth density $\rho$. In particular, the inequality
\begin{equation}\label{coercive}
\langle f - f^\infty , \LL (f - f^\infty) \rangle_\mathcal{H} \leq - \lambda \|f - f^\infty\|_\mathcal{H}^2
\end{equation}
is valid in general for $\lambda = 0$, but for no positive value of $\lambda$.  If (\ref{coercive}) were valid for some $\lambda>0$, we would have had
$\|f(t) - f^\infty\|_\mathcal{H}^2 \leq e^{-t\lambda}\|f^I - f^\infty\|_\mathcal{H}^2$ for all solutions of our equation, and we would say that
the evolution equation is {\em coercive}. However, while this is not the case, it does turn out that one still has constants $1< c < \infty$ and $\lambda> 0$
such that
\begin{equation*}%\label{coercive2}
\|f(t) - f^\infty\|_\mathcal{H}^2 \leq ce^{-t\lambda}\|f^I - f^\infty\|_\mathcal{H}^2\ .
\end{equation*}
(The fact that there exist initial data $f(0) \neq f^\infty $ for which the derivative of the norm is zero shows that necessarily $c>1$.) In Villani's terminology
(see \S3.2 of \cite{ViH06}),
this means that our evolution equation is {\em hypocoercive}. 

Since $f(t)$ and $f^\infty$ are probability densities, a natural norm in which to measure the distance between them is the $L^1$ distance, or, what is the same up to a factor of $2$, the total variation distance between the corresponding probability measures. However, as is well known, the norm $\|\cdot \|_\mathcal{H}$ controls the $L^1$ norms. Specifically, by the Cauchy-Schwarz inequality, 
\begin{eqnarray}\label{L1dom}
\|f(t) - f^\infty\|_\mathcal{H}^2 &=&  \int_{\tilde\T\times\R^d}  | f(x,v,t)M_T^{-1}(v)   -1|^2 M_T(v)\d[\tilde x] \d[v]\nonumber\\
&\geq& \left(\int_{\tilde\T\times\R^d}  | f(x,v,t)M_T^{-1}(v)   -1| M_T(v)\d[\tilde x] \d[v]\right)^2\nonumber\\
&=& \| f(t) - f^\infty\|^2_{L^1(\tilde\T\times\R^d,\d[\tilde x]\d[v])}\ .
\end{eqnarray}

%In this regard we mention that the collision operator $-\Q$ is positive but \emph{not} coercive on the weighted space $L^2(2\pi\times\R;M_T^{-1})$  (choose $f=\sin(x)M_T(v)$, e.g.). 
%Nevertheless we shall show that the convergence rate is exponential, so that our system is {\em hypocoercive} on $\{f^\infty\}^\perp\subset L^2(2\pi\times\R;M_T^{-1})$. 

Many hypocoercive equations have been studied in recent years \cite{ViH06, He06, DoMoScH09, DoMoScH10, ArEr14}, 
including BGK models in \S 1.4 and \S 3.1 of \cite{DoMoScH10}, % (see also \cite[\S 4.1]{AAC16}),
 but sharp decay rates were rarely an issue there.
%The fact that normalized solutions of \eqref{bgk:linear} converge exponentially fast at {\em some} rate to $f^\infty$  is a consequence of a probabilistic analysis of such equations in \cite{BL}:
%In fact, equation \eqref{bgk:linear} is the Kolmogorov forward equation for a certain Markov process, and as shown in \cite{BL} an argument based on a Doeblin condition yields exponential convergence. 
%However, this approach relies on compactness arguments and does not yield explicit values for $c$ or $\lambda$. 
In our earlier work~\cite{AAC16}, we %discussed another approach to the problem of 
established hypocoercivity for such models in 1D by an approach that yields explicit -- and quite reasonable -- values for $c$ and $\lambda$. 
To this end, our main tools have been variants of the \emph{entropy--entropy production method}.

The articles \cite{AAC16} and \cite{DoMoScH10} only consider BGK models with conserved mass, and partly with also conserved energy. But the tools presented there did not apply to BGK equations that also conserve momentum. This is in fact an important structural restriction that we shall formalize in \S\ref{sec-hypo-index} with the notion \emph{hypocoercivity index}. The common feature of all models analyzed in \cite{AAC16} as well as in \cite{DoMoScH10} is that their hypocoercivity index is~1. The main goal of this paper is to extend the methods from \cite{AAC16} (i.e.\ constructing feasible Lyapunov functionals) to models with higher hypocoercivity index. Applied to BGK equations this then also includes models with conserved momentum.

% Our first main result will be a decay estimate for \eqref{bgk:linear}:
% \begin{theorem}[{\bf decay estimate for \eqref{bgk:linear}}] \label{bgk:linear-decay}
% Fix unit temperature $T= 1$. 
% There exists an entropy functional $\cE^d(f)$ and $\lambda^d(L)>0$ satisfying
% %$$2 \| f - M_1\|^2_{L^2(M_1^{-1})}  \geq e(f) \geq \frac14 \| f - M_1\|^2_{L^2(M_1^{-1})}$$ 
%  \[ \frac12 \cE^d(f) \leq \| f - M_1\|^2_\mathcal{H} \le 4\cE^d(f) \] 
%  such that for all (normalized) solutions $f(t)$ of \eqref{bgk:linear} with $\cE^d(f^I) < \infty$,
%  \[ \cE^d(f(t)) \leq e^{-\lambda^d(L) t} \cE^d(f^I)\ ,\qquad t\ge0\ . \]
%  \end{theorem}

\bigskip
The existence of global solutions for the Cauchy problem of~\eqref{bgk} has been proven in case of unbounded domains~\cite{Pe89}
 and bounded domains~\cite{Ri91, PePu93}, respectively.
In case of bounded domains (such as $x\in\tilde\T^d$), these solutions are essentially bounded and unique~\cite{PePu93}. 
For a space-inhomogeneous nonlinear BGK model with an external confinement potential,
 the global existence of solutions for its Cauchy problem
 and their strong convergence in $L^1$ to a Maxwellian equilibrium state has been proven recently~\cite{BoCa09}.
 
\bigskip
%\paragraph{linearization}
In the first part of this paper we shall study the linearization of the BGK equation \eqref{bgk} around the centered Maxwellian with constant-in-$x$ temperature equal to one. 
To this end we consider $f$ close to the global equilibrium $M_1(v)$, with  $h$ defined by $f(x,v,t) = M_1(v)+h(x,v,t)$. Then 
%\begin{equation}\label{f:perturb}
%  \rho(x,t) = 1 + \sigma(x,t) \qquad{\rm and}\qquad P(x,t) = T + \tau(x,t)\ , 
%\end{equation}
\begin{align} %\begin{split} \label{f:perturb}
  \rho(x,t) &= 1 + \sigma(x,t) \qquad\text{with }\: \sigma(x,t):=\int_{\Rd} h(x,v,t) \d[v]\ , \nonumber \\
  (\rho u)(x,t) &= \int_{\Rd} vf(x,v,t) \d[v] =\mu(x,t) \qquad\text{with the vector function }\: \mu(x,t):=\int_{\Rd} vh(x,v,t) \d[v]\ ,\label{f:perturb} \\ 
  P(x,t) &= \frac1d \int_{\Rd} (|v-u|^2) f(x,v,t) \d[v] 
  = 1+\frac1d\big[\tau(x,t)-\frac{|\mu(x,t)|^2}{1+\sigma(x,t)}\big] \quad\text{with }\: \tau(x,t):=\intRd |v|^2 h(x,v,t) \d[v]\ . \nonumber %\end{split}
\end{align}
The conservation of the normalizations \eqref{f0-normal} implies
\begin{equation}\label{mloc2}
\int_{\tilde\T^d} \sigma(x,t)\d[\tilde x] =0\ ,  \qquad \int_{\tilde\T^d} \mu(x,t)\d[\tilde x] =0 \ , \qquad{\rm and}\qquad   \int_{\tilde\T^d} \tau(x,t)\d[\tilde x] =0\ .
\end{equation}
%Define $h$ by $f = M_T+h$.  

The perturbation $h$ then satisfies
$$
 \partial_t h (x,v,t) +  v\cdot \nabla_x h(x,v,t) = [M_f(x,v,t)  -M_1(v)]  - h(x,v,t)\ , \quad t\geq 0 \ .
$$
For $\sigma$, $\mu$, and $\tau$ small we have
\begin{align}\label{taylor}
 & M_f(x,v) - M_1(v) \nonumber \\
 &= \frac{(1+\sigma)^{1+\frac{d}2}(x)}{\left(2\pi \big\{1+\frac1d\big[\tau(x)-\frac{|\mu|^2(x)}{1+\sigma(x)}\big]\big\}\right)^\frac{d}2} 
 \,\exp\Big\{-\frac{|v(1+\sigma(x))-\mu(x)|^2}{2\big(1+\frac1d\big[\tau(x)-\frac{|\mu|^2(x)}{1+\sigma(x)}\big]\big) (1+\sigma(x))}\Big\}
  - (2\pi)^{-\frac{d}2}e^{-\frac{|v|^2}2}\nonumber \\
  &\approx M_1(v)\left[\big( 1+\frac{d}2 - \frac{|v|^2}{2}\big)\sigma(x)+v\cdot \mu(x)
      + \big( -\frac{1}{2}+ \frac{|v|^2}{2d}\big)\tau(x)\right] \ ,
\end{align}
which yields the linearized BGK model that we shall analyze in dimensions 1, 2, and 3 in this paper:
\begin{align} \label{linBGK:torus}
 & \partial_t h (x,v,t) +  v\cdot \nabla_x h(x,v,t) \\
 &\quad = M_1(v)\left[\big( 1+\frac{d}2 - \frac{|v|^2}{2}\big)\sigma(x,t)+v\cdot \mu(x,t)
      + \big( -\frac{1}{2}+ \frac{|v|^2}{2d}\big)\tau(x,t)\right]  - h(x,v,t)\ , \quad t\geq 0 \ .\nonumber
\end{align}
Here and in the sequel we only have $h(x,v,t)\approx f(x,v,t)-M_1(v)$, but for simplicity of notation we shall still denote the perturbation by $h$.

\begin{theorem}[{\bf decay estimate for the linearized BGK \eqref{linBGK:torus} in dimensions 1, 2, and 3}] \label{linBGK-decay}
% Fix unit temperature $T= 1$.
For each side length $L>0$ and for dimensions $d=1,2,3$, there exists a (quadratic) entropy functional $\cE^d(f)$ and a decay rate $\lambda^d(L)>0$ satisfying
 \begin{equation} \label{entropy-equiv}
 c_d(L) \cE^d(f) \leq \| f - M_1\|^2_\mathcal{H} \le C_d(L) \cE^d(f)\ ,
 \end{equation}
with some positive constants $c_d,\,C_d$ %; $d=1,2,3$ 
given explicitly in the proofs.
Moreover, any solution $h(t)$ to \eqref{linBGK:torus} with $\cE^d(h^I+M_1)<\infty$, normalized according to \eqref{mloc2}, then satisfies
 \begin{equation} \label{ineq:linBGK-decay}
   \cE^d(h(t)+M_1) \leq e^{-\lambda^d(L)\ t}\ \cE^d(h^I+M_1)\ ,\qquad t\ge0\ .
%   \cE^d(f(t)) \leq e^{-\lambda^d(L)\ t}\ \cE^d(f^I)\ ,\qquad t\ge0\ ,
 \end{equation}
%where $f(t):=M_1+h(t)$, and we assume $\cE^d(f^I) < \infty$.
\end{theorem}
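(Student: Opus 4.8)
\medskip\noindent\emph{Outline of the proof.} The plan is to diagonalise the streaming term by a Fourier series in $x$ and then, for each Fourier mode separately, to build a quadratic entropy that is a small and explicitly controlled perturbation of $\norm{\cdot}_\mathcal{H}^2$, adapted to the hypocoercive structure of that mode.

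Since the coefficients of \eqref{linBGK:torus} do not depend on $x$, expanding $h(x,v,t)=\sum_{k\in\frac{2\pi}{L}\Z^d}\hat h_k(v,t)\,\ee^{\ii k\cdot x}$ decouples \eqref{linBGK:torus} into the family $\partial_t\hat h_k=\widehat{\LL}_k\hat h_k$ on $\mathcal{H}_v:=L^2(\Rd;M_1^{-1}(v)\d[v])$, where $\widehat{\LL}_k g:=-\ii(k\cdot v)\,g+(\Pi-I)g$ and $\Pi$ is the orthogonal projection of $\mathcal{H}_v$ onto the $(d+2)$-dimensional space $V:=\spn\{M_1,v_1M_1,\dots,v_dM_1,|v|^2M_1\}$ of local equilibria. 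A short computation with Gaussian moments identifies the bracket in \eqref{linBGK:torus} with $\Pi h$, so the collision part there is exactly $\Pi-I$, which vanishes on $V$ and equals $-I$ on $V^\perp$. The normalisation \eqref{mloc2} forces $\Pi\hat h_0=0$, hence $\widehat{\LL}_0=-I$ and the zero mode relaxes like $\ee^{-t}$; and since $M_1$ is rotation invariant, $\widehat{\LL}_k$ is unitarily equivalent to $\widehat{\LL}_{|k|e_1}$, so it suffices to treat wavenumbers $\kappa:=|k|\ge 2\pi/L$.

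For each such $\kappa$ the target is a bounded self-adjoint operator $\P_\kappa$ on $\mathcal{H}_v$ (with $\P_0:=I$), a perturbation of $I$ with two-sided bounds uniform in $\kappa$, together with a rate $\lambda_\kappa>0$ such that $\P_\kappa\widehat{\LL}_\kappa+\widehat{\LL}_\kappa^{*}\P_\kappa\le-\lambda_\kappa\P_\kappa$ (in the spectral picture $\lambda_\kappa$ is the spectral gap of $\widehat{\LL}_\kappa$, up to an arbitrarily small loss at any defective eigenvalue, and can be located from a $(d+2)\times(d+2)$ dispersion determinant). Then $\cE_\kappa(g):=\ip{g}{\P_\kappa g}$ decays at rate $\lambda_\kappa$ along the $\kappa$-mode flow, and summing by Parseval the functional $\cE^d(f):=\sum_{k}\cE_{|k|}\big((\widehat{f-M_1})_k\big)$ is equivalent to $\norm{f-M_1}_\mathcal{H}^2$ (this is \eqref{entropy-equiv}) and decays at rate $\lambda^d(L):=\min\{2,\inf_{\kappa\ge2\pi/L}\lambda_\kappa\}$, which gives \eqref{ineq:linBGK-decay}. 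All the work is in constructing $\P_\kappa$: on $V^\perp$ the collision term already dissipates at rate $1$, so one only needs to create dissipation in the macroscopic directions $V$, into which the skew part $\ii\kappa v_1$ transports information only indirectly.

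To do this I would use the Lyapunov-direct-method and decomposition scheme announced in the introduction: take $\P_\kappa=I+\sum_{j\ge1}\eps_j\,\B_j(\kappa)$ with a decreasing hierarchy $\eps_1\gg\eps_2\gg\cdots>0$ and bounded self-adjoint operators $\B_j$ assembled from $\Pi$, from $C_\kappa:=\ii\kappa v_1$ and from the iterated maps $\Pi C_\kappa(I-\Pi)$, $\Pi C_\kappa(I-\Pi)C_\kappa\Pi,\dots$; on expanding $\P_\kappa\widehat{\LL}_\kappa+\widehat{\LL}_\kappa^{*}\P_\kappa$, the successively smaller $\eps_j$ make each new cross-term dominate the errors it generates, so after finitely many steps one reaches strict negativity on all of $\mathcal{H}_v$. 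The finite-dimensional core of this is a Lyapunov matrix inequality for the closed ODE system satisfied by the moments $(\sigma_k,\mu_k,\tau_k)$ together with the stress-tensor and heat-flux moments that the transport term brings into play; the number of corrections needed equals the hypocoercivity index of the model, which is larger than $1$ here precisely because momentum is conserved — for $d\in\{2,3\}$ the components of $\mu_k$ orthogonal to $k$ are invisible to $\Pi C_\kappa\Pi$ and only surface through $\Pi C_\kappa(I-\Pi)C_\kappa\Pi$. The main obstacle, and what pushes this beyond \cite{AAC16,DoMoScH10}, is to carry out the construction \emph{uniformly in} $\kappa$ while keeping $\P_\kappa$ genuinely close to $I$: one must control how the $\eps_j$ and the norms $\norm{\B_j(\kappa)}$ scale as $\kappa\to\infty$ and as $\kappa\downarrow 2\pi/L$ so that $\inf_\kappa\lambda_\kappa>0$ and the equivalence constants $c_d(L),C_d(L)$ stay finite, and to run the explicit matrix computations separately for $d=1,2,3$ to pin down the correct index and a sharp admissible rate.
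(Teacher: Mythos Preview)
Your high-level strategy---Fourier decomposition in $x$, rotational reduction to $k=\kappa e_1$, the observation that the zero mode decays at rate $1$ under the normalisation \eqref{mloc2}, and then a mode-wise Lyapunov functional $\cE_\kappa(g)=\ip{g}{\P_\kappa g}$ summed via Parseval---is exactly what the paper does. Where the two routes diverge is in the construction of $\P_\kappa$. You propose an abstract Villani/DMS-style hierarchy $\P_\kappa=I+\sum_j\eps_j\B_j(\kappa)$ with $\B_j$ built from $\Pi$ and iterates of $C_\kappa=\ii\kappa v_1$; the paper instead expands $\hat h_\kappa$ in the Hermite basis of $L^2(\Rd;M_1^{-1})$, which turns $\widehat{\LL}_\kappa$ into the explicit infinite matrix $-(i\tfrac{2\pi}{L}\kappa\LL_1+\LL_2)$ with $\LL_1$ the (tensorised) Jacobi matrix of the Hermite recursion and $\LL_2$ diagonal with a finite kernel. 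The paper then takes $\P_\kappa$ to differ from $\II$ only on a small upper-left block ($4\times4$, $7\times7$, $11\times11$ for $d=1,2,3$), with the nonzero off-diagonal entries placed exactly at the positions where $\LL_1$ couples a conserved mode to the next, and scaled as $\alpha/\kappa$; positivity of $\CC_\kappa^*\P_\kappa+\P_\kappa\CC_\kappa$ is then checked by Sylvester's criterion on finitely many minors, each a low-degree polynomial in $\alpha$ and $1/\kappa^2$, and the $\kappa$-uniformity follows because these minors are monotone in $1/\kappa^2$ on a suitable $\alpha$-interval.

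Two remarks on the comparison. First, the paper does \emph{not} attempt to locate the true spectral gap via a $(d+2)\times(d+2)$ dispersion determinant; it deliberately settles for a suboptimal but fully explicit $\P_\kappa$, and the theorem as stated promises explicit constants $c_d,C_d,\lambda^d$, which the Hermite-block computation delivers directly (e.g.\ $c_1=(1+\alpha_*\sqrt{3+\sqrt6})^{-1}$). Your scheme would need to be made equally concrete to meet that promise. Second, your $\B_j$ are formally built from the unbounded $C_\kappa=\ii\kappa v_1$; you flag the $\kappa\to\infty$ scaling as the main obstacle, and indeed the resolution is precisely the $1/\kappa$ factor the paper puts into $\P_\kappa-\II$, which in your language amounts to choosing $\eps_j\sim\kappa^{-1}$ and $\B_j$ of finite rank in the Hermite basis. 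So your outline is correct and would converge to essentially the same object, but the paper's Hermite-basis route is what makes the ``finite-dimensional core'' you mention literal and the constants computable.
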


\begin{remark}
\begin{enumerate}[label=(\alph*)]
\item Combining \eqref{L1dom} and the bound on the right in \eqref{entropy-equiv}, we obtain a 
{\em Pinsker type inequality} \cite{Pi64} for our entropy. Let $\tilde f := h+M_1$. Then
\begin{equation}\label{pinsker}
 \| \tilde f - M_1\|_{L^1(\tilde\T\times\R^d,\d[\tilde x]\d[v])}  \le \sqrt{C_d(L) \cE^d(\tilde  f)}\ .
\end{equation}
\item For any solution $h(t)$ to \eqref{linBGK:torus} with $\cE^d(h^I+M_1)<\infty$, normalized according to \eqref{mloc2}, the function $\tilde f(t):=h(t)+M_1$ satisfies
\begin{equation}\label{twoscale1}
\| \tilde f(t) - M_1\|_{L^1(\tilde\T\times\R^d,\d[\tilde x]\d[v])} \le \sqrt{C_d(L) \cE^d(\tilde f^I)} e^{-\lambda^d(L)\ t / 2}\,,
\end{equation}
 due to~\eqref{ineq:linBGK-decay} and~\eqref{pinsker} with $\tilde f^I :=h^I +M_1$.
However, since $\tilde f(t)$ and $M_1$ are both probability measures, we also have
\begin{equation}\label{twoscale2}
\| \tilde f(t) - M_1\|_{L^1(\tilde\T\times\R^d,\d[\tilde x]\d[v])}  \le 2
\end{equation}
for all $t$. Moreover, if most of the mass density is initially located in a small portion of $\tilde\T$; 
 e.g., if the gas molecules are initially released from a small container into a vacuum in the rest of $\tilde\T$,
 then $\| \tilde f(t) - M_1\|_{L^1(\tilde\T\times\R^d,\d[\tilde x]\d[v])}$ will be close to $2$ until the streaming has had time to distribute the particles more uniformly over $\tilde \T$. 
Our estimates bound the time that it takes for this to happen. 

Combining \eqref{twoscale1} with \eqref{twoscale2} yields
\begin{equation} \label{Hnorm-decay-est}
 \| \tilde f(t) - M_1\|_{L^1(\tilde\T\times\R^d,\d[\tilde x]\d[v])}  \le \min\left\{2,\,\sqrt{C_d(L)\,\cE^d(\tilde f^I)}\,e^{-\lambda^d(L)\ t/2} \right\}\ ,\qquad t\ge0\ .
\end{equation}
Our bound~\eqref{twoscale1} improves the trivial bound~\eqref{twoscale2} 
%gives no information on $ \| f(t) - M_1\|_{L^1(\tilde\T\times\R^d,\d[\tilde x]\d[v])}$ for $t< t_{{\rm init}}$ where
only for $t> t_{{\rm init}}$ where
\begin{equation*}%\label{TInit}
t_{{\rm init}} := \frac{ \log C_d(L)  +  \log \cE^d(\tilde f^I)-2\log 2}{  \lambda^d(L) } \ .
\end{equation*}

For the one dimensional case, it is shown in Remark~\ref{rem3.2} that $\lambda^1(L) = O(1/L^2)$ in the limit $L\to\infty$. 
Moreover, the constant $C_1(L)$ approaches $1$ in the limit $L\to\infty$
 by using the limiting behavior $\alpha_*(L)=O(1/L)$ in expression~\eqref{norm-const:1D}. 
For  initial data $\tilde f^I$ with all of the gas molecules initially located in a small region of $\tilde\T$ with a volume fraction of order $\epsilon$, the initial entropy $\cE^1(\tilde f^I)$ will satisfy $\cE^1(\tilde f^I) = O(\epsilon^{-2})$. In this case, $t_{{\rm init}}$ is approximately given by
\[ O(-L^2 (C +\log \epsilon) )\quad \mbox{for } \epsilon\ll 1,\;L\gg1 \]
and some positive constant $C$.
Thus one time scale in our problems is given, or at least bounded, by $t_{{\rm init}}$. 
After this time, the solution satisfies
\begin{equation}\label{twoscale3b}
\| \tilde f(t) - M_1\|_{L^1(\tilde\T\times\R^d,\d[\tilde x]\d[v])}  \leq 2 e^{-\lambda^d(L)\ (t- t_{{\rm init}})/2}\ ,
\end{equation}
and the second time scale, is given by $2/\lambda^d(L)$, the  waiting time after $t_{{\rm init}}$ 
for $\| \tilde f(t) - M_1\|_{L^1(\tilde\T\times\R^d,\d[\tilde x]\d[v])}$ to decrease by a factor of $1/e$; 
see Figure \ref{fig:H-decay}. 

These two times scales are quite similar to what one observes in interacting particle systems or even in card shuffling; see \cite{AlDi86,Di96}. 
In particular, \cite[Fig. 2]{AlDi86} is quite similar to our Fig. \ref{fig:H-decay} below. 

\item The resemblance of \eqref{twoscale3b} to the results of Aldous and Diaconis for finite Markov chains in \cite{AlDi86,Di96}, and in particular for card shuffling, is not a coincidence. 
The equation \eqref{bgk:linear} can be interpreted as the Kolmogorov forward equation for a Markov process. 
Exponential rates for related Markov process with exponential rates have been obtained by probabilistic methods;
 see \cite{BL} for an early study of this type. 
However, the approach in \cite{BL} relies on compactness arguments and does not yield explicit values for $c$ or $\lambda$. 
One difference between our results and those for finite Markov chains is that in our case, the initial relative entropy can be infinite. 
In card shuffling, starting form a perfectly ordered deck of cards,
 one starts from a state of maximal---but finite---relative entropy, and the waiting time for uniformization from this state dominates that of any other starting point. 
For this reason, the initial waiting time for finite Markov chains is a universal ``worst case'', while this is impossible in our setting; our result must refer to $\cE^d(\tilde f^I)$. 

\item Our bound on the decay rate is monotonically decreasing in $L$ and satisfies $\lambda^d(L=0)>0$ and  $\lambda^d(L=\infty)=0$ (for $d=1$ see Fig.\ \ref{fig:muL} below). Moreover $c_d(L=0)=C_d(L=0)=1$ (see \eqref{alpha-limit}, \eqref{norm-const:1D} below).
\end{enumerate}
\end{remark}
\begin{figure}[ht!]
\begin{center}
 \includegraphics[scale=.6]{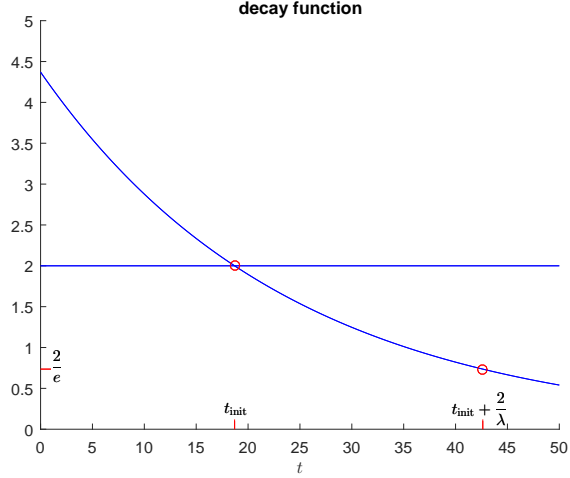}
 \caption{These two functions illustrate the time dependent decay estimate from \eqref{Hnorm-decay-est}. The values of $C_d,\,\lambda^d$ correspond to the 1D case with $L=2\pi$, and we chose $\cE^d(\tilde f^I)=15$. We also show the two time scales of the BGK equation: $t_{{\rm init}}$ marks the intersection point of the two blue curves and it corresponds to the generic transport time. $t_2:=t_{{\rm init}}+\frac2\lambda$ marks the intersection point of the exponential curve with the value $2/e$, and $t_2-t_{{\rm init}}$ corresponds to the relaxation time scale.  For larger values of $L$, $t_{{\rm init}}$ will be much larger. 
 }
 \label{fig:H-decay}
\end{center}
\end{figure}
\medskip

To prove local asymptotic stability for the nonlinear BGK equation \eqref{bgk} in 3D,
 we make use of another set of norms:
For $\gamma \geq 0$, let $H^\gamma(\tilde\T^3)$ be the Sobolev space
 consisting of the completion of smooth functions $\varphi$ on $\tilde\T^3$ in the Hilbertian norm 
 \[ \|\varphi\|_{H^\gamma}^2 := \sum_{k\in \Z^3} (1+|k|^2)^{\gamma}|\varphi_k|^2\ , \]
 where $\varphi_k$ ($k\in\Z^3$) is the $k$th Fourier coefficient of $\varphi$. 
Let $\mathcal{H}_\gamma$ denote the Hilbert space $H^\gamma(\tilde\T^3)\otimes L^2(\R^3;M_1^{-1})$.  
%$H^\gamma(\tilde\T^3)\otimes L^2(\R;M_T^{-1}(v) \d[v] )$.  
Then the inner product in $\mathcal{H}_\gamma$ is given by
 \[ \langle f,g\rangle_{\mathcal{H}_\gamma} = \int_{\tilde\T^3\times\R^3} \overline{f}(x,v) \left[\left(1 - \Delta_x\right)^{\gamma} g(x,v) \right]M_1^{-1}(v) \d[\tilde x] \d[v] \ . \]

\begin{theorem}[{\bf decay estimates for the linearized and nonlinear BGK equation \eqref{bgk} in 3D}] \label{BGK-decay}
Let $L=2\pi$ and let the initial data $f^I$ satisfy the normalization~\eqref{f0-normal}. 
\begin{enumerate}[label=(\alph*)]
\item \label{BGK-decay:a}
For all $\gamma \ge 0$ there is an entropy functional $\cE_\gamma(f)$ satisfying
 \begin{equation*}%\label{comp}
  \frac34 \cE_\gamma(f)  \leq \|f-M_1\|_{\mathcal{H}_\gamma}^2 \leq \frac32 \cE_\gamma(f)
 \end{equation*}
  such that, if $h$ is a solution of the linearized BGK equation \eqref{linBGK:torus} in 3D
 with initial data $h^I$ and ${\cE_\gamma(h^I+M_1)< \infty}$, then
 \begin{equation*} %\label{localstab5}
  \cE_\gamma(h(t)+M_1) \leq e^{-t/2820}\cE_\gamma(h^I+M_1)\ ,\qquad t\ge0\ .
 \end{equation*}
\item \label{BGK-decay:b}
Moreover, for all $\gamma>3/2$, there is an explicitly computable  $\delta_\gamma>0$ such that, 
 if $f$ is a solution of the nonlinear BGK equation \eqref{bgk} with initial data $f^I$
 and $\|f^I-M_1\|_{\mathcal{H}_\gamma} < \delta_\gamma$,
 then for the same entropy function~$\cE_\gamma$, the following decay estimate holds:
$$
 \cE_\gamma(f(t)) \leq e^{-t/2820}\cE_\gamma(f^I)\ ,\qquad t\ge0\ .
$$ 
%\eqref{localstab5} is again valid. 
\end{enumerate}
\end{theorem}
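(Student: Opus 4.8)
The plan is to prove part~\ref{BGK-decay:a} by a Fourier decomposition in $x$ together with the mode-wise Lyapunov construction developed in the preceding sections, and then to deduce part~\ref{BGK-decay:b} from it by a nonlinear energy estimate and a continuity argument.

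For part~\ref{BGK-decay:a}, since $L=2\pi$ we expand $h(x,v,t)=\sum_{k\in\Z^3}\ee^{\ii k\cdot x}\hat h_k(v,t)$, so that \eqref{linBGK:torus} decouples into the family $\partial_t\hat h_k=\LL_k\hat h_k$ on $\cK:=L^2(\R^3;M_1^{-1}(v)\d[v])$ with
\[
 \LL_k\hat h_k = -\ii\,(v\cdot k)\,\hat h_k + (\Pi-\II)\hat h_k\ ,
\]
where $\Pi$ is the $\cK$-orthogonal projection onto the five-dimensional space $V:=\spn\{M_1,v_1M_1,v_2M_1,v_3M_1,|v|^2M_1\}$ of collision invariants; a short computation identifies the bracket in \eqref{linBGK:torus} with $\Pi\hat h_k$. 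For $k=0$ the normalization \eqref{mloc2} forces $\Pi\hat h_0=0$, so $\hat h_0(t)=\ee^{-t}\hat h_0(0)$, which decays faster than $\ee^{-t/2820}$. For $k\neq0$ the task is to construct a bounded self-adjoint $\P_k$ on $\cK$ with $\tfrac23\II\le\P_k\le\tfrac43\II$ and
\[
 \P_k\LL_k+\LL_k^*\P_k\ \le\ -\lambda_3\,\P_k
\]
for a $k$-independent $\lambda_3>1/2820$; then $\cE_\gamma(M_1+h):=\sum_{k\in\Z^3}(1+|k|^2)^\gamma\ip{\hat h_k}{\P_k\hat h_k}_\cK$ satisfies $\tfrac34\cE_\gamma\le\|h\|_{\mathcal H_\gamma}^2\le\tfrac32\cE_\gamma$, and summing the mode-wise estimates $\ddt\ip{\hat h_k}{\P_k\hat h_k}_\cK\le-\lambda_3\ip{\hat h_k}{\P_k\hat h_k}_\cK$ over $k$ yields the stated decay (at rate $\lambda_3\ge1/2820$). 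The operator $\P_k$ is produced by the decomposition technique of the earlier sections: since $\Pi-\II$ equals $-\II$ on $V^\perp$, the motion off $V$ is damped at rate $1$, and hypocoercivity is needed only to handle the coupling, induced by $v\cdot k$, between $V$ and finitely many neighbouring moment levels (their number governed by the hypocoercivity index of \S\ref{sec-hypo-index}); this reduces the construction of $\P_k-\II$ to a finite-dimensional Lyapunov-type matrix equation whose solution can be written down and bounded explicitly, after using the rotational symmetry of the model to assume $k=|k|e_1$. The main anticipated obstacle is uniformity in $k$: one must check that these explicit corrections keep the contraction rate above $1/2820$ while holding $\P_k$ inside $[\tfrac23\II,\tfrac43\II]$, which requires tracking the behaviour of the finite-dimensional hypocoercivity constants both at the smallest wavenumber $|k|=1$, where hypocoercivity is weakest, and as $|k|\to\infty$, where $-\ii(v\cdot k)$ dominates, and verifying that they stabilise.

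For part~\ref{BGK-decay:b}, writing $f=M_1+h$ turns \eqref{bgk} into $\partial_t h+v\cdot\nabla_x h=(M_f-M_1)-h=\LL h+\mathcal N(h)$, where
\[
 \mathcal N(h):=(M_f-M_1)-M_1\Big[\big(\tfrac52-\tfrac{|v|^2}2\big)\sigma+v\cdot\mu+\big(-\tfrac12+\tfrac{|v|^2}6\big)\tau\Big]
\]
is the quadratic-and-higher remainder of the expansion \eqref{taylor}. Let $B_\gamma$ be the symmetric bilinear form with $\cE_\gamma(M_1+h)=B_\gamma(h,h)$; since part~\ref{BGK-decay:a} gives $2B_\gamma(h,\LL h)\le-\lambda_3\cE_\gamma(M_1+h)$, along the flow of \eqref{bgk}
\[
 \ddt\cE_\gamma(f(t)) = 2B_\gamma(h,\LL h)+2B_\gamma(h,\mathcal N(h))\ \le\ -\lambda_3\,\cE_\gamma(f(t))+2\,|B_\gamma(h,\mathcal N(h))|\ .
\]
The crux is the bound $|B_\gamma(h,\mathcal N(h))|\le\tfrac43\|h\|_{\mathcal H_\gamma}\|\mathcal N(h)\|_{\mathcal H_\gamma}\le C_\gamma\|h\|_{\mathcal H_\gamma}^3$ for $\|h\|_{\mathcal H_\gamma}$ small: here $\gamma>3/2=d/2$ enters so that $H^\gamma(\tilde\T^3)$ embeds in $L^\infty$ and is a Banach algebra, which controls the products of $\sigma,\mu,\tau$ and the smooth functions of $(\sigma,\mu,\tau)$ appearing in $M_f$; smallness also keeps the local density positive and the local temperature below $2$, which is precisely what makes $M_f M_1^{-1/2}$ square-integrable in $v$ and hence $\mathcal N(h)\in\mathcal H_\gamma$ with $\|\mathcal N(h)\|_{\mathcal H_\gamma}=O(\|h\|_{\mathcal H_\gamma}^2)$. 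Passing between $\cE_\gamma$ and $\|\cdot\|_{\mathcal H_\gamma}$ via the two-sided bound yields $\ddt\cE_\gamma\le-\lambda_3\cE_\gamma+C_\gamma'\cE_\gamma^{3/2}$; choosing $\delta_\gamma>0$ so small that $C_\gamma'\sqrt{\cE_\gamma(f^I)}\le\lambda_3-1/2820$, and using the well-posedness of \eqref{bgk} on the torus \cite{PePu93} together with local propagation of $\mathcal H_\gamma$-regularity, a standard continuity/bootstrap argument shows $\cE_\gamma(f(t))$ never leaves this regime, so $\ddt\cE_\gamma(f(t))\le-\tfrac1{2820}\cE_\gamma(f(t))$ for all $t\ge0$, which is the claim. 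The main difficulty here is making the control of $\mathcal N(h)$ in $\mathcal H_\gamma$ — both the $x$-algebra estimates and the Gaussian $v$-integrability of the nonlinear Maxwellian remainder — explicit enough to produce a usable $\delta_\gamma$.
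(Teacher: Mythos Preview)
Your proposal is correct and follows essentially the same route as the paper: part~(a) is obtained by defining $\cE_\gamma(M_1+h)=\sum_{k}(1+|k|^2)^\gamma\ip{\hat h_k}{\P_{|k|}\hat h_k}$ with the finite-rank perturbations $\P_\kappa$ built in \S\ref{sec:linBGK:3D} (after the rotational reduction to $k=|k|e_1$), and the numerical values $\alpha_*\approx0.164$, $\mu\approx1.77\cdot10^{-4}$ from Remark~\ref{rem:mu:3D}\ref{rem:mu:3D:2Pi} give precisely the bounds $\tfrac23\II\le\P_\kappa\le\tfrac43\II$ and $2\mu>1/2820$ you need. Part~(b) is likewise done in the paper by writing the equation as the linearization plus a remainder $R_f=\mathcal N(h)$, bounding $\|R_f\|_{\mathcal H_\gamma}\le\tilde C_\gamma\|h\|_{\mathcal H_\gamma}^2$ via Sobolev embedding for $\gamma>3/2$ together with the Gaussian $v$-integrability you mention, and then absorbing the cubic term into the gap $\lambda_3-1/2820$; your bootstrap argument is exactly the mechanism used.
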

Note that part~\ref{BGK-decay:a} of this theorem generalizes Theorem~\ref{linBGK-decay} to the Sobolev-type entropies $\cE_\gamma(f)$ in the case $d=3$, $L=2\pi$.

\bigskip
This paper is organized as follows: 
In \S\ref{sec:P-method} we review from \cite{AAC16} a Lyapunov-type method for hypocoercive ODEs that yields their sharp exponential decay rate. 
While this approach requires all eigenvectors of the system matrix, we also develop an approach using simplified Lyapunov functionals. 
This alternative strategy comes at the price of yielding only a suboptimal decay rate,
 but it can be extended to infinite dimensional systems and BGK equations. 
In \S\ref{sec:linBGK:1D} we apply the second strategy to the linearized BGK model \eqref{linBGK:torus} in 1D,
 proving exponential decay of the solution towards the spatially uniform Maxwellian, 
 as stated in Theorem \ref{linBGK-decay}. 
This is based on decomposing \eqref{linBGK:torus} into spatial Fourier modes and introducing a Hermite function basis in velocity direction. 
In the Sections \ref{sec:linBGK:2D} and \ref{sec:linBGK:3D} we extend our result to 2D and 3D, respectively. 
But this is not straightforward, as it is already not obvious how to choose a convenient Hermite function basis in multi dimensions. 
Finally, in \S\ref{sec:6} we prove local exponential stability of the nonlinear BGK equation \eqref{bgk} in 3D as stated in Theorem \ref{BGK-decay}(b).

%%%%%%%%%%%%%%%%%%%%%%%%%%%%%%%%%%%%%%%%%%%%%%%%%%%%%%%%%%%%%%%%%%%%%%%%%%%%%%%%%

%%%%%%%%%%%%%%%%%%%%%%%%%%%%%%%%%%%%%%%%%%%%%%%%%%%%%%%%%%%%%%%%%%%%%%%%%%%%%%%%%

\section{Decay of hypocoercive ODEs}
\label{sec:P-method}

The local convergence result in Theorem~\ref{BGK-decay}(b) is obtained from the global convergence result in Theorem~\ref{linBGK-decay} and a relatively straightforward control of the errors involved in linearization. Therefore, the essential content of the paper concerns the linearized BGK equations. To this end we shall rewrite them as ODEs -- of infinite dimension -- in fact. We therefore begin this section with a discussion of  the hypocoercivity structure of ODEs and review (from \cite{AAC16}) a Lyapunov-type method that yields their sharp decay rate.

\subsection{Lyapunov's direct method}

To illustrate the method we start with linear, finite dimensional ODEs.
Consider an ODE for a vector $f(t)=(f_1(t),\,f_2(t),\ldots,f_n(t))^\top\in \C^n$:
 \begin{equation} \label{ODE:general:n}
  \begin{cases}
   \ddt{f}= -\CC f,\quad t\ge 0\,,\\
   f(0)   = f^I \in\C^n\,,
  \end{cases}
 \end{equation}
for some (typically non-Hermitian) matrix $\CC\in\C^{n\times n}$. 
The stability of the steady state $f^0\equiv 0$ is determined by the eigenvalues of matrix $\CC$:
\begin{theorem} %\label{thm:f0:stability}
 Let $\CC\in\C^{n\times n}$ and let $\lambda_j$ ($j=1,\ldots,n)$ denote the eigenvalues of $\CC$
  (counted with their multiplicity).
 \begin{enumerate}[label=(S\arabic*)]
  \item The equilibrium $f^0$ of \eqref{ODE:general:n} is stable %uniformly stable 
    if and only if (i) $\Re (\lambda_j) \geq 0$ for all $j=1,\ldots,n$;
    and (ii) all eigenvalues with $\Re(\lambda_j)=0$ are 
    non-defective\footnote{An eigenvalue is {\em defective} if its geometric multiplicity is strictly less than its algebraic multiplicity. This difference is called {\em defect}.}.
%     semi-simple, i.e. its algebraic and geometric multiplicity are equal.
  \item %\label{f0:AS}
   The equilibrium $f^0$ of \eqref{ODE:general:n} is asymptotically stable %exponentially stable in the large 
    if and only if $\Re (\lambda_j)> 0$ for all $j=1,\ldots,n$.
  \item The equilibrium $f^0$ of \eqref{ODE:general:n} is unstable 
    in all other cases.
 \end{enumerate}
\end{theorem}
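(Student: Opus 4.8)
The claim is the classical Lyapunov criterion for linear autonomous systems, and the natural route is through the explicit structure of the solution $f(t)=\ee^{-t\CC}f^I$. The plan is to reduce the three statements to the asymptotic behaviour of the matrix semigroup $\ee^{-t\CC}$ and to read that behaviour off from the Jordan canonical form of $\CC$. Concretely, by linearity of \eqref{ODE:general:n}, stability of $f^0$ is equivalent to uniform boundedness of the semigroup, $\sup_{t\ge0}\norm{\ee^{-t\CC}}<\infty$ (one may take the $\delta$ in the definition of stability proportional to $\varepsilon$, and conversely an unbounded semigroup forces, along a suitable sequence of unit initial vectors, arbitrarily large excursions), while asymptotic stability is equivalent to $\norm{\ee^{-t\CC}}\to0$ as $t\to\infty$.

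The computational heart is to write $\CC=\Smatrix(\DD+\mathbf{N})\Smatrix^{-1}$ in Jordan form, where $\DD$ is diagonal carrying the eigenvalues $\lambda_j$ and $\mathbf{N}$ is the associated nilpotent part, commuting with $\DD$ block by block. Then $\ee^{-t\CC}=\Smatrix\,\ee^{-t\DD}\ee^{-t\mathbf{N}}\Smatrix^{-1}$, and on a Jordan block of size $m$ attached to some $\lambda_j$ the factor $\ee^{-t\mathbf{N}}$ has entries equal to the monomials $t^k/k!$ for $0\le k\le m-1$. Hence every entry of $\ee^{-t\CC}$ is a finite linear combination of terms $t^k\ee^{-t\lambda_j}$ with $k$ at most (block size)$-1$, and $|t^k\ee^{-t\lambda_j}|=t^k\ee^{-t\Re(\lambda_j)}$. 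Conversely, evaluating the semigroup on a Jordan chain of length $m$ for $\lambda_j$ produces a trajectory whose component along the eigenvector has leading term $\pm\, t^{m-1}\ee^{-t\lambda_j}/(m-1)!$.

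With these facts the three statements follow by inspection of the exponents. For (S2): if $\Re(\lambda_j)>0$ for all $j$, every term $t^k\ee^{-t\Re(\lambda_j)}\to0$, so $\ee^{-t\CC}\to0$ and $f^0$ is asymptotically stable; if instead some $\Re(\lambda_j)\le0$, the solution started from the corresponding eigenvector has norm $\ee^{-t\Re(\lambda_j)}$ times a positive constant, which does not tend to $0$. For (S1): if additionally $\Re(\lambda_j)\ge0$ for all $j$ and every purely imaginary eigenvalue is non-defective, the only non-decaying contributions to $\ee^{-t\CC}$ are the terms with $\Re(\lambda_j)=0$ and $k=0$, each of modulus $1$, so the semigroup is uniformly bounded and $f^0$ is stable. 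All remaining configurations fall under (S3): if some $\Re(\lambda_j)<0$ the eigenvector solution grows like $\ee^{|\Re(\lambda_j)|t}$, and if some purely imaginary $\lambda_j$ is defective the head of a Jordan chain of length $m\ge2$ yields a trajectory with a component of modulus exactly $t^{m-1}/(m-1)!$; either way the semigroup is unbounded and $f^0$ is unstable.

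The only point requiring genuine care — everything else being bookkeeping on the Jordan decomposition — is the defective purely-imaginary case: one must produce an honestly \emph{unbounded} trajectory rather than merely an upper bound that happens to grow, which is why I would insist on choosing the initial vector at the top of a maximal Jordan chain, so that the leading polynomial coefficient $1/(m-1)!$ along the eigenvector is nonzero and no cancellation with lower-order terms or with the other chains can occur. I expect this to be the main (mild) obstacle in writing the argument carefully; the rest is a direct translation of the spectral data through $\ee^{-t\CC}=\Smatrix\,\ee^{-t\DD}\ee^{-t\mathbf{N}}\Smatrix^{-1}$.
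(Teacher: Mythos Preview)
Your argument is correct and is the standard Jordan-form proof of this classical Lyapunov criterion. Note, however, that the paper does not actually prove this theorem: it is stated without proof as a well-known prerequisite (the surrounding text in \S2.1 simply says ``To illustrate the method we start with linear, finite dimensional ODEs'' and then moves on to the Lyapunov-functional construction), so there is no paper proof to compare against. Your write-up would serve perfectly well as the omitted justification.
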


For positive definite Hermitian matrices $\CC$, 
 using the Lyapunov functional $\|f\|^2$ in the energy method allows to obtain the sharp decay rate,
 which is the smallest eigenvalue $\mu$ of $\CC$: 
The derivative of $\|f\|^2$ along solutions $f(t)$ of \eqref{ODE:general:n} satisfies
 \begin{equation*} %\label{trivial:energy}
  \ddt \|f(t)\|^2  = -\ip{f(t)}{(\CC^* + \CC)f(t)} = -2 \ip{f(t)}{\CC f(t)} \leq -2\mu \|f(t)\|^2 \,, 
 \end{equation*}
 where $\CC^*$ denotes the Hermitian transpose of $\CC$.
Note that the derivative of $\|f\|^2$ depends only on the Hermitian part $\tfrac12 (\CC^* + \CC)$ of matrix $\CC$,
 such that for a Hermitian matrix~$\CC$ there is no loss of information.
 
But for non-Hermitian matrices it is more natural to use a modified norm: 
 \[ \|f\|^2_\P := \langle f,\P f\rangle\,, \]
 for some positive definite Hermitian matrix $\P\in\C^{n\times n}$, to be derived from $\CC$. 
The derivative of $\|f\|^2_\P$ along solutions $f(t)$ of \eqref{ODE:general:n} satisfies
 \begin{equation*}
  \ddt \|f(t)\|^2_\P
%    &= -\ip{\CC f(t)}{\P f(t)} - \ip{f(t)}{\P \CC f(t)} = \ip{f(t)}{\RR f(t)} \,,
     = -\ip{f(t)}{(\CC^* \P +\P \CC)f(t)} \,. %\label{GDF:derivative}
 \end{equation*}
% with matrix $\RR:= \CC^* \P +\P \CC$.
Then, $f^0\equiv 0$ is asymptotically stable,
 if there exists a positive definite Hermitian matrix~$\P$ such that $\CC^* \P +\P \CC$ is positive definite. 
To determine the decay rate to $f^0$, and to choose $\P$ conveniently
 we shall use the following algebraic result. %(the complex analog of Lemma 4.3 in \cite{ArEr14} or Lemma 2.6 in \cite{AAS15}).
\begin{lemma}[{\cite[Lemma 2]{AAC16}}] \label{lemma:Pdefinition}
For any fixed matrix $\CC\in\C^{n\times n}$,
 let $\mu:=\min\{\Re(\lambda)|\lambda$ is an eigenvalue of $\CC\}$. 
Let $\{\lambda_{j}|1\leq j\leq j_0\}$ be all the eigenvalues of $\CC$ with $\Re(\lambda_j)=\mu$, only counting their geometric multiplicity.
%
%  \begin{itemize}
%  \item[(i)]
 If all $\lambda_j$ ($j=1,\dots,j_0$) are non-defective, %\footnote{An eigenvalue is defective if its geometric multiplicity is strictly less than its algebraic multiplicity.},
  then there exists a positive definite Hermitian matrix $\P\in\C^{n\times n}$ with
  \begin{align} \label{matrixestimate1}
   \CC^*\P+\P \CC &\geq 2\mu \P\,.
  \end{align}
But $\P$ is not uniquely determined.
%  where $\CC^*$ denotes the Hermitian transpose of $\CC$.
% \item[(ii)] If $\lambda_m$ is defective for at least one $m\in\{1,\dots,m_0\}$, then for any $\eps>0$ there exists a positive definite Hermitian matrix $\P=\P(\eps)\in\C^{n\times n}$ with
% \begin{align}
% \label{degeneratematrixestimate} \CC^*\P+\P \CC &\geq 2(\mu-\eps) \P\,.
% \end{align}
%  \end{itemize}
 Moreover, if all eigenvalues of $\CC$ are non-defective, such matrices $\P$ satisfying~\eqref{matrixestimate1} are given by
 \begin{align} \label{simpleP1} 
  \P:= \sum\limits_{j=1}^n b_j \,\overline{w_j}\otimes w_j^\top \,,
 \end{align}
 where $w_j$ ($j=1,\dots,n$) denote the left eigenvectors of $\CC$, and $b_j\in\R^+$ ($j=1,\dots,n$) are arbitrary weights.
\end{lemma}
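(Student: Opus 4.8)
The plan is to work in a basis of eigenvectors of $\CC$, reducing the matrix inequality $\CC^*\P+\P\CC \ge 2\mu\P$ to a family of scalar (or small-block) inequalities that can be satisfied by a suitable choice of weights. First I would treat the cleanest case: assume all eigenvalues of $\CC$ are non-defective, so there is a basis $\{v_j\}_{j=1}^n$ of right eigenvectors, $\CC v_j = \lambda_j v_j$, with a dual basis $\{w_j\}$ of left eigenvectors normalized by $\langle w_j, v_k\rangle = \delta_{jk}$, i.e. $w_j^* \CC = \lambda_j w_j^*$ (adjusting conventions so that $\overline{w_j}\otimes w_j^\top$ is the correct rank-one projector onto the $\lambda_j$-eigenspace along the others). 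For $\P$ of the form \eqref{simpleP1}, one computes $\P v_k = b_k \overline{w_k}\langle w_k, v_k\rangle$-type terms; the key identity is that $\P\CC v_k = \lambda_k \P v_k$ and $\CC^*\P v_k = \overline{\lambda_k}\,\P v_k$, because $\P$ is built from spectral projectors of $\CC$. Hence $(\CC^*\P + \P\CC)v_k = (\lambda_k + \overline{\lambda_k})\P v_k = 2\Re(\lambda_k)\,\P v_k \ge 2\mu\,\P v_k$ since $\Re(\lambda_k)\ge\mu$ for every $k$. Because $\{v_k\}$ is a basis and both sides are Hermitian, this eigen-relation on a spanning set upgrades to the matrix inequality $\CC^*\P+\P\CC \ge 2\mu\P$. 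Positive-definiteness of $\P$ is immediate from $b_j>0$ together with the fact that $\{\overline{w_j}\otimes w_j^\top\}$ spans: $\langle f,\P f\rangle = \sum_j b_j |\langle w_j, f\rangle|^2 > 0$ for $f\ne 0$ since the $w_j$ form a basis. That establishes the "moreover" part and also exhibits non-uniqueness, since the $b_j$ are free.

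For the first assertion, where only the eigenvalues $\lambda_1,\dots,\lambda_{j_0}$ on the critical line $\Re\lambda=\mu$ are assumed non-defective, I would decompose $\C^n$ into the $\CC$-invariant spectral subspace $V_0$ associated with $\{\lambda_j : \Re\lambda_j = \mu\}$ and a complementary $\CC$-invariant subspace $V_+$ on which every eigenvalue has real part $>\mu$. On $V_0$ the restriction of $\CC$ is diagonalizable (by the non-defectiveness hypothesis), so the construction above gives a positive definite $\P_0$ on $V_0$ with $(\CC|_{V_0})^*\P_0 + \P_0(\CC|_{V_0}) \ge 2\mu\P_0$. On $V_+$, the restriction $\CC|_{V_+}$ has spectrum with real part $\ge \mu+\eps$ for some $\eps>0$; here I would invoke the standard Lyapunov theorem (or a small-ball argument via the Jordan form, allowing a large constant) to produce a positive definite $\P_+$ with $(\CC|_{V_+})^*\P_+ + \P_+(\CC|_{V_+}) \ge 2(\mu+\eps')\P_+$ for some $\eps'>0$, which in particular dominates $2\mu\P_+$. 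Setting $\P := \P_0 \oplus \P_+$ in a basis adapted to $V_0\oplus V_+$ — more precisely, extending $\P_0,\P_+$ by zero on the complementary block and adding — and scaling one block relative to the other if needed so that the cross terms (which vanish because the decomposition is $\CC$-invariant and we take $\P$ block-diagonal in the adapted basis) cause no trouble, yields the desired $\P$ on all of $\C^n$.

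The main obstacle is the $V_+$ block: the defective eigenvalues are permitted there, so one cannot use the clean eigenvector formula, and one must instead appeal to the general existence theorem for Lyapunov matrices (the algebraic Lyapunov equation $\CC^*\P+\P\CC = \Q$ is solvable with $\P\succ 0$ whenever $\CC$ has spectrum in the open right half-plane and $\Q\succ 0$) applied to the shifted matrix $\CC|_{V_+} - \mu\,\mathrm{I}$, whose spectrum lies strictly in the right half-plane. A secondary subtlety is the bookkeeping of left/right eigenvector normalizations so that \eqref{simpleP1} is genuinely the spectral resolution — i.e., $\sum_j \overline{w_j}\otimes w_j^\top$ acts as the correct family of (generally non-orthogonal) projectors — and checking that Hermitianity of $\P$ forces the pairing to be consistent; this is routine but must be done carefully. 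I would also note explicitly that since the decomposition $V_0\oplus V_+$ is not orthogonal in general, "block-diagonal $\P$" must be interpreted with respect to the $\CC$-adapted (oblique) coordinates, which is exactly what \eqref{simpleP1} encodes on $V_0$.
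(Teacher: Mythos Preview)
The paper does not prove this lemma; it is quoted verbatim from \cite{AAC16} (with the real case credited to \cite{ArEr14}), so there is no in-paper argument to compare against. Your proposal is essentially correct and in the spirit of those references: diagonalize where possible, and on the remaining invariant block invoke the classical Lyapunov equation for the shifted matrix $\CC|_{V_+}-\mu\II$, then assemble $\P$ block-diagonally in the $\CC$-adapted (oblique) coordinates.

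Two places deserve tightening. First, the sentence ``this eigen-relation on a spanning set upgrades to the matrix inequality'' hides the actual mechanism: what you have shown is that each $v_k$ solves the \emph{generalized} eigenvalue problem $(\CC^*\P+\P\CC)\,u=\nu\,\P u$ with $\nu_k=2\Re(\lambda_k)$; since $\{v_k\}$ is a basis and $\P\succ 0$, these exhaust the generalized spectrum, and $A\ge cB$ for Hermitian $A$ and $B\succ 0$ is equivalent to every generalized eigenvalue being $\ge c$. A cleaner route avoids the $v_k$ altogether: with the \emph{transpose} convention $w_j^\top\CC=\lambda_j w_j^\top$ (which is the one matching formula~\eqref{simpleP1}) one has directly $\CC^*(\overline{w_j}\,w_j^\top)=\overline{\lambda_j}\,\overline{w_j}\,w_j^\top$ and $(\overline{w_j}\,w_j^\top)\CC=\lambda_j\,\overline{w_j}\,w_j^\top$, whence
\[
\CC^*\P+\P\CC-2\mu\P=\sum_{j=1}^n 2\big(\Re\lambda_j-\mu\big)\,b_j\,\overline{w_j}\,w_j^\top\ \ge\ 0
\]
as a sum of positive semidefinite rank-one terms. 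Second, your stated convention $w_j^*\CC=\lambda_j w_j^*$ gives biorthogonality $w_j^* v_k=\delta_{jk}$ but \emph{not} $w_j^\top v_k=\delta_{jk}$, so $\P v_k$ is not simply $b_k\overline{w_k}$ and the displayed identity $\CC^*\P v_k=\overline{\lambda_k}\,\P v_k$ does not follow as written; the transpose convention resolves this, exactly as you anticipate under ``bookkeeping''. Your block argument on $V_0\oplus V_+$ is correct: because both subspaces are $\CC$-invariant, $\CC$ is block diagonal in the adapted basis, and conjugating by the change-of-basis matrix shows $\CC^*\P+\P\CC-2\mu\P$ is congruent to the block-diagonal matrix of the two separate inequalities.
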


\begin{remark}
 \begin{enumerate}
  \item[(i)]
This result was proven in \cite[Lemma 4.3]{ArEr14} for real matrices $\CC\in\R^{n\times n}$,
 and in \cite[Lemma 2]{AAC16} for complex matrices $\CC$.
%Lemma~\ref{lemma:Pdefinition} is the complex analog of \cite[Lemma 4.3]{ArEr14} or \cite[Lemma 2.6]{AAS15}.
In particular, if $\CC$ is a real matrix, 
 then the inequality~\eqref{matrixestimate1} of Lemma~\ref{lemma:Pdefinition} holds true
 for some real positive definite symmetric matrices $\P\in\R^{n\times n}$.
  \item[(ii)] For the extension of the above lemma to the case of defective eigenvalues see~\cite[Lemma 4.3(i)]{ArEr14} and~\cite[Prop. 2.2]{AAS15}. But the construction of~$\P$ then involves also the generalized eigenvectors,
  \item[(iii)] The Lyapunov inequality \eqref{matrixestimate1} is a special case of a linear matrix inequality.
In a standard reference of system and control theory~\cite{Boyd94},
the problem of finding the maximal positive constant $\mu$ and a positive definite matrix $\P$ satisfying \eqref{matrixestimate1} is formulated as a generalized eigenvalue problem, see \cite[\S 5.1.3]{Boyd94}.
The optimal value for the constant $\mu$ is pointed out,
but the associated matrices $\P$ (like in our construction \eqref{simpleP1}) are not specified.
 \end{enumerate}
\end{remark}
Now we consider examples, where all eigenvalues of $\CC\in\C^{n\times n}$ are non-defective and have positive real parts. Then the origin is the unique and asymptotically stable steady state $f^0=0$ of~$\eqref{ODE:general:n}$:
Due to Lemma~\ref{lemma:Pdefinition}, 
 there exists a positive definite Hermitian matrix $\P\in\C^{n\times n}$ such that $\CC^* \P +\P\CC \geq 2\mu \P$
 where $\mu = \min \Re(\lambda_j) >0$.
Thus, the derivative of $\|f\|^2_\P = \ip{f}{\P f}$ along solutions of \eqref{ODE:general:n} satisfies
 \begin{align*}
  \ddt \|f(t)\|^2_\P 
%     &= \ddt \ip{f(t)}{\P f(t)} = \ip{\ddt f(t)}{\P f(t)} + \ip{f(t)}{\P \ddt f(t)} \nonumber \\
%     &= -\ip{\CC f(t)}{\P f(t)} -\ip{f(t)}{\P \CC f(t)} = -\ip{f(t)}{(\CC^* \P +\P \CC)f(t)} \,. \label{GDF:derivative}
    &\leq -2\mu \|f(t)\|^2_\P \qquad \text{with} \quad \mu = \min \Re(\lambda_j),
 \end{align*} 
which implies 
\begin{equation} \label{ODE-decay}
 \|f(t)\|^2_\P\leq e^{-2\mu t} \|f^I \|^2_\P \,,\qquad t\ge0\,.
\end{equation}  
Let $\lambda^\P_j$ $(j=1,\ldots,n)$ denote the positive eigenvalues of the positive definite Hermitian matrix~$\P$
 being ordered by magnitude such that $0 <\lambda^\P_1 \leq\ldots \leq\lambda^\P_n$.
Then the matrix inequality $\lambda^\P_1 \II \leq \P \leq\lambda^\P_n \II$ implies the equivalence of norms
\[ \lambda^\P_1 \|v\|^2 \leq \|v\|_\P^2 \leq \lambda^\P_n \|v\|^2 \qquad \forall v\in\C^n \,. \]
Thus the decay in $\P$-norm~\eqref{ODE-decay} translates into a decay in the Euclidean norm
\begin{equation} \label{ODE:exponentialDecay}
 \norm{f(t)}^2\leq c e^{-2\mu t} \norm{f^I}^2\,,
\end{equation}
with the constant $c=\lambda^\P_n /\lambda^\P_1 \ge1$, i.e.\ the condition number of $\P$.
Note that $c=1$ if and only if $\P=\II$.
\begin{remark}
In a popular textbook on linear systems theory~\cite{Hes09},
 the exponential decay~\eqref{ODE:exponentialDecay} is obtained as follows~\cite[\S 8.5]{Hes09}:
For a stable matrix $-\CC$ (i.e. all eigenvalues of $-\CC$ have negative real part) and a matrix~$\Q$, 
 the unique solution~$\P$ of Lyapunov's equation
 \begin{equation*} %\label{eq:Lyapunov}
  \CC^* \P +\P\CC = \Q
 \end{equation*}
 is given by \[ \P := \int_0^\infty e^{-\CC^* t}\ \Q\ e^{-\CC t} \d[t]\ . \]
If $\Q$ is a positive definite symmetric matrix,
 then the unique solution~$\P$ is also symmetric and positive definite.
Moreover, the $\P$-norm of any solution $f(t)$ of \eqref{ODE:general:n} satisfies
\[
   \ddt \|f(t)\|^2_\P 
%     &= \ddt \ip{f(t)}{\P f(t)} = \ip{\ddt f(t)}{\P f(t)} + \ip{f(t)}{\P \ddt f(t)} \nonumber \\
%     &= -\ip{\CC f(t)}{\P f(t)} -\ip{f(t)}{\P \CC f(t)}
    = -\ip{f(t)}{(\CC^* \P +\P \CC)f(t)} 
    = -\ip{f(t)}{\Q f(t)}   
    \leq -\min \lambda^\Q_j \|f(t)\|^2
    \leq -\frac{\min \lambda^\Q_j}{\max \lambda^\P_j} \|f(t)\|^2_\P \ ,
\]
where $\lambda^\Q_j$ and $\lambda^\P_j$ are the positive eigenvalues of the positive definite symmetric matrices $\Q$ and $\P$.
This implies \eqref{ODE-decay} with $2\mu ={\min \lambda^\Q_j}/{\max \lambda^\P_j}$.
However, only a suitable choice for $\Q$ would allow to recover the optimal decay rate as achieved in Lemma~\ref{lemma:Pdefinition}.
\end{remark}
The preceding discussion allows to characterize coercive and hypocoercive systems of linear ODEs (as well as matrices) according to the definition in the introduction:
Equation~\eqref{ODE:general:n} with matrix~$\CC$ is \emph{coercive}, if the Hermitian part of $\CC$ is positive definite,
i.e. 
 \begin{equation*} %\label{def-coercive}
  \exists\kappa>0 \quad \text{such that} \quad \CC_H:=\tfrac12 (\CC+\CC^*) \ge \kappa \II \,.
 \end{equation*}
In this case, the trivial energy method (i.e.\ multiplying \eqref{ODE:general:n} by $\overline{f}(t)^\top$
 and using $\|f\|^2$ as a Lyapunov functional) shows decay of $f(t)$ with rate $\kappa$ and $c=1$. 
But this exponential rate is not necessarily sharp, e.g. for some non-Hermitian matrices $\CC$.

Equation~\eqref{ODE:general:n} with matrix~$\CC$ is \emph{hypocoercive (with trivial kernel)},
if there exists $\mu>0$ such that all eigenvalues of $\CC$ satisfy
 \[ \Re(\lambda_j)\ge\mu\,, \qquad j=1,...,n\,. \]
While this notion was originally coined for operators in PDEs, such matrices are typically also called \emph{positively stable}.

Comparing the spectrum of $\CC$ and $\CC_H$, it is well known that the maximum constants $\kappa$ and $\mu$ satisfy $\kappa\le\mu$.
If all eigenvalues of $\CC$ with $\Re(\lambda_j)=\mu$ are \emph{non-defective},
 then $f(t)$ decays at least with rate $\mu$. 
However, if $\CC$ has a defective eigenvalue with $\Re(\lambda)=\mu$, then $f(t)$ decays ``slightly slower'',
 i.e.\ with rate $\mu-\eps$, for any $\eps>0$ (see \cite[Proposition 4.5]{ArEr14} and \cite[Proposition 2.2]{AAS15} for details -- applied to hypocoercive Fokker-Planck equations). 
Very recently this decay result has been improved as follows: In this case there is still a positive definite matrix~$\P$, but it cannot be given by the simple formula~\eqref{simpleP1}, and~\eqref{ODE-decay} becomes
\begin{equation}\label{defective-decay}
 \|f(t)\|^2_\P \leq C (1+t^{2m}) e^{-2\mu t}  \|f^I\|^2_\P\ 
\end{equation}
for some $C>0$, where~$m$ is the maximal defect of the eigenvalues of~$\CC$ with $\Re(\lambda_j)=\mu$.
See \cite{ArEiWo} for more information. 

\subsection{Index of hypocoercivity}\label{sec-hypo-index}

For the BGK models analyzed below we intend to construct convenient Lyapunov functionals of the form $\langle  f,\P f\rangle$, where the matrix $\P$ does not necessarily have to reveal the sharp spectral gap of $\CC$ (in the sense of Lemma \ref{lemma:Pdefinition}).
% we shall first give a motivation how to make a good ansatz for such a \emph{minimal} $\P$--matrix. 
To this end we first give a definition of the structural complexity of a hypocoercive equation of the form
\begin{equation}\label{matrix-ODE}
  \ddt f +i\; \CC_1 f = -\CC_2 f\ ,\quad t\ge0\ .
\end{equation}
Here we decomposed the matrix $\CC\in\C^{n\times n}$ as $\CC=i\CC_1+\CC_2$
with Hermitian matrices $\CC_1$ and $\CC_2$ with $\CC_2\ge0$.
In the special case $\CC_1=0$, $(\ker \CC_2)^\perp$ corresponds to the subspace of decaying solutions $f(t)$, and $\ker \CC_2$ to the non-decaying subspace. 
In hypocoercive equations, the semigroup generated by the skew-Hermitian matrix $i\CC_1$ may turn non-decaying directions into decaying directions,
 hence allowing for an exponential decay of all solutions.
More precisely, we assume
\begin{equation} \label{hypocoercive:2}
% \exists \tau\in \{0,\ldots,n-\rank \CC_2\} \quad \text{and}\quad \exists \kappa<0 \ : \qquad \sum_{j=0}^\tau \CC_1^j \CC_2 (\CC_1)^j \leq \kappa \II \ . 
 \exists \tau\in \N_0 \quad \text{and}\quad \exists \kappa>0 \ : \qquad \sum_{j=0}^\tau \CC_1^j \CC_2 (\CC_1)^j \geq \kappa \II \ . 
\end{equation}

\begin{definition}\label{hyp-index}
 For Hermitian matrices $\CC_1$ and $\CC_2$ with $\CC_2\ge0$,
 the \emph{hypocoercivity index} of the matrix $\CC$ (and of the ODE \eqref{matrix-ODE}) is the smallest $\tau\in\N_0$, such that \eqref{hypocoercive:2} holds.
\end{definition}
Clearly, $\tau=0$ corresponds to coercive matrices $\CC$;
 i.e., those for which all eigenvalues of its Hermitian part $\tfrac12(\CC+\CC^*)$ are strictly positive.
A simple computation shows that this definition is invariant under a change of basis. 
We note that condition \eqref{hypocoercive:2} is identical to the matrix condition in Lemma 2.3 of \cite{ArEr14}, which characterizes the hypoellipticity of degenerate Fokker-Planck operators of the form $\LL f=\diver(\DD\nabla f+\CC xf)$ %on $\R^d$ 
(using the matrix correspondence $\DD=\CC_2$, $\CC=\CC_1$). 
Hence, condition \eqref{hypocoercive:2} for the ODE \eqref{matrix-ODE} and its hypocoercivity index can be seen as an analogue of the \emph{finite rank H\"ormander condition} for hypoelliptic and degenerate diffusion equations \cite[Th.\ 1.1]{Ho67}. 
While the hypocoercivity index of degenerate parabolic equations determines the algebraic regularization rate (e.g.\ from $L^2$ into $H^1$, see Theorem A.12 in \cite{ViH06} and Theorem 4.8 in \cite{ArEr14}), its role in hypocoercive ODEs is not yet clear.

\subsubsection{Equivalent hypocoercivity conditions} %\label{sec-equivalent-hypo-conditions}

Next, we collect several statements which are equivalent to condition~\eqref{hypocoercive:2}. They will be useful for the analysis in \S\ref{sec:Pmatrix}.
\begin{proposition} \label{prop:equivalence}
 Suppose that $\CC_1\in\C^{n\times n}$ and $\CC_2\in\C^{n\times n}$ are Hermitian matrices.
 Suppose furthermore that $\CC_2$ is positive semi-definite.
 Then the following conditions are equivalent:
 \begin{enumerate}[label=(B\arabic*)]
 	\item \label{cond:fullrank} There exists $\tau\in\N_0$ %$\tau \in \{0,\ldots,n-\rank \CC_2\}$
 	 such that \[ \rank\{\sqrt{\CC_2},\CC_1 \sqrt{\CC_2}, \ldots, \CC_1^\tau \sqrt{\CC_2}\} =n \ , \]
	 which is often called \emph{Kalman rank condition}.
	\item \label{cond:hypocoercive} The matrices $\CC_1$ and $\CC_2$ satisfy condition~\eqref{hypocoercive:2}.
 \item \label{cond:invariance} No non-trivial subspace of $\ker \CC_2$ is invariant under $\CC_1$.
	\item \label{cond:eigenvector} No eigenvector of $\CC_1$ lies in the kernel of $\CC_2$.
  \item \label{cond:compMatrix} There exists a skew-Hermitian matrix $\K$ such that $\CC_2 +[\K,\CC_1] =\CC_2 +(\K\CC_1 -\CC_1\K)$ is positive definite.
 \end{enumerate}
Moreover, the smallest possible $\tau$ in~\ref{cond:fullrank} and~\ref{cond:hypocoercive} coincides; it is the hypocoercivity index of $\CC$.
\end{proposition}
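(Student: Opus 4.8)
The strategy is to funnel all five conditions through one linear-algebraic object, the non-increasing family of subspaces $V_\tau := \{v\in\C^n : \CC_1^j v\in\ker\CC_2 \text{ for all } j=0,\dots,\tau\}$, $\tau\in\N_0$, showing that each of (B1)--(B4) amounts to ``$V_\tau=\{0\}$ for an appropriate $\tau$'', while (B5) is handled separately by an explicit construction. I would begin with $\text{(B1)}\Leftrightarrow\text{(B2)}$ for each \emph{fixed} $\tau$. Using that $\CC_1$ and $\sqrt{\CC_2}$ are Hermitian and $\ker\sqrt{\CC_2}=\ker\CC_2$, a vector $v$ lies in the left kernel of the Kalman matrix $[\sqrt{\CC_2}\,|\,\CC_1\sqrt{\CC_2}\,|\,\cdots\,|\,\CC_1^\tau\sqrt{\CC_2}]$ exactly when $\sqrt{\CC_2}\,\CC_1^j v=0$ for $j\le\tau$, i.e.\ exactly when $v\in V_\tau$; so (B1) with index $\tau$ is equivalent to $V_\tau=\{0\}$. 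Likewise $\sum_{j=0}^\tau\CC_1^j\CC_2\CC_1^j=\sum_{j=0}^\tau(\CC_1^j\sqrt{\CC_2})(\CC_1^j\sqrt{\CC_2})^*$ is positive semidefinite with kernel precisely $V_\tau$, hence $\ge\kappa\II$ for some $\kappa>0$ iff $V_\tau=\{0\}$. Therefore (B1) and (B2) hold for the same set of $\tau$; this gives their equivalence and, at once, the final assertion that their minimal $\tau$ coincides and equals the hypocoercivity index of Definition~\ref{hyp-index}.

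Next, for $\text{(B2)}\Leftrightarrow\text{(B3)}$ I would use the recursion $V_{\tau+1}=\ker\CC_2\cap\CC_1^{-1}(V_\tau)$, which shows that as soon as $V_{\tau+1}=V_\tau$ the family is constant afterwards; since $\dim V_0\le n$, it stabilizes to $V_\infty:=\bigcap_{\tau\ge0}V_\tau$ within $n$ steps. A short check identifies $V_\infty$ as the largest $\CC_1$-invariant subspace contained in $\ker\CC_2$: it is $\CC_1$-invariant and inside $\ker\CC_2$ by its very definition, and conversely any $\CC_1$-invariant $W\subseteq\ker\CC_2$ satisfies $\CC_1^j W\subseteq W\subseteq\ker\CC_2$, so $W\subseteq V_\infty$. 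Hence (B3) says $V_\infty=\{0\}$, which by stabilization is the same as $V_\tau=\{0\}$ for some $\tau$, i.e.\ (B2). Finally $\text{(B3)}\Leftrightarrow\text{(B4)}$ is immediate: an eigenvector of $\CC_1$ in $\ker\CC_2$ spans a one-dimensional $\CC_1$-invariant subspace of $\ker\CC_2$, and conversely any nontrivial $\CC_1$-invariant $W\subseteq\ker\CC_2$ contains an eigenvector of $\CC_1|_W$ (any operator on a nonzero finite-dimensional complex space has one), which lies in $\ker\CC_2$.

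It remains to incorporate (B5). The implication $\text{(B5)}\Rightarrow\text{(B4)}$ is a one-line computation: if $\K^*=-\K$ with $\CC_2+[\K,\CC_1]>0$ and $\CC_1 v=\nu v$ for some $0\ne v\in\ker\CC_2$ (so $\nu\in\R$), then $\CC_1^*=\CC_1$ gives $\langle v,(\CC_2+[\K,\CC_1])v\rangle=\nu\langle v,\K v\rangle-\nu\langle v,\K v\rangle=0$, contradicting positive definiteness. For the converse $\text{(B3)}\Rightarrow\text{(B5)}$ I would work in an orthonormal eigenbasis of the Hermitian matrix $\CC_1$: write $\C^n=\bigoplus_\ell E_\ell$ for its eigenspaces, $\nu_\ell$ the distinct real eigenvalues, and $\CC_2=(\CC_2^{\ell m})$ in the corresponding block form. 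Since eigenspaces of a Hermitian matrix are orthogonal, $\langle v,\CC_2 v\rangle=\langle v,\CC_2^{\ell\ell}v\rangle$ for $v\in E_\ell$, so $E_\ell\cap\ker\CC_2=\ker\CC_2^{\ell\ell}$; and because every $\CC_1$-invariant subspace splits as $\bigoplus_\ell(W\cap E_\ell)$, condition (B3) is equivalent to each (automatically positive semidefinite) diagonal block $\CC_2^{\ell\ell}$ being positive definite. Now I would set $\K^{\ell m}:=(\nu_\ell-\nu_m)^{-1}\CC_2^{\ell m}$ for $\ell\ne m$ and $\K^{\ell\ell}:=0$; since $\CC_2^{m\ell}=(\CC_2^{\ell m})^*$ this $\K$ is skew-Hermitian, and as $[\K,\CC_1]$ has blocks $(\nu_m-\nu_\ell)\K^{\ell m}$, the off-diagonal blocks of $\CC_2+[\K,\CC_1]$ vanish while its diagonal blocks stay $\CC_2^{\ell\ell}>0$. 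Thus $\CC_2+[\K,\CC_1]=\bigoplus_\ell\CC_2^{\ell\ell}>0$, which together with $\text{(B5)}\Rightarrow\text{(B4)}\Leftrightarrow\text{(B3)}$ yields $\text{(B3)}\Leftrightarrow\text{(B5)}$ and finishes the proof.

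The hard part is this last construction of the commutator correction $\K$: it is not produced by algebraic manipulation of the hypotheses but requires the right normal form — diagonalizing $\CC_1$, so that $[\,\cdot\,,\CC_1]$ acts blockwise by the scalars $\nu_m-\nu_\ell$ and can cancel exactly the off-diagonal part of $\CC_2$, with the positivity $\CC_2\ge0$ ensuring the untouched diagonal blocks are already positive definite under (B3). The only other point needing care is the bookkeeping behind the ``same minimal $\tau$'' claim, namely the stabilization of the filtration $(V_\tau)$; this is standard.
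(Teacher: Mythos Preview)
Your proof is correct and complete. The paper itself does not give a self-contained argument here: its proof of Proposition~\ref{prop:equivalence} consists entirely of citations, deferring (B1)$\Leftrightarrow$(B2) to \cite[Lemma~2.3]{AAS15}, (B2)--(B4) to \cite[Lemma~2.3]{ArEr14}, and (B4)$\Leftrightarrow$(B5) to the real Shizuta--Kawashima result \cite[Theorem~2.5]{ShKa85}. Your write-up therefore supplies what the paper leaves to the references.

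The filtration $V_\tau=\bigcap_{j\le\tau}\ker(\sqrt{\CC_2}\,\CC_1^j)$ is exactly the object appearing (in different guises) in those cited proofs; your use of it to obtain (B1)$\Leftrightarrow$(B2) at each fixed~$\tau$ and the stabilisation argument for (B2)$\Leftrightarrow$(B3) are standard and match what one finds there. The genuinely noteworthy part is your constructive proof of (B3)$\Rightarrow$(B5): diagonalising $\CC_1$ so that $[\,\cdot\,,\CC_1]$ multiplies the $(\ell,m)$ block by $\nu_m-\nu_\ell$, and then choosing $\K^{\ell m}=(\nu_\ell-\nu_m)^{-1}\CC_2^{\ell m}$ to cancel the off-diagonal part of $\CC_2$, is clean and fully explicit. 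The Shizuta--Kawashima argument cited by the paper reaches the same conclusion (in the real setting, with the complex case asserted to follow analogously), but your presentation has the advantage of producing $\K$ in closed form and making transparent why Hermiticity of $\CC_1$ and positive semidefiniteness of $\CC_2$ are exactly what is needed---the former to make the eigenspaces orthogonal (so that the diagonal blocks $\CC_2^{\ell\ell}$ inherit positive semidefiniteness), the latter to upgrade ``trivial kernel'' of $\CC_2^{\ell\ell}$ to ``positive definite''. One small remark: in your final paragraph you attribute the ``same minimal~$\tau$'' claim to the stabilisation argument, but in fact it follows already from your first paragraph, where you show (B1) and (B2) are equivalent \emph{for each fixed}~$\tau$; the stabilisation is only needed for (B2)$\Leftrightarrow$(B3).
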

\begin{proof}
 The equivalence of~\ref{cond:fullrank} and~\ref{cond:hypocoercive} (with the same $\tau$) follows from~\cite[Lemma 2.3]{AAS15}.
 The equivalence of~\ref{cond:hypocoercive}--\ref{cond:eigenvector} follows from~\cite[Lemma 2.3]{ArEr14}.
 The equivalence of~\ref{cond:eigenvector} and~\ref{cond:compMatrix} follows by the same arguments as for real symmetric matrices in~\cite[Theorem 2.5]{ShKa85}.
\end{proof}
\begin{remark}\label{rem:hypo:cond:B1'}
\item
\begin{enumerate}[label=(\alph*)]
\item In order to use condition \ref{cond:fullrank} later on also for ``infinite matrices'' we give here an equivalent version:
 \begin{enumerate}[label=(B1')] 
 \item \label{cond:trivkernel} There exists $\tau\in\N_0$ %$\tau \in \{0,\ldots,\corank \CC_2\}$
  such that $\bigcap_{j=0}^\tau \ker\big(\sqrt{\CC_2}\CC_1^j\big) =\{0\}$.
 \end{enumerate}
 \item
  If $\tau\in\N_0$ is such that 
  \begin{equation}\label{rank-tau1} 
    \rank\{\sqrt{\CC_2},\CC_1 \sqrt{\CC_2}, \ldots, \CC_1^\tau \sqrt{\CC_2}\} = \rank\{\sqrt{\CC_2},\CC_1 \sqrt{\CC_2}, \ldots, \CC_1^\tau \sqrt{\CC_2}, \CC_1^{\tau+1} \sqrt{\CC_2}\} \ ,
  \end{equation}
  then for all $k\in\N$ 
  \[ \rank\{\sqrt{\CC_2},\CC_1 \sqrt{\CC_2}, \ldots, \CC_1^\tau \sqrt{\CC_2}\} = \rank\{\sqrt{\CC_2},\CC_1 \sqrt{\CC_2}, \ldots, \CC_1^{\tau+k} \sqrt{\CC_2}\} \,. \]
  Condition \eqref{rank-tau1} implies that the columns of $\CC_1^{\tau+1} \sqrt{\CC_2}$ are linear combinations of the columns of $\CC_1^j \sqrt{\CC_2}$, $j\in\{0,\ldots,\tau\}$.
  This implies that $\CC_1^{\tau+k} \sqrt{\CC_2}$ are linear combinations of the columns of $\CC_1^j \sqrt{\CC_2}$, $j\in\{k-1,\ldots,\tau+k-1\}$.
  Hence, for a hypocoercive matrix we have to gain with each added term in \eqref{rank-tau1} at least one rank until we reach full rank, i.e.\ space dimension~$n$.
  Thus, for hypocoercive matrices its hypocoercivity index is bounded from above by the dimension of $\ker\CC_2$ (or equivalently corank of~$\CC_2$). 
\end{enumerate}
\end{remark}

In~\cite[Remark 17]{ViH06} the connections of the above conditions to \emph{Kawashima's nondegeneracy condition} for the study of degenerate hyperbolic-parabolic systems \cite{Ka87} and \emph{H\"ormander's rank condition} for hypoelliptic equations \cite{Ho67} are noted. 

For real symmetric matrices $\CC_1, \CC_2\in \R^{n\times n}$ with $\CC_2\geq 0$,
 condition~\ref{cond:eigenvector} is equivalent to the condition that $\CC :=i\CC_1 +\CC_2$ has only eigenvalues with positive real part, see~\cite[Theorem 1.1]{ShKa85}. And the latter statement is equivalent to the exponential stability of \eqref{matrix-ODE}.
Using Proposition~\ref{prop:equivalence}, we shall now prove a similar statement for Hermitian matrices:

\begin{lemma} \label{lem:hypocoercive}
 Hermitian matrices $\CC_1$ and $\CC_2$ with $\CC_2\ge0$ satisfy condition~\eqref{hypocoercive:2}
 if and only if all eigenvalues~$\lambda_{\CC}$ of $\CC :=i\CC_1 +\CC_2$ have positive real part $\Re (\lambda_{\CC}) >0$. 
\end{lemma}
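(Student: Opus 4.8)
The plan is to deduce Lemma~\ref{lem:hypocoercive} from the chain of equivalences already assembled in Proposition~\ref{prop:equivalence}, so that essentially no new linear-algebra machinery is needed; the work lies in connecting the spectral side to condition \ref{cond:compMatrix} (or directly to \ref{cond:eigenvector}). First I would dispose of the easy implication: if $\CC_1$ and $\CC_2$ satisfy \eqref{hypocoercive:2}, then by the discussion in \S\ref{sec:P-method} (Lemma~\ref{lemma:Pdefinition} applied in the coercive-after-modification setting, or directly condition \ref{cond:compMatrix}) there is a positive definite Hermitian $\P$ with $\CC^*\P + \P\CC$ positive definite, whence every eigenvalue of $\CC$ has strictly positive real part. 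Concretely, if $\CC v = \lambda_\CC v$ with $v\neq 0$, then $0 < \langle v, (\CC^*\P+\P\CC)v\rangle = 2\Re(\lambda_\CC)\,\langle v,\P v\rangle$, and $\langle v,\P v\rangle>0$ forces $\Re(\lambda_\CC)>0$.

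For the converse I would argue by contraposition using condition~\ref{cond:eigenvector}. Suppose \eqref{hypocoercive:2} fails; then by Proposition~\ref{prop:equivalence} there is a common eigenvector: a vector $v\neq 0$ with $\CC_1 v = \omega v$ for some $\omega\in\R$ (real since $\CC_1$ is Hermitian) and $\CC_2 v = 0$. Then $\CC v = (i\CC_1 + \CC_2)v = i\omega v$, so $i\omega$ is an eigenvalue of $\CC$ with zero real part, contradicting the hypothesis that all eigenvalues of $\CC$ have positive real part. This gives the ``only if'' direction of the stated equivalence and completes the proof.

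The one point deserving a little care — and the place I expect a referee to look — is the direction "all eigenvalues of $\CC$ have positive real part $\Rightarrow$ \eqref{hypocoercive:2}", because a priori a matrix could be positively stable without the structural condition holding; the resolution is exactly that for $\CC$ of the special form $i\CC_1 + \CC_2$ with $\CC_1,\CC_2$ Hermitian and $\CC_2\ge 0$, failure of \eqref{hypocoercive:2} produces a genuine purely imaginary eigenvalue via the shared eigenvector from \ref{cond:eigenvector}, as in the previous paragraph. So in fact the contrapositive argument handles both nontrivial directions symmetrically: \eqref{hypocoercive:2} holds $\iff$ no eigenvector of $\CC_1$ lies in $\ker\CC_2$ $\iff$ $\CC$ has no eigenvalue on the imaginary axis $\iff$ $\Re(\lambda_\CC)>0$ for all eigenvalues. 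The only small lemma needed beyond Proposition~\ref{prop:equivalence} is the elementary observation that a shared eigenvector $v$ of $\CC_1$ (eigenvalue $\omega\in\R$) lying in $\ker\CC_2$ yields $\CC v = i\omega v$; I would state this inline rather than as a separate lemma. I would phrase the writeup as: "\emph{Proof.} We prove the contrapositive of each implication. \dots" and keep it to a short paragraph, citing Proposition~\ref{prop:equivalence} for the equivalence of \eqref{hypocoercive:2} with \ref{cond:eigenvector}.
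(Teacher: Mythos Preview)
Your reverse direction (contraposition via condition~\ref{cond:eigenvector}) is correct and matches the paper's argument exactly.

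Your forward direction in the first paragraph has a gap: the existence of a positive definite~$\P$ with $\CC^*\P+\P\CC>0$ is not justified by the results you cite. Lemma~\ref{lemma:Pdefinition} only yields $\CC^*\P+\P\CC\ge 2\mu\P$ with $\mu=\min\Re(\lambda)$, so using it to get a strictly positive right-hand side presupposes $\mu>0$, which is precisely the conclusion you want. Condition~\ref{cond:compMatrix} produces a skew-Hermitian~$\K$, not a~$\P$; turning~$\K$ into a suitable~$\P$ (say via $\P=\II-\epsilon\, i\K$) requires controlling the cross term $i(\CC_2\K+\K\CC_2)$, which is exactly the estimate~\eqref{est:1} the paper carries out in its own proof --- so this route is not a shortcut.

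The chain of equivalences you sketch in the third paragraph is the cleaner route and is essentially correct, but it too has a missing link: the step ``$\CC$ has no eigenvalue on $i\R$ $\Leftrightarrow$ all $\Re(\lambda_\CC)>0$'' is an equivalence only once you know a priori that $\Re(\lambda_\CC)\ge 0$ for every eigenvalue. That is easy --- for any eigenpair $(\lambda,\phi)$ one has $\Re(\lambda)\|\phi\|^2=\langle\phi,\CC_2\phi\rangle\ge0$ --- and with it your argument becomes: if $\Re(\lambda)=0$ then $\CC_2\phi=0$, whence $i\CC_1\phi=\lambda\phi$ exhibits~$\phi$ as an eigenvector of~$\CC_1$ in $\ker\CC_2$, contradicting~\ref{cond:eigenvector}. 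Add this one line and drop the~$\P$ detour entirely.

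By contrast, the paper's forward direction is \emph{quantitative}: rather than arguing by contradiction, it uses~\ref{cond:compMatrix} directly on an arbitrary eigenvector to derive the explicit lower bound $\Re(\lambda)\ge \alpha m/8$ (with $m$ the smallest eigenvalue of $\CC_2+[\K,\CC_1]$), via a Young-inequality control of $\langle i\K\CC_2\phi,\phi\rangle$. That extra work is what yields the uniform-in-$k$ spectral-gap bound recorded in Remark~\ref{kdep-estimate}, which your qualitative argument would not provide.
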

To show Lemma~\ref{lem:hypocoercive} for Hermitian matrices,
 we will follow the proofs of~\cite[Prop. 2.4]{UmKaSh84} and~\cite[Lemma 3.2]{ShKa85} for real symmetric matrices.
\begin{proof}[Proof of Lemma~\ref{lem:hypocoercive}]
 First, we show that condition~\eqref{hypocoercive:2} implies
  that all eigenvalues $\lambda_{\CC}$ of $\CC :=i\CC_1 +\CC_2$ have positive real part $\Re (\lambda_{\CC}) >0$:
 Let $\phi$ be an eigenvector of $\CC$ corresponding to an eigenvalue $\lambda$, i.e.
 \begin{equation} \label{eq:EVP}
  \lambda\phi = \CC \phi = (i\CC_1 +\CC_2) \phi \ .
 \end{equation}
 Take the complex inner product of this equation with $\phi$, to obtain
 \[ \overline{\lambda} \langle\phi,\phi\rangle = \langle \CC \phi, \phi\rangle \ , \]
 using $\langle\phi,\psi\rangle = \overline{\phi}^\top \psi$ for all $\phi,\psi\in\C^n$.
 Its real part satisfies 
 \begin{equation} \label{eq:Re:lambda}
  \Re (\lambda) \langle\phi,\phi\rangle = \langle\CC_2 \phi,\phi\rangle \ ,
 \end{equation}
 due to the assumptions on the matrices $\CC_1$ and $\CC_2$.
 Moreover, there exists a skew-Hermitian matrix~$\K$ such that $\CC_2 +[\K,\CC_1]$ is positive definite by Proposition~\ref{prop:equivalence}.
 We multiply equation~\eqref{eq:EVP} with $i\K$ and take the inner product with $\phi$ such that
 \[
  \overline{\lambda} \langle i\K\phi,\phi\rangle = \langle i\K\CC\phi,\phi\rangle \ .
 \]
 Its real part satisfies
 \begin{equation} \label{eq:Re:lambda:2}
  2\Re (\lambda) \langle i\K\phi,\phi\rangle = \langle(\CC_1 \K-\K\CC_1)\phi,\phi\rangle - i\langle(\CC_2 \K+\K\CC_2)\phi,\phi\rangle \ , 
 \end{equation}
 since $\CC_1$, $\CC_2$ and $i\K$ are Hermitian matrices.
 Moreover,
 \begin{multline} \label{est:1}
  2\Re (\langle i\K\CC_2 \phi,\phi\rangle)
   = \langle(\CC_2 \ i\K+ i\K\CC_2)\phi,\phi\rangle
   = \langle \sqrt{\CC_2}\ i\K\phi,\sqrt{\CC_2}\phi\rangle +\langle \sqrt{\CC_2}\phi,\sqrt{\CC_2}\ i\K\phi\rangle \\
	 \leq 2\norm{\sqrt{\CC_2}\phi} \norm{\sqrt{\CC_2}\ i\K\phi}
   \leq 2M \norm{\sqrt{\CC_2}\phi} \norm{\phi}
	 \leq \epsilon \norm{\phi}^2 +\frac{M^2}{\epsilon} \langle\CC_2 \phi,\phi\rangle
 \end{multline}
 for any positive $\epsilon$. Here we used $M:=\|\sqrt{\CC_2}\ i\K\|$ and $\norm{\sqrt{\CC_2}\phi}^2 = \langle\CC_2 \phi,\phi\rangle$ since $\CC_2\geq 0$.
 Combining equations~\eqref{eq:Re:lambda} and~\eqref{eq:Re:lambda:2} as $2\cdot$\eqref{eq:Re:lambda}$-\alpha\cdot$\eqref{eq:Re:lambda:2}
  for some constant $\alpha>0$ to be chosen later, we derive
% \begin{equation}
%  \Re \lambda \langle\phi,\phi\rangle - 2\alpha\Re \lambda \langle i\K\phi,\phi\rangle
%	  = -\langle\CC_2 \phi,\phi\rangle - \alpha \langle(\CC_1 \K-\K\CC_1)\phi,\phi\rangle + i\langle(\CC_2 \K+\K\CC_2)\phi,\phi\rangle \ , 
% \end{equation}
 \begin{equation}\label{eq1}
  2\Re (\lambda) \big(\norm{\phi}^2 -\alpha \langle i\K\phi,\phi\rangle \big)
	  =\langle (\CC_2 +\alpha (\K\CC_1 -\CC_1 \K))\phi,\phi\rangle +\langle\CC_2 \phi,\phi\rangle
		 +i\alpha \langle(\CC_2 \K+\K\CC_2)\phi,\phi\rangle \ .
 \end{equation}
 There exists $\alpha_0>0$ such that $\Phi_\alpha := \norm{\phi}^2 -\alpha \langle i\K\phi,\phi\rangle$
 satisfies 
 \begin{equation}\label{eq2}
   \norm{\phi}^2/2\leq \Phi_\alpha\leq 2\norm{\phi}^2\quad \forall\; \alpha\in(-\alpha_0,\alpha_0)\,,
 \end{equation}
 since $i\K$ is a Hermitian matrix.
 Recall that the skew-Hermitian matrix~$\K$ was chosen such that $\CC_2 +[\K,\CC_1]$ is positive definite by Proposition~\ref{prop:equivalence}.
 Therefore, the estimate 
 \begin{equation}\label{eq3}
   \langle (\CC_2 +\alpha (\K\CC_1-\CC_1 \K))\phi,\phi\rangle \geq \alpha m \norm{\phi}^2 
 \end{equation}
 holds for all $\alpha\in[0,1]$, where $m>0$ is the smallest eigenvalue of the positive definite Hermitian matrix $\CC_2 +(\K\CC_1-\CC_1 \K)$.
 Thus we deduce from \eqref{eq1} and the estimates \eqref{eq3}, \eqref{est:1} that
 \begin{equation*}
  2\Re (\lambda) \Phi_\alpha \geq \alpha (m -\epsilon) \norm{\phi}^2 +(1 -\alpha\frac{M^2}{\epsilon}) \langle\CC_2 \phi,\phi\rangle \ .
 \end{equation*}
 Choosing $\epsilon =m/2$ and $\alpha =\min\{1,\alpha_0,\epsilon/M^2\}$, we finally derive with \eqref{eq2}
 \[ \Re (\lambda) \geq \frac{\alpha m}8 >0 \ . \]

 Finally, we show the reverse implication via a proof of its negation.
 If condition~\ref{cond:eigenvector} does not hold, then there exists a $\phi\in\ker \CC_2$ and an (eigenvalue) $\mu\in\R$ such that $\CC_1 \phi =\mu \phi$.
 This implies $(i\CC_1 +\CC_2)\phi =i\mu \phi$. Thus $\phi$ is an eigenvector of $\CC :=i\CC_1 +\CC_2$ for the purely imaginary eigenvalue $i\mu$.
 Thus not all eigenvalues $\lambda_{\CC}$ of $\CC$ have positive real part. 
 
 We conclude that, if all eigenvalues $\lambda_{\CC}$ of $\CC$ have positive real part $\Re (\lambda_{\CC}) >0$, then condition~\ref{cond:eigenvector} -- and equivalently~\eqref{hypocoercive:2} -- must hold.
\end{proof}

\begin{remark}\label{kdep-estimate}
In the study of hypocoercivity for discrete velocity BGK models,
a family of matrices $\CC^{(k)} := ik\ \CC_1 +\CC_2$ $(k\in\N)$ 
 for some real symmetric matrices $\CC_1, \CC_2\in \R^{n\times n}$ with $\CC_2\geq 0$
 has to be considered, see~\cite[\S4.1-\S4.2]{AAC16}.
Following the proof of~\cite[Prop. 2.4]{UmKaSh84},
 a uniform bound for the real parts of the eigenvalues $\lambda_{\CC^{(k)}}$ of these matrices $\CC^{(k)}$ ($k\in\N$) can be proven:
 \[ \Re (\lambda_{\CC^{(k)}}) \geq \frac{\alpha m}8 \frac{k^2}{1+k^2} >0 \qquad \forall k\in\N \ . \]
\end{remark}

\begin{remark}
Next we relate our study of equation~\eqref{matrix-ODE} to the one of $\ddt f + \LL f =0$ in~\cite{ViH06}.
In the first part of~\cite{ViH06}, operators $\LL =\A^* \A + \B$ with a skew-symmetric operator $\B$ are considered.
Our operator/matrix $\CC = i\CC_1 +\CC_2$ (for some Hermitian matrices $\CC_1,\CC_2 \in\C^{n\times n}$ with $\CC_2\geq 0$)
 is of the form $\LL =\A^* \A + \B$ for the choice $\A =\sqrt{\CC_2}$ and $\B=i\CC_1$ acting on the complex Hilbert space $\C^n$.
First, we notice that $\cK :=\ker \LL =\ker\A \cap \ker\B$, see~\cite[Prop. I.2]{ViH06}.
There, the study of hypocoercivity is based on the assumptions \cite[(3.4)--(3.5)]{ViH06}:
\begin{equation} \label{cond:Villani:3.4}
 \exists \tau\in\N_0 \ : \quad \ker\big(\sum_{k=0}^\tau \DD_k^* \DD_k \big) =\ker \LL =:\cK \ ,
\end{equation}
or more clearly, %and its quantitative version
\begin{equation} \label{cond:Villani:3.5}
 \exists \tau\in\N_0 \ : \quad \sum_{k=0}^\tau \DD_k^* \DD_k \quad\text{is coercive on } \cK^\perp \ ,
\end{equation}
where the iterated commutators $\DD_k$ ($k\in\N_0$) are defined recursively as
\begin{equation*} %\label{Villani:commutators}
 \DD_0 :=\A \ , \qquad \DD_k := [\DD_{k-1},\B] = \DD_{k-1}\B -\B\DD_{k-1} \ , \quad k\in\N \ .
\end{equation*}
In~\cite[Remark 17]{ViH06} it is noted (without a proof) that on finite dimensional Hilbert spaces,
 condition~\eqref{cond:Villani:3.5} is equivalent to~\ref{cond:invariance} in Proposition~\ref{prop:equivalence} (with credit to Denis Serre).
%Moreover, the connections to Kawashima's nondegeneracy condition in the study of systems of conservation laws and H\"ormander's rank condition for hypoelliptic equations are noted.
\end{remark}

The following simple example shows that this ``equivalence'' needs a small modification in complex Hilbert spaces:
Consider the matrices
\begin{equation*}
\A = \begin{pmatrix} 1 & 0 \\ 0 & 0 \end{pmatrix}, \quad
\B = i\A = \begin{pmatrix} i & 0 \\ 0 & 0 \end{pmatrix}.
\end{equation*}
Matrix $\A$ has kernel $\ker\A = \spn \{\binom{0}{1}\}$.
Moreover $\DD_0 =\A$ and $\DD_k = \Null$ for all $k\in\N$.
Hence, $\cK =\ker\A\cap\ker\B =\ker\A$ and conditions~\eqref{cond:Villani:3.4} and~\eqref{cond:Villani:3.5} are satisfied for all $\tau\in\N_0$. But \ref{cond:invariance}  does \emph{not} hold.

Now we give a proof of a slightly modified equivalence. 
On finite dimensional Hilbert spaces, conditions~\eqref{cond:Villani:3.4} and \eqref{cond:Villani:3.5} are obviously equivalent.
Moreover, we will make use of Proposition~\ref{prop:equivalence}
 and only show the equivalence of~\ref{cond:fullrank} and a modified~\eqref{cond:Villani:3.4}:
\begin{lemma}
Let the matrices $\CC_1$ and $\CC_2\ge0$ be Hermitian and define $\A: =\sqrt{\CC_2}$ and $\B:=i\CC_1$. Then $(\A,\B)$ satisfies
 \begin{equation} \label{cond:Villani:3.4:modified} \text{condition~\eqref{cond:Villani:3.4} together with $\ker\A \cap\ker\B =\{0\}$} \end{equation}
% \begin{quote} condition~\eqref{cond:Villani:3.4} with $\A =\sqrt{\CC_2}$ and $\B=i\CC_1$ such that $\ker\A \cap\ker\B =\{0\}$ \end{quote} if and only if they satisfy~\ref{cond:fullrank}. 
 if and only if $(\CC_1,\CC_2)$ satisfies~\ref{cond:fullrank}.
Moreover, the smallest possible $\tau$ in \eqref{cond:Villani:3.4:modified} and \ref{cond:fullrank} coincides.
\end{lemma}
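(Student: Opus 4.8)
The plan is to translate between the two conditions by unwinding the definitions of the iterated commutators $\DD_k$ and the Kalman-type column spaces, keeping careful track of the spurious imaginary factors that the example above isolates. First I would observe that because $\B = i\CC_1$ and $\CC_1$ is Hermitian, one has $\DD_0 = \A = \sqrt{\CC_2}$ and, by an easy induction, $\DD_k = (-1)^k\, \mathrm{i}^{k}\,[\,[\cdots[\A,\CC_1],\CC_1],\dots,\CC_1\,]$ — that is, $\DD_k$ equals, up to a nonzero scalar factor, the $k$-fold commutator of $\sqrt{\CC_2}$ with $\CC_1$. Since scalar multiples do not change kernels or column spaces, $\ker \DD_k = \ker\big(\mathrm{ad}_{\CC_1}^k \sqrt{\CC_2}\big)$ and $\ran \DD_k^* = \ran\big(\mathrm{ad}_{\CC_1}^k \sqrt{\CC_2}\big)^*$ for every $k$.

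Next I would relate the span of the iterated commutators $\{\mathrm{ad}_{\CC_1}^k\sqrt{\CC_2} : 0\le k\le \tau\}$ to the span of the Kalman blocks $\{\CC_1^j\sqrt{\CC_2} : 0\le j\le \tau\}$. The standard fact here is that, for any fixed $\tau$, the linear span of $\{\CC_1^j \sqrt{\CC_2} : 0\le j\le \tau\}$ (as a set of matrices, equivalently the sum of their column spaces) coincides with the span of $\{\mathrm{ad}_{\CC_1}^k\sqrt{\CC_2} : 0\le k\le \tau\}$; this follows by the binomial-type identity $\mathrm{ad}_{\CC_1}^k\sqrt{\CC_2} = \sum_{j=0}^k (-1)^j\binom{k}{j}\CC_1^{k-j}\sqrt{\CC_2}\,\CC_1^{j}$ together with a triangular change of indexing, exactly as in the classical equivalence of the Hörmander-type bracket condition with the Kalman rank condition. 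Consequently $\sum_{k=0}^\tau \DD_k^*\DD_k$ and $\sum_{j=0}^\tau (\CC_1^j\sqrt{\CC_2})^*(\CC_1^j\sqrt{\CC_2}) = \sum_{j=0}^\tau \sqrt{\CC_2}\,\CC_1^j\,\CC_1^j\,\sqrt{\CC_2}$ have the same kernel, namely $\bigcap_{j=0}^\tau \ker(\sqrt{\CC_2}\CC_1^j)$, which by Remark~\ref{rem:hypo:cond:B1'}(a) is exactly the kernel appearing in condition~\ref{cond:trivkernel} equivalent to~\ref{cond:fullrank}.

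Then I would assemble the equivalence. Condition~\eqref{cond:Villani:3.4} says $\ker\big(\sum_{k=0}^\tau \DD_k^*\DD_k\big) = \cK := \ker\A\cap\ker\B$; by the previous paragraph the left side equals $\bigcap_{j=0}^\tau\ker(\sqrt{\CC_2}\CC_1^j)$. Adjoining the extra hypothesis $\ker\A\cap\ker\B = \{0\}$ (the ``small modification'' flagged by the example) forces $\bigcap_{j=0}^\tau\ker(\sqrt{\CC_2}\CC_1^j) = \{0\}$, which is precisely \ref{cond:trivkernel}, hence \ref{cond:fullrank}. Conversely, if \ref{cond:fullrank} holds with parameter $\tau$, then $\bigcap_{j=0}^\tau\ker(\sqrt{\CC_2}\CC_1^j)=\{0\}$, so in particular $\ker\A\cap\ker\B \subseteq \ker\sqrt{\CC_2}\cap\ker\CC_1 \subseteq \ker\sqrt{\CC_2} = \ker\A$ is trivial — giving the side condition for free — and the displayed kernel identity then reads $\ker\big(\sum_{k=0}^\tau\DD_k^*\DD_k\big) = \{0\} = \cK$, i.e.\ \eqref{cond:Villani:3.4}. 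The statement about the minimal $\tau$ follows because the kernel $\bigcap_{j=0}^\tau\ker(\sqrt{\CC_2}\CC_1^j)$ is nonincreasing in $\tau$ and stabilizes, so the smallest $\tau$ making it trivial is the same on both sides.

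The main obstacle, such as it is, is bookkeeping rather than conceptual: one must be scrupulous that the scalar factors $(-1)^k\mathrm{i}^k$ attached to $\DD_k$ are nonzero (so kernels/column-spaces are untouched) and that the span identity between the commutator family and the Kalman family is stated at each truncation level $\tau$ (not merely in the limit), since the claim includes equality of the minimal $\tau$. The complex setting contributes nothing new beyond the already-noted need to impose $\ker\A\cap\ker\B=\{0\}$ separately; over $\R$ with $\B$ skew-symmetric this is automatic, but over $\C$ the eigenvector of $\CC_1$ in $\ker\CC_2$ may survive inside $\ker\B$ when $\B = \mathrm{i}\CC_1$ has the same kernel as $\CC_1$ only after the modification is imposed, as the two-by-two example shows.
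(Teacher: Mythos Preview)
Your overall route—identify $\DD_k$ with a nonzero scalar multiple of $\mathrm{ad}_{\CC_1}^k\sqrt{\CC_2}$ and invoke the two binomial identities to match the truncated column spaces $\sum_{k\le\tau}\ran(\DD_k)$ and $\sum_{j\le\tau}\ran(\CC_1^j\sqrt{\CC_2})$ level by level—is a legitimate alternative to the paper's argument and does yield the same minimal $\tau$. But the write-up has a real slip at the ``Consequently'' step.

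You claim that $\sum_{k\le\tau}\DD_k^*\DD_k$ and $\sum_{j\le\tau}(\CC_1^j\sqrt{\CC_2})^*(\CC_1^j\sqrt{\CC_2})$ have the same kernel, namely $\bigcap_{j\le\tau}\ker(\sqrt{\CC_2}\CC_1^j)$. The second operator, however, has kernel $\bigcap_{j\le\tau}\ker(\CC_1^j\sqrt{\CC_2})$, which collapses to $\ker\sqrt{\CC_2}$ (the $j=0$ term already gives it, and conversely $\sqrt{\CC_2}w=0$ kills all terms). So the two operators do \emph{not} have the same kernel in general. The column-space identity you cite concerns the \emph{ranges} of $\DD_k$ and $\CC_1^j\sqrt{\CC_2}$; to pass to the right kernels you need one more ingredient: each $\DD_k$ is Hermitian (a one-line induction using $\B^*=-\B$). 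Then
\[
\bigcap_{k\le\tau}\ker\DD_k=\Big(\sum_{k\le\tau}\ran\DD_k\Big)^{\!\perp}=\Big(\sum_{j\le\tau}\ran(\CC_1^j\sqrt{\CC_2})\Big)^{\!\perp}=\bigcap_{j\le\tau}\ker(\sqrt{\CC_2}\CC_1^j),
\]
which is the identity you actually need. Two smaller points: the parenthetical ``as a set of matrices, equivalently the sum of their column spaces'' conflates two different objects, and only the column-space version is true here (e.g.\ with $\CC_1=\begin{pmatrix}0&1\\1&0\end{pmatrix}$, $\sqrt{\CC_2}=\begin{pmatrix}1&0\\0&0\end{pmatrix}$ the matrix spans differ already at $\tau=1$). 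And your chain ``$\ker\A\cap\ker\B\subseteq\ker\sqrt{\CC_2}\cap\ker\CC_1\subseteq\ker\sqrt{\CC_2}=\ker\A$ is trivial'' does not conclude $\ker\A\cap\ker\B=\{0\}$; the correct observation is that $w\in\ker\sqrt{\CC_2}\cap\ker\CC_1$ forces $\sqrt{\CC_2}\CC_1^jw=0$ for every $j$, hence $w\in\bigcap_j\ker(\sqrt{\CC_2}\CC_1^j)=\{0\}$.

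For comparison, the paper avoids the range detour entirely: it proves directly, by a short induction on $\tau$, that $w\in\bigcap_{k\le\tau}\ker\DD_k$ is equivalent to $\A\B^kw=0$ for all $k\le\tau$, using $\DD_{\tau+1}w=\DD_\tau\B w-\B\DD_\tau w$ and the inductive hypothesis to peel off one factor of $\B$ at a time. This sidesteps both the binomial bookkeeping and the Hermiticity-of-$\DD_k$ step.
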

\begin{proof}
First, notice that for all $\tau\in\N_0$
\begin{equation} \label{eq4}
  \bigcap_{k=0}^\tau \ker \DD_k = \ker \sum_{k=0}^\tau \DD_k^* \DD_k \ . 
\end{equation}
Defining $\cK':=\bigcap_{k\geq 0} \ker \DD_k$, the inclusion $\cK \subset \cK'$ is proven in~\cite[Prop I.15]{ViH06}.
Next we prove that 
\begin{equation} \label{Villani+fullrank}
 w\in \bigcap_{k=0}^\tau \ker \DD_k =: \cK'_\tau \qquad \text{is equivalent to} \qquad  \A \B^k w = 0 \quad \forall k\in\{0,\ldots,\tau\} 
\end{equation}
by induction:
For $\tau=0$, $w\in \ker \DD_0 =\ker \A$ holds.
Assume now condition~\eqref{Villani+fullrank} for $\tau$ and prove it for $\tau+1$.
Operator $\DD_{\tau+1}$ is defined as $\DD_{\tau+1} = [\DD_\tau,\B] = \DD_\tau \B -\B \DD_\tau$
 and using $w\in\bigcap_{k=0}^{\tau+1} \ker \DD_k$ yields
 \begin{equation*} %\label{Villani+fullrank:induction}
 \begin{split}
    0 &=\DD_{\tau+1}w = [\DD_\tau,\B]w = (\DD_\tau \B -\B \DD_\tau)w = \DD_\tau \B w \\
      &=(\DD_{\tau-1}\B -\B\DD_{\tau-1})\B w = \DD_{\tau-1} \B^2 w - \B^2 \DD_{\tau-1} w
			=\DD_{\tau-1}\B^2 w = \ldots = \DD_0 \B^{\tau+1} w = \A \B^{\tau+1} w \ .
 \end{split}
 \end{equation*}
The converse, $0=\A \B^{\tau+1} w=\DD_{\tau+1}w$, is proven similarly.
Thus the equivalence~\eqref{Villani+fullrank} holds.

Finally we prove the equivalence of~\ref{cond:fullrank} and~\eqref{cond:Villani:3.4:modified}: 
If condition~\ref{cond:fullrank} holds for one $\tau_0$, then $\A \B^k w = 0$ for all $k\in\{0,\ldots,\tau_0\}$ implies $w=0$.
Due to the equivalence in~\eqref{Villani+fullrank}, $\cK'\subset\cK'_{\tau_0} =\{0\}$.
Hence, $\{0\}\subset\cK\subset\cK'_{\tau_0}=\{0\}$. With \eqref{eq4} this proves condition~\eqref{cond:Villani:3.4} with $\tau=\tau_0$ and $\ker\LL=\ker\A \cap\ker\B =\{0\}$.
%$\cK=\cK'_\tau=\{0\}$.

\begin{comment}
\textbf{indirect proof:}
For the reverse direction we assume that condition~\ref{cond:fullrank} does not hold.
Then there exists a $w\ne 0$ such that $\A \B^k w = 0$ for all $k\in\{0,\ldots,n-\rank \CC_2\}$.
In fact, $\A \B^k w = 0$ for all $k\in\N_0$, due to the equivalence of~\ref{cond:invariance}, \ref{cond:eigenvector}, and~\ref{cond:fullrank}.
%In particular, $w\ne 0$ is an element of a non-trivial subspace of $\ker\A$ which is invariant under $\B$.
Either, $0\ne w\in\cK=\ker\A\cap\ker\B$ but then $\ker\A \cap\ker\B$ is not empty.
Or, $w\notin\cK=\ker\A \cap\ker\B$. With the equivalence \eqref{Villani+fullrank} and \eqref{eq4} we thus have 
%$w\in\ker\A$ but $w\notin\ker\B$.
\[ 0\ne w \in\cK' \subseteq \bigcap_{k=0}^\tau \ker\DD_k =\ker \sum_{k=0}^\tau \DD_k^* \DD_k \qquad \forall \tau\in\N \ , \]
such that $\cK\subsetneq\cK'$ contradicting~\eqref{cond:Villani:3.4}.
This finishes the proof.
%If condition~\eqref{cond:Villani:3.4} holds then also $\cK=\cK'$. 
%Suppose $w\in\cK'$. Thus, $w\in\cK'=\cK=\ker\A\cap\ker\B$ and consequently $\A \B^k w = 0$ for all $k\in\{0,\ldots,\tau\}$.
%Either $w=0$ which proves~\ref{cond:fullrank} with the same (minimal) $\tau$.
%If $w\ne 0$ then its an element of an non-trivial subspace of $\ker\A$ which is invariant under $\B$.
\end{comment}

%\textbf{direct proof:}
If condition~\eqref{cond:Villani:3.4} holds together with $\ker\A \cap\ker\B =\{0\}$,
 then $\cK =\ker\LL =\ker\A \cap\ker\B =\{0\}$ and
 there exists $\tau\in\N_0$ such that $\ker\big(\sum_{k=0}^\tau \DD_k^* \DD_k \big) =\ker \LL =\cK$.
Due to~\eqref{eq4},
\[ \bigcap_{k=0}^\tau \ker \DD_k =\ker \sum_{k=0}^\tau \DD_k^* \DD_k =\ker \LL =\cK =\{0\} \ . \]
{}From the equivalence~\eqref{Villani+fullrank} we then obtain: If some $w\in\C^n$ satisfies $\A\B^k w=0$ for all $k\in\{0,\ldots,\tau\}$, it follows that $w=0$.
Therefore condition~\ref{cond:fullrank} holds with the same index~$\tau$.
This finishes the proof.
\end{proof}
\begin{comment}
\begin{remark}
The additional condition $\ker\A\cap\ker\B=\{0\}$ is necessary, due to the following simple examples.
Consider the matrices
\begin{equation}
\A = \begin{pmatrix} 1 & 0 \\ 0 & 0 \end{pmatrix}, \quad
\B_0 = \Null := \begin{pmatrix} 0 & 0 \\ 0 & 0 \end{pmatrix}, \quad
\B_1 = i\A = \begin{pmatrix} i & 0 \\ 0 & 0 \end{pmatrix}, \quad
\B_2 = \begin{pmatrix} 0 & 1 \\ -1 & 0 \end{pmatrix}.
\end{equation}
Matrix $\A$ has kernel $\ker\A = \spn \{\binom{0}{1}\}$.
The kernel of the null matrix $\B_0$ is the whole vectorspace.
Moreover $\DD_0 =\A$ and $\DD_k = \Null$ for all $k\in\N$.
Hence, $\cK =\ker\A\cap\ker\B =\ker\A$ and conditions~\eqref{cond:Villani:3.4} and~\eqref{cond:Villani:3.5} are satisfied for all $\tau\in\N_0$.

Matrix~$\B_1$ is a multiple of matrix~$\A$, hence $\ker\B =\ker\A$ and $\cK =\ker\A\cap\ker\B =\ker\A$.
Again, $\DD_0 =\A$ and $\DD_k = \Null$ for all $k\in\N$.
Hence, $\cK =\ker\A\cap\ker\B =\ker\A$ and conditions~\eqref{cond:Villani:3.4} and~\eqref{cond:Villani:3.5} are satisfied for all $\tau\in\N_0$.

In the last example, the Hermitian matrix~$\A$ became a skew-symmetric matrix $\B_1= i\A$ via a multiplication by $i$.
This can not happen in real vectorspaces for real symmetric matrices. 
Matrix $\B_2$ is a simple skew-symmetric real matrix suggesting that the equivalence due to Serre might still hold (excluding the trivial case $\B_0$).
\end{remark}
\end{comment}

%%%%%%%%%%%%%%%%%%%%%%%%%%%%%%%%%%%%%%%%%%%%%%%%%%%%%%%%%%%%%%%%%
\subsection{Ansatz for the transformation matrix~$\P$} \label{sec:Pmatrix}

For finite dimensional matrices with non-defective eigenvalues,
an \emph{optimal} transformation matrix $\P$ (yielding the sharp spectral gap and thus the sharp decay rate) can be constructed as stated in Lemma~\ref{lemma:Pdefinition}. 
But for ``infinite matrices'' the eigenfunctions $w_j$ will not be known in general. 
Hence, an optimal matrix $\P$ cannot be obtained from formula~\eqref{simpleP1}. 
Even for finite dimensional systems with $n$ large, it may not be possible to explicitly construct the matrix~$\P$ defined in~\eqref{simpleP1}.
However, Lemma~\ref{lemma:Pdefinition} still provides a guide to the construction of a non-optimal choice of $\P$ that can still be used to prove hypocoercivity and to give a quantitative decay rate.
We shall exploit this in \S\ref{sec:linBGK:1D}--\ref{sec:6} to prove hypocoercivity for BGK equations. To this end we shall only consider \emph{minimal} matrices~$\P$,
 i.e.\ matrices with a minimal number of non-zero entries in $\P-\II$, such that Lemma \ref{lemma:Pdefinition} still allows to deduce hypocoercivity (but then with a suboptimal rate $\mu$). 

Our focus will be to find a usable and simple ansatz 
for $\P$ and to prove that such an ansatz will give rise to a matrix inequality of the form \eqref{matrixestimate1}. The structure of these ansatzes shall be derived from the \emph{connectivity 
structure} of the matrix $\CC$: We consider examples of equations \eqref{matrix-ODE},
where we assume w.l.o.g.\ that the Hermitian matrix $\CC_2$ is diagonal and hence real.
Next we consider how the zero and negative diagonal elements of $-\CC_2$
(or equivalently the non-decaying and decaying eigenmodes of $\ddt f  = -\CC_2 f$) are coupled via a (non-zero) off-diagonal pair in the Hermitian matrix $\CC_1$. 
More precisely, a non-zero off-diagonal element of $\CC_1$ at $j,k$ (and hence also at $k,j$) couples, in the evolution equation, the $j$-th mode of $\CC_2$ to its $k$-th mode (or diagonal element).
In the sequel we shall use a simple graphical representation of such connections: there the dots $\circ$ and $\bullet$ represent, respectively, zero and negative diagonal elements of $-\CC_2$, and an arrow between such dots represents their connection (or coupling). 
%of a non-decaying eigenmode to a decaying eigenmode -- realized by a (non-zero) off-diagonal pair in the Hermitian matrix $\CC_1$. 

For each zero element in the diagonal of $\CC_2$, we next consider a \emph{shortest connection graph} to a non-zero element in $\diag(\CC_2)$ -- realized by a sequence of non-zero off-diagonal elements of $\CC_1$. 
This leads to a guideline to find a simple ansatz for a minimal transformation matrix of the form $\P=\II+\A$: The ansatz parameters of the Hermitian matrix $\A\in\C^{n\times n}$ should be put at the positions of the non-zero off-diagonal coupling elements of $\CC_1$ that are needed to establish the shortest connection graphs -- choosing only one graph per zero element in $\diag(\CC_2)$.

Next we shall list some hypocoercive cases with low dimensionality of $\ker\CC_2$, because these are the most important cases in kinetic equations (as discussed in \S\ref{sec:linBGK:1D}--\ref{sec:6}). For those cases we shall then prove that the above mentioned ansatzes indeed allow to establish a spectral gap of $\CC$.

\subsubsection{Hypocoercive matrix with $\dim(\ker\CC_2)=1$} %\label{sec:dim-ker1}
In this situation there exists only one (structurally relevant) case.
For \eqref{matrix-ODE} to be hypocoercive, the only zero element of the diagonal of $\CC_2$ (w.l.o.g.\ say with index $j=1$) needs to be coupled (via $\CC_1$) to a positive element of the diagonal of $\CC_2$ . \newline
Due to our assumptions,
 \[ \CC_2=\diag\{0,c_2,\ldots,c_n\} \quad \text{with }c_j>0;\,j=2,...,n;\quad \text{and} \quad \CC_1=(c_{j,k})_{j,k\in\{1,\ldots,n\}} \,. \]
The matrix $\CC=i\CC_1 +\CC_2$ is hypocoercive if and only if~\ref{cond:invariance} holds.
Since $\ker\CC_2 =\spn\{e_1\}$, Condition \ref{cond:invariance} reads here $\CC_1 e_1\not\in \spn\{e_1\}$.
Thus, we conclude from $\CC_1 e_1 =(c_{1,1},\ldots,c_{n,1})^\top$ that $c_{j,1}\ne 0$ for some $j\in\{2,\ldots,n\}$.
Of course, $j$ does not have to be unique, but we now fix one such index~$j_0$. 
This means that $c_{1,j_0}= \overline{c_{j_0,1}}\ne 0$.
In this case the hypocoercivity index is always~$1$,
since Remark \ref{rem:hypo:cond:B1'}(b) yields here that the hypocoercivity index is less or equal $\dim(\ker \CC_2)=1$.
%which is easily seen by using the equivalence between~\ref{cond:hypocoercive} and~\ref{cond:fullrank} and 
%\[ \CC_1 \sqrt{\CC_2} = \Bigg({\bf 0},\sqrt{c_2} \begin{pmatrix} c_{1,2} \\ c_{2,2} \\ \vdots \\ c_{n,2} \end{pmatrix}, \ldots ,\sqrt{c_n} \begin{pmatrix} c_{1,n} \\ c_{2,n} \\ \vdots \\ c_{n,n} \end{pmatrix} \Bigg)
%     \quad \text{and} \quad 
%   \rank \Bigg(e_2, e_3, \ldots, e_n, \sqrt{c_{j_0}} \begin{pmatrix} c_{1,j_0} \\ c_{2,j_0} \\ \vdots \\ c_{n,j_0} \end{pmatrix} \Bigg) = n\ ,
%\]
%since $c_{1,j_0}= \overline{c_{j_0,1}}\ne 0$.

W.l.o.g.\ we assume $j_0=2$.
The coupling within the relevant $2\times2$-subspace (i.e.\ the upper left $2\times2$ block of the matrix $\CC$) can then be symbolized as $\circ\!\!\longrightarrow\!\!\bullet$ .
Such an example was analyzed in \S4.3 of \cite{AAC16} (representing a linear BGK equation in 1D) using a transformation matrix with the ansatz
\begin{equation}\label{ansatz:Pmatrix:1D}
  \sbox0{$\begin{matrix}0 & \lambda \\ \bar\lambda & 0\end{matrix}$}
  \P=\II + \left(\begin{array}{c|c}
                   \usebox{0} & {\bf 0}\\
                   \hline
                   {\bf 0 }& {\bf 0}
  \end{array}\right)\ ,
\end{equation}
% \[ \sbox0{$\begin{matrix}0 & \lambda \\ \bar\lambda & 0\end{matrix}$}
%   \P=\II + \left(\begin{array}{c|c}
%                    \usebox{0}&\makebox[\wd0]{\bf 0}\\
%                    \hline
%                    \vphantom{\usebox{0}}\makebox[\wd0]{\bf 0}&\makebox[\wd0]{\bf 0}
%   \end{array}\right)\ ,
% \]
for some $\lambda\in\C$. 
Here, $\P$ and $\II$ are square matrices of the same size as $\CC$, possibly even infinite. 
The second matrix on the r.h.s.\ has the same size, but only its upper left $2\times2$ block is non-zero. 

While the above transformation matrix $\P$ is not optimal, this approach is important in practice: 
in theory, Lemma \ref{lemma:Pdefinition} provides the optimal transformation matrix $\P$ to deduce the optimal ODE-decay \eqref{ODE-decay} or \eqref{defective-decay}. But in practice, its computation is tedious, particularly when the system matrix involves a parameter, which is the case for the BGK-models to be analyzed below (cf.\ Remark \ref{kdep-estimate}). For large systems, there is therefore a need to design a method that does not require all eigenvectors, even if the resulting decay rates are then sub-optimal.
For the case $\dim(\ker \CC_2)=1$, an approximate transformation matrix $\P$ of the simple structure \eqref{ansatz:Pmatrix:1D} is sufficient, and it \emph{always} allows to prove an explicit exponential decay of the ODE \eqref{matrix-ODE}: the following theorem shows that $\CC$ and $\P$ satisfy a matrix inequality of form \eqref{matrixestimate1}, but not necessarily with the optimal constant $\mu$. Moreover, it shows that the ansatz \eqref{ansatz:Pmatrix:1D} from \S4.3 of \cite{AAC16} was not a ``wild guess'' but rather a systematic approach.\\

\begin{theorem}\label{lemma:ansatzP:1D}
 Let $\CC_1$ and $\CC_2$ be Hermitian matrices with $\CC_2\ge0$, $\dim(\ker \CC_2)=1$ such that $\CC:=i\CC_1 +\CC_2$ is hypocoercive.
 For $|\lambda|<1$ the Hermitian matrix~$\P$ in~\eqref{ansatz:Pmatrix:1D} is positive definite.
 If a sufficiently small $\lambda\in\C$ is chosen such that $\Im(\overline{\lambda}c_{1,2})>0$, then the Hermitian matrix $\CC^*\P+\P\CC$ is also positive definite.
\end{theorem}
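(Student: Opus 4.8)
The plan is to verify the two assertions separately, the second by a perturbative (Schur--complement) argument around $\lambda=0$. Positive definiteness of $\P$ from \eqref{ansatz:Pmatrix:1D} for $|\lambda|<1$ is immediate: in the standard basis $\P$ is block diagonal, equal to the identity on $\spn\{e_1,e_2\}^{\perp}$ and to $\left(\begin{smallmatrix}1 & \lambda\\ \bar\lambda & 1\end{smallmatrix}\right)$ on $\spn\{e_1,e_2\}$, and the latter $2\times 2$ block has the eigenvalues $1\pm|\lambda|>0$. For the second assertion I would work, as in the surrounding discussion, in the basis where $\CC_2=\diag\{0,c_2,\dots,c_n\}$ with all $c_j>0$ and, after relabelling, $c_{1,2}=(\CC_1)_{1,2}=\overline{(\CC_1)_{2,1}}\neq 0$; recall that this nonvanishing is exactly what hypocoercivity forces once $\dim\ker\CC_2=1$, since $\ker\CC_2=\spn\{e_1\}$ must not be $\CC_1$-invariant (condition \ref{cond:invariance}).

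Writing $\P=\II+\A$ and using $\CC^{*}=-i\CC_1+\CC_2$, the first step is the identity
\begin{equation*}
  \M(\lambda):=\CC^{*}\P+\P\CC=2\CC_2+\big(\CC^{*}\A+\A\CC\big)=2\CC_2+i(\A\CC_1-\CC_1\A)+(\CC_2\A+\A\CC_2)\ ,
\end{equation*}
a Hermitian matrix that is \emph{affine} in $(\lambda,\bar\lambda)$ because $\P$ is affine in $\A$. The second step is to read off the structure of $\M(\lambda)$ with respect to the orthogonal splitting $\C^n=\C e_1\oplus e_1^{\perp}$. Since $\A$ is supported on the first two coordinates and $\CC_2$ is diagonal, the anticommutator $\CC_2\A+\A\CC_2$ has vanishing $(1,1)$ entry, and a one-line computation using $c_{1,2}=\overline{c_{2,1}}$ gives $\big[i(\A\CC_1-\CC_1\A)\big]_{1,1}=2\,\Im(\bar\lambda\,c_{1,2})$. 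Hence the $(1,1)$ entry of $\M(\lambda)$ is exactly $2\,\Im(\bar\lambda\,c_{1,2})$: the degeneracy of $\CC_2$ along its kernel $\spn\{e_1\}$ is removed precisely when $\Im(\bar\lambda c_{1,2})>0$. The remaining entries are genuinely perturbative: the coupling block from $\C e_1$ to $e_1^{\perp}$ depends linearly on $\bar\lambda$, hence has norm $\le K_1|\lambda|$, and the lower-right block equals $2\,\diag(c_2,\dots,c_n)+O(|\lambda|)$, so for $|\lambda|$ small it is $\ge c_{\min}\II$ on $e_1^{\perp}$ (with $c_{\min}:=\min_{j\ge 2}c_j>0$) and its inverse is bounded by some $K_2$.

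The third step is the Schur complement. Choose the argument of $\lambda$ so that $\delta_0:=\Im(\bar\lambda c_{1,2})/|\lambda|>0$, which is possible precisely because $c_{1,2}\neq 0$. Writing $\M(\lambda)=\left(\begin{smallmatrix} a & b^{*}\\ b & D\end{smallmatrix}\right)$ in the splitting above, we have $a=2\delta_0|\lambda|$, $\|b\|\le K_1|\lambda|$, and $D>0$ with $\|D^{-1}\|\le K_2$ for $|\lambda|$ small. Then $\M(\lambda)>0$ if and only if the Schur complement $a-b^{*}D^{-1}b$ is positive, and $a-b^{*}D^{-1}b\ge 2\delta_0|\lambda|-K_1^2K_2|\lambda|^2=|\lambda|\,(2\delta_0-K_1^2K_2|\lambda|)>0$ once $|\lambda|<2\delta_0/(K_1^2K_2)$. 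This yields $\CC^{*}\P+\P\CC>0$, i.e.\ a matrix inequality of the form \eqref{matrixestimate1} (with some, in general non-optimal, $\mu>0$), and completes the proof.

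The one point I would be careful to emphasize --- and the only real obstacle --- is that the ``repaired'' $(1,1)$ entry $a(\lambda)=2\Im(\bar\lambda c_{1,2})$ is only of order $|\lambda|$, not bounded below by a fixed constant, so $\M(\lambda)$ is \emph{not} a small perturbation of a positive-definite matrix; one must compare the linear gain $a\sim|\lambda|$ against the quadratic loss $b^{*}D^{-1}b=O(|\lambda|^2)$ in the Schur complement. This comparison is exactly what forces both the sign condition $\Im(\bar\lambda c_{1,2})>0$ and the smallness of $\lambda$ in the statement; the rest is routine linear algebra.
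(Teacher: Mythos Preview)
Your proof is correct. The paper's argument is conceptually the same but technically packaged differently: it writes $\P=\II+r\A$ with $\lambda=re^{i\phi}$, views $\CC^{*}\P+\P\CC=2\CC_2+r(\CC^{*}\A+\A\CC)$ as a one-parameter perturbation of $2\CC_2$, and invokes analytic eigenvalue perturbation theory (Horn--Johnson, Thm.~6.3.12) to compute the derivative of the lowest eigenvalue at $r=0$ as $e_1^{*}(\CC^{*}\A+\A\CC)e_1=2\,\Im(e^{-i\phi}c_{1,2})$, which is exactly your $(1,1)$ entry. Your Schur-complement route bypasses the citation to perturbation theory and is more self-contained and quantitative (you get an explicit smallness threshold $|\lambda|<2\delta_0/(K_1^2K_2)$), at the cost of a slightly longer argument; the paper's version is shorter but less explicit. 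Both rest on the same key observation you isolate at the end: the gain in the kernel direction is linear in $|\lambda|$ while the loss from the off-diagonal coupling is quadratic.
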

\begin{proof}
 We set $\P=\II+r\A$ with
 \[ \lambda =r e^{i\phi} \quad \text{and} \quad \sbox0{$\begin{matrix}0 & e^{i\phi} \\ e^{-i\phi} & 0\end{matrix}$}
  \A=  \left(\begin{array}{c|c}
                   \usebox{0} & {\bf 0}\\
                   \hline
                   {\bf 0 }& {\bf 0}
  \end{array}\right)\ .\]
 Then we consider $\CC^*\P+\P\CC = 2\CC_2 +r(\CC^*\A+\A\CC)$ 
 as a perturbation of the matrix $2\CC_2$ for sufficiently small $r\geq 0$.
 In particular, zero is a simple eigenvalue of $\CC_2$ with eigenvector~$e_1$.
 For small $r\geq 0$, the eigenvalues of $\CC^*\P+\P\CC$ are close to the eigenvalues of $2\CC_2$.
 Therefore, we only need to study the evolution of the zero eigenvalue w.r.t.\ $r$.
 Due to~\cite[Thm. 6.3.12]{HoJo13}, the lowest eigenvalue~$\mu(r)$ is a continuous function satisfying $\lim_{r\to 0} \mu(r)=0$.
 Moreover, it is differentiable at $r=0$ with
 \begin{multline*}
  \frac{\d[\mu]}{\d[r]} \big|_{r=0} = \frac{e_1^* (\CC^*\A+\A\CC) e_1}{e_1^* e_1} = (\CC^*\A+\A\CC)_{1,1}
   = -i e^{-i\phi} c_{1,2} +i (e^{-i\phi} c_{1,2})^* = 2\Im (e^{-i\phi} c_{1,2}) \,.
 \end{multline*}
 Due to our assumptions, $c_{1,2}\ne 0$. Hence, we can choose $\phi$ such that $\Im (e^{-i\phi} c_{1,2})$ is positive.
 For such a choice, the smallest eigenvalue $\mu(r)$ of $2\CC_2 +r(\CC^*\A+\A\CC)$ will be positive.
 This finishes the proof.
\end{proof}

%%%%%%%%%%%%%%%%%%%%%%%%%%%%%%%%%%%%%%%%%%%%%%%%%%%%%%%%%%%%%%%%%%%%%%%%%%%%%%%%%5

\subsubsection{Hypocoercive matrix with $\dim(\ker\CC_2)=2$}\label{sec:dim-ker2}

Up to a change in basis of $\C^n$, we consider the Hermitian matrices
 \begin{equation} \label{setup:DimKerC2:2}
    \CC_2=\diag\{0,0,c_3,\ldots,c_n\}\geq 0 \quad \text{and} \quad 
    \CC_1=(c_{j,k})_{j,k\in\{1,\ldots,n\}}\in\C^{n\times n} 
 \end{equation}
such that $c_j>0$ for $j\geq 3$ and $c_{j,j}\in\R$ for all $j\in\{1,\ldots,n\}$. 
% $\CC_1$ cannot have a block-diagonal structure of partition size $(2,n-2)$ as, otherwise, its upper left $2\times2$ submatrix $\CC_1^{++}$ would have an eigenvector in $\ker \CC_2$ -- in contradiction to condition~\ref{cond:eigenvector}. Up to a renumbering of the index sets $\{1,2\}$ and $\{j\ge3\}$ we hence assume in this subsection w.l.o.g.\ that $c_{2,3}\ne0$.
% 
% In this section we only consider hypocoercive matrices $\CC=i\CC_1 +\CC_2$.
% The equivalent condition~\ref{cond:eigenvector} reads here
%  \[ \forall \,(\alpha,\beta)\ne(0,0): \qquad \CC_1 (\alpha e_1 +\beta e_2)
%      =\alpha \begin{pmatrix} c_{1,1} \\ c_{2,1} \\ \vdots \\ c_{n,1} \end{pmatrix}
%       +\beta \begin{pmatrix} c_{1,2} \\ c_{2,2} \\ \vdots \\ c_{n,2} \end{pmatrix}
%   \notin \spn\{\alpha e_1 +\beta e_2\} \,.
%  \]
% This is equivalent to the following condition: $\forall\,(\alpha,\beta)\ne(0,0)$ it holds that
% \begin{equation}\label{HC-cond}
%   (\alpha c_{1,1} +\beta c_{1,2})\beta \ne (\alpha c_{2,1} +\beta c_{2,2})\alpha \quad 
% \mbox{or}\quad \exists\,j\in\{3,\ldots,n\}\quad \mbox{such that}\quad \alpha c_{j,1} +\beta c_{j,2} \ne 0\,.
% \end{equation}
% We first consider the special case $\alpha=1,\,\beta=0$. This implies the requirements $c_{1,2}\ne0$ or 
% $c_{1,j}\ne0$ for some $j\ge3$, i.e.\ $c_{1,j}=\overline{c_{j,1}}\ne0$ for some $j\ge2$.\\
We only consider hypocoercive matrices $\CC=i\CC_1 +\CC_2$.
Then, $\CC_1$ cannot have a block-diagonal structure of partition size $(2,n-2)$ as, otherwise, 
%its upper left $2\times2$ submatrix $\CC_1^{++}$ would have an eigenvector in $\ker \CC_2$ -- in contradiction to condition~\ref{cond:eigenvector}. Up to a renumbering of the index sets $\{1,2\}$ and $\{j\ge3\}$ we hence assume in this subsection w.l.o.g.\ that $c_{2,3}\ne0$.
the kernel of $\CC_2$ would be invariant under $\CC_1$ in contradiction to condition~\ref{cond:invariance}. 
Hence, we shall assume in the sequel w.l.o.g.\ that $c_{2,3}\ne0$.

In order to construct (later on) appropriate transformation matrices $\P$ we shall distinguish two cases
 depending on the rank of the upper right submatrix $\CC_1^{ur} =(c_{j,k})_{j\in\{1,2\},\ k\in\{3,\ldots,n\}}$ of $\CC_1$.
These cases with appropriate ansatz for the matrix~$\P$ are summarized in Table \ref{table:ansatzP}.\\
\begin{table}
\begin{align}
(2A) &\qquad 
  \sbox0{$\begin{matrix}\ast & \ast \\ \ast & \ast \end{matrix}$}
  \sbox1{$\begin{matrix}\ast & \bullet & \ast & \cdots & \ast \\ \bullet & \ast & \ast & \cdots & \ast \end{matrix}$}
  \sbox2{$\begin{matrix}\ast & \bullet \\ \bullet & \ast \\ \ast & \ast \\ \vdots & \vdots \\ \ast & \ast \end{matrix}$}
  \CC_1 = \left(\begin{array}{c|c}
                   \usebox{0}& \usebox{1}\\
                   \hline
                   \usebox{2}& \mbox{\bf *}
  \end{array}\right)\,,
&&\quad 
  \sbox0{$\begin{matrix}0 & 0 & 0 & \lambda_1 \\ 0 & 0 & \lambda_2 & 0 \\ 0 & \overline{\lambda_2} & 0 & 0 \\ \overline{\lambda_1} & 0 & 0 & 0 \end{matrix}$}
  \P=\II + \left(\begin{array}{c|c}
                   \usebox{0}& {\bf 0}\\
                   \hline
                   {\bf 0}& {\bf 0}
  \end{array}\right)\,, \label{P-ansatz-2A} %\\
\intertext{\hspace{19mm} where the upper right submatrix $\CC_1^{ur}\in\C^{2\times(n-2)}$ has rank 2. Here, we assume w.l.o.g. that \smallskip\newline 
           \hspace*{19mm} $|c_{1,4}c_{2,3}|\geq |c_{1,3}c_{2,4}|$ and $c_{1,4}\,c_{2,3} \ne c_{1,3}\,c_{2,4}$, such that $c_{2,3}\ne 0$ and $c_{1,4}\ne 0$.}
(2B) &\qquad 
  \sbox0{$\begin{matrix}\ast & \ast \\ \ast & \ast \end{matrix}$}
  \sbox1{$\begin{matrix}\ast & \ast & \cdots & \ast \\ \bullet & \ast & \cdots & \ast \end{matrix}$}
  \sbox2{$\begin{matrix}\ast & \bullet \\ \ast & \ast \\ \vdots & \vdots \\ \ast & \ast \end{matrix}$}
  \CC_1 = \left(\begin{array}{c|c}
                   \usebox{0}& \usebox{1}\\
                   \hline
                   \usebox{2}& \mbox{\bf *}
  \end{array}\right)\,,
&&\quad
  \sbox0{$\begin{matrix}0 & \lambda_1 & 0 \\ \overline{\lambda_1} & 0 & \lambda_2 \\ 0 & \overline{\lambda_2} & 0 \end{matrix}$}
  \P=\II +\U \left(\begin{array}{c|c} \usebox{0}& {\bf 0}\\ \hline {\bf 0}& {\bf 0} \end{array}\right) \U^* \,, \label{P-ansatz-2B}
\intertext{\hspace{19mm} where the upper right submatrix $\CC_1^{ur}\in\C^{2\times(n-2)}$ has rank 1. Again, we assume w.l.o.g. that \smallskip\newline 
           \hspace*{19mm} $c_{2,3}\ne 0$. The right choice for the unitary matrix $\U$ depends on the structure of $\CC_1$:}
(2B1) &\qquad 
  \sbox0{$\begin{matrix}\ast & \bullet \\ \bullet & \ast \end{matrix}$}
  \sbox1{$\begin{matrix}0 & 0 & \cdots & 0 \\ \bullet & \ast & \cdots & \ast \end{matrix}$}
  \sbox2{$\begin{matrix}0 & \bullet \\ 0 & \ast \\ \vdots & \vdots \\ 0 & \ast \end{matrix}$}
  \CC_1 = \left(\begin{array}{c|c}
                   \usebox{0}& \usebox{1}\\
                   \hline
                   \usebox{2}& \mbox{\bf *}
  \end{array}\right)\,,
&&\quad
  \U=\II\,, \label{U-ansatz-2B1} \\
(2B2) &\qquad 
  \sbox0{$\begin{matrix}\ast & \ast \\ \ast & \ast \end{matrix}$}
  \sbox1{$\begin{matrix}\bullet & \ast & \cdots & \ast \\ \bullet & \ast & \cdots & \ast \end{matrix}$}
  \sbox2{$\begin{matrix}\bullet & \bullet \\ \ast & \ast \\ \vdots & \vdots \\ \ast & \ast \end{matrix}$}
  \CC_1 = \left(\begin{array}{c|c}
                   \usebox{0}& \usebox{1}\\
                   \hline
                   \usebox{2}& \mbox{\bf *}
  \end{array}\right)\,,
&&\quad
  \U= \left(\begin{array}{c|c} \U^{ul} & {\bf 0}\\ \hline {\bf 0}& \II \end{array}\right) \,, \label{U-ansatz-2B2}
\intertext{\hspace{19mm} with upper left submatrix $\U^{ul}=\tfrac1{\sqrt{|c_{1,3}|^2 +|c_{2,3}|^2}}\begin{pmatrix}\overline{c_{2,3}} & c_{1,3} \\ -\overline{c_{1,3}} & c_{2,3} \end{pmatrix}$.}
%  $\U^{ul}$ is a unitary matrix such that $\U^* =\U^{-1}$.}
  \nonumber
\end{align}
\vspace{-18mm}
\caption{We give a classification of Hermitian matrices $\CC_1$,
 such that the associated matrix~$\CC=i\CC_1 +\diag(0,0,c_2,\ldots,c_n)$ is hypocoercive.
  The restrictions on the coefficients of $\CC_1$ are depicted as $0$ if zero, $\bullet$ if non-zero, and $\ast$ if there is no restriction. %\newline
 Furthermore, we give the corresponding two-parameter ansatz for the transformation matrix $\P=\II+\A$. 
 The guideline to construct an admissible Hermitian perturbation matrix $\A$, is to put the parameters $\lambda_j$ at the positions of the (non-zero) coupling elements of $\CC_1$.
 In case (2B2) this will be apparent after a suitable transformation, see the proof of Theorem~\ref{th:P-admissible2D}.
 }
 \label{table:ansatzP}
\end{table}

\noindent
\underline{Case 2A:} In this case the upper right submatrix $\CC_1^{ur}\in\C^{2\times(n-2)}$ has rank 2.
Its hypocoercivity index is $1$ which can be inferred from condition \ref{cond:fullrank}: Using 
\[
  \CC_1 \sqrt{\CC_2} = \Bigg(\Null,\Null,\sqrt{c_3} \begin{pmatrix} c_{1,3} \\ \vdots \\ c_{n,3} \end{pmatrix},\ldots,
                                          \sqrt{c_n} \begin{pmatrix} c_{1,n} \\ \vdots \\ c_{n,n} \end{pmatrix} \Bigg)
 \]
we see that 
 \[
  \rank\big(\sqrt{\CC_2},\CC_1 \sqrt{\CC_2}\big) =\rank\Bigg(e_3,\ldots,e_n,\sqrt{c_3} \begin{pmatrix} c_{1,3} \\ \vdots \\ c_{n,3} \end{pmatrix},\ldots,    \sqrt{c_n} \begin{pmatrix} c_{1,n} \\ \vdots \\ c_{n,n} \end{pmatrix} \Bigg) \,.
 \]
Due to $\rank \CC_1^{ur}=2$, we have $\rank\big(\sqrt{\CC_2},\CC_1 \sqrt{\CC_2}\big)=n$. Hence, the hypocoercivity index of $\CC$ is 1. 
Such an example (a linearized BGK equation in 1D) was analyzed in \S4.4 of \cite{AAC16} using a transformation matrix with ansatz~\eqref{P-ansatz-2A}.

Up to a renumbering of the indices $\{j\ge3\}$, we assume $c_{1,4}\,c_{2,3} \ne c_{1,3}\,c_{2,4}$.
Moreover, up to a renumbering of the indices $j\in\{3,4\}$, we assume $|c_{1,4}c_{2,3}|\geq |c_{1,3}c_{2,4}|$ such that $c_{1,4}\ne 0$ and $c_{2,3}\ne 0$.
Thus, w.l.o.g.\ we assume that the zero in the diagonal of $\CC_2$ at $j=1$ is connected to $j=4$, and the zero at $j=2$ is connected to $j=3$.

The two zeros in the diagonal of $\CC_2$ are connected (via $\CC_1$) to two \emph{different} positive entries in the diagonal of $\CC_2$, i.e.\ to two decaying modes (and possibly, in addition, also to the same). Hence, this case can occur only for $n\ge4$. Here, the two connections in the relevant (upper left) $4\times4$-subspace can be symbolized as $\circ\!\!\longrightarrow\!\!\bullet\;\;\circ\!\!\longrightarrow\!\!\bullet$\,. 
\medskip

\noindent
\underline{Case 2B:} 
In this case the upper right submatrix $\CC_1^{ur}\in\C^{2\times(n-2)}$ has rank 1.
%If $\rank \CC_1^{ur}=1$, hypercoercivity of $\CC$ is equivalent to $(\overline{c_{2,3}},-\overline{c_{1,3}},0,...,0)^\top$ \emph{not} being an eigenvector of $\CC_1$ (due to condition \ref{cond:eigenvector}). 
Then $\rank\big(\sqrt{\CC_2},\CC_1 \sqrt{\CC_2}\big)=n-1$.
Hence, the hypocoercivity index of $\CC$ is 2 since it is bounded from above by $\dim(\ker \CC_2)=2$,
 see Remark~\ref{rem:hypo:cond:B1'}(b).

\begin{lemma}
 Let $\CC_1$ be a Hermitian matrix whose upper right submatrix $\CC_1^{ur}\in\C^{2\times(n-2)}$ has rank 1,
  and let $\CC_2$ be a positive semi-definite Hermitian matrix with $\dim(\ker \CC_2)=2$.
 Up to a change of basis,
  the Hermitian matrices $\CC_1$ and $\CC_2$ satisfy \eqref{setup:DimKerC2:2} with $c_{2,3} =\overline{c_{3,2}} \ne 0$.
 Then,
  the matrix $\CC:=i\CC_1 +\CC_2$ is hypocoercive if and only if 
  \begin{equation} \label{2B:condition:hypo}
   c_{1,3}\ c_{2,3}\ (c_{1,1} -c_{2,2}) -c_{1,3}^2\ c_{2,1} +c_{2,3}^2\ c_{1,2} \ne 0 \,.
  \end{equation}
\end{lemma}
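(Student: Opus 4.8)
The plan is to reduce everything to condition \ref{cond:eigenvector} of Proposition~\ref{prop:equivalence}: since $\ker\CC_2=\spn\{e_1,e_2\}$, the matrix $\CC=i\CC_1+\CC_2$ is hypocoercive if and only if no eigenvector of $\CC_1$ lies in $\spn\{e_1,e_2\}$. So the whole task is to show that $\CC_1$ has an eigenvector in $\spn\{e_1,e_2\}$ exactly when the left-hand side of \eqref{2B:condition:hypo} vanishes; the stated equivalence then follows by negation.

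First I would cash in the rank-$1$ hypothesis on $\CC_1^{ur}=(c_{j,k})_{j\in\{1,2\},\,k\in\{3,\dots,n\}}$. Because $c_{2,3}\neq0$ its second row is non-zero, hence the first row is a scalar multiple of it: $c_{1,k}=\gamma\,c_{2,k}$ for all $k\geq3$, where $\gamma:=c_{1,3}/c_{2,3}$. Hermiticity of $\CC_1$ then gives $c_{k,1}=\overline{\gamma}\,c_{k,2}$ for $k\geq3$. Now write a putative eigenvector as $v=ae_1+be_2$. For $k\geq3$ its image has $k$-th component $(\CC_1 v)_k=a\,c_{k,1}+b\,c_{k,2}=(\overline{\gamma}a+b)\,c_{k,2}$, which must vanish; since $c_{3,2}=\overline{c_{2,3}}\neq0$ this forces $b=-\overline{\gamma}a$, and then necessarily $a\neq0$ (otherwise $v=0$), so we may normalize $(a,b)=(1,-\overline{\gamma})$. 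The two remaining coordinates of the eigenvalue equation $\CC_1 v=\lambda v$ say precisely that $(1,-\overline{\gamma})^\top$ is an eigenvector of the upper-left block $\CC_1^{ul}=\left(\begin{smallmatrix} c_{1,1} & c_{1,2}\\ c_{2,1} & c_{2,2}\end{smallmatrix}\right)$, i.e. that the $2\times2$ determinant $\det\!\left(\begin{smallmatrix} 1 & c_{1,1}-\overline{\gamma}c_{1,2}\\ -\overline{\gamma} & c_{2,1}-\overline{\gamma}c_{2,2}\end{smallmatrix}\right)$ vanishes, which after expanding reads $c_{2,1}+\overline{\gamma}(c_{1,1}-c_{2,2})-\overline{\gamma}^2 c_{1,2}=0$.

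Finally I would clean this up: multiply through by $\overline{c_{2,3}}^{\,2}$, using $\overline{\gamma}\,\overline{c_{2,3}}=\overline{c_{1,3}}$, to get $\overline{c_{2,1}}$-free form $c_{2,1}\overline{c_{2,3}}^{\,2}+\overline{c_{1,3}}\,\overline{c_{2,3}}(c_{1,1}-c_{2,2})-\overline{c_{1,3}}^{\,2}c_{1,2}=0$, then take complex conjugates and use $\overline{c_{2,1}}=c_{1,2}$, $\overline{c_{1,2}}=c_{2,1}$, $c_{1,1},c_{2,2}\in\R$; this turns the equation into $c_{1,3}c_{2,3}(c_{1,1}-c_{2,2})-c_{1,3}^2 c_{2,1}+c_{2,3}^2 c_{1,2}=0$, i.e. the vanishing of the left side of \eqref{2B:condition:hypo}. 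Hence $\CC_1$ has an eigenvector in $\ker\CC_2$ iff \eqref{2B:condition:hypo} fails, which by Proposition~\ref{prop:equivalence}\,\ref{cond:eigenvector} is equivalent to $\CC$ not being hypocoercive. The argument is essentially routine linear algebra once set up this way; the only genuinely delicate point — and the step I would double-check carefully — is the final chain of conjugations, needed because \eqref{2B:condition:hypo} is phrased in $c_{1,3},c_{2,3}$ rather than their conjugates, while the natural eigenvector normalization produced $\overline{\gamma}=\overline{c_{1,3}}/\overline{c_{2,3}}$.
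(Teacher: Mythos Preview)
Your proof is correct and follows essentially the same route as the paper's: both reduce to condition~\ref{cond:eigenvector} of Proposition~\ref{prop:equivalence}, use the rank-$1$ hypothesis to pin down the unique candidate direction in $\ker\CC_2=\spn\{e_1,e_2\}$ (your $(1,-\overline{\gamma})$, the paper's $(1,\gamma)$ with $\gamma=-c_{3,1}/c_{3,2}$), and then check when that direction is an eigenvector of the upper-left $2\times2$ block, finishing with the same conjugation step to rewrite the resulting expression in terms of $c_{1,3},c_{2,3}$. Your organization via the $2\times2$ determinant is a bit cleaner, but the substance is identical.
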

\begin{proof}
 Up to a change of basis, 
  the Hermitian matrices $\CC_1$ and $\CC_2$ satisfy \eqref{setup:DimKerC2:2}.
 The upper right submatrix $\CC_1^{ur}\in\C^{2\times(n-2)}$ has rank 1,
  therefore at least one coefficient of $\CC_1^{ur}$ is non-zero.
 Another change of basis
  moves this non-zero coefficient to position $(2,3)$,
  hence, w.l.o.g. let $c_{2,3} =\overline{c_{3,2}} \ne 0$.
 To prove that condition~\eqref{2B:condition:hypo} is necessary and sufficient,
  we use the characterization in Proposition~\ref{prop:equivalence}.
 Condition~\ref{cond:eigenvector} for one-dimensional subspaces of $\ker \CC_2$ reads
 \[ \forall \,(\alpha,\beta)\in\C^2\setminus\{(0,0)\}: \qquad \CC_1 (\alpha e_1 +\beta e_2)
     =\alpha \begin{pmatrix} c_{1,1} \\ c_{2,1} \\ \vdots \\ c_{n,1} \end{pmatrix}
      +\beta \begin{pmatrix} c_{1,2} \\ c_{2,2} \\ \vdots \\ c_{n,2} \end{pmatrix}
  \notin \spn\{\alpha e_1 +\beta e_2\} \,.
 \]
 This is equivalent to the following condition:
 \begin{align} 
  \text{For all $(\alpha,\beta)\in\C^2\setminus\{(0,0)\}$, } &
     (\alpha c_{1,1} +\beta c_{1,2})\beta \ne (\alpha c_{2,1} +\beta c_{2,2})\alpha \label{HC-cond:1} \\ 
    &\text{or } \exists j\in\{3,\ldots,n\}\,:\ \alpha c_{j,1} +\beta c_{j,2} \ne 0 \text{ holds.} \label{HC-cond:2}
 \end{align}
 Due to the assumption~$\rank \CC_1^{ur} =1$, 
  there exists a unique $\gamma\in\C$ (namely $\gamma = -c_{3,1} /c_{3,2}$, since $c_{2,3} =\overline{c_{3,2}} \ne 0$) such that $c_{j,1} +\gamma\ c_{j,2} = 0$ for all $j\in\{3,\ldots,n\}$.
 Therefore, the second condition~\eqref{HC-cond:2} holds if and only if $\beta\ne \alpha\gamma$.
 If $\beta =\alpha\gamma$ then the first condition~\eqref{HC-cond:1} has to hold.
 Inserting $\beta =\alpha\gamma$ in~\eqref{HC-cond:1} yields 
\begin{equation}\label{alpha-beta} 
  0 \ne \alpha\beta (c_{1,1} -c_{2,2}) +\beta^2 c_{1,2} -\alpha^2 c_{2,1}
     = \alpha^2 (\gamma (c_{1,1} -c_{2,2}) +\gamma^2 c_{1,2} -c_{2,1}) \,.
\end{equation}
Using $\gamma = -c_{3,1} /c_{3,2}$, the r.h.s.\ of \eqref{alpha-beta} reads
$$
     \frac{\alpha^2}{c_{3,2}^2} \big(-c_{3,1}\ c_{3,2} (c_{1,1} -c_{2,2}) +c_{3,1}^2 c_{1,2} -c_{3,2}^2\ c_{2,1} \big)
%    = -\frac{\alpha^2}{c_{3,2}^2} (c_{3,1}\ c_{3,2} (c_{1,1} -c_{2,2}) -c_{3,1}^2 c_{1,2} +c_{3,2}^2\ c_{2,1})
%    = -\frac{\alpha^2}{c_{3,2}^2} \big(c_{3,1}\ c_{3,2} (c_{1,1} -c_{2,2}) -c_{3,1}^2 c_{1,2} +c_{3,2}^2\ c_{2,1})\big
     = -\frac{\alpha^2}{c_{3,2}^2} \overline{(c_{1,3}\ c_{2,3} (c_{1,1} -c_{2,2}) -c_{1,3}^2 c_{2,1} +c_{2,3}^2\ c_{1,2})} \,.
$$
 Thus, matrix $\CC$ is hypocoercive if and only if condition~\eqref{2B:condition:hypo} holds.
% Up to a renumbering of the indices $\{j\ge3\}$, we assume $c_{2,3}\ne 0$.
%  Then matrix~$\CC$ is hypocoercive if and only~if
%  \begin{equation} \label{2B:condition:hypo}
%   c_{1,3}\ c_{2,3}\ (c_{1,1} -c_{2,2}) -c_{2,1}\ c_{1,3}^2 +c_{1,2} c_{2,3}^2 \ne 0 \,.
%  \end{equation}
\end{proof}

% Condition~\ref{cond:eigenvector} for one-dimensional subspaces of $\ker \CC_2$ reads 
%  \[ \forall \,(\alpha,\beta)\in\C^2\setminus\{(0,0)\}: \qquad \CC_1 (\alpha e_1 +\beta e_2)
%      =\alpha \begin{pmatrix} c_{1,1} \\ c_{2,1} \\ \vdots \\ c_{n,1} \end{pmatrix}
%       +\beta \begin{pmatrix} c_{1,2} \\ c_{2,2} \\ \vdots \\ c_{n,2} \end{pmatrix}
%   \notin \spn\{\alpha e_1 +\beta e_2\} \,.
%  \]
% This is equivalent to the following condition: For all $(\alpha,\beta)\in\C^2\setminus\{(0,0)\}$ it holds that
% \begin{equation}\label{HC-cond}
%   (\alpha c_{1,1} +\beta c_{1,2})\beta \ne (\alpha c_{2,1} +\beta c_{2,2})\alpha \quad 
% \mbox{or}\quad \exists\,j\in\{3,\ldots,n\}\quad \mbox{such that}\quad \alpha c_{j,1} +\beta c_{j,2} \ne 0\,.
% \end{equation}
% Up to a renumbering of the indices $\{j\ge3\}$, we assume $c_{2,3}\ne 0$.
% Then matrix~$\CC$ is hypocoercive if and only~if
% \begin{equation} \label{2B:condition:hypo}
%   c_{1,3}\ c_{2,3}\ (c_{1,1} -c_{2,2}) -c_{2,1}\ c_{1,3}^2 +c_{1,2} c_{2,3}^2 \ne 0 \,.
% \end{equation}
 
This finishes the complete classification of the situation when $\dim(\ker \CC_2)=2$. 
Our ansatz for matrix~$\P$ depends on the structure of matrix~$\CC_1$.
Therefore we distinguish between the subcases (2B1) and (2B2), see also Table \ref{table:ansatzP}.
%For each case we give in Table~\ref{table:ansatzP} an appropriate ansatz for the matrix $\P$.
We shall prove that these ansatzes will allow for a matrix inequality of the form \eqref{matrixestimate1} and hence for an explicit exponential decay~\eqref{ODE-decay} in the ODE~\eqref{matrix-ODE}. 
As in Theorem \ref{lemma:ansatzP:1D} we shall construct $\P$ as a perturbation of $\II$. 
To verify, then, a matrix inequality of the form \eqref{matrixestimate1} we shall use the following perturbation result on multiple eigenvalues:

\begin{lemma}[Theorem II.2.3 in \cite{Kato76}] \label{lemma:ansatzP:nD}
 Let $\CC_1$ and $\CC_2$ be Hermitian matrices with $\CC_2\ge0$ and $\dim(\ker\CC_2)=k\in\N_0$,
 such that the associated matrix~$\CC=i\CC_1 +\CC_2$ is hypocoercive.
 Let $\{v_j;\,j=1,\ldots,k\}$ be an orthonormal basis of the kernel $\ker\CC_2$
 and let $\A$ be a Hermitian matrix (which makes $\P(r):=\II+r\A$ a positive definite Hermitian matrix for sufficiently small $r\geq0$).
 Then, for sufficiently small~$r>0$, 
 the $k$ lowest eigenvalues~$\mu_j(r)$ of the Hermitian matrix $\CC^*\P(r)+\P(r)\CC$ satisfy 
 \begin{equation} \label{eigenvalue:expansion}
  \mu_j(r) =r \xi_j +o(r)\ ,\quad j=1,...,k\,, 
 \end{equation}
 where $\xi_j$ are the eigenvalues of $\RR^*(\CC^*\A+\A\CC)\RR$ and $\RR:=(v_1,\ldots,v_k)\in\C^{n\times k}$. 
\end{lemma}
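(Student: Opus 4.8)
The plan is to view $\CC^*\P(r)+\P(r)\CC$ as a real-analytic Hermitian perturbation of $2\CC_2$ and to apply the cited eigenvalue-splitting theorem of Kato. First I would set $\M:=\CC^*\A+\A\CC$, which is Hermitian, and record the identity
\[
  \CC^*\P(r)+\P(r)\CC \;=\; \CC^*(\II+r\A)+(\II+r\A)\CC \;=\; (\CC^*+\CC)+r\M \;=\; 2\CC_2+r\M\ ,
\]
using $\CC^*+\CC=(-\ii\CC_1+\CC_2)+(\ii\CC_1+\CC_2)=2\CC_2$. Thus $\{\CC^*\P(r)+\P(r)\CC\}_{r\in\R}$ is a linear (hence analytic) family of Hermitian matrices equal to $2\CC_2$ at $r=0$. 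Since $\CC_2\ge0$ and $\dim(\ker\CC_2)=k$, the matrix $2\CC_2$ has the eigenvalue $0$ with multiplicity exactly $k$ and eigenspace $\ker\CC_2$, while all its other eigenvalues are at least $2c_*$, where $c_*>0$ denotes the smallest positive eigenvalue of $\CC_2$. By Weyl's perturbation inequality, for $0\le r<c_*/\|\M\|$ exactly $k$ eigenvalues of $2\CC_2+r\M$ lie in $(-c_*,c_*)$; these are its $k$ lowest eigenvalues $\mu_1(r)\le\cdots\le\mu_k(r)$, and they form the group that bifurcates from the degenerate eigenvalue $0$ of $2\CC_2$.

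Next I would run the Riesz-projection reduction for this group, which is the content of Theorem~II.2.3 in \cite{Kato76}. Fix a circle $\Gamma$ around $0$ of radius $c_*$; for small $r$ the spectral projection $\Pi(r):=\frac{1}{2\pi\ii}\oint_\Gamma\bigl(z\II-2\CC_2-r\M\bigr)^{-1}\d[z]$ is well defined, analytic in $r$, of rank $k$, commutes with $2\CC_2+r\M$, and satisfies $\Pi(0)=\Pi_0$, the orthogonal projection onto $\ker\CC_2$. Kato's transformation function provides an invertible $U(r)=\II+rU_1+O(r^2)$ with $U(0)=\II$ and $U(r)\Pi_0U(r)^{-1}=\Pi(r)$, so that $\ran\Pi_0$ is invariant under $\widetilde\M(r):=U(r)^{-1}(2\CC_2+r\M)U(r)$ and $\mu_1(r),\dots,\mu_k(r)$ are precisely the eigenvalues of the compression $\Pi_0\widetilde\M(r)\Pi_0$ restricted to $\ran\Pi_0$. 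Expanding gives $\widetilde\M(r)=2\CC_2+r\M+r\,[2\CC_2,U_1]+O(r^2)$, and since $\CC_2\Pi_0=\Pi_0\CC_2=\Null$ both $\Pi_0(2\CC_2)\Pi_0$ and $\Pi_0[2\CC_2,U_1]\Pi_0$ vanish, leaving $\Pi_0\widetilde\M(r)\Pi_0=r\,\Pi_0\M\Pi_0+O(r^2)$.

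It then remains to identify the leading term. Because $\{v_j\}$ is orthonormal, $\RR=(v_1,\dots,v_k)$ satisfies $\RR^*\RR=\II_k$ and $\RR\RR^*=\Pi_0$, so in the basis $\{v_j\}$ the operator $\Pi_0\M\Pi_0|_{\ran\Pi_0}$ is represented by the Hermitian matrix $\RR^*\M\RR=\RR^*(\CC^*\A+\A\CC)\RR$, whose eigenvalues are by definition $\xi_1,\dots,\xi_k$. By the continuous (Bauer--Fike) dependence of eigenvalues, the $k$ eigenvalues of $r\,\RR^*\M\RR+O(r^2)$ are $r\xi_j+O(r^2)$, hence $\mu_j(r)=r\xi_j+o(r)$ for $j=1,\dots,k$, which is the assertion. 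The only genuinely nontrivial point is the reduction to the unperturbed eigenspace together with the cancellation of the order-$r^0$ part $2\CC_2$ and of the first-order commutator correction after compression by $\Pi_0$ — this is exactly what the quoted Kato theorem delivers, and the remaining steps are bookkeeping, once $r_0>0$ is fixed small enough that simultaneously $\P(r)$ is positive definite, $\Gamma$ avoids the spectrum, and the bifurcating group stays separated from the remaining eigenvalues.
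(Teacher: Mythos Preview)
Your argument is correct. The paper does not give its own proof of this lemma: it simply cites Theorem~II.2.3 of Kato and, immediately after the statement, records the same identity you derive,
\[
  \CC^*\P(r)+\P(r)\CC \;=\; 2\CC_2 + r(\CC^*\A+\A\CC)\,,
\]
as the link between the matrix in question and a linear Hermitian perturbation of $2\CC_2$. Your write-up goes further than the paper by unpacking the mechanism behind Kato's result --- the Riesz projection $\Pi(r)$, the transformation function $U(r)$, and the cancellation of the $[2\CC_2,U_1]$ commutator after compression by $\Pi_0$ --- all of which is accurate and is exactly how the first-order eigenvalue splitting formula is obtained in Kato's book. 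So there is no discrepancy in approach: the paper treats the lemma as a black box from \cite{Kato76}, while you have sketched the contents of that box.
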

We will use this result to construct perturbation matrices $\A$ and to check the admissibility of the various ansatzes for the transformation matrices~$\P$ -- mostly in the case $\dim(\ker\CC_2)=2$. The two matrices in Lemma \ref{lemma:ansatzP:nD} are related via
\begin{equation} \label{matrix-rel}
 \CC^*\P(r)+\P(r)\CC =\CC^*(\II+r\A)+(\II+r\A)\CC =2\CC_2 +r(\CC^*\A+\A\CC)\,, 
\end{equation}
and $\CC_2$ has a $k$-fold 0-eigenvalue by assumption. Now, if $\A$ is chosen such that all eigenvalues $\xi_j,\,j=1,...,k$ in \eqref{eigenvalue:expansion} are positive, then we deduce the positive definiteness of $\CC^*\P(r)+\P(r)\CC$ for sufficiently small~$r>0$.\\ 
%This, in turn, allows for a matrix estimate as in \eqref{matrixestimate1} and to deduce a quantitative decay estimate for the associated ODE.\\
We remark that the positivity of $\xi_1,...,\xi_k$ is first of all a sufficient condition for the positive definiteness of $\CC^*\P(r)+\P(r)\CC$ (for sufficiently small~$r>0$). But one sees easily from \eqref{matrix-rel} that it is also necessary.\\
\medskip

\begin{theorem}\label{th:P-admissible2D}
 Let $\CC_1$ and $\CC_2$ be Hermitian matrices with $\CC_2\ge0$ and $\dim(\ker\CC_2)=2$,
 such that the associated matrix~$\CC=i\CC_1 +\CC_2$ is hypocoercive.
 %If the elements of $\CC_1$ are either purly imaginary, or real and zero at the diagonal,
 Then there exists a two-parameter ansatz for a positive definite matrix~$\P=\P(\lambda_1,\lambda_2)$, according to Table~\ref{table:ansatzP},
 such that $\CC^*\P+\P\CC$ is positive definite (for an appropriate choice of $\lambda_1,\lambda_2$).
\end{theorem}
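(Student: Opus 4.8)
The plan is to reduce the claim to an application of Lemma~\ref{lemma:ansatzP:nD} by computing, in each of the cases (2A), (2B1), (2B2) of Table~\ref{table:ansatzP}, the $2\times 2$ matrix $\RR^*(\CC^*\A+\A\CC)\RR$ and choosing $\lambda_1,\lambda_2$ so that both of its eigenvalues $\xi_1,\xi_2$ are positive. Since $\dim(\ker\CC_2)=2$ and the ansatz matrices $\P=\II+\A$ in Table~\ref{table:ansatzP} are Hermitian perturbations of $\II$, the matrix $\P(r)=\II+r\A$ is positive definite for small $r>0$, and by Lemma~\ref{lemma:ansatzP:nD} the two lowest eigenvalues of $\CC^*\P(r)+\P(r)\CC$ behave like $r\xi_j+o(r)$ where $\xi_j$ are the eigenvalues of $\RR^*(\CC^*\A+\A\CC)\RR$ with $\RR=(e_1,e_2)$ (the canonical basis of $\ker\CC_2$ after the normalization \eqref{setup:DimKerC2:2}). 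Thus it suffices to show that in each case the parameters $\lambda_1,\lambda_2$ can be tuned so that this $2\times 2$ Hermitian matrix is positive definite; then positive definiteness of $\CC^*\P+\P\CC$ follows for sufficiently small $r>0$, and rescaling $\lambda_j\mapsto r\lambda_j$ absorbs $r$ into the parameters.

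For case (2A), the perturbation matrix $\A$ has entries $\lambda_1$ at position $(1,4)$ and $\lambda_2$ at $(2,3)$ (and their conjugates). A direct computation of $(\CC^*\A+\A\CC)_{1,1}$, $(\CC^*\A+\A\CC)_{2,2}$ and the off-diagonal $(\CC^*\A+\A\CC)_{1,2}$ shows the diagonal entries are $2\Im(\overline{\lambda_1}\,c_{1,4})$ and $2\Im(\overline{\lambda_2}\,c_{2,3})$ respectively (using $\CC_2 e_1=\CC_2 e_2=0$ so only the $i\CC_1$ part contributes through the rows $1,2$), while the off-diagonal entry is a linear combination of $\overline{\lambda_1}c_{2,4}$, $\overline{\lambda_2}c_{1,3}$ and terms involving $\lambda_1,\lambda_2$ paired with $c_{j,3},c_{j,4}$ for $j\ge3$; crucially, since $\A$ lives only in the upper-left $4\times4$ block, the $(1,2)$-entry depends on $\lambda_1,\lambda_2$ only through the couplings to columns $3,4$. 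The normalization $c_{1,4}\ne0$, $c_{2,3}\ne0$ from Table~\ref{table:ansatzP} lets me first choose $\arg\lambda_1,\arg\lambda_2$ to make both diagonal entries strictly positive; then, shrinking $|\lambda_1|,|\lambda_2|$ keeps the diagonal positive while the determinant condition $\xi_1\xi_2>0$ can be arranged — here I expect to need the hypocoercivity hypothesis, encoded in $c_{1,4}c_{2,3}\ne c_{1,3}c_{2,4}$, precisely to control the sign of the $2\times2$ determinant (otherwise the two ``arrows'' $\circ\to\bullet$ could be pointing at the same mode in a way that makes the $2\times2$ block degenerate).

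For case (2B) the hypocoercivity index is $2$, so the argument is genuinely more delicate: the column $\CC_1^{ur}$ has rank $1$, meaning $\sqrt{\CC_2}$ and $\CC_1\sqrt{\CC_2}$ together only span an $(n-1)$-dimensional space, and one must go to second order in $\CC_1$ to fill in the last direction. After the unitary change of basis $\U$ given in (2B1)/(2B2) — chosen so that the single nonzero column of $\CC_1^{ur}$ is aligned with $e_2$ and connected to $e_3$ — the transformed matrix $\CC_1$ has the ``staircase'' structure $\circ\!\!-\!\!\circ\!\!-\!\!\bullet$, i.e. $e_1$ couples to $e_2$ via $c_{1,2}$ and $e_2$ couples to $e_3$ via $c_{2,3}$. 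The ansatz $\P=\II+\U(\text{tridiagonal }\A)\U^*$ then places $\lambda_1$ at $(1,2)$ and $\lambda_2$ at $(2,3)$. Computing $\RR^*(\CC^*\A+\A\CC)\RR$: the $(2,2)$-entry becomes $2\Im(\overline{\lambda_2}c_{2,3})>0$ for suitable $\arg\lambda_2$, but the $(1,1)$-entry vanishes at first order in $\lambda_1$ alone (since $e_1$ is coupled to $e_2$, which is also in $\ker\CC_2$), and one must use $\lambda_1$ together with the already-present positivity in the $(2,2)$-slot to generate positivity in the $(1,1)$-slot — this is the hallmark of index-$2$ hypocoercivity, where the chain of commutators is needed. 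The determinant condition $\xi_1\xi_2>0$ is exactly where \eqref{2B:condition:hypo} enters. The main obstacle I anticipate is this case (2B): keeping track of which entries of $\CC^*\A+\A\CC$ contribute to the $2\times2$ reduced matrix after the $\U$-conjugation, and verifying that the hypocoercivity condition \eqref{2B:condition:hypo} is exactly what makes the reduced determinant have the right sign once $\lambda_1,\lambda_2$ are optimally phased and scaled — in (2B2) the extra rotation $\U^{ul}$ mixing coordinates $1,2$ must be handled carefully so that after conjugation the coupling to $e_3$ sits cleanly at position $(2,3)$, as claimed in Table~\ref{table:ansatzP}. Once the reduced $2\times2$ matrix is positive definite in every case, Lemma~\ref{lemma:ansatzP:nD} finishes the proof.
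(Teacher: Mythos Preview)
Your overall strategy---reduce via Lemma~\ref{lemma:ansatzP:nD} to positive definiteness of the $2\times2$ block $\RR^*(\CC^*\A+\A\CC)\RR$ with $\RR=(e_1,e_2)$, then tune $\lambda_1,\lambda_2$ in each case---matches the paper exactly, and your treatment of case (2A) is essentially right (the paper makes the choice $\lambda_1=-i\ell_1 c_{1,4}$, $\lambda_2=-i\ell_2 c_{2,3}$ and then picks $\ell_1,\ell_2$ so that the determinant is positive, using precisely the condition $c_{1,4}c_{2,3}\ne c_{1,3}c_{2,4}$).

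However, your account of case (2B) contains a concrete error. You claim that the $(1,1)$-entry of the reduced $2\times2$ matrix ``vanishes at first order in $\lambda_1$ alone (since $e_1$ is coupled to $e_2$, which is also in $\ker\CC_2$)''. This is false. With $\A_{1,2}=\lambda_1$, $\A_{2,3}=\lambda_2$ and $\CC=i\CC_1+\CC_2$, a direct computation (this is \eqref{RCAR} in the paper) gives
\[
\RR^*(\CC^*\A+\A\CC)\RR =
\begin{pmatrix}
2\Im(c_{1,2}\overline{\lambda_1}) & i(c_{2,2}-c_{1,1})\lambda_1\\
i(c_{1,1}-c_{2,2})\overline{\lambda_1} & -2\Im(c_{1,2}\overline{\lambda_1})+2\Im(c_{2,3}\overline{\lambda_2})
\end{pmatrix}.
\]
The $(1,1)$-entry is $2\Im(c_{1,2}\overline{\lambda_1})$, which is nonzero as soon as $c_{1,2}\ne0$ and $\arg\lambda_1$ is chosen appropriately. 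The point is that $(\CC^*\A+\A\CC)_{1,1}$ picks up the $i\CC_1$-part of $\CC$, not $\CC_2$; the fact that $e_2\in\ker\CC_2$ is irrelevant here. Likewise your $(2,2)$-entry is incomplete: it is $2\Im(c_{2,3}\overline{\lambda_2})-2\Im(c_{1,2}\overline{\lambda_1})$, not just the first term. The correct conditions for positive diagonal are therefore $0<\Im(c_{1,2}\overline{\lambda_1})<\Im(c_{2,3}\overline{\lambda_2})$, and the determinant is then made positive by shrinking $|\lambda_1|$ (the off-diagonal is $O(|\lambda_1|)$ while the product of diagonals stays bounded below). No ``chain of commutators'' or second-order mechanism is needed at this stage; the hypocoercivity condition \eqref{2B:condition:hypo} enters only to guarantee $c_{1,2}\ne0$ in (2B1), and in (2B2) to guarantee $\tilde c_{1,2}\ne0$ after the unitary transformation $\U$ that reduces (2B2) to (2B1).
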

\begin{proof}
 First, one easily checks that all matrices $\P$ from Table~\ref{table:ansatzP} are positive definite if $|\lambda_1|^2+|\lambda_2|^2<1$. 
Thus, $\P(r):=\II+r\A$ with $\A:=\P-\II$ yields for $r\in[0,1]$ a family of positive definite Hermitian matrices~$\P(r)$. %({\bf CHECK AGAIN IN FINAL VERSION})

Now, up to a change of basis in $\C^n$,
we assume without loss of generality that $\CC_2$ is a diagonal matrix of the form $\CC_2 =\diag(0,0,c_3,\ldots,c_n)$ with $c_j>0$.
Then, $\ker\CC_2=\spn\{e_1,e_2\}$ and we choose $\RR=(e_1,e_2)\in\R^{n\times 2}$.
According to Lemma \ref{lemma:ansatzP:nD}, the positive definiteness of $\CC^*\P+\P\CC$ (for sufficiently small~$r>0$) can be inferred from the positive definiteness of $\RR^*(\CC^*\A+\A\CC)\RR$. 

Next we deal with each case of $\CC_1$ and its corresponding ansatz $\P=\II+\A$ (as listed in Table \ref{table:ansatzP}) separately: 
we need to prove that $\lambda_1$ and $\lambda_2$ can be chosen such that $\RR^*(\CC^*\A+\A\CC)\RR$ is indeed positive definite. 
% This elementary but lengthy part of the proof is deferred to the Appendix.
% \end{proof}
% 
% \begin{proof}[Proof of Theorem~\ref{th:P-admissible2D}] 
% \
\begin{enumerate}[label=(2\Alph*)]
 \item %\label{case:2A}
We consider $\CC_1=(c_{j,k})_{j,k\in\{1,\ldots,n\}}$ satisfying w.l.o.g.
\begin{equation}\label{cond:2A:c}
 |c_{1,4}\ c_{2,3}|\geq |c_{1,3}\ c_{2,4}| \quad \mbox{and} \quad c_{1,4}\ c_{2,3} \ne c_{1,3}\ c_{2,4} \,,
\end{equation}
such that $c_{2,3}=\overline{c_{3,2}}\ne0$ and $c_{1,4}=\overline{c_{4,1}}\ne0$.
For
 \[ \RR^*(\CC^*\A+\A\CC)\RR = i
     \begin{pmatrix}
      - c_{1,4}\ \overline{\lambda_1} + \overline{c_{1,4}}\ \lambda_1 & +\overline{c_{2,4}}\ \lambda_1 - c_{1,3}\ \overline{\lambda_2} \\
      - c_{2,4}\ \overline{\lambda_1} + \overline{c_{1,3}}\ \lambda_2 & -c_{2,3}\ \overline{\lambda_2}+\overline{c_{2,3}}\ \lambda_2
     \end{pmatrix} \,
 \]
to be positive definite, all three of its minors have to be positive for appropriately chosen $\lambda_1$ and $\lambda_2$. We set
\begin{equation}\label{lambda-polar}
  \lambda_1 := -i \ell_1 c_{1,4}\,, \quad \lambda_2 := -i \ell_2 c_{2,3} \,,
\end{equation}
for some positive numbers $\ell_1$ and $\ell_2$. Then, the minors of first order satisfy 
\begin{align*}
  -i(c_{1,4}\ \overline{\lambda_1} -\overline{c_{1,4}}\ \lambda_1) &= 2\Im(c_{1,4}\ \overline{\lambda_1})=2\ell_1\ |c_{1,4}|^2 >0 \,, \\
  -i(c_{2,3}\ \overline{\lambda_2} -\overline{c_{2,3}}\ \lambda_2) &= 2\Im(c_{2,3}\ \overline{\lambda_2})=2\ell_2\ |c_{2,3}|^2 >0 \,.
\end{align*}
The minor of second order reads (using~\eqref{lambda-polar})
\begin{align*}
 \det(\RR^*(\CC^*\A &+\A\CC)\RR)
%   &= 4\Im(c_{2,3}\overline{\lambda_2}) \Im(c_{1,4}\overline{\lambda_1}) -|c_{2,4}\overline{\lambda_1} -\overline{c_{1,3}}\lambda_2|^2\\ 
%   &= 4\ell_1 \ell_2 |c_{1,4}|^2 |c_{2,3}|^2 -|c_{2,4}\ i\ell_1\overline{c_{1,4}} -\overline{c_{1,3}}\ (-i)\ell_2 c_{2,3}|^2\\ 
   = 4\ell_1 \ell_2 |c_{1,4}|^2 |c_{2,3}|^2 -|\ell_1\ c_{2,4}\ \overline{c_{1,4}} +\ell_2\ \overline{c_{1,3}}\ c_{2,3}|^2 \\
 &\quad = 4\ell_1 \ell_2 |c_{1,4}|^2 |c_{2,3}|^2 -|\ell_1\ c_{2,4}\ \overline{c_{1,4}}|^2 -|\ell_2\ \overline{c_{1,3}}\ c_{2,3}|^2 \\
  &\qquad -\ell_1 \ell_2\ c_{2,4}\ \overline{c_{1,4}}\ c_{1,3}\ \overline{c_{2,3}} -\ell_1 \ell_2\ \overline{c_{2,4}}\ c_{1,4}\ \overline{c_{1,3}}\ c_{2,3} \\
 &\quad = -\big(\ell_1 |c_{1,4}\ c_{2,4}| -\ell_2 |c_{1,3}\ c_{2,3}|\big)^2 \\
  &\qquad +\ell_1 \ell_2 \Big[ 4|c_{1,4}\ c_{2,3}|^2 -2 |c_{1,4}\ c_{2,4}\ c_{1,3}\ c_{2,3}|
                                -c_{2,4}\ \overline{c_{1,4}}\ c_{1,3}\ \overline{c_{2,3}} -\overline{c_{2,4}}\ c_{1,4}\ \overline{c_{1,3}}\ c_{2,3} \Big] \\
 &\quad = -\big(\ell_1 |c_{1,4}\ c_{2,4}| -\ell_2 |c_{1,3}\ c_{2,3}|\big)^2 \\
  &\qquad +\ell_1 \ell_2 \Big[ (3 |c_{1,4}\ c_{2,3}| +|c_{1,3}\ c_{2,4}|) (|c_{1,4}\ c_{2,3}| -|c_{1,3}\ c_{2,4}|)
                                +|c_{1,4}\ c_{2,3} -c_{1,3} c_{2,4}|^2 \Big] \,.
\end{align*} 
Then, the minor of second order is positive for the choice $\ell_1 =\epsilon |c_{1,3}\ c_{2,3}|$ and $\ell_2 =\epsilon |c_{1,4}\ c_{2,4}|$ with any $\epsilon>0$, 
 due to our assumption~\eqref{cond:2A:c}.
Finally, for sufficiently small $\epsilon>0$ the Hermitian matrix~$\P$ is positive definite.

\item \label{case:2B}
First, we verify that the ansatz for $\P$ in~\eqref{P-ansatz-2B} is admissible in case (2B1).

\noindent
\underline{In case (2B1)}, we consider w.l.o.g.
 \[ \CC_1=(c_{j,k})_{j,k\in\{1,\ldots,n\}} \quad \text{with} \quad
     c_{1,2}=\overline{c_{2,1}}\ne0\,,\ 
     c_{2,3}=\overline{c_{3,2}}\ne0\,,\ 
     c_{1,3}=\overline{c_{3,1}}=0 \,. 
 \]
Then, the connections in the relevant (upper left) $3\times3$-subspace can be symbolized as $\circ\!\!\longrightarrow\!\!\circ\!\!\longrightarrow\!\!\bullet$\,; see \eqref{U-ansatz-2B1}. 
To prove that the ansatz for $\P$ in~\eqref{P-ansatz-2B} with $\U=\II$ %in~\eqref{U-ansatz-2B1} 
is admissible, we use Lemma~\ref{lemma:ansatzP:nD} and we need to check the positive definiteness of 
\begin{equation}\label{RCAR}
 \RR^*(\CC^*\A+\A\CC)\RR = i
     \begin{pmatrix}
      -c_{1,2}\overline{\lambda_1} +\overline{ c_{1,2}}\lambda_1 & (c_{2,2} - c_{1,1})\lambda_1 \\
       (c_{1,1} -c_{2,2})\overline{\lambda_1} & c_{1,2}\overline{\lambda_1} -\overline{ c_{1,2}}\lambda_1 -c_{2,3}\overline{\lambda_2} +\overline{ c_{2,3}}\lambda_2
     \end{pmatrix}
\end{equation}
for appropriately chosen $\lambda_1$ and $\lambda_2$. 
The minors of first order are 
\[ -i( c_{1,2}\overline{\lambda_1}-\overline{ c_{1,2}}\lambda_1) =2\Im( c_{1,2}\overline{\lambda_1}) \quad \text{and} \quad
   i( c_{1,2}\overline{\lambda_1}-\overline{ c_{1,2}}\lambda_1- c_{2,3}\overline{\lambda_2} +\overline{ c_{2,3}}\lambda_2) =-2\Im( c_{1,2}\overline{\lambda_1})+2\Im( c_{2,3}\overline{\lambda_2}) \,.
\]
They are positive if and only if 
\begin{equation}\label{1st-minor-cond}
  0<\Im( c_{1,2}\overline{\lambda_1})<\Im( c_{2,3}\overline{\lambda_2}) \,. 
\end{equation}
Due to our assumptions $ c_{1,2}\ne0$ and $ c_{2,3}\ne0$,
we can choose $\lambda_1$ and $\lambda_2$ such that this condition is satisfied.
The minor of second order reads
\[ \det(\RR^*(\CC^*\A+\A\CC)\RR)
    = 4\Im( c_{1,2}\overline{\lambda_1}) \big(\Im( c_{2,3}\overline{\lambda_2})-\Im( c_{1,2}\overline{\lambda_1})\big) -|c_{1,1} -c_{2,2}|^2 |\lambda_1|^2 \,,
\] 
where the first summand is positive due to \eqref{1st-minor-cond}.
First we choose $\lambda_1$ and $\lambda_2$ such that the minors of first order are positive.
Then we consider $r \lambda_1$ for $r\in(0,1)$ instead of $\lambda_1$, and we choose $r\in(0,1)$ sufficiently small such that the second minor
% \[ \det(\RR^*(\CC^*\A+\A\CC)\RR)
%     = 4 r \Im( c_{12}\overline{\lambda_1}) \big(\Im( c_{23}\overline{\lambda_2})-r \Im( c_{12}\overline{\lambda_1})\big) -r^2 |c_{11} -c_{22}|^2 |\lambda_1|^2 \,.
% \] 
becomes positive, and hence \eqref{RCAR} is positive definite.

\underline{In case (2B2)}, we consider w.l.o.g.
 \[ \CC_1=(c_{j,k})_{j,k\in\{1,\ldots,n\}} \quad \text{with} \quad
     c_{1,3}=\overline{c_{3,1}}\ne0\,,\ 
     c_{2,3}=\overline{c_{3,2}}\ne0\,, 
 \]
 and recall the hypocoercivity condition~\eqref{2B:condition:hypo}.
The guideline to construct a simple ansatz for~$\P$ at the beginning of this section would suggest to connect each non-decaying mode to the same decaying mode.
However, for some examples in subcase (2B2) this ansatz is not admissible. 
Therefore this guideline is not universally true.

The motivation for the (alternative) $\P$-ansatz \eqref{P-ansatz-2B} with unitary matrix~$\U$ in~\eqref{U-ansatz-2B2} is
 that the transformation $\tilde{\CC}_1 =\U^{-1} \CC_1 \U$ yields a matrix of form (2B1)
 with $\tilde c_{1,j}=0$ for $j\ge3$ (since $\rank(\CC_1^{ur})=1$), $\tilde c_{2,3}=1$ and
 \[
  \big( \widetilde{\CC}_1 \big)_{1,2} = \tilde{c}_{1,2} = \tfrac1{|c_{1,3}|^2 +|c_{2,3}|^2} \big( (c_{1,1} -c_{2,2}) c_{1,3} c_{2,3} +c_{1,2} c_{2,3}^2 -\overline{c_{1,2}} c_{1,3}^2 \big) \ne 0 \,,
 \]
due to the hypocoercivity condition~\eqref{2B:condition:hypo}.
To prove that the ansatz for $\P$ in~\eqref{P-ansatz-2B} with $\U$ in~\eqref{U-ansatz-2B1} is admissible,
 we consider
 \[ \CC^* \P +\P\CC = \CC^{*} (\II+\U\A\U^* ) +(\II+\U\A\U^* )\CC = 2\CC_2 +\CC^{*} \U\A\U^* +\U\A\U^* \CC \,. \]
Due to Lemma~\ref{lemma:ansatzP:nD},
 we need to check the positive definiteness of $\widetilde{\RR}^*(\CC^{*} \U\A\U^* +\U\A\U^* \CC)\widetilde{\RR}$
 for appropriately chosen $\lambda_1$ and $\lambda_2$. 
Using $\widetilde{\RR} =\U\RR$, we deduce
 \begin{align*} %\label{RCAR:2B2}
 \widetilde{\RR}^*(\CC^{*} \U\A\U^* +\U\A\U^* \CC)\widetilde{\RR}
%  &= i\widetilde{\RR}^*(-\CC_1^{*} \U\A\U^* +\U\A\U^* \CC_1)\widetilde{\RR} \\
  &= i\RR^* \U^* (-\CC_1^{*} \U\A\U^* +\U\A\U^* \CC_1)\U\RR \\
  &= i\RR^* \big(-(\U^* \CC_1^{*} \U)\A +\A(\U^* \CC_1 \U)\big)\RR \,.
 \end{align*}
Recalling that $\U^* \CC_1 \U =\U^{-1} \CC_1 \U$ is of form (2B1),
 the positive definiteness of $i\RR^* \big(-(\U^* \CC_1^{*} \U)\A +\A(\U^* \CC_1 \U)\big)\RR$ for suitable $\lambda_1$ and $\lambda_2$ follows as in case (2B1).
\end{enumerate}
\end{proof}

For $\dim(\ker\CC_2)=1$ or 2, we just listed all possible cases. 
But for $\dim(\ker\CC_2)=3$ we will next only consider the one situation relevant below for the linearized BGK equation in 1D, i.e.\ \eqref{linap4}, \eqref{L1L2}.

%%%%%%%%%%%%%%%%%%%%%%%%%%%%%%%%%%%%%%%%%%%%%%%%%%%%%%%%%%%%%%%%%%%%%%%%%%%%%%%%%5

\subsubsection{Hypocoercive matrix with $\dim(\ker\CC_2)=3$}\label{sec:dim-ker3}

If the three zeros in the diagonal of $\CC_2$ are connected (via $\CC_1$) only \emph{consecutively} to a positive entry in the diagonal of $\CC_2$, the relevant $4\times4$-subspace can be symbolized as 
$\circ\!\!\longrightarrow\!\!\circ\!\!\longrightarrow\!\!\circ\!\!\longrightarrow\!\!\bullet$. 
Proceeding as in \S\ref{sec:dim-ker2} one easily checks that
 \[
% \rank\big(\sqrt{\CC_2},\CC_1 \sqrt{\CC_2}\big) = n \,,
  \rank\big(\sqrt{\CC_2},\CC_1 \sqrt{\CC_2},\CC_1^2 \sqrt{\CC_2}\big) = n-1 \,,\quad
  \rank\big(\sqrt{\CC_2},\CC_1 \sqrt{\CC_2},\CC_1^2 \sqrt{\CC_2},\CC_1^3 \sqrt{\CC_2}\big) = n \,,
 \]
and hence the hypocoercivity index of $\CC$ is 3.

With $\CC_1$ of the form
\begin{equation}\label{C1-form-3}
  \sbox0{$\begin{matrix}\ast & \bullet & 0\\ \bullet & \ast & \bullet\\ 0 & \bullet & \ast\end{matrix}$}
  \sbox1{$\begin{matrix}0 & 0 & \cdots & 0 \\ 0 & 0 & \cdots & 0 \\ \bullet & \ast & \cdots & \ast \end{matrix}$}
  \sbox2{$\begin{matrix}0 & 0 & \bullet \\ 0 & 0 & \ast \\ \vdots & \vdots & \vdots \\ 0 & 0 & \ast \end{matrix}$}
  \CC_1 = \left(\begin{array}{c|c}
                   \usebox{0}& \usebox{1}\\
                   \hline
                   \usebox{2}& \mbox{\bf *}
  \end{array}\right)\,,
\end{equation}
a natural ansatz for a simple transformation matrix is given by
\begin{equation}\label{P-ansatz-3}
 \sbox0{$\begin{matrix}0 & \lambda_1 & 0 & 0 \\ \overline{\lambda_1} & 0 & \lambda_2 & 0 \\ 0 & \overline{\lambda_2} & 0 & \lambda_3 \\ 0 & 0 & \overline{\lambda_3} & 0 \end{matrix}$}
  \P=\II + \left(\begin{array}{c|c}
                   \usebox{0}& {\bf 0}\\
                   \hline
                   {\bf 0}& {\bf 0}
  \end{array}\right)\ ,
\end{equation}
with some $\lambda_1,\,\lambda_2,\,\lambda_3\in\C$. 

Indeed, this ansatz always yields a useful Lyapunov functional and hence a quantitative exponential decay rate, as we shall now show under the simplifying restriction $c_{1,1}=c_{2,2}=c_{3,3}$ (which is the relevant situation in \S\ref{sec:linBGK:1D}):
\begin{theorem}\label{th:p-ansatz-3}
 Let $\CC_2=\diag(0,0,0,c_4,...,c_n)$ with $c_j>0$, and $\CC_1$ be a Hermitian matrices of form \eqref{C1-form-3} and satisfying $c_{1,1}=c_{2,2}=c_{3,3}$.
 Then there exists a three-parameter ansatz for a positive definite matrix~$\P=\P(\lambda_1,\lambda_2,\lambda_3)$ of form \eqref{P-ansatz-3},
 such that $\CC^*\P+\P\CC$ is positive definite (for an appropriate choice of $\lambda_1,\lambda_2,\lambda_3$).
\end{theorem}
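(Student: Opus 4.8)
The plan is to mimic the perturbative argument used in Theorems~\ref{lemma:ansatzP:1D} and~\ref{th:P-admissible2D}. Set $\A:=\P-\II$, the tridiagonal $4\times4$ corner matrix from~\eqref{P-ansatz-3}, and $\P(r):=\II+r\A$; one checks directly that $\P(r)$ is positive definite for all $|r|$ below a threshold depending on the (still to be chosen) $\lambda_j$. Since $\dim(\ker\CC_2)=3$ and $\{e_1,e_2,e_3\}$ is an orthonormal basis of $\ker\CC_2$, Lemma~\ref{lemma:ansatzP:nD} with $\RR:=(e_1,e_2,e_3)$ reduces the positive definiteness of $\CC^*\P(r)+\P(r)\CC=2\CC_2+r(\CC^*\A+\A\CC)$, for sufficiently small $r>0$, to the positive definiteness of the $3\times3$ Hermitian matrix $\RR^*(\CC^*\A+\A\CC)\RR$. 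Thus the whole claim reduces to exhibiting $\lambda_1,\lambda_2,\lambda_3$ for which this $3\times3$ matrix is positive definite, and then rescaling $\lambda_j\mapsto r\lambda_j$ with $r>0$ small.

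The next step is to compute $\RR^*(\CC^*\A+\A\CC)\RR$ explicitly. Writing $\CC^*\A+\A\CC=i(\A\CC_1-\CC_1\A)+\CC_2\A+\A\CC_2$ and using $\CC_2\RR=0$, this equals $i\RR^*(\A\CC_1-\CC_1\A)\RR$, i.e.\ $i$ times the upper-left $3\times3$ block of the commutator $[\A,\CC_1]$. Carrying out the entrywise calculation with the sparsity pattern~\eqref{C1-form-3} (so $c_{1,3}=0$ and $c_{1,j}=c_{2,j}=0$ for $j\ge4$) together with the hypothesis $c_{1,1}=c_{2,2}=c_{3,3}$, the $(1,2)$- and $(2,3)$-entries cancel, leaving
\[
 \RR^*(\CC^*\A+\A\CC)\RR = \begin{pmatrix} -p & 0 & \delta \\ 0 & p-q & 0 \\ \bar\delta & 0 & q-s \end{pmatrix},
\]
where $p:=2\Im(\lambda_1\overline{c_{1,2}})$, $q:=2\Im(\lambda_2\overline{c_{2,3}})$, $s:=2\Im(\lambda_3\overline{c_{3,4}})$ and $\delta:=i(\lambda_1 c_{2,3}-c_{1,2}\lambda_2)$. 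Its trace equals $-s$, consistent with the hypocoercivity index being $3$: only the last link $\circ\!\!\longrightarrow\!\!\bullet$ feeds dissipation into $\ker\CC_2$.

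For the parameter choice I would set $\lambda_1:=-i\ell_1 c_{1,2}$, $\lambda_2:=-i\ell_2 c_{2,3}$, $\lambda_3:=-i\ell_3 c_{3,4}$ with real $\ell_1,\ell_2,\ell_3>0$, which is legitimate because $c_{1,2},c_{2,3},c_{3,4}\ne0$ by~\eqref{C1-form-3}. Then $p=-2\ell_1|c_{1,2}|^2$, $q=-2\ell_2|c_{2,3}|^2$, $s=-2\ell_3|c_{3,4}|^2$ and $\delta=(\ell_1-\ell_2)\,c_{1,2}c_{2,3}$. Since the index $2$ is decoupled, the displayed matrix is positive definite if and only if $p-q=2(\ell_2|c_{2,3}|^2-\ell_1|c_{1,2}|^2)>0$ and the $\{1,3\}$-submatrix is positive definite, i.e.\ $-p=2\ell_1|c_{1,2}|^2>0$ (automatic) together with $4\ell_1(\ell_3|c_{3,4}|^2-\ell_2|c_{2,3}|^2)>(\ell_1-\ell_2)^2|c_{2,3}|^2$. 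A cascading choice does it: fix $\ell_1:=1$, then pick $\ell_2>|c_{1,2}|^2/|c_{2,3}|^2$ so $p-q>0$, and finally pick $\ell_3$ large (with $\ell_1,\ell_2$ now frozen) so that the last inequality holds --- which also forces $q-s>0$. For these $\lambda_j$ the $3\times3$ matrix is positive definite; Lemma~\ref{lemma:ansatzP:nD} then gives that $\CC^*\P(r)+\P(r)\CC$ is positive definite for all small $r>0$, and since $\P(r)$ is positive definite for small $r$ as well, replacing $\lambda_j$ by $r\lambda_j$ finishes the proof.

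The only non-routine point --- and the main obstacle --- is the parameter bookkeeping in the last step. Unlike the $\dim(\ker\CC_2)=2$ cases, here one cannot annihilate the off-diagonal coupling $\delta$ by a symmetric ansatz: $\delta=0$ forces $\ell_1=\ell_2$, which is incompatible with $p-q>0$ whenever $|c_{1,2}|\ge|c_{2,3}|$. The remedy is to keep $\ell_1,\ell_2$ fixed and swamp $|\delta|^2$ by taking $\ell_3$ large, and it is precisely the chain geometry $\circ\!\!\longrightarrow\!\!\circ\!\!\longrightarrow\!\!\circ\!\!\longrightarrow\!\!\bullet$ together with the hypothesis $c_{1,1}=c_{2,2}=c_{3,3}$ (which decouples the middle mode) that makes this work.
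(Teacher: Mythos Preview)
Your proof is correct and follows essentially the same route as the paper: both reduce via Lemma~\ref{lemma:ansatzP:nD} to the positive definiteness of the $3\times3$ block $\RR^*(\CC^*\A+\A\CC)\RR$, compute it explicitly (the vanishing of the $(1,2)$- and $(2,3)$-entries coming from $c_{1,1}=c_{2,2}=c_{3,3}$ together with the sparsity of~\eqref{C1-form-3}), take the phase choice $\lambda_j=-i\ell_j c_{j,j+1}$, and then select the moduli by first fixing $\ell_1,\ell_2$ to make the diagonal chain increase and finally taking $\ell_3$ large to dominate the remaining off-diagonal term. Your observation that the middle index decouples (so the analysis splits into a $1\times1$ and a $2\times2$ block) is exactly what the paper exploits when it notes that the second principal minor is automatically positive once the first-order conditions~\eqref{minor1-3D} hold.
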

\begin{proof}
First, the matrix $\P$ is positive definite if $|\lambda_{1}|^2+|\lambda_{2}|^2+|\lambda_{3}|^2<1$. 
Thus, $\P(r):=\II+r\A$ with $\A:=\P-\II$ yields for $r\in[0,1]$ a family of positive definite Hermitian matrices~$\P(r)$.

We have $\ker\CC_2=\spn\{e_1,e_2,e_3\}$ and $\RR=(e_1,e_2,e_3)\in\R^{n\times 3}$.
According to Lemma \ref{lemma:ansatzP:nD}, the positive definiteness of $\CC^*\P+\P\CC$ (for sufficiently small~$r>0$) can be inferred from the positive definiteness of $\RR^*(\CC^*\A+\A\CC)\RR$. 

As in the proof of Theorem \ref{th:P-admissible2D} we search for conditions on $\lambda_j$ ($j=1,2,3$) such that the eigenvalues of 
 \begin{equation}\label{R-matrix-3D}
  \RR^*(\CC^*\A+\A\CC)\RR =  
   \begin{pmatrix}
    2 \Im(c_{1,2}\overline{\lambda_1}) & 0 & i(c_{2,3} \lambda_1 -c_{1,2}\lambda_2) \\
    0 & -2\Im(c_{1,2}\overline{\lambda_1}-c_{2,3}\overline{\lambda_2}) & 0 \\
    i(\overline{c_{1,2}} \overline{\lambda_2} -\overline{c_{2,3}}\overline{\lambda_1}) & 0 & -2\Im(c_{2,3}\overline{\lambda_2} -c_{3,4}\overline{\lambda_3}) 
   \end{pmatrix} \,
 \end{equation}
 are positive.
If all minors are positive, then the matrix will be positive definite (by Sylvester's criterion).
{}From the three minors of first order we deduce the conditions 
\begin{equation}\label{minor1-3D}
 0<\Im(c_{1,2}\overline{\lambda_1}) <\Im(c_{2,3}\overline{\lambda_2}) <\Im(c_{3,4}\overline{\lambda_3}) \,,
\end{equation}
which also imply the positivity of the second minor, i.e. 
\[ 
 4 \Im(c_{1,2}\overline{\lambda_1}) \Im(c_{2,3}\overline{\lambda_2}-c_{1,2}\overline{\lambda_1}) >0 \,.
\]
To satisfy the former conditions it is convenient to choose
\begin{equation}\label{lambda-polar-3D}
  \arg(\lambda_1)=\arg(c_{1,2})-\frac{\pi}{2}\ ,\quad \arg(\lambda_2)=\arg(c_{2,3})-\frac{\pi}{2}\ ,\quad \arg(\lambda_3)=\arg(c_{3,4})-\frac{\pi}{2}\ ,
\end{equation}
just as in \eqref{lambda-polar}.
The determinant of \eqref{R-matrix-3D} reads
\begin{equation}\label{det-3D}
  2 \Im(c_{2,3}\overline{\lambda_2}-c_{1,2}\overline{\lambda_1}) 
	\left[4 \Im(c_{1,2}\overline{\lambda_1}) \Im(c_{3,4}\overline{\lambda_3}-c_{2,3}\overline{\lambda_2})
	-|c_{2,3} \lambda_1-c_{1,2}\lambda_2|^2\right]\ .
\end{equation}
Now the parameters $\lambda_j$ ($j=1,2,3$) can be chosen in analogy to the proof of Theorem \ref{th:P-admissible2D},
case \ref{case:2B} to satisfy the conditions \eqref{minor1-3D}. Once $\lambda_1$, $\lambda_2$, and $\arg(\lambda_3)$ are fixed, we can choose $|\lambda_3|$ large enough to also satisfy the positivity of \eqref{det-3D}.
\end{proof}
\medskip

This analysis to construct appropriate matrices $\P$ could, of course, also be extended to higher dimensions of $\ker\CC_2$, but this gets more cumbersome. In \S\ref{sec:linBGK:2D} and \S\ref{sec:linBGK:3D} we have $\dim(\ker\CC_2)=4$ and 5, respectively.

%%%%%%%%%%%%%%%%%%%%%%%%%%%%%%%%%%%%%%%%%%%%%%%%%%%%%%%%%%%%%%%%%%%%%%%%%%%%%%%%%

\section{Linearized BGK equation in 1D}
\label{sec:linBGK:1D}

\bigskip
In this section we shall analyze the large time behavior of the linearized BGK equation~\eqref{linBGK:torus} in 1D, 
\begin{multline} \label{linBGK:torus:1D}
 \partial_t h (x,v,t) +  v\ \partial_x h(x,v,t) \\
      = M_1(v) \,\left[\left( \frac32 -\frac{v^2}{2}\right)\sigma(x,t)
                       +v \mu(x,t)
                       + \left( -\frac{1}{2}+ \frac{v^2}{2}\right)\tau(x,t)\right] -h(x,v,t)\ , \quad t\geq 0 \ ,
\end{multline}
for the perturbation $h(x,v,t) \approx f(x,v,t) -M_1(v)$. To prepare for the proof of Theorem \ref{linBGK-decay} we shall use an expansion in $v$--modes, as in \cite{AAC16}.
Using the probabilists' Hermite polynomials,
\begin{equation} \label{hermite-polynom:1D}
  H_m(v) := (-1)^m e^{\frac{v^2}{2}}\frac{{\rm d}^m}{{\rm d} v^m}e^{-\frac{v^2}{2}}\ ,\quad m\in\N_0\ ,
\end{equation}
we define the normalized Hermite functions corresponding to $T=1$:
\begin{equation} \label{hermite-fct:1D}
  g_m(v):=(2\pi m!)^{-1/2} H_m(v)\,e^{-\frac{v^2}{2}}.
  %\quad \text{and}\quad
  %\tilde g_m(v):=\frac{1}{\sqrt T} g_m\big(\frac{v}{\sqrt T}\big)\ .
\end{equation}
They satisfy
$$
  \int_\R g_m(v) g_n(v) M_1^{-1}(v)\,{\rm d} v = \delta_{mn}\,,\quad m,n\in\N_0\,,
$$
and the recurrence relation
\begin{equation}\label{recur:1D}
  v \, g_m(v) = \sqrt{m+1}\, g_{m+1}(v) + \sqrt{m} \,g_{m-1}(v)\,,\quad m\in\N\ .
%  v \, g_m(v) = \sqrt T\big[\sqrt{m+1}\, g_{m+1}(v) + \sqrt{m} \,\tilde g_{m-1}(v)\big]\ .
\end{equation}
The first three normalized Hermite functions $g_m(v)$ are 
$$
  {g}_0(v) = M_1(v)\ , \qquad
  {g}_1(v) = v M_1(v) \quad\text{and}\quad
  {g}_2(v) = \frac{ v^2- 1}{\sqrt{2}}\ M_1(v)\ .
$$
With this notation, \eqref{linBGK:torus:1D} reads
$$
  \partial_t h(x,v,t) + v\ \partial_x h(x,v,t) = \left({g}_0(v) -\frac{1}{\sqrt2} {g}_2(v)\right)\sigma(x,t)
    + {g}_1(v)\mu(x,t)
    + \frac{1}{\sqrt2} {g}_2(v)\tau(x,t)-h(x,v,t)\ .
$$
We start with the $x$--Fourier series of $h$: 
%Fourier transforming in $x$, as in \eqref{Fourier},
 \begin{equation*}%\label{Fourier:1D}
  h(x,v,t) = \sum_{k\in\Z} h_k(v,t)\,e^{ik\frac{2\pi}{L}x}\ .
 \end{equation*}
Each spatial mode $h_k(v,t)$ is decoupled and evolves according to
\begin{equation} \label{linBGK:Fourier} 
 \ddt h_k +  ik\tfrac{2\pi}{L}v\ h_k = {g}_0(v)\sigma_k(t) 
      + {g}_1(v)\mu_k(t)
      + {g}_2(v)\frac{1}{\sqrt2} \left(\tau_k(t) -\sigma_k(t)\right)-h_k\ ,\quad k\in\Z;\,t\ge0\ .
\end{equation}
Here, $\sigma_k$, $\mu_k$ and $\tau_k$ denote the spatial modes of the $v$--moments $\sigma$, $\mu$ and $\tau$ defined in \eqref{mloc2}; hence
\[ \sigma_k := \int_\R h_k(v,t)\d[v]\ , \quad 
   \mu_k := \int_\R v\ h_k(v,t)\d[v]\ , \quad 
   \tau_k := \int_\R v^2\ h_k(v,t)\d[v]\ .
\]

\noindent
Next we expand $h_k(\cdot,t)\in L^2(\R;M_1^{-1})$ in the orthonormal basis $\{{g}_m(v)\}_{m\in\N_0}$:
$$
  h_k(v,t)=\sum_{m=0}^\infty \hat h_{k,m}(t)\,{g}_m(v)\ ,\quad\text{with}\quad
  \hat h_{k,m} = \langle h_k(v),{g}_m(v)\rangle_{L^2(M_1^{-1})}\ .
$$
For each $k\in\Z$,
 the ``infinite vector'' $\hat {\bf h}_k(t) = ( \hat h_{k,0}(t),\,\hat h_{k,1}(t),\ ...)^\top\in \ell^2(\N_0)$ contains all Hermite coefficients of $h_k(\cdot,t)$. 
In particular we have
$$
  \hat h_{k,0} = \int_\R h_k(v) {g}_0(v) M_1^{-1}(v) \d[v] = \sigma_k \ , \qquad 
  \hat h_{k,1} = \int_\R h_k(v) {g}_1(v) M_1^{-1}(v) \d[v] = \mu_k \ ,
$$
and
$$
  \hat h_{k,2} = \int_\R h_k(v) {g}_2(v) M_1^{-1}(v) \d[v] = \frac{1}{\sqrt2} \left(\tau_k-\sigma_k\right)\ .
$$
Hence, \eqref{linBGK:Fourier} can be written equivalently as
$$
  \ddt h_k(v,t) +  ik\tfrac{2\pi}{L} v\ h_k(v,t) = {g}_0(v) \hat h_{k,0}(t) 
      + {g}_1(v) \hat h_{k,1}(t)
      + {g}_2(v) \hat h_{k,2}(t) -h_k(v,t)\ , \quad k\in\Z\ ;\ t\ge0\ .
$$
Thus, the vector of its Hermite coefficients satisfies
\begin{equation}\label{linap4}
\ddt\hat {\bf h}_k(t) + ik \tfrac{2\pi}{L}\,\LL_1 \hat{\bf h}_k(t) = -\LL_2 \hat{\bf h}_k(t)\ ,\quad k\in\Z\ ;\ t\ge0\ ,
\end{equation}
where the operators $\LL_1,\,\LL_2$ are represented by ``infinite matrices'' on $\ell^2(\N_0)$:
\begin{equation}\label{L1L2}
  \LL_1= \begin{pmatrix}
  0 &  \sqrt 1 &  0 &  \cdots \\
  \sqrt 1 & 0 &  \sqrt 2 &  0 \\
  0 &  \sqrt 2 & 0 &  \sqrt 3 \\
  \vdots &  0 &  \sqrt 3 & \ddots 
  \end{pmatrix}
\ ,\quad \LL_2=\diag(0,\, 0,\, 0,\, 1,\, 1,\cdots)\ .
\end{equation}
% \begin{equation*}%\label{L1L2}
%   \LL_1= \begin{pmatrix}
%   0 & \sqrt 1 &  0 & 0 & \cdots \\
%   \sqrt 1 & 0 &  \sqrt 2 & 0 & \cdots \\
%   0 & \sqrt 2 & 0 &  \sqrt 3 & \ddots \\
%   0 & 0 & \sqrt 3 & 0 & \ddots \\ 
%   \vdots & & \ddots & \ddots & \ddots 
%   \end{pmatrix}
% \ ,\quad \LL_2=\diag(0,\, 0,\, 0,\, -1,\, -1,\cdots)\ .
% \end{equation*}
\begin{remark} The bi-diagonal form of $\LL_1$ is a direct expression of the two-term recursion relation (\ref{recur:1D}). This is not special to the Hermite polynomials; a similar expression holds for the orthogonal polynomials with respect to any even reference measure.
\end{remark} 
Equation \eqref{linap4} provides a decomposition of the generator in its skew-symmetric part $-ik \tfrac{2\pi}{L}\LL_1$ and its symmetric part $-\LL_2$, the latter introducing the decay in the evolution.

We remark that \eqref{linap4} simplifies for the spatial mode $h_0$ with $k=0$. 
One easily verifies that, for all $d$, the flow of %\eqref{linBGK:torus:1D} 
\eqref{linBGK:torus} preserves \eqref{mloc2},
 i.e. $\sigma_0(t)=0$, $\mu_0(t)=0$, $\tau_0(t)=0$ for all $t\ge0$. 
Hence, \eqref{linBGK:Fourier} yields 
\begin{equation}\label{h0-decay}
  \ddt h_0(v,t) = -h_0(v,t)\ ,\quad t\ge0\ .
\end{equation}

For $k\neq 0$, we note that the linearized BGK equation is very similar to the equation specified in \cite[\S 4.4]{AAC16}:
The only difference is that $\LL_2$ now has one more zero -- at the second entry on the diagonal, which corresponds to the conservation of momentum. For $k\ne0$, \eqref{linap4} has the structure of the example in \S\ref{sec:dim-ker3}, and thus hypocoercivity index 3. This has a simple interpretation: The mass-conservation mode is coupled to the momentum-conservation mode, which is coupled to the energy-conservation mode. Finally, the latter is coupled to the decreasing mode that corresponds to $g_3(v)$. The hypocoercivity index of \eqref{linap4} can also be obtained directly from Definition \ref{hyp-index}, in its equivalent formulation \ref{cond:trivkernel} that also applies to ``infinite matrices'': With $\corank \LL_2=3$, $\ker\LL_2=\spn\{e_0,\,e_1,\,e_2\}$, and the relations 
$$
  \LL_1 e_0=e_1\ ,\quad \LL_1 e_1=e_0+\sqrt2 e_2\ ,\quad \LL_1 e_2=\sqrt2 e_1+\sqrt3 e_3\ ,\quad 
$$
we again find $\tau=3$. \\

We define the matrices $\CC_k := ik \tfrac{2\pi}{L} \LL_1 + \LL_2$, $k\in\Z$ which determine the evolution of the spatial modes of the BGK equation in 1D, cf.\ \eqref{linap4}. Our next goal is to establish a spectral gap of $\CC_k$, uniformly in $k\ne0$. This will prove Theorem \ref{linBGK-decay} in 1D. Clearly, this matrix corresponds to $\CC=i\CC_1+\CC_2$ in \S\ref{sec:Pmatrix}. There, the construction of the transformation matrix $\P(r)=\II+r\A$ was based on Lemma \ref{lemma:ansatzP:nD}, and hence on proving the positive definiteness of
$$
  \RR^*(\CC^*\A+\A\CC)\RR = i\RR^*(-\CC_1^*\A+\A\CC_1)\RR\,.
$$
Here, the operator $\LL_1$ carries the coefficient $ik \tfrac{2\pi}{L}$ with $k\in\Z\setminus\{0\}$. To compensate for $k$, it is natural to choose the perturbation matrix $\A$ proportional to $\frac1k$.
Following \S\ref{sec:dim-ker3} we hence use the ansatz \eqref{P-ansatz-3} for the $k$--dependent transformation matrices $\P_k$: For parameters $\lambda_j;\,j=1,2,3$ %$\alpha$, $\beta$ and $\gamma>0$ 
to be chosen below, we define $\P_k,\, k\ne0$ to be the infinite matrix that has
\begin{equation}\label{P:1D}
\begin{pmatrix}
1  & \lambda_1/k  & 0 & 0 \\ 
\overline{\lambda_1}/k & 1  & \lambda_2/k &0\\
0 & \overline{\lambda_2}/k &1 & \lambda_3/k \\
0 & 0 & \overline{\lambda_3}/k & 1 
%1  & -i\alpha/k  & 0 & 0 \\ 
%i\alpha/k & 1  & -i\beta/k &0\\
%0 & i\beta/k &1 & -i\gamma/k \\
%0 & 0 & i\gamma/k & 1 
\end{pmatrix}
\end{equation}
as its upper-left $4\times 4$ block, with all other entries being those of the identity. 
Under the assumption $|\lambda_1|^2 +|\lambda_2|^2 +|\lambda_3|^2 <1$, 
% $|\alpha|^2 +|\beta|^2 +|\gamma|^2 <1$,
the matrix $\P_k$ will be positive definite for all $k\ne 0$.
Recalling that $\LL_1$ is an (infinite) real matrix as well as the parameter choice in \eqref{lambda-polar-3D}, it is natural to choose also here $\arg(\lambda_j)=-\frac\pi2$. Hence \eqref{P:1D} turns into
\begin{equation}\label{P:1D'}
\begin{pmatrix}
1  & -i\alpha/k  & 0 & 0 \\ 
i\alpha/k & 1  & -i\beta/k &0\\
0 & i\beta/k &1 & -i\gamma/k \\
0 & 0 & i\gamma/k & 1 
\end{pmatrix} \ ,
\end{equation}
with $\alpha:=|\lambda_1|$, $\beta:=|\lambda_2|$, $\gamma:=|\lambda_3|$.

Now, (the infinite dimensional analog of) Theorem \ref{th:p-ansatz-3} asserts that the above ansatz will yield an admissible transformation matrix $\P$ and hence an exponential decay rate for \eqref{linap4}, uniformly in $k$. But, as a perturbation result, it neither provides an explicit value for the decay rate $\mu$, nor does it yield a rather natural ratio between the parameters $\lambda_j$. These two aspects will be our next task.

\begin{remark}
To justify the infinite dimensional analog of Theorem \ref{th:p-ansatz-3},
 we decompose 
 \[ \CC_k^*\P(r)+\P(r)\CC_k = 2\LL_2 +r\ (\CC_k^*\A+\A\CC_k) = 2\II +(2\LL_2 -2\II) +r\ (\CC_k^*\A+\A\CC_k) \,. \]
To investigate the spectrum of the Hermitian operator~$\CC_k^*\P(r)+\P(r)\CC_k$ in $\ell^2(\N_0)$,
 it is sufficient to compute the spectrum of the compact operator $(2\LL_2 -2\II) +r\ (\CC_k^*\A+\A\CC_k)$.
The compact operators $2\LL_2 -2\II=2\diag(-1,-1,-1,0,\ldots)$ and $\CC_k^*\A+\A\CC_k$ act on a common finite-dimensional subspace of $\ell^2(\N_0)$,
 hence we can use Lemma~\ref{lemma:ansatzP:nD} to analyze the restriction of the compact operators on this finite-dimensional subspace:
The three lowest eigenvalues of $(2\LL_2 -2\II) +r\ (\CC_k^*\A+\A\CC_k)$, for sufficiently small $r\geq0$, satisfy
 \[
   \widetilde{\mu_j}(r) = -2 +r\xi_j +o(r) \, ;\quad j=1,2,3\,,
 \]
 where $\xi_j$ are the eigenvalues of $\RR^*(\CC_k^*\A+\A\CC_k)\RR$ and $\RR=(e_1,e_2,e_3)\in\C^{n\times 3}$ (recall that $\ker(\LL_2) =\spn\{e_1,e_2,e_3\}$).
Then the three lowest eigenvalues of $\CC_k^*\P(r)+\P(r)\CC_k$, for sufficiently small $r\geq0$, satisfy
 \[ \mu_j(r) = r\xi_j +o(r) \,. \]
\end{remark}

Next we search for conditions on $\alpha,\,\beta,\,\gamma>0$ 
%$\lambda_j$ ($j=1,2,3$) 
such that the eigenvalues $\xi_j$ of 
 \[ 
  \RR^*(\CC_k^*\A+\A\CC_k)\RR = \tfrac{2\pi}{L}\  
   \begin{pmatrix}
    2 \alpha & 0 & \sqrt2\alpha-\beta \\
    0 & 2(\sqrt{2}\beta-\alpha) & 0 \\
    \sqrt2\alpha-\beta & 0 & 2(\sqrt{3}\gamma -\sqrt{2}\beta) 
%    -2 \Im\lambda_1 & 0 & i(\sqrt{2} \lambda_1 -\lambda_2) \\
%    0 & 2(\Im\lambda_1 -\sqrt{2}\Im\lambda_2) & 0 \\
%    -i(\sqrt{2}\, \overline{\lambda_1} -\overline{\lambda_2}) & 0 & 2(\sqrt{2}\Im\lambda_2 -\sqrt{3}\Im\lambda_3) 
   \end{pmatrix} \,
 \]
 are positive.
If all minors are positive, then the matrix will be positive definite (by Sylvester's criterion).
We deduce the conditions 
 \[ 
    0<\alpha<\sqrt2\beta<\sqrt3\gamma \quad \text{and} \quad 
    0 <4\alpha(\sqrt{3}\gamma -\sqrt{2}\beta) - |\sqrt{2} \alpha -\beta|^2 \, ,
%    \sqrt{3}\Im\lambda_3 <\sqrt{2}\Im\lambda_2 <\Im\lambda_1 <0 \quad \text{and} \quad 
%    0 <-4\Im\lambda_1(\sqrt{2}\Im\lambda_2 -\sqrt{3}\Im\lambda_3) - |\sqrt{2} \lambda_1 -\lambda_2|^2 \, ,
 \]
which are special cases of \eqref{minor1-3D}, \eqref{det-3D}.
In fact, the matrix $\tfrac{L}{2\pi}\ \RR^*(\CC_k^*\A+\A\CC_k)\RR$ has the eigenvalues $2(\sqrt2\beta-\alpha)$ 
%$2(\Im\lambda_1 -\sqrt{2}\Im\lambda_2)$ 
and
 \[
  \sqrt{3}\gamma-\sqrt2\beta+\alpha
   \pm \sqrt{(\sqrt{3}\gamma-\sqrt2\beta-\alpha)^2 +(\sqrt{2}\alpha-\beta)^2} \,.
% \sqrt{2}\Im\lambda_2 -\sqrt{3}\Im\lambda_3 -\Im\lambda_1
%   \pm \sqrt{(\sqrt{2}\Im\lambda_2 +\Im\lambda_1 -\sqrt{3}\Im\lambda_3)^2 +(\Im\lambda_2 -\sqrt{2}\Im\lambda_1)^2 +(\Re\lambda_2 -\sqrt{2}\Re\lambda_1)^2 } \,.
 \]
We note that the special choice $\beta:=\sqrt{2}\alpha$ and $\gamma:=\sqrt{3}\alpha$ makes all eigenvalues of $\RR^*(\CC_k^*\A+\A\CC_k)\RR$ equal, which seems to be beneficial to obtain eventually a good decay estimate. Moreover, it will simplify the proof of Lemma~\ref{lem:mu:1D}.
%Now we are in the position to discuss choices for $\alpha, \,\beta,\,\gamma$.
%$\lambda_j$ ($j=1,2,3$):
%The real part of any $\lambda_j$ effect only the discriminant and will decrease the value of the lower eigenvalue.
%Therefore we choose all $\lambda_j$ to be purely imaginary (or at least $\Re\lambda_2 =\sqrt{2}\Re\lambda_1$).
%Moreover the choices $\lambda_2 =\sqrt{2}\lambda_1$ and $\lambda_3 =\sqrt{3}\lambda_1$ make all eigenvalues of $\RR^*(\CC_k^*\A+\A\CC_k)\RR$ equal.
%Consequently, the ansatz of $k$--dependent matrices $\P_k$ in~\eqref{P:1D} is justified by this preliminary analysis and suggests furthermore the relation $\beta=\sqrt{2}\alpha$ and $\gamma=\sqrt{3}\alpha$ (see also Lemma~\ref{lem:mu:1D}).

In the following lemma we establish an infinite dimensional analog of Lemma \ref{lemma:Pdefinition} -- for \eqref{linap4}, the transformed linearized BGK equation in 1D. However, here we shall not aim at obtaining the optimal decay constant $\mu$ in the matrix inequality \eqref{matrixestimate1}. 
Still, $\mu$ will be independent of the modal index $k\in\Z$, thus providing exponential decay of the full solution.

\begin{lemma} \label{lem:mu:1D}
For each cell length $L>0$ we consider $\alpha^{(3)}=\alpha^{(3)}(L)>0$ defined in~\eqref{root:3:1D}.
If the matrices $\P_k$ are chosen with some $\alpha \in (0,\alpha^{(3)})$, $\beta =\sqrt{2} \alpha$, and $\gamma =\sqrt{3} \alpha$ uniformly for all $|k|\in\N$,
 then $\P_k$ from \eqref{P:1D'} and $\CC_k^*\P_k + \P_k\CC_k$ are positive definite for all $k\in\Z\setminus\{0\}$.
Moreover,
\begin{equation}\label{goodbnd}
\CC_k^*\P_k + \P_k\CC_k  \geq 2\mu \P_k \qquad \text{ uniformly in } |k|\in\N \ , 
\end{equation}
with  
%\begin{equation}\label{goodbnd2}
$$
\mu := \frac{\delta_3(1,\alpha)}{8(1-2\pi\alpha/L)^2 (1+\alpha \sqrt{3+\sqrt{6}})}>0\ , 
$$
%\end{equation}
 where $\delta_3(1,\alpha):= \det \DD_{1,\alpha,\sqrt{2} \alpha,\sqrt{3} \alpha}^{(3)}>0$
 with the matrix $\DD_{k,\alpha,\sqrt{2} \alpha,\sqrt{3} \alpha}^{(3)}$ defined in~\eqref{Dka3}.
\end{lemma}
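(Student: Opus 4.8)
The plan is to carry the finite-dimensional perturbation argument behind Theorem~\ref{th:p-ansatz-3} over to the $k$-parametrised, infinite-dimensional setting, but with all constants made explicit and uniform in $k$. Set $\A:=k(\P_k-\II)$, which by \eqref{P:1D'} is a fixed (i.e.\ $k$-independent) Hermitian matrix, supported on the upper-left $4\times4$ block, carrying $-i\alpha,-i\sqrt2\alpha,-i\sqrt3\alpha$ on its first superdiagonal. First I would establish the commutator identity
\[
 \CC_k^*\P_k+\P_k\CC_k \;=\; 2\LL_2 \;+\; \tfrac{2\pi i}{L}\,[\A,\LL_1] \;+\; \tfrac1k\,\{\LL_2,\A\}\ ,
\]
with $[\cdot,\cdot]$ the commutator and $\{\cdot,\cdot\}$ the anticommutator. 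Its point is that the large coefficient $ik\tfrac{2\pi}{L}$ of $\LL_1$ in $\CC_k$ is cancelled exactly by the $\tfrac1k$ in $\P_k$, so the only surviving $k$-dependence sits in the anticommutator term and is $O(1/k)$. Since $\A$ lives on coordinates $0,\dots,3$ and $\LL_1$ is bi-diagonal, both $[\A,\LL_1]$ and $\{\LL_2,\A\}$ are supported in the upper-left $5\times5$ block; off that block the right-hand side equals $2\II$ and $\P_k=\II$, so \eqref{goodbnd} holds there trivially as soon as $\mu\le1$, and it remains to prove it on the $5\times5$ block. Because $\CC_{-k}=\overline{\CC_k}$ and $\P_{-k}=\overline{\P_k}$, the inequalities for $k$ and $-k$ are complex conjugates of each other, so it suffices to treat $k\in\N$. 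Positive definiteness of $\P_k$ is immediate from \eqref{P:1D'}: its leading principal minors are $1$, $1-\alpha^2/k^2$, $1-3\alpha^2/k^2$ and $(1-\alpha^2/k^2)(1-3\alpha^2/k^2)-2\alpha^2/k^2$, all positive for $k\ge1$ once $\alpha<\alpha^{(3)}$; that $\alpha^{(3)}(L)>0$ for every $L$ follows since every quantity entering these (and the conditions below) is continuous in $\alpha$ and strictly positive at $\alpha=0$.

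On the $5\times5$ block I would split the coordinates into $\ker\LL_2=\spn\{e_0,e_1,e_2\}$ and the decaying pair $\{e_3,e_4\}$ and apply the Schur-complement criterion to $\CC_k^*\P_k+\P_k\CC_k-2\mu\P_k$. The block on $\{e_3,e_4\}$ is a small perturbation of $2\II$, hence positive definite once $\alpha$ is below a threshold absorbed into $\alpha^{(3)}(L)$; eliminating it leaves a $3\times3$ Hermitian matrix $\DD^{(3)}_{k,\alpha,\sqrt2\alpha,\sqrt3\alpha}$ on $\ker\LL_2$ --- precisely the object whose positive definiteness governs \eqref{goodbnd}, in the spirit of Lemma~\ref{lemma:ansatzP:nD} and \eqref{R-matrix-3D}. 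The special choice $\beta=\sqrt2\alpha$, $\gamma=\sqrt3\alpha$ --- already used so that the three eigenvalues of $\RR^*(\CC_k^*\A+\A\CC_k)\RR$ are all equal to $\tfrac{4\pi\alpha}{L}$ --- makes the $\mu$- and $k^{-1}$-independent part of $\DD^{(3)}_{k}$ equal to $\tfrac{4\pi\alpha}{L}\II_3$ minus a rank-one term along $e_2$, so Sylvester's criterion reduces everything to the positivity of $\delta_3(k,\alpha)=\det\DD^{(3)}_{k,\alpha,\sqrt2\alpha,\sqrt3\alpha}$, which is worst at $k=1$; one then checks $\delta_3(1,\alpha)>0$ precisely on $(0,\alpha^{(3)})$. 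Combining the elementary bound $\lambda_{\min}\ge\det/(\lambda_2\lambda_3)$ for a positive definite $3\times3$ matrix with eigenvalues $\lambda_1\le\lambda_2\le\lambda_3$, the estimates made when eliminating the $\{e_3,e_4\}$-block (responsible for the factor $(1-2\pi\alpha/L)^{-2}$), and the norm bound $\|\P_k\|\le 1+\tfrac1k\|\A\|\le 1+\alpha\sqrt{3+\sqrt6}$ (the last number being $\|\A\|$, read off from the eigenvalues of $\A^2$), one upgrades $\CC_k^*\P_k+\P_k\CC_k\ge 2\nu\II$ to \eqref{goodbnd} with exactly the asserted $\mu$.

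The step I expect to be the real obstacle is the uniform-in-$k$ bookkeeping: the anticommutator $\tfrac1k\{\LL_2,\A\}$ and the $-\tfrac{2\mu}{k}\A$ coming from $2\mu\P_k$ couple $e_2$ to $e_3$ with a coefficient of size $O(\alpha/k)$, which is only small for large $|k|$; one must show the $3\times3$ Schur complement remains positive definite even at $k=1$ and then absorb every $k$-dependent error into a single constant. Everything else is a long but routine determinant computation, and $\mu=\delta_3(1,\alpha)/\big(8(1-2\pi\alpha/L)^2(1+\alpha\sqrt{3+\sqrt6})\big)$ is what falls out of it.
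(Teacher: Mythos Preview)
Your commutator identity and the reduction to the upper-left $5\times5$ block are correct, and your overall strategy is sensible. But there is a concrete misidentification that derails the rest.

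The matrix $\DD^{(3)}_{k,\alpha,\sqrt2\alpha,\sqrt3\alpha}$ referenced in the statement (equation~\eqref{Dka3}) is \emph{not} a Schur complement on $\ker\LL_2=\spn\{e_0,e_1,e_2\}$. It is simply the lower-right $3\times3$ block of the $5\times5$ matrix $\DD_{k,\alpha,\sqrt2\alpha,\sqrt3\alpha}$, living on the indices $\{e_2,e_3,e_4\}$. The special parameter choice $\beta=\sqrt2\alpha$, $\gamma=\sqrt3\alpha$ does something stronger than making the eigenvalues of $\RR^*(\CC_k^*\A+\A\CC_k)\RR$ coincide: it annihilates the off-diagonal entries $\ell(\sqrt2\alpha-\beta)$ and $\ell(\sqrt3\beta-\sqrt2\gamma)$ in $\DD$, so that the full $5\times5$ matrix becomes block-diagonal, equal to $2\ell\alpha\,\II_2$ on $\{e_0,e_1\}$ and to $\DD^{(3)}$ on $\{e_2,e_3,e_4\}$. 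No Schur step is needed; the eigenvalues of $\DD$ are directly $2\ell\alpha$ (double) together with the three eigenvalues of $\DD^{(3)}$.

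Consequently your attribution of the factor $(1-2\pi\alpha/L)^{-2}$ to ``eliminating the $\{e_3,e_4\}$-block'' is wrong. In the paper's argument one bounds the smallest eigenvalue $\lambda_1$ of $\DD^{(3)}$ by
\[
 \lambda_1=\frac{\det\DD^{(3)}}{\lambda_2\lambda_3}\ \ge\ \frac{\det\DD^{(3)}}{\bigl(\tfrac12\tr\DD^{(3)}\bigr)^2}
 \ =\ \frac{\delta_3(k,\alpha)}{4(1-\ell\alpha)^2}\,,
\]
using AM--GM and $\tr\DD^{(3)}=2\ell\alpha+(2-6\ell\alpha)+2=4(1-\ell\alpha)$ with $\ell=2\pi/L$; the monotonicity $\delta_3(k,\alpha)\ge\delta_3(1,\alpha)$ and the check $\min\{2,\,2\ell\alpha,\,\lambda_1\}=\lambda_1$ then give $\CC_k^*\P_k+\P_k\CC_k\ge\frac{\delta_3(1,\alpha)}{4(1-\ell\alpha)^2}\II$, and combining with $\P_k\le(1+\alpha\sqrt{3+\sqrt6})\II$ yields exactly the stated $\mu$. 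Your Schur-complement route on $\ker\LL_2$ produces a \emph{different} diagonal $3\times3$ matrix (with determinant only proportional to $\delta_3$), so while that approach could in principle be pushed through, it will not recover the constants in the lemma as stated, and the ``real obstacle'' you anticipate with the $O(\alpha/k)$ cross-terms simply does not arise once one exploits the block-diagonality directly.
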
 
The proof of this lemma is deferred to Appendix~\ref{A1}.

\begin{remark}\label{rem3.2}
\begin{enumerate}[label=(\alph*)]
\item Consider 
 \[ \alpha_* = \argmax_{\alpha\in[0,\alpha^{(3)}]} \frac{\delta_3(1,\alpha)}{8(1-2\pi\alpha/L)^2 (1+\alpha \sqrt{3+\sqrt{6}})} \ . \]
 Choosing $\P_k$ with $\alpha=\alpha_*$, $\beta =\sqrt{2} \alpha$, and $\gamma =\sqrt{3} \alpha$ uniformly for all $|k|\in\N$, yields~\eqref{goodbnd} with the %maximal 
 constant
\begin{equation}\label{goodbnd2}
  \mu_* = \frac{\delta_3(1,\alpha_* )}{8(1-2\pi\alpha_* /L)^2 (1+\alpha_* \sqrt{3+\sqrt{6}})}
      = \max_{\alpha\in[0,\alpha^{(3)}]} \frac{\delta_3(1,\alpha)}{8(1-2\pi\alpha/L)^2 (1+\alpha \sqrt{3+\sqrt{6}})} \ .
\end{equation}
\item In the limit $L\to \infty$, the matrix $\CC_k^*\P_k + \P_k\CC_k$ has zero eigenvalues,
 which is apparent from its upper left submatrix $\DD_{k,\alpha,\sqrt{2} \alpha,\sqrt{3} \alpha}$ defined in~\eqref{Dkaaa}.
 Accordingly, $\alpha^{(3)} \to 0$ with $\alpha^{(3)} =O(\frac1L)$ and $\mu_*=O(\frac1{L^2})$ in the limit $L\to \infty$. It is no surprise that the exponential decay rate vanishes in this limit, as the limiting whole space problem only exhibits algebraic decay (cf.\ \cite{BDMMS17} for the large-time analysis of \eqref{bgk:linear} on $\R^d$).
 
 In the limit $L\to 0$, again $\alpha^{(3)} \to 0$ with $\alpha^{(3)} =O(L)$. 
Using
\begin{equation}\label{alpha-limit}
   \lim_{L\to0} \frac{\alpha_*(L)}{L} 
   = \frac{4-\sqrt{13}}{6\pi}\,,
\end{equation}
 we obtain
 $$
   \lim_{L\to0} \mu_*(L)= 3 (4-\sqrt{13}) \frac{(3-\sqrt{13})^2}{(1-\sqrt{13})^2} = 0.06391670961...
 $$
 (cf.\ Fig.\ \ref{fig:muL}).
\end{enumerate}
\end{remark}

\begin{figure}[ht!]
\begin{center}
 \includegraphics[scale=.5]{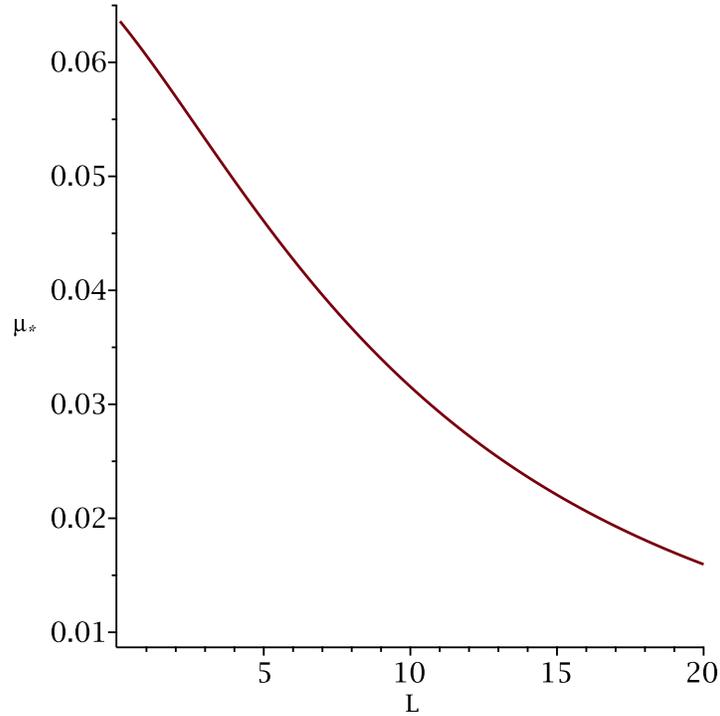}
 \caption{For each cell length $L$ the constant $2\mu_*(L)$ obtained from Lemma \ref{lem:mu:1D} and Remark \ref{rem3.2}(a) 
 yields a bound for the entropy decay rate in Theorem \ref{linBGK-decay}.
 }
 \label{fig:muL}
\end{center}
\end{figure}

% \begin{lemma} 
% Consider
%  \[ \alpha_+ := \min\{\tfrac2{1+\sqrt{5}},\alpha^{(4)}(1),\alpha^{(5)}_-(1)\}\]
%  with $\alpha^{(4)}(k)$ and $\alpha^{(5)}_-(k)$ defined in \eqref{root:4:1D} and \eqref{root:5:1D}, respectively.
% If $\P_k$ is chosen with some $\alpha \in (0,\alpha_+)$, $\beta =\alpha$, and $\gamma =\alpha$ uniformly for all $k\in\N$,
%  then $\P_k$ and $\CC_k^*\P_k + \P_k\CC_k$ are positive definite for all $k\in\N$.
% Moreover, consider
%  \[ \alpha_* = \argmax_{\alpha\in[0,\alpha_+]} \frac{\delta_5(1,\alpha,\alpha,\alpha)}{2 \big(1+(1+\sqrt{5})\frac{\alpha}{2}\big)} \ , \]
%  where $\delta_5(1,\alpha,\alpha,\alpha):= \det \DD_{k,\alpha,\alpha,\alpha}$
%  with $\DD_{k,\alpha,\alpha,\alpha}$ defined in~\eqref{Dkaaa}.
% Choosing $\P_k$ with $\alpha=\alpha_*$, $\beta =\alpha$, and $\gamma =\alpha$ uniformly for all $k\in\N$, yields
% \begin{equation}\label{goodbnd}
% \CC_k^*\P_k + \P_k\CC_k  \geq 2\mu \P_k \qquad \text{ uniformly in } k\in\N \ , 
% \end{equation}
% with  
% \begin{equation}\label{goodbnd2}
% \mu = \frac{\delta_5(1,\alpha_*,\alpha_*,\alpha_*)}{2 \big(1+(1+\sqrt{5})\frac{\alpha_*}{2}\big)}
%     = \max_{\alpha\in[0,\alpha_+]} \frac{\delta_5(1,\alpha,\alpha,\alpha)}{2 \big(1+(1+\sqrt{5})\frac{\alpha}{2}\big)} \ ,
% \end{equation}
% \end{lemma} 

\medskip
Applying Lemma \ref{lem:mu:1D} to each $x$-Fourier mode $\hat{\bf h}_k(t),\,k\ne0$ from \eqref{linap4} allows to prove exponential decay of the linearized BGK equation in 1D:
\begin{proof}[Proof of Theorem~\ref{linBGK-decay} in 1D]
We consider a solution $h$ of \eqref{linBGK:torus}, and let the entropy functional $\cE(\tilde{f})$ be defined by
\begin{equation*}%\label{entropy:1D}
  \cE(\tilde{f}) := \sum_{k\in \Z} \langle h_k(v), \P_k h_k(v)\rangle_{L^2(M_1^{-1})}\ ,
\end{equation*}
with $\tilde{f}(t) :=M_1 +h(t)$.
Here, the ``infinite matrices'' $\P_0 := \II$ and $\P_k$ defined in \eqref{P:1D} for $k\neq 0$
are regarded as bounded operators on $L^2(M_1^{-1})$.
Then 
\begin{equation*}%\label{e-inequality:1D}
  \ddt \cE(\tilde{f}) = -\sum_{k\in \Z} \langle h_k(v), (\CC_k^*\P_k + \P_k\CC_k) h_k(v)\rangle_{L^2(M_1^{-1})} 
   \leq - 2\min\{1,\,\mu_*\} \,\cE(\tilde{f}) \ , 
%   \leq - \tfrac1{25} \cE_\gamma(h) \ , 
\end{equation*}
where $1$ is the decay rate of $h_0$, cf.~\eqref{h0-decay}.
This implies~\eqref{ineq:linBGK-decay} with $\lambda^1(L):=2\min\{1,\,\mu_*\}$ and $\mu_*$ from \eqref{goodbnd2}. 

The constants $c_1$ and $C_1$ in the estimate \eqref{entropy-equiv} follow from \eqref{Pbound}: 
\begin{equation}\label{norm-const:1D}
  c_1(L)=\left(1+\alpha_*(L)\sqrt{3+\sqrt6}\right)^{-1}\,,\quad
  C_1(L)=\left(1-\alpha_*(L)\sqrt{3+\sqrt6}\right)^{-1}\,
\end{equation}
and this finishes the proof of Theorem \ref{linBGK-decay} in 1D.
\end{proof}

To appreciate the above decay estimate, let us compare it to the spectral gap obtained in numerical tests for $L=2\pi$. In this case the estimate from Remark \ref{rem3.2} yields the analytic bound with $\mu_*=0.041812...$. As a comparison we computed the spectrum of finite dimensional approximation matrices to $\LL_2+ik\LL_1$ up to the matrix size $n=500$. Apparently the spectral gap is determined by the lowest spatial modes $k=\pm1$. With increasing $n$ it grows monotonically to $\mu_{num}=0.558296...$. So, our estimate is off by a factor of about 13. Following the strategy from \S4.3 in \cite{AAC16}, i.e.\ by maximizing $\mu$ in the matrix inequality $\CC_k^*\P_k+\P_k\CC-2\mu\P_k\ge0$, the above estimate on the decay rate could be improved further. But we shall not pursue this strategy here again.

Let us briefly compare this gap to the situation in the two 1D BGK models analyzed in \S4.3 and \S4.4 of \cite{AAC16}. They only differ from the 1D model \eqref{linap4}-\eqref{L1L2} of this section, concerning the matrix $\LL_2$: there we had $\LL_2=\diag(0,\,1,\,...)$ and $\LL_2=\diag(0,\,1,\,0,\,1,\,...)$, resp. We recall from \S\ref{sec-hypo-index} that both models have hypocoercivity index 1, and their spectral gaps are 0.6974... and 0.3709717660..., resp. One might expect that removing 1 entries from $\LL_2$ and hence increasing the hypocoercivity index would decrease the spectral gap. But this is obviously not always the case.

%%%%%%%%%%%%%%%%%%%%%%%%%%%%%%%%%%%%%%%%%%%%%%%%%%%%%%%%%%%%%%%%%%%%%%%%%%%%%%%%%

\section{Linearized BGK equation in 2D}
\label{sec:linBGK:2D}

Next we shall analyze the linearized BGK equation~\eqref{linBGK:torus} in 2D:
\begin{multline*} %\label{linBGK:2D}
 \partial_t h(x,v,t) +  v\cdot \nabla_x h(x,v,t) \\
  = M_1(v)\left[\big( 2 - \frac{|v|^2}{2}\big)\sigma(x,t)+v\cdot \mu(x,t)
    + \big( -\frac{1}{2}+ \frac{|v|^2}{4}\big)\tau(x,t)\right]  - h(x,v,t)\ , \quad t\geq 0 \ ,
\end{multline*}
 for the perturbation $h(x,v,t) \approx f(x,v,t) -M_1(v)$ with $x\in\tilde\T^2$, $v\in\R^2$.

 Again we consider the $x$--Fourier series of $h$: 
 \begin{equation*}%\label{Fourier:2D}
  h(x,v,t) = \sum_{k\in\Z^2} h_k(v,t)\,e^{i\frac{2\pi}{L} k\cdot x}\ ;
 \end{equation*}
 each spatial mode $h_k(v,t)$ is decoupled and evolves as
\begin{equation} \label{linBGK:Fourier:2D} 
 \ddt h_k +  i\tfrac{2\pi}{L} k\cdot v\ h_k =
      M_1(v)\left[\big( 2 - \frac{|v|^2}{2}\big)\sigma_k(t)+v\cdot \mu_k(t)
      + \big( -\frac{1}{2}+ \frac{|v|^2}{4}\big)\tau_k(t)\right]  - h_k(v,t)\ , \quad t\geq 0 \ .
\end{equation}
Here, $\sigma_k$, $\mu_k$ and $\tau_k$ denote the spatial modes of the $v$--moments $\sigma$, $\mu$ and $\tau$ defined in \eqref{mloc2}; hence
\[ \sigma_k := \int_{\R^2} h_k(v,t)\d[v]\ , \quad 
   \mu_k := \int_{\R^2} v\ h_k(v,t)\d[v]\ , \quad 
   \tau_k := \int_{\R^2} |v|^2\ h_k(v,t)\d[v]\ .
\]

Next we shall introduce an orthonormal basis in $v$-direction, to represent the spatial modes $h_k(\cdot,t)\in L^2(\R^2;M_1^{-1})$, $k\in\Z^2$. As in 1D we shall again use Hermite functions. But their multi-dimensional generalization is not unique, and we shall present two options that seem to be practical:\\

\noindent
\underline{Basis 1 (``pure tensor-basis''):}
%\begin{enumerate}[label=(\alph*)]
% \item \label{Hermite:dD}
  A complete set of orthogonal polynomials in $d$ variables can be formed as products of $d$ such polynomials, each in a single variable. 
  Using the Hermite polynomials $H_m$ in 1D, i.e.\
  \[ H_0(\upsilon) =1\ , \quad H_1(\upsilon) =\upsilon\ , \quad H_2(\upsilon) =\upsilon^2 -1\ , \quad H_3(\upsilon) =\upsilon^3 -3\upsilon\ ,\ ... \quad \text{with } \upsilon\in\R\ , \]
  we construct Hermite polynomials on $\R^d$ as
  \begin{equation} \label{hermite-polynom:2D:a}
    H_m(v) := \prod_{j=1}^d H_{m_j}(v_j)\ ,\quad v\in\R^d\ ,
  \end{equation}
with the multi-index $m=(m_1,\ldots,m_d)\in\N_0^d$. 
They are also generated by a simple generalization of formula~\eqref{hermite-polynom:1D}:
  \begin{equation*} %\label{hermite-polynom:2D:b}
   H_m(v) = (-1)^{|m|} e^{\frac{|v|^2}{2}}\frac{\partial^{|m|}}{\partial v^m}e^{-\frac{|v|^2}{2}}\ ,\quad m\in\N_0^d\ ,
  \end{equation*}
 with $|m|=\sum_{j=1}^d m_j$ (see \cite{Gr49}, e.g.).
%  \[
%   H_m(v)
%     = \prod_{j=1}^d H_{m_j}(v_j)
%     = \prod_{j=1}^d (-1)^{m_j} e^{v_j^2/2}\frac{{\rm d}^{m_j}}{{\rm d} v_j^{m_j}}e^{-v_j^2/2} 
%		 = (-1)^{|m|} e^{\frac{|v|^2}{2}}\frac{\partial^m}{\partial v^m}e^{-\frac{|v|^2}{2}} \ .
%  \]
  \begin{comment} % Harold Grad's notation
  Thus, the Hermite polynomials $H^{(|m|)}$ of order $|m|$ are given as
  \begin{align*}
   H^{(0)} =1\ , \quad H^{(1)}_{j}(v) =v_j\ , \quad H^{(2)}_{jk}(v) = v_j\ v_k\ - \delta_{jk}\ , \quad H^{(3)}_{jk\ell}(v) = v_j v_k v_\ell - (v_j \delta_{k\ell} +v_k \delta_{j\ell} +v_\ell \delta_{jk})\ .
  \end{align*}
  with $j,k,\ell \in\{1,\ldots,d\}$.
  \end{comment}
  For $d=2$, we obtain
  \begin{align*}
   H_{0,0}(v) &=H_0(v_1) H_0(v_2) =1\ , \quad H_{1,0}(v) =H_1(v_1) H_0(v_2) =v_1\ , \quad H_{0,1}(v) =H_0(v_1) H_1(v_2) =v_2\ , \\
   H_{2,0}(v) &=H_2(v_1) H_0(v_2) =v_1^2 -1\ , \quad H_{1,1}(v) =H_1(v_1) H_1(v_2) =v_1 v_2\ , \quad H_{0,2}(v) =H_0(v_1) H_2(v_2) =v_2^2 -1\ ,  \\
   H_{3,0}(v) &=H_3(v_1) H_0(v_2) =v_1^3 -3v_1\ , \quad H_{2,1}(v) =H_2(v_1) H_1(v_2) =(v_1^2 -1) v_2\ , \\
   H_{1,2}(v) &=H_1(v_1) H_2(v_2) =v_1 (v_2^2 -1)\ , \quad H_{0,3}(v) =H_0(v_1) H_3(v_2) =v_2^3 -3v_2\ .
%   H_{(0,0)}(v) &=H_0(v_1) H_0(v_2) =1\ , \quad H_{(1,0)}(v) =H_1(v_1) H_0(v_2) =v_1\ , \quad H_{(0,1)}(v) =H_0(v_1) H_1(v_2) =v_2\ , \\
%   H_{(2,0)}(v) &=H_2(v_1) H_0(v_2) =v_1^2 -1\ , \quad H_{(1,1)}(v) =H_1(v_1) H_1(v_2) =v_1 v_2\ , \quad H_{(0,2)}(v) =H_0(v_1) H_2(v_2) =v_2^2 -1\ ,  \\
%   H_{(3,0)}(v) &=H_3(v_1) H_0(v_2) =v_1^3 -3v_1\ , \quad H_{(2,1)}(v) =H_2(v_1) H_1(v_2) =(v_1^2 -1) v_2\ , \\
%   H_{(1,2)}(v) &=H_1(v_1) H_2(v_2) =v_1 (v_2^2 -1)\ , \quad H_{(0,3)}(v) =H_0(v_1) H_3(v_2) =v_2^3 -3v_2\ .
  \end{align*}
	Using definition~\eqref{hermite-fct:1D} of normalized Hermite functions in 1D,
	 we define the normalized Hermite functions in $d$ dimensions as 
	 \begin{equation} \label{hermite-fct:dD}
	   g_m(v) := \prod_{j=1}^d g_{m_j} (v_j) \qquad \text{for } m=(m_1,\ldots,m_d)\in\N_0^d \ .
	 \end{equation}
	Then, $g_m$ ($m\in \N_0^d$) form an orthonormal basis of $L^2(\R^d;M_1^{-1})$
	 and inherit a simple recurrence relation:
	For $k\in\{1,\ldots,d\}$, $m\in\N_0^d$, and the Euclidean basis vectors $e_k =(\delta_{kj})_{j=1,\ldots,d}$ in $\R^d$, 
	 the recurrence relation
	 \begin{equation} \label{recur:dD1}
	   v_k g_m(v) = \sqrt{m_k+1}\ g_{m+e_k}(v) +\sqrt{m_k}\ g_{m-e_k}(v)
	 \end{equation}
	 holds.

In order to give a vector representation of \eqref{linBGK:Fourier:2D}, the evolution equation of the spatial modes $h_k(v,t)$, 
we first need to introduce a linear ordering of the velocity basis $g_m$ ($m\in \N_0^2$).
We shall use a lexicographic order, i.e.\ first (increasingly) with respect to the total order $|m|$, and
 within a set of order $|m|$ we order w.r.t. $m_1$  (decreasingly) (for $d=2$).
Thus, we obtain the linearly ordered basis
\begin{align*} %\label{Hermite:basis:2D}
   g_0(v) &:=g_{0,0}(v) =M_1(v)\ , \quad
	 g_1(v) :=g_{1,0}(v) =v_1 M_1(v)\ , \quad
	 g_2(v) :=g_{0,1}(v) =v_2 M_1(v)\ , \\
   g_3(v) &:=g_{2,0}(v) =\tfrac1{\sqrt{2}} (v_1^2 -1) M_1(v)\ , \quad
	 g_4(v) :=g_{1,1}(v) =v_1 v_2 M_1(v)\ , \quad
	 g_5(v) :=g_{0,2}(v) =\tfrac1{\sqrt{2}} (v_2^2 -1) M_1(v)\ ,  \\
   g_6(v) &:=g_{3,0}(v) =\tfrac1{\sqrt{3!}} (v_1^3 -3v_1) M_1(v)\ , \quad
	 g_7(v) :=g_{2,1}(v) =\tfrac1{\sqrt{2}} (v_1^2 -1) v_2 M_1(v)\ , \\
   g_8(v) &=g_{1,2}(v) =\tfrac1{\sqrt{2}} v_1 (v_2^2 -1) M_1(v)\ , \quad
	 g_9(v) :=g_{0,3}(v) =\tfrac1{\sqrt{3!}} (v_2^3 -3v_2) M_1(v)\ , \quad ...
%   g_0(v) &=g_{(0,0)}(v) =M_1(v)\ , \quad
%	 g_1(v) =g_{(1,0)}(v) =v_1 M_1(v)\ , \quad
%	 g_2(v) =g_{(0,1)}(v) =v_2 M_1(v)\ , \\
%   g_3(v) &=g_{(2,0)}(v) =\tfrac1{\sqrt{2}} (v_1^2 -1) M_1(v)\ , \quad
%	 g_4(v) =g_{(1,1)}(v) =v_1 v_2 M_1(v)\ , \quad
%	 g_5(v) =g_{(0,2)}(v) =\tfrac1{\sqrt{2}} (v_2^2 -1) M_1(v)\ ,  \\
%   g_6(v) &=g_{(3,0)}(v) =\tfrac1{\sqrt{3!}} (v_1^3 -3v_1) M_1(v)\ , \quad
%	 g_7(v) =g_{(2,1)}(v) =\tfrac1{\sqrt{2}} (v_1^2 -1) v_2 M_1(v)\ , \\
%   g_8(v) &=g_{(1,2)}(v) =\tfrac1{\sqrt{2}} v_1 (v_2^2 -1) M_1(v)\ , \quad
%	 g_9(v) =g_{(0,3)}(v) =\tfrac1{\sqrt{3!}} (v_2^3 -3v_2) M_1(v)\ .
\end{align*}
Given a multi-index $m\in\N_0^2$, its lexicographic index is computed as $|m| (|m|+1)/2 +m_2$ with $|m| =m_1 +m_2$.\\

\noindent
\underline{Basis 2 (``energy-basis''):}
 %\item \label{Hermite:dD:modified}
The second basis is a simple variant of the first one. We recall that the evolution with the BGK equation~\eqref{bgk} conserves the (kinetic) energy and mass. Hence, their difference is also conserved and it is related to the polynomial $\frac{|v|^2}2 -1$. In analogy to the 1D case from \S\ref{sec:linBGK:1D} it is thus a natural option to construct a basis of orthogonal polynomials $\tilde{H}_m(v)$, $m\in\N_0^d$, such that $\frac{|v|^2}2 -1$ is a basis element.
Compared to $\{H_m(v)\}$, in fact, we only have to modify the Hermite polynomials of second order. For $d=2$ they read:
  \begin{equation}\label{Hermite:basis:2D}
     \tilde{H}_m(v) :=
       \begin{cases}
         H_m(v) & \text{if } |m|\ne 2\ , \\
         \frac12 \big(H_{2,0}(v) +H_{0,2}(v)\big) = \frac{|v|^2}2 -1 & \text{if } m=(2,0)\ , \\
         H_{1,1}(v) = v_1 v_2 & \text{if } m=(1,1)\ , \\ 
         \frac12 \big(H_{2,0}(v) -H_{0,2}(v)\big) = \frac{v_1^2 -v_2^2}2 & \text{if } m=(0,2)\ .
       \end{cases}
  \end{equation}
	Similarly, we define normalized Hermite functions
  \begin{equation*}
     \tilde{g}_m(v) :=
       \begin{cases}
         g_m(v) & \text{if } |m|\ne 2\ , \\
         \frac1{\sqrt2} \big(g_{2,0}(v) +g_{0,2}(v)\big) & \text{if } m=(2,0)\ , \\
         g_{1,1}(v) & \text{if } m=(1,1)\ , \\ 
         \frac1{\sqrt2} \big(g_{2,0}(v) -g_{0,2}(v)\big) & \text{if } m=(0,2)\ .
       \end{cases}
  \end{equation*}
	The elements $\tilde{g}_m$ satisfy a recurrence relation similar to~\eqref{recur:dD1},
	 except for identities involving $\tilde{g}_{2,0}$ or $\tilde{g}_{0,2}$.
	For example,
	 \begin{eqnarray*}
	    v_1 \tilde{g}_{2,0}(v)
	       &=& \frac1{\sqrt{2}} v_1\ \big(g_{2,0}(v) +g_{0,2}(v)\big) 
	       = \frac1{\sqrt{2}} \big(\sqrt{3}\ g_{3,0}(v) +\sqrt{2}\ g_{1,0}(v) +g_{1,2}(v)\big) \\
	       &=& \frac1{\sqrt{2}} \big(\sqrt{3}\ \tilde{g}_{3,0}(v) +\sqrt{2}\ \tilde{g}_{1,0}(v) +\tilde{g}_{1,2}(v)\big)\ .
%	    v_1 \tilde{g}_{(2,0)}(v)
%	       = \frac12 v_1\ \big(g_{(2,0)}(v) +g_{(0,2)}(v)\big) \\
%	       = \frac12 \big(\sqrt{3}\ g_{(3,0)}(v) +\sqrt{2}\ g_{(1,0)}(v) +g_{(1,2)}(v)\big)
%	       = \frac12 \big(\sqrt{3}\ \tilde{g}_{(3,0)}(v) +\sqrt{2}\ \tilde{g}_{(1,0)}(v) +\tilde{g}_{(1,2)}(v)\big)\ .
	 \end{eqnarray*}
While the first basis $g_m$ $(m\in\N_0^d)$ inherits a simple recurrence formula with three elements,
 the recurrence formulas for the second basis $\tilde{g}_m$ $(m\in\N_0^d)$ involve four elements, when including $\tilde{g}_{2,0}(v)$ or $\tilde{g}_{0,2}(v)$.\\
%\end{enumerate}

To derive the vector representation of \eqref{linBGK:Fourier:2D}, 
 it is preferable to use the first basis with the linear ordering $g_m$ $(m\in\N_0)$. 
% Next we turn to the vector representation of \eqref{linBGK:Fourier:2D},
% using the first basis with the linear ordering $g_m(v)$, $m\in\N_0$. 
% {\bf (TO DO: Give an explanation for choosing the 1st basis.)} 
With the identity $(g_3(v) +g_5(v))/\sqrt{2} = (|v|^2/2 -1) M_1(v)$ we rewrite~\eqref{linBGK:Fourier:2D} as
 \begin{multline} \label{linBGK:Fourier:2Da} 
 \ddt h_k +  i\tfrac{2\pi}{L} k\cdot v\ h_k =
      \big( g_0(v) -\tfrac1{\sqrt{2}} g_3(v) -\tfrac1{\sqrt{2}} g_5(v)\big)\sigma_k(t)
			+ \binom{g_1(v)}{g_2(v)}\cdot \mu_k(t) \\
      + \tfrac1{2\sqrt{2}} \big( g_3(v) +g_5(v)\big)\tau_k(t)  - h_k(v,t)\ , 
			\quad k\in\Z^2\ , \quad t\geq 0 \ .%\nonumber
 \end{multline}
First we consider the spatial mode $h_0$ with $k=0$. With the same argument as in 1D we again obtain \eqref{h0-decay}, i.e.\ $\ddt h_0(v,t)=-h_0(v,t)$.
  Next we expand $h_k(\cdot,t)\in L^2(\R^2;M_1^{-1})$ in the orthonormal basis $\{g_m(v)\}_{m\in\N_0}$:
 $$
   h_k(v,t)=\sum_{m=0}^\infty \hat h_{k,m}(t)\,g_m(v)\ ,\quad\text{with}\quad
   \hat h_{k,m} = \langle h_k(v),g_m(v)\rangle_{L^2(M_1^{-1})}\ .
 $$
 For each spatial mode $k\in\Z^2$,
  the ``infinite vector'' $\hat {\bf h}_k(t) = ( \hat h_{k,0}(t),\,\hat h_{k,1}(t),\ ...)^\top\in \ell^2(\N_0)$ contains all 2D--Hermite coefficients of $h_k(\cdot,t)$. 
 In particular we have
 $$
   \hat h_{k,0} = \int_{\R^2} h_k(v) g_0(v) M_1^{-1}(v) \d[v] = \sigma_k \ , \qquad 
   \binom{\hat h_{k,1}}{\hat h_{k,2}} = \int_{\R^2} h_k(v) \binom{g_1(v)}{g_2(v)} M_1^{-1}(v) \d[v] = \mu_k \in\R^2 \ ,
 $$
 and
 $$
   \tfrac1{\sqrt{2}} (\hat h_{k,3} +\hat h_{k,5})
	   = \int_{\R^2} h_k(v) \tfrac1{\sqrt{2}} (g_3(v) +g_5(v)) M_1^{-1}(v) \d[v]
	   = \frac12\tau_k -\sigma_k\ .
 $$
 Thus, we can rewrite \eqref{linBGK:Fourier:2Da} as
 \begin{align} \label{linBGK:Fourier2D} 
 \ddt h_k +  i\tfrac{2\pi}{L} k\cdot v\ h_k
%    &= g_0(v) \hat h_{k,0} 
%			+ \binom{g_1(v)}{g_2(v)}\cdot \binom{\hat h_{k,1}}{\hat h_{k,2}}
%      + \tfrac1{\sqrt{2}} \big( g_3(v) +g_5(v)\big) \big( \frac12\tau_k(t) -\sigma_k(t)\big)
%			- h_k(v,t) \nonumber \\
    &= g_0(v) \hat h_{k,0} 
			+ \binom{g_1(v)}{g_2(v)}\cdot \binom{\hat h_{k,1}}{\hat h_{k,2}}
      +\frac{ g_3(v) +g_5(v)}{2} %+\frac12 \big( g_3(v) +g_5(v)\big) 
			(\hat h_{k,3} +\hat h_{k,5})
			- h_k(v,t)\ , \quad k\in\Z^2\ , \quad t\geq 0 \ .%\nonumber
\end{align}

Our next goal is to rewrite this system in the Hermite function basis as an infinite vector system -- in analogy to \eqref{linap4} in 1D. In that equation, the operator $\LL_1$ is multiplied by the (scalar and integer) mode number $k$, which is then used in the construction of the transformation matrices $\P_k$. To extend this structure and strategy to 2D, we first have to consider the rotational symmetry of \eqref{linBGK:Fourier2D}: We note that the basis functions $g_0$ and $g_3+g_5$ depend only on $|v|$, and that the interplay between the vectors $k$ and $v$ only occurs via $k\cdot v$. Hence, evolution equations from the family \eqref{linBGK:Fourier2D} having the same modulus $|k|$ are identical in the following sense: Rotating the spatial mode vector $k$ and the $v$-coordinate system by the same angle, leaves \eqref{linBGK:Fourier2D} invariant. Thus it suffices to consider \eqref{linBGK:Fourier2D} for vectors $k=(\kappa,0)^\top$ with the discrete moduli 
$$
  \kappa\in K:=\big\{r\ge1 \, \big|\, \exists k\in\Z^2\setminus\{0\} \mbox{ with } r=|k| \big\} \ . 
$$
We skipped here the mode $h_0$, as it was already analyzed before. In the sequel we also denote $h_\kappa:=h_{\kappa,0}$ and $\hat {\bf h}_\kappa:=\hat {\bf h}_{\kappa,0}$.
With this notation, \eqref{linBGK:Fourier2D} reads
\[ \ddt h_\kappa +  i\tfrac{2\pi}{L} \kappa v_1\ h_\kappa
     = g_0(v) \hat h_{\kappa,0} 
			+ \binom{g_1(v)}{g_2(v)}\cdot \binom{\hat h_{\kappa,1}}{\hat h_{\kappa,2}}
      +\frac{ g_3(v) +g_5(v)}{2} %+\frac12 \big( g_3(v) +g_5(v)\big) 
			(\hat h_{\kappa,3} +\hat h_{\kappa,5})
			- h_\kappa(v,t)\ , \quad \kappa\in K \ .
\]
Then, the vector of its Hermite coefficients satisfies
\begin{equation}\label{linap4:2D}
\ddt\hat {\bf h}_\kappa(t) + i\,\tfrac{2\pi}{L}\kappa\, \LL_1 \hat{\bf h}_\kappa(t) = -\LL_2 \hat{\bf h}_\kappa(t)\ ,\quad \kappa\in K\ ,\quad\ t\ge0\ ,
\end{equation}
where the operators $\LL_1,\,\LL_2$ are represented by symmetric ``infinite matrices'' on $\ell^2(\N_0)$:
\setcounter{MaxMatrixCols}{15}
\begin{eqnarray*}%\label{L1L2}
  \LL_1&=& \begin{pmatrix}
  0 & \sqrt 1 & 0 & 0 & 0 & 0 & 0 & 0 & 0 & 0 & 0 & 0 & 0 & 0 & \cdots \\
  \sqrt 1 & 0 & 0 & \sqrt 2 & 0 & 0 & 0 & 0 & 0 & 0 & 0 & 0 & 0 & 0 & \cdots \\
  0 & 0 & 0 & 0 & \sqrt 1 & 0 & 0 & 0 & 0 & 0 & 0 & 0 & 0 & 0 & \cdots \\
  0 & \sqrt 2 & 0 & 0 & 0 & 0 & \sqrt 3 & 0 & 0 & 0 & 0 & 0 & 0 & 0 & \cdots \\
  0 & 0 & \sqrt 1 & 0 & 0 & 0 & 0 & \sqrt 2 & 0 & 0 & 0 & 0 & 0 & 0 & \cdots \\
  0 & 0 & 0 & 0 & 0 & 0 & 0 & 0 & \sqrt 1 & 0 & 0 & 0 & 0 & 0 & \cdots \\
  0 & 0 & 0 & \sqrt 3 & 0 & 0 & 0 & 0 & 0 & 0 & \sqrt 4 & 0 & 0 & 0 & \cdots \\
  0 & 0 & 0 & 0 & \sqrt 2 & 0 & 0 & 0 & 0 & 0 & 0 & \sqrt 3 & 0 & 0 & \cdots \\
  0 & 0 & 0 & 0 & 0 & \sqrt 1 & 0 & 0 & 0 & 0 & 0 & 0 & \sqrt 2 & 0 & \cdots \\
  0 & 0 & 0 & 0 & 0 & 0 & 0 & 0 & 0 & 0 & 0 & 0 & 0 & \sqrt 1 & \cdots \\
%  0 & 0 & 0 & 0 & 0 & 0 & 0 & 0 & 0 & 0 & 0 & 0 & 0 & 0 & \cdots \\
  \vdots &\vdots &\vdots &\vdots &\vdots &\vdots &\vdots &\vdots &\vdots &\vdots &\vdots &\vdots &\vdots &\vdots & \ddots 
  \end{pmatrix}
\ ,\quad\\
 \LL_2&=&\diag(0,\, 0,\, 0,\, \tfrac12 \begin{pmatrix} 1 & 0 & -1 \\ 0 & 2 & 0 \\ -1 & 0 & 1 \end{pmatrix},\, 1,\, 1,\cdots)\ .
\end{eqnarray*}

To compute the hypocoercivity index of the BGK model in 2D, 
 it is preferable to use the second basis with the linear ordering $\tilde{g}_m$ $(m\in\N_0)$. 
We shall give the matrix representation of the two dimensional BGK equation \eqref{linBGK:Fourier:2D} in the second velocity basis, again only for the spatial modes $k=(\kappa,0)^\top, \,\kappa\in K$. To obtain the corresponding matrices $\tilde\LL_1$ and $\tilde\LL_2$, we simply represent the linear basis transformation by the infinite matrix 
 \[
  \Smatrix = \diag (1,\ 1,\ 1,\ \tfrac1{\sqrt{2}} \begin{pmatrix} 1 & 0 & 1 \\ 0 & \sqrt{2} & 0 \\ 1 & 0 & -1 \end{pmatrix},\ 1,\ 1,\cdots)\ ,
 \]
 which is self-inverse, i.e. $\Smatrix=\Smatrix^{-1}$.
Thus we compute $\tilde{\LL}_1 = \Smatrix^{-1} \LL_1 \Smatrix$ and $\tilde{\LL}_2 = \Smatrix^{-1} \LL_2 \Smatrix$, yielding
\begin{eqnarray*}%\label{L1L2}
 &&\tilde\LL_1= \begin{pmatrix}
  0 & 1 & 0 & 0 & 0 & 0 & 0 & 0 & 0 & 0 & 0 & \cdots \\
  1 & 0 & 0 & 1 & 0 & 1 & 0 & 0 & 0 & 0 & 0 & \cdots \\
  0 & 0 & 0 & 0 & 1 & 0 & 0 & 0 & 0 & 0 & 0 & \cdots \\
  0 & 1 & 0 & 0 & 0 & 0 & \sqrt{3/2} & 0 & \sqrt{1/2} & 0 & 0 & \cdots \\
  0 & 0 & 1 & 0 & 0 & 0 & 0 & \sqrt 2 & 0 & 0 & 0 & \cdots \\
  0 & 1 & 0 & 0 & 0 & 0 & \sqrt{3/2} & 0 & -\sqrt{1/2} & 0 & 0 & \cdots \\
  0 & 0 & 0 & \sqrt{3/2} & 0 & \sqrt{3/2} & 0 & 0 & 0 & 0 & 2 & \cdots \\
  0 & 0 & 0 & 0 & \sqrt 2 & 0 & 0 & 0 & 0 & 0 & 0 & \cdots \\
  0 & 0 & 0 & \sqrt{1/2} & 0 & -\sqrt{1/2} & 0 & 0 & 0 & 0 & 0 & \cdots \\
  0 & 0 & 0 & 0 & 0 & 0 & 0 & 0 & 0 & 0 & 0 & \cdots \\
  0 & 0 & 0 & 0 & 0 & 0 & 2 & 0 & 0 & 0 & 0 & \cdots \\
  \vdots & \vdots &\vdots &\vdots &\vdots &\vdots &\vdots &\vdots &\vdots & \vdots & \vdots & \ddots 
  \end{pmatrix}
 \ ,\\[3mm]
 &&\tilde\LL_2=\diag(0,\, 0,\, 0,\, 0,\, 1,\, 1,\cdots)\ .
\end{eqnarray*}

%Equation \eqref{linap4} provides a decomposition of the generator in its skew-symmetric part $-ik \tfrac{2\pi}{L}\LL_1$ and its symmetric part $\LL_2$, the latter introducing the decay in the evolution.
%
%We remark that \eqref{linap4} simplifies for the spatial mode $h_0$ with $k=0$. 
%One easily verifies that the flow of \eqref{linBGK:torus:1D} preserves \eqref{mloc2},
 %i.e. $\sigma_0(t)=0$, $\mu_0(t)=0$, $\tau_0(t)=0$ for all $t\ge0$. 
%Hence, \eqref{linBGK:Fourier} yields 
%$$
  %\partial_t h_0(v,t) = -h_0(v,t)\ ,\quad t\ge0\ .
%$$
This second basis representation makes it easy of determine the hypocoercivity index of the BGK model in 2D. 
As for the 1D model, we use Definition \ref{hyp-index} in its equivalent formulation \ref{cond:trivkernel}: 
With $\corank \tilde\LL_2=4$, $\ker\tilde\LL_2=\spn\{e_0,\,e_1,\,e_2,\,e_3\}$, and the relations 
\begin{equation} \label{linBGK2D:rangeL1}
  \tilde\LL_1 e_0=e_1\ ,\quad \tilde\LL_1 e_1=e_0+e_3+e_5\ ,\quad \tilde\LL_1 e_2=e_4\ ,\quad \tilde\LL_1 e_3=e_1+\sqrt{3/2}\, e_6+\sqrt{1/2}\, e_8\ ,
\end{equation}
we find the index $\tau=2$. 
At first glance this may come as a surprise, since the analogous 1D model has index 3. 
But in 2D, each of the two momentum-conservation modes (represented by $e_1$ and $e_2$) is directly coupled to a decreasing mode (represented by $e_5$ and $e_4$, respectively).
These modes are quadratic polynomials, but orthogonal to $|v|^2$, where the latter corresponds to the (conserved) kinetic energy, cf.\ \eqref{Hermite:basis:2D}.\\

We define the matrices $\CC_\kappa := i\tfrac{2\pi}{L} \kappa \tilde\LL_1 + \tilde\LL_2$, $\kappa\in K\cup\{0\}$ which determine the evolution of the spatial modes of the BGK equation in 2D, cf.\ \eqref{linap4:2D}. Our next goal is to establish a spectral gap of $\CC_\kappa$, uniformly in $\kappa\in K$. This will prove Theorem \ref{linBGK-decay} in 2D.
To this end we make an ansatz for the transformation matrices: Following the detailed motivation from the 1D analog in \S\ref{sec:linBGK:1D}, let $\P_\kappa$, $\kappa\in K$ be the identity matrix whose upper-left $7\times 7$ block is replaced by
 \begin{equation}\label{P:2D}
  \begin{pmatrix}
  1 & -i \alpha/\kappa & 0 & 0 & 0 & 0 & 0 \\
  i \alpha/\kappa & 1 & 0 & 0 & 0 & -i \beta/\kappa & 0 \\
  0 & 0 & 1 & 0 & -i \gamma/\kappa & 0 & 0 \\
  0 & 0 & 0 & 1 & 0 & 0 & -i \omega/\kappa \\
  0 & 0 & i \gamma/\kappa & 0 & 1 & 0 & 0 \\
  0 & i \beta/\kappa & 0 & 0 & 0 & 1 & 0 \\
  0 & 0 & 0 & i \omega/\kappa & 0 & 0 & 1 
  \end{pmatrix}
 \end{equation}
 with positive parameters $\alpha$, $\beta$, $\gamma$, and $\omega$ to be chosen below.
The distribution of the non-zero off-diagonal elements follows from the pattern in matrix $\tilde\LL_1$,
with the following rationale:
The $\alpha$-term couples the $e_0$-mode to the $e_1$-mode,
 which is the only choice according to~\eqref{linBGK2D:rangeL1}.
The $\beta$-term couples the $e_1$-mode to the decaying $e_5$-mode,
 and the $\gamma$-term couples the $e_2$-mode to the decaying $e_4$-mode.
Finally, the $\omega$-term couples the $e_3$-mode to the $e_6$-mode,
 the first decaying mode according to~\eqref{linBGK2D:rangeL1}.

%{\bf TO DO: Having the $\beta$-term in \eqref{P:2D} at (1,3) and (3,1) rather corresponds to a hypocoercivity index 3. We should compare the following computations to the result when the $\beta$-term is at (1,5) and (5,1), which corresponds to the hypocoercivity index 2.}

\begin{lemma} \label{lemma:decay:2D}
% Let $\ell:= \tfrac{2\pi}{L}$.
If the matrices $\P_\kappa$ are chosen as~\eqref{P:2D} with $\beta =2\alpha$, $\gamma =\alpha$, and $\omega=\sqrt{6}\alpha$ uniformly for all $\kappa\in K$,
 then there exists $0<\alpha_+$ such that $\P_\kappa$ and $\CC_\kappa^*\P_\kappa + \P_\kappa\CC_\kappa$ are positive definite for all $\alpha \in (0,\alpha_+)$ and $\kappa\in K$.
Moreover,
\begin{equation}\label{goodbnd:2D}
\CC_\kappa^*\P_\kappa + \P_\kappa\CC_\kappa  \geq 2\mu \P_\kappa \qquad \text{ uniformly in } \kappa\in K \ , 
\end{equation}
with  
\begin{equation*} %\label{goodbnd2:2D:a}
\mu := \left( \frac{10}{14} \right)^{10} \frac{\delta_{11}(1,\alpha,2\alpha,\alpha,\sqrt{6}\alpha)}{2 \big(1+\sqrt{6} \alpha\big)}>0\ , 
\end{equation*}
 where $\delta_{11}(1,\alpha,2\alpha,\alpha,\sqrt{6}\alpha):= \det \DD_{1,\alpha,2\alpha,\alpha,\sqrt{6}\alpha}$
 with $\DD_{\kappa,\alpha,2\alpha,\alpha,\sqrt{6}\alpha}$ defined in~\eqref{Dka2aa3a}.
\end{lemma}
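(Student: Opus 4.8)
The plan is to follow the route from Theorem~\ref{th:p-ansatz-3} to Lemma~\ref{lem:mu:1D}, now with $\dim(\ker\tilde\LL_2)=4$. First I would record the decomposition
\begin{equation*}
 \CC_\kappa^*\P_\kappa + \P_\kappa\CC_\kappa = 2\tilde\LL_2 + i\tfrac{2\pi}{L}\bigl(\A\tilde\LL_1 - \tilde\LL_1\A\bigr) + \tfrac1\kappa\bigl(\tilde\LL_2\A + \A\tilde\LL_2\bigr)\,,
\end{equation*}
where $\A:=\kappa(\P_\kappa-\II)$ is the fixed Hermitian matrix of \eqref{P:2D}. Since $\A$ is supported on the indices $\{0,\dots,6\}$ and $\tilde\LL_1$ sends each $e_j$ with $j\le6$ into $\spn\{e_0,\dots,e_{10}\}$ (cf.\ \eqref{linBGK2D:rangeL1} and the explicit form of $\tilde\LL_1$), the right-hand side coincides with $2\II$ outside the $11\times11$ principal block, which after setting $\beta=2\alpha$, $\gamma=\alpha$, $\omega=\sqrt6\alpha$ is exactly the matrix $\DD_{\kappa,\alpha,2\alpha,\alpha,\sqrt6\alpha}$ of \eqref{Dka2aa3a}. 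Hence on the orthogonal complement $\CC_\kappa^*\P_\kappa+\P_\kappa\CC_\kappa = 2\II \ge 2\mu\P_\kappa$ as soon as $\mu\le1$, and the task reduces to proving the finite-dimensional inequality $\DD_{\kappa,\alpha,2\alpha,\alpha,\sqrt6\alpha}\ge 2\mu\,\P_\kappa^{[11]}$ uniformly in $\kappa\in K$, where $\P_\kappa^{[11]}$ is the corresponding $11\times11$ block of $\P_\kappa$.

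Next I would dispose of the structural facts. With the above choice the matrix $\A$ splits into mutually orthogonal blocks on $\{e_2,e_4\}$, $\{e_3,e_6\}$ and $\{e_0,e_1,e_5\}$, with spectra $\{\pm\alpha\}$, $\{\pm\sqrt6\alpha\}$ and $\{0,\pm\sqrt5\alpha\}$; hence $\|\A\|=\sqrt6\alpha$ and $(1-\sqrt6\alpha)\II\le\P_\kappa\le(1+\sqrt6\alpha)\II$ for $\kappa\ge1$, which makes $\P_\kappa$ positive definite for $\alpha\in(0,1/\sqrt6)$ and simultaneously produces the norm-equivalence constants $c_2,C_2$ of \eqref{entropy-equiv}. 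For the kernel directions I would use $\tilde\LL_2\RR=0$ with $\RR=(e_0,e_1,e_2,e_3)$, so that $\tfrac1\kappa(\tilde\LL_2\A+\A\tilde\LL_2)$ drops out there, and compute, from $\tilde\LL_1 e_0=e_1$, $\tilde\LL_1 e_1=e_0+e_3+e_5$, $\tilde\LL_1 e_2=e_4$, $\tilde\LL_1 e_3=e_1+\sqrt{3/2}\,e_6+\sqrt{1/2}\,e_8$, that the $4\times4$ kernel block of $\DD_{\kappa,\alpha,2\alpha,\alpha,\sqrt6\alpha}$ equals the $\kappa$-independent matrix
\begin{equation*}
 \tfrac{2\pi}{L}\,\alpha\begin{pmatrix} 2 & 0 & 0 & 1\\ 0 & 2 & 0 & 0\\ 0 & 0 & 2 & 0\\ 1 & 0 & 0 & 6\end{pmatrix}\,,
\end{equation*}
whose eigenvalues $\tfrac{2\pi}{L}\alpha\{2,2,4+\sqrt5,4-\sqrt5\}$ are positive. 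This is the multi-dimensional counterpart of conditions \eqref{minor1-3D}--\eqref{det-3D}, and it is exactly what Lemma~\ref{lemma:ansatzP:nD} with $r=1/\kappa$ needs: the $\kappa$ in $\RR^*(\CC_\kappa^*\A+\A\CC_\kappa)\RR$ and the $r$ cancel, so the four near-zero eigenvalues of $\CC_\kappa^*\P_\kappa+\P_\kappa\CC_\kappa$ approach $\tfrac{2\pi}{L}\alpha\{2,2,4\pm\sqrt5\}$ uniformly in $\kappa$, and the ratios $\beta:\gamma:\omega$ are chosen precisely to make this block positive and of a clean form.

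The part that takes real work is the quantitative, $\kappa$-uniform inequality. Splitting $\DD_{\kappa,\alpha,2\alpha,\alpha,\sqrt6\alpha}$ into a $2\times2$ operator array along $\ker\tilde\LL_2^{[11]}\oplus(\ker\tilde\LL_2^{[11]})^\perp$, the $(1,1)$ block is $\succeq\tfrac{2\pi}{L}(4-\sqrt5)\alpha\,\II_4$ by the previous step, while the off-diagonal block and the departure of the $(2,2)$ block from $2\,\II_7$ have all entries of size $O(\alpha)$ for $\kappa\ge1$; a Schur-complement estimate then gives $\DD_{\kappa,\alpha,2\alpha,\alpha,\sqrt6\alpha}\succ0$ for $\alpha$ in an explicit interval $(0,\alpha_+)$, uniformly in $\kappa$. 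To extract the stated $\mu$, I would use that $\tr\DD_{\kappa,\alpha,2\alpha,\alpha,\sqrt6\alpha}=2\tr\tilde\LL_2^{[11]}=14$ — the commutator block and $\tilde\LL_2\A+\A\tilde\LL_2$ are traceless on this block — so that by the AM--GM inequality the product of the ten largest eigenvalues is at most $(14/10)^{10}$ and hence $\lambda_{\min}\bigl(\DD_{\kappa,\alpha,2\alpha,\alpha,\sqrt6\alpha}\bigr)\ge(10/14)^{10}\det\DD_{\kappa,\alpha,2\alpha,\alpha,\sqrt6\alpha}$; bounding $\det\DD_{\kappa,\alpha,2\alpha,\alpha,\sqrt6\alpha}\ge\delta_{11}(1,\alpha,2\alpha,\alpha,\sqrt6\alpha)$ and $\P_\kappa^{[11]}\le(1+\sqrt6\alpha)\II$ then yields $\DD_{\kappa,\alpha,2\alpha,\alpha,\sqrt6\alpha}\ge\lambda_{\min}\,\II\ge 2\mu\,\P_\kappa^{[11]}$ with $\mu$ as in the statement (and $\mu\le1$ is clear from the formula), which together with Step~1 gives \eqref{goodbnd:2D}.

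The step I expect to cause the most trouble is the claim $\det\DD_{\kappa,\alpha,2\alpha,\alpha,\sqrt6\alpha}\ge\det\DD_{1,\alpha,2\alpha,\alpha,\sqrt6\alpha}$ for every $\kappa\in K$, i.e.\ that the lowest mode $\kappa=1$ is the critical one (as suggested by the 1D analysis and by the numerics): $\kappa$ enters $\DD_{\kappa,\alpha,2\alpha,\alpha,\sqrt6\alpha}$ only through the bounded perturbation $\tfrac1\kappa(\tilde\LL_2\A+\A\tilde\LL_2)^{[11]}$, and one must verify from the explicit $11\times11$ matrix \eqref{Dka2aa3a} that this perturbation cannot push the determinant below its $\kappa=1$ value. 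I anticipate this reduces to inspecting the signs of the coefficients of the low-degree polynomial $1/\kappa\mapsto\det\DD_{\kappa,\alpha,2\alpha,\alpha,\sqrt6\alpha}$, exactly as was done for the 1D model in the appendix. Should the clean monotonicity fail for some exceptional $\kappa$, a safe fallback is to retain the slightly worse but still $\kappa$-uniform constant that already drops out of the Schur-complement bound in the previous step, which does not reference $\delta_{11}$ at all. Everything else — the reduction in Step~1, the structural computations in Step~2, the Schur estimate and the AM--GM step — is routine once Lemma~\ref{lemma:ansatzP:nD} and the explicit forms of $\tilde\LL_1,\tilde\LL_2$ are in hand.
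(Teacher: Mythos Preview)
Your proposal is correct in outline and matches the paper's overall strategy: reduce to the finite $11\times11$ block $\DD_{\kappa,\alpha,2\alpha,\alpha,\sqrt6\alpha}$, read off the $\P_\kappa$-eigenvalues $\{1,1\pm\alpha/\kappa,1\pm\sqrt5\alpha/\kappa,1\pm\sqrt6\alpha/\kappa\}$ to get the sandwich $(1-\sqrt6\alpha)\II\le\P_\kappa\le(1+\sqrt6\alpha)\II$, observe that $\tr\DD_{\kappa,\alpha,2\alpha,\alpha,\sqrt6\alpha}=14$ (independent of $\kappa,\alpha$), and use AM--GM on the ten largest eigenvalues to obtain $\lambda_{\min}\ge(10/14)^{10}\det\DD_\kappa$. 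Your kernel-block computation and its eigenvalues $\tfrac{2\pi}{L}\alpha\{2,2,4\pm\sqrt5\}$ are also correct and explain the choice of ratios $\beta:\gamma:\omega$.

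The one genuine methodological difference is how positive definiteness of $\DD_\kappa$ and the monotonicity $\det\DD_\kappa\ge\det\DD_1$ are obtained. The paper does \emph{not} use a Schur-complement estimate; it applies Sylvester's criterion and computes all eleven leading principal minors $\delta_j(\kappa,\alpha)$ explicitly. Each higher minor factors as a product of earlier factors times a new rational function of the form $p(\kappa,\alpha)=(p_0(\alpha)+p_1(\alpha)/\kappa^2)/\kappa^2+p_2(\alpha)$, and a short dedicated lemma (the conditions $p_1\ge0$ and $p_0+2p_1\le0$ on a suitable $\alpha$-interval) gives $p(\kappa,\alpha)\ge p(1,\alpha)$ for $\kappa\ge1$. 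This simultaneously yields positive definiteness (each minor positive) and the bound $\delta_{11}(\kappa,\alpha)\ge\delta_{11}(1,\alpha)$, with the resulting $\alpha_+$ being the minimum of $1/\sqrt6$ and the thresholds coming from each minor and each monotonicity condition. Your Schur-complement route would work for positive definiteness alone, but to recover the specific constant $\mu$ in the statement you still need the full determinant as an explicit polynomial in $1/\kappa^2$; the paper's minor-by-minor factorisation is what makes that polynomial tractable (degree two in $1/\kappa^2$ for each new factor) and is what your ``inspecting the signs of the coefficients'' step would amount to in practice.
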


The proof of this lemma is deferred to Appendix~\ref{A1}.

\begin{remark}
\begin{enumerate}[label=(\alph*)]
\item Consider 
 \[ \alpha_* = \argmax_{\alpha\in[0,\alpha_+]} \frac{\delta_{11}(1,\alpha,2\alpha,\alpha,\sqrt{6}\alpha)}{2 \big(1+\sqrt{6} \alpha\big)} \ . \]
 Choosing $\P_\kappa$ with $\alpha=\alpha_*$, $\beta =2\alpha$, $\gamma =\alpha$, and $\omega=\sqrt{6}\alpha$ uniformly for all $\kappa\in K$, yields~\eqref{goodbnd:2D} with the maximal constant
 \begin{equation}\label{goodbnd2:2D:b}
  \mu = \left( \frac{10}{14} \right)^{10} \frac{\delta_{11}(1,\alpha_*,2\alpha_*,\alpha_*,\sqrt{6}\alpha_*)}{2 \big(1+\sqrt{6} \alpha_*\big)}
      = \max_{\alpha\in[0,\alpha_+]} \left( \frac{10}{14} \right)^{10} \frac{\delta_{11}(1,\alpha,2\alpha,\alpha,\sqrt{6}\alpha)}{2 \big(1+\sqrt{6} \alpha\big)} \ .
 \end{equation}
% \item The ordering of $\alpha^{(4)}(k)$ and $\alpha^{(5)}_-(k)$ depends on the parameter $L$:
%  Evaluating $\delta_5(1,\alpha,\alpha,\alpha)$ at $\alpha=\alpha^{(4)}(k)$ yields
%  \[ \delta_5(k,\alpha^{(4)}(k))
%      = 2 \big(\tfrac{2\pi}{L}\big)^3 \frac{(\alpha^{(4)}(k))^5}{q_1} 
%        \bigg( \big( q_3 q_1 - 16 (\sqrt{2}-1)(q_{2,1}-16(\sqrt{2}-1))\big) \big(\tfrac{2\pi}{L}\big)^2
%               - 16(\sqrt{2}-1) \frac1{k^2} q_{2,2}
%        \bigg)
%  \]
%  with some positive constants $q_1$, $q_{2,1}$, $q_{2,2}$ and $q_3$ given in~\eqref{constants:q}.
%  In particular, $\delta_5(k,\alpha^{(4)}(k))$ is positive for positive $L$ close to zero and negative for very large positive $L$.
\item For $L=2\pi$, we compute $\alpha_+ = 0,2102380141$.
 Moreover, the constant $\mu$ is determined as $\mu = 0,003013362117$ with $\alpha_* = 0,1453311384$.
\item In the limit $L\to +\infty$, the matrix $\CC_\kappa^*\P_\kappa + \P_\kappa\CC_\kappa$ has zero eigenvalues,
 which is apparent from its upper left submatrix $\DD_{\kappa,\alpha,2\alpha,\alpha,\sqrt{6}\alpha}$ defined in~\eqref{Dka2aa3a}.
 Accordingly, $\alpha_+ \to 0$ % with $\alpha_+ =O(1/L)$
  in the limit $L\to \infty$.
 
 Moreover, $\alpha_+ \to 0$ in the limit $L\to 0$. % with $\alpha_+ =O(L)$.
\end{enumerate}
\end{remark}

\begin{proof}[Proof of Theorem~\ref{linBGK-decay} in 2D]
We consider a solution $h$ of \eqref{linBGK:torus}, and the entropy functional $\cE(\tilde{f})$ defined by
\begin{equation*} %\label{entropy:2D}
  \cE(\tilde{f}) := \sum_{k\in \Z^2} \langle h_k(v), \P_{|k|} h_k(v)\rangle_{L^2(M_1^{-1})}\ ,
\end{equation*}
with $\tilde{f}(t) := M_1 +h(t)$.
Here the matrices $\P_0 = \II$ and $\P_\kappa$ defined in \eqref{P:2D} for $\kappa=|k|\neq 0$
are regarded as bounded operators on $L^2(M_1^{-1})$.
Then 
\begin{equation*} %\label{e-inequality:2D}
  \ddt \cE(\tilde{f}) = -\sum_{k\in \Z^2} \langle h_k(v), (\CC_{|k|}^*\P_{|k|} + \P_{|k|}\CC_{|k|}) h_k(v)\rangle_{L^2(M_1^{-1})} 
   \leq - 2\min\{1,\mu\} \,\cE(\tilde{f}) \ , 
%   \leq - \tfrac1{25} \cE_\gamma(h) \ , 
\end{equation*}
where $1$ is the decay rate of $h_0$, cf.~\eqref{h0-decay}.
This implies~\eqref{ineq:linBGK-decay} with $\lambda^2(L):=2\min\{1,\mu\}$ and $\mu$ from~\eqref{goodbnd2:2D:b}.

The constants $c_2$ and $C_2$ in the estimate \eqref{entropy-equiv} follow from \eqref{Pbound2}: 
\begin{equation*} %\label{norm-const:2D}
  c_2(L)=\left(1 +\sqrt{6} \alpha_* \right)^{-1}\,,\quad
  C_2(L)=\left(1 -\sqrt{6} \alpha_* \right)^{-1}\,.
\end{equation*}
This finishes the proof of Theorem \ref{linBGK-decay} in 2D.
\end{proof}

%%%%%%%%%%%%%%%%%%%%%%%%%%%%%%%%%%%%%%%%%%%%%%%%%%%%%%%%%%%%%%%%%%%%%%%%%%%%%%%%%

\section{Linearized BGK equation in 3D}
\label{sec:linBGK:3D}

Next we shall analyze the linearized BGK equation~\eqref{linBGK:torus} in 3D:
\begin{multline*} %\label{linBGK:3D}
 \partial_t h (x,v,t) +  v\cdot \nabla_x h(x,v,t) \\
     = M_1(v)\left[\big( \frac52 - \frac{|v|^2}{2}\big)\sigma(x,t)+v\cdot \mu(x,t)
       + \big( -\frac{1}{2}+ \frac{|v|^2}{6}\big)\tau(x,t)\right]  - h(x,v,t)\ , \quad t\geq 0 \ ,
\end{multline*}
 for the perturbation $h(x,v,t) \approx f(x,v,t) -M_1(v)$ with $x\in\tilde\T^3$, $v\in\R^3$.

Again we consider the $x$--Fourier series of $h$: 
 \begin{equation*} %\label{Fourier:3D}
  h(x,v,t) = \sum_{k\in\Z^3} h_k(v,t)\,e^{i\frac{2\pi}{L} k\cdot x}\ .
 \end{equation*}
Each spatial mode $h_k(v,t)$ is decoupled and evolves as
\begin{equation} \label{linBGK:Fourier:3D} 
 \partial_t h_k +  i\tfrac{2\pi}{L} k\cdot v\ h_k =
      M_1(v)\left[\big( \frac52 - \frac{|v|^2}{2}\big)\sigma_k(t)+v\cdot \mu_k(t)
      + \big( -\frac{1}{2}+ \frac{|v|^2}{6}\big)\tau_k(t)\right]  - h_k(v,t)\ , \quad t\geq 0 \ .
\end{equation}
Here, $\sigma_k$, $\mu_k$ and $\tau_k$ denote the spatial modes of the $v$--moments $\sigma$, $\mu$ and $\tau$ defined in \eqref{mloc2}; hence
\[ \sigma_k := \int_{\R^3} h_k(v,t)\d[v]\ , \quad 
   \mu_k := \int_{\R^3} v\ h_k(v,t)\d[v]\ , \quad 
   \tau_k := \int_{\R^3} |v|^2\ h_k(v,t)\d[v]\ .
\]
Next we introduce an orthonormal basis in $v$-direction,
 to represent the spatial modes $h_k(\cdot,t)\in L^2(\R^3;M_1^{-1})$, $k\in\Z^3$.
Again we will use Hermite functions.
As in 2D, their multi-dimensional generalization is not unique,
 and we present two options which seem to be practical:\\

\noindent
\underline{Basis 1 (``pure tensor-basis''):}
This is a straightforward generalization of the 2D case.
  Using~\eqref{hermite-polynom:2D:a} and the normalized 1D-Hermite functions~$g_n$ ($n\in\N_0$),
  we define the normalized Hermite functions in 3D as in~\eqref{hermite-fct:dD}, 
  \begin{equation*} %\label{hermite-fct:3D}
   g_m(v) := \prod_{j=1}^3 g_{m_j} (v_j) \qquad \text{for } m=(m_1,\ldots,m_3)\in\N_0^3 \ .
  \end{equation*}
  Then, $g_m$ ($m\in\N_0^3$) form an orthonormal basis of $L^2(\R^3;M_1^{-1})$
  and inherit a simple recurrence relation~\eqref{recur:dD1}.

% In order to give a vector representation of \eqref{linBGK:Fourier:3D},
% the evolution equation of the spatial modes $h_k(v,t)$,
% we first need to introduce a linear ordering of the velocity basis $g_m$ ($m\in\N_0^3$).
As in 2D,
 we shall use a lexicographic order,
 i.e.\ we order $\{g_m\}$ first (increasingly) with respect to the total order $|m|$,
 and within a set of order $|m|$,
 we order first (decreasingly) with respect to $m_1$, and then $m_2$.
Thus, we obtain the linearly ordered basis
{\footnotesize
\begin{align*} %\label{Hermite:basis:3D}
   g_0(v) &=g_{0,0,0}(v) =M_1(v)\ , \\
   g_1(v) &=g_{1,0,0}(v) =v_1 M_1(v)\ , \quad
	 g_2(v) =g_{0,1,0}(v) =v_2 M_1(v)\ , \quad
	 g_3(v) =g_{0,0,1}(v) =v_3 M_1(v)\ , \\
   g_4(v) &=g_{2,0,0}(v) =\tfrac1{\sqrt{2}} (v_1^2 -1) M_1(v)\ , \quad
	 g_5(v) =g_{1,1,0}(v) =v_1 v_2 M_1(v)\ , \quad
	 g_6(v) =g_{1,0,1}(v) =v_1 v_3 M_1(v)\ , \\
   g_7(v) &=g_{0,2,0}(v) =\tfrac1{\sqrt{2}} (v_2^2 -1) M_1(v)\ , \quad
	 g_8(v) =g_{0,1,1}(v) =v_2 v_3 M_1(v)\ , \quad
	 g_9(v) =g_{0,0,2}(v) =\tfrac1{\sqrt{2}} (v_3^2 -1) M_1(v)\ , \\
   g_{10}(v) &=g_{3,0,0}(v) =\tfrac1{\sqrt{3!}} (v_1^3 -3v_1) M_1(v)\ , \quad
	 g_{11}(v) =g_{2,1,0}(v) =\tfrac1{\sqrt{2}} (v_1^2 -1) v_2 M_1(v)\ , \quad
	 g_{12}(v) =g_{2,0,1}(v) =\tfrac1{\sqrt{2}} (v_1^2 -1) v_3 M_1(v)\ , \\
   g_{13}(v) &=g_{1,2,0}(v) =\tfrac1{\sqrt{2}} v_1 (v_2^2 -1) M_1(v)\ , \quad
	 g_{14}(v) =g_{1,1,1}(v) = v_1 v_2 v_3 M_1(v)\ , \quad
	 g_{15}(v) =g_{1,0,2}(v) =\tfrac1{\sqrt{2}} v_1 (v_3^2 -1) M_1(v)\ , \\
   g_{16}(v) &=g_{0,3,0}(v) =\tfrac1{\sqrt{3!}} (v_2^3 -3v_2) M_1(v)\ , \quad
	 g_{17}(v) =g_{0,2,1}(v) = \tfrac1{\sqrt{2}} (v_2^2 -1) v_3 M_1(v)\ , \quad
	 g_{18}(v) =g_{0,1,2}(v) =\tfrac1{\sqrt{2}} v_2 (v_3^2 -1) M_1(v)\ , \\
   g_{19}(v) &=g_{0,0,3}(v) =\tfrac1{\sqrt{3!}} (v_3^3 -3v_3) M_1(v)\ , \quad
   g_{20}(v) =g_{4,0,0}(v) =\tfrac1{\sqrt{4!}} (v_1^4 -6v_1^2 +3) M_1(v) , \quad \ldots \ .
\end{align*}
}
%Given a multi-index $m\in\N_0^3$, its lexicographic index is computed as  with $|m| =m_1 +m_2 +m_3$.\\

\noindent
\underline{Basis 2 (``energy-basis''):}
 %\item \label{Hermite:dD:modified}
% The second basis a simple variant of the first one. 
% We recall that the evolution with the BGK equation~\eqref{bgk} conserves the (kinetic) energy and mass. 
% Hence, their difference is also conserved and it is related to the polynomial $|v|^2/2 -1$. 
In analogy to the 2D case from \S\ref{sec:linBGK:2D},
 it is natural to construct a basis of orthogonal polynomials $\tilde{H}_m(v)$ ($m\in\N_0^3$)
 that involves the kinetic energy polynomial $|v|^2/2$ (minus a multiple of the mass);
 in 3D the relevant term is $(|v|^2 -3)/2$.
Again, we only have to modify the Hermite polynomials of second order:
  \begin{equation*}
     \tilde{H}_m(v) =
       \begin{cases}
         \tfrac12 (H_{2,0,0}(v) +H_{0,2,0} +H_{0,0,2}(v)) = \tfrac{|v|^2}{2} -\tfrac32 & \text{if } m=(2,0,0)\ , \\
         H_{2,0,0}(v) -\tfrac12 (1+\sqrt{3}) H_{0,2,0} +\tfrac12 (\sqrt{3}-1) H_{0,0,2}(v) & \text{if } m=(0,2,0)\ , \\
         H_{2,0,0}(v) +\tfrac12 (\sqrt{3}-1) H_{0,2,0} -\tfrac12 (1+\sqrt{3}) H_{0,0,2}(v) & \text{if } m=(0,0,2)\ , \\
         H_m(v) & \text{else.}
       \end{cases}
  \end{equation*}
Similarly, we define normalized Hermite functions
  \begin{equation*}
     \tilde{g}_m(v) =
       \begin{cases}
         \frac1{\sqrt3} \big(g_{2,0,0}(v) +g_{0,2,0}(v) +g_{0,0,2}(v)\big) & \text{if } m=(2,0,0)\ , \\
         \frac1{\sqrt3} \big(g_{2,0,0}(v) -\tfrac12 (1+\sqrt{3}) g_{0,2,0}(v) +\tfrac12 (\sqrt{3}-1) g_{0,0,2}(v)\big) & \text{if } m=(0,2,0)\ , \\
         \frac1{\sqrt3} \big(g_{2,0,0}(v) +\tfrac12 (\sqrt{3}-1) g_{0,2,0}(v) -\tfrac12 (1+\sqrt{3}) g_{0,0,2}(v)\big) & \text{if } m=(0,0,2)\ , \\
         g_m(v) & \text{else.}
       \end{cases}
  \end{equation*}
We remark that it is most convenient to obtain $\tilde{g}_{0,2,0}$ and $\tilde{g}_{0,0,2}$
 from diagonalizing the matrix $\LL_2$ (see~\eqref{matrix:L2:3D} below).
The elements $\tilde{g}_m$ satisfy a recurrence relation similar to~\eqref{recur:dD1};
	 except for identities involving $\tilde{g}_{2,0,0}$, $\tilde{g}_{0,2,0}$ or $\tilde{g}_{0,0,2}$.
	For example,
	 \begin{align*}
	    v_1 \tilde{g}_{2,0,0}(v)
	       &= \frac1{\sqrt3} v_1 \big(g_{2,0,0}(v) +g_{0,2,0}(v) +g_{0,0,2}(v)\big) \\
	       &= \frac1{\sqrt3} \big(\sqrt{3} g_{3,0,0}(v) +\sqrt{2} g_{1,0,0}(v) +g_{1,2,0}(v) +g_{1,0,2}(v)\big) \ .
	 \end{align*}
Whereas the first basis $g_m(v)$ ($m\in\N_0^3$) inherits a simple recurrence formula with three elements;
 for the second basis $\tilde{g}_m(v)$ ($m\in\N_0^3$) the recurrence formula for $\tilde{g}_{2,0,0}(v)$ relates five elements.

\bigskip
To derive the vector representation of \eqref{linBGK:Fourier:3D}, 
 it is preferable to use the first basis with the linear ordering $g_m(v)$, $m\in\N_0$. 
With the identity $(g_4(v) +g_7(v) +g_9(v))/\sqrt{2} = (|v|^2 -3) M_1(v)/2$,
 we rewrite~\eqref{linBGK:Fourier:3D} as
 \begin{equation} \label{linBGK:Fourier:3D:2} 
 \partial_t h_k +  i\tfrac{2\pi}{L} k\cdot v\ h_k =
   g_0(v) \sigma_k(t) 
    + \begin{pmatrix} g_1(v) \\ g_2(v) \\ g_3(v) \end{pmatrix}\cdot \mu_k(t)
    + \big( g_4(v) +g_7(v) +g_9(v)\big) \frac{\tau_k(t) -3\sigma_k(t)}{3\sqrt{2}}
    - h_k(v,t)
%      \big( g_0(v) -\tfrac1{\sqrt{2}} g_4(v) -\tfrac1{\sqrt{2}} g_7(v) -\tfrac1{\sqrt{2}} g_9(v)\big)\sigma_k(t)
%			+ \begin{pmatrix} g_1(v) \\ g_2(v) \\ g_3(v) \end{pmatrix}\cdot \mu_k(t) \\
%      + \tfrac1{3\sqrt{2}} \big( g_4(v) +g_7(v) +g_9(v)\big)\tau_k(t)  - h_k(v,t)\ , \quad t\geq 0 \ .
 \end{equation}
 for $t\geq 0$.
As in 1D, the spatially homogeneous mode again satisfies $\ddt h_0(v,t)=-h_0(v,t)$,
 cf.\ \eqref{h0-decay}.
Next we expand $h_k(\cdot,t)\in L^2(\R^3;M_1^{-1})$ in the orthonormal basis $g_m$ ($m\in\N_0$):
 $$
   h_k(v,t)=\sum_{m=0}^\infty \hat h_{k,m}(t)\,g_m(v) \qquad\text{with}\quad
   \hat h_{k,m} = \langle h_k(v),g_m(v)\rangle_{L^2(M_1^{-1})}\ .
 $$
 For each spatial mode $k\in\Z^3$,
  the ``infinite vector'' $\hat {\bf h}_k(t) = ( \hat h_{k,0}(t),\,\hat h_{k,1}(t),\ ...)^\top\in \ell^2(\N_0)$ contains all Hermite coefficients of $h_k(\cdot,t)$. 
 In particular, we have
 \[ \hat h_{k,0} = \sigma_k \ , \qquad 
    \big(\hat h_{k,1}\,,\ \hat h_{k,2}\,,\ \hat h_{k,3} \big)^\top = \mu_k \in\R^3 \ , \qquad
    \tfrac1{\sqrt{2}} (\hat h_{k,4} +\hat h_{k,7} +\hat h_{k,9}) = \frac12\tau_k -\frac32 \sigma_k\ .
 \]
%  $$
%    \hat h_{k,0} = \int_{\R^3} h_k(v) g_0(v) M_1^{-1}(v) \d[v] = \sigma_k \ , \qquad 
%    \begin{pmatrix} \hat h_{k,1} \\ \hat h_{k,2} \\ \hat h_{k,3} \end{pmatrix} = \int_{\R^3} h_k(v) \begin{pmatrix} g_1(v) \\ g_2(v) \\ g_3(v) \end{pmatrix} M_1^{-1}(v) \d[v] = \mu_k \in\R^3 \ ,
%  $$
%  and
%  $$
%    \tfrac1{\sqrt{2}} (\hat h_{k,4} +\hat h_{k,7} +\hat h_{k,9})
% 	   = \int_{\R^3} h_k(v) \tfrac1{\sqrt{2}} (g_4(v) +g_7(v) +g_9(v)) M_1^{-1}(v) \d[v]
% 	   = \frac12\tau_k -\frac32 \sigma_k\ .
%  $$
 Thus, we can rewrite~\eqref{linBGK:Fourier:3D:2} as
 \begin{equation} \label{linBGK:Fourier:3D:3} 
 \partial_t h_k +  i\tfrac{2\pi}{L} k\cdot v\ h_k
    = g_0\ \hat h_{k,0} 
      + \begin{pmatrix} g_1 \\ g_2 \\ g_3 \end{pmatrix}\cdot \begin{pmatrix} \hat h_{k,1} \\ \hat h_{k,2} \\ \hat h_{k,3} \end{pmatrix}
      +\tfrac1{3} \big( g_4 +g_7 +g_9\big) (\hat h_{k,4} +\hat h_{k,7} +\hat h_{k,9})
      - h_k(v,t)\ ,
%     = g_0(v) \hat h_{k,0} 
% 			+ \begin{pmatrix} g_1(v) \\ g_2(v) \\ g_3(v) \end{pmatrix}\cdot \begin{pmatrix} \hat h_{k,1} \\ \hat h_{k,2} \\ \hat h_{k,3} \end{pmatrix}
%       +\tfrac1{3} \big( g_4(v) +g_7(v) +g_9(v)\big) (\hat h_{k,4} +\hat h_{k,7} +\hat h_{k,9})
% 			- h_k(v,t)\ ,
\end{equation}
for $k\in\Z^3$, $t\geq 0$.
% Our next goal is to rewrite this system in the Hermite function basis as an infinite vector system -- in analogy to~\eqref{linap4} in 1D. % and to~\eqref{linap4:2D} in 2D. 
% In that equation, the operator $\LL_1$ is multiplied by the (scalar and integer) mode number $k$,
% which is then used in the construction of the transformation matrices $\P_k$.
% To extend this structure and strategy to 2D, we used the rotational symmetry of \eqref{linBGK:Fourier2D} to derive~\eqref{linap4:2D}.
% In 3D, we observe that equation~\eqref{linBGK:3D} is again invariant under simultaneous rotations of the $(x,v)$-coordinate system.
% This rotational symmetry is passed on to equation~\eqref{linBGK:Fourier:3D:3}:
% We note that the basis functions $g_0$ and $g_4 +g_7 +g_9$ only depend on $|v|$,
% and that the interplay between the vectors $k$ and $v$ only occurs via $k\cdot v$. 
% Hence, evolution equations from the family \eqref{linBGK:Fourier:3D:3} having the same modulus $|k|$ are identical in the following sense: Rotating the spatial mode vector $k$ and the $v$-coordinate system by the same angle, leaves \eqref{linBGK:Fourier:3D:3} invariant. 
Since~\eqref{linBGK:Fourier:3D:3} is rotationally invariant (as in 2D),
 it suffices to consider \eqref{linBGK:Fourier:3D:3} for vectors $k=(\kappa,0)^\top$ with the discrete moduli 
$$
  \kappa\in K:=\big\{r\ge1 \, \big|\, \exists k\in\Z^3\setminus\{0\} \mbox{ with } r=|k| \big\} \ . 
$$
%We skipped here the mode $h_0$, as it was already analyzed before. 
With the notation $h_\kappa:=h_{\kappa,0}$ and $\hat {\bf h}_\kappa:=\hat {\bf h}_{\kappa,0}$,
 \eqref{linBGK:Fourier:3D:3} reads
\[ \partial_t h_\kappa +  i\tfrac{2\pi}{L} \kappa v_1\ h_\kappa
   = g_0 \hat h_{\kappa,0} 
     + \begin{pmatrix} g_1 \\ g_2 \\ g_3 \end{pmatrix}\cdot \begin{pmatrix} \hat h_{\kappa,1} \\ \hat h_{\kappa,2} \\ \hat h_{\kappa,3} \end{pmatrix}
     +\tfrac1{3} \big( g_4 +g_7 +g_9\big) (\hat h_{\kappa,4} +\hat h_{\kappa,7} +\hat h_{\kappa,9})
     - h_\kappa \ , \quad
     \kappa\in K\ , \quad t\geq 0 \ .
\]
Then, the vector of its Hermite coefficients satisfies
\begin{equation}\label{linap4:3D}
\partial_t\hat {\bf h}_\kappa (t) + i\tfrac{2\pi}{L}\,\kappa \LL_1 \hat{\bf h}_\kappa (t) = -\LL_2 \hat{\bf h}_\kappa (t)\ ,\quad \kappa\in K\ ,\quad t\ge0\ ,
\end{equation}
where the operators $\LL_1,\,\LL_2$ are represented by ``infinite matrices'' on $\ell^2(\N_0)$:
\setcounter{MaxMatrixCols}{22}
\begin{equation*}%\label{L1L2}
  \LL_1= \begin{pmatrix}
  0 & 1 & 0 & 0 & 0 & 0 & 0 & 0 & 0 & 0 & 0 & 0 & 0 & 0 & 0 & 0 & 0 & 0 & 0 & 0 & \cdots \\
  1 & 0 & 0 & 0 & \sqrt 2 & 0 & 0 & 0 & 0 & 0 & 0 & 0 & 0 & 0 & 0 & 0 & 0 & 0 & 0 & 0 & \cdots \\
  0 & 0 & 0 & 0 & 0 & 1 & 0 & 0 & 0 & 0 & 0 & 0 & 0 & 0 & 0 & 0 & 0 & 0 & 0 & 0 & \cdots \\
  0 & 0 & 0 & 0 & 0 & 0 & 1 & 0 & 0 & 0 & 0 & 0 & 0 & 0 & 0 & 0 & 0 & 0 & 0 & 0 & \cdots \\
  0 & \sqrt 2 & 0 & 0 & 0 & 0 & 0 & 0 & 0 & 0 & \sqrt 3 & 0 & 0 & 0 & 0 & 0 & 0 & 0 & 0 & 0 & \cdots \\
  0 & 0 & 1 & 0 & 0 & 0 & 0 & 0 & 0 & 0 & 0 & \sqrt 2 & 0 & 0 & 0 & 0 & 0 & 0 & 0 & 0 & \cdots \\
  0 & 0 & 0 & 1 & 0 & 0 & 0 & 0 & 0 & 0 & 0 & 0 & \sqrt 2 & 0 & 0 & 0 & 0 & 0 & 0 & 0 & \cdots \\
  0 & 0 & 0 & 0 & 0 & 0 & 0 & 0 & 0 & 0 & 0 & 0 & 0 & 1 & 0 & 0 & 0 & 0 & 0 & 0 & \cdots \\
  0 & 0 & 0 & 0 & 0 & 0 & 0 & 0 & 0 & 0 & 0 & 0 & 0 & 0 & 1 & 0 & 0 & 0 & 0 & 0 & \cdots \\
  0 & 0 & 0 & 0 & 0 & 0 & 0 & 0 & 0 & 0 & 0 & 0 & 0 & 0 & 0 & 1 & 0 & 0 & 0 & 0 & \cdots \\
  0 & 0 & 0 & 0 & \sqrt 3 & 0 & 0 & 0 & 0 & 0 & 0 & 0 & 0 & 0 & 0 & 0 & 0 & 0 & 0 & 0 & \cdots \\
  0 & 0 & 0 & 0 & 0 & \sqrt 2 & 0 & 0 & 0 & 0 & 0 & 0 & 0 & 0 & 0 & 0 & 0 & 0 & 0 & 0 & \cdots \\
  0 & 0 & 0 & 0 & 0 & 0 & \sqrt 2 & 0 & 0 & 0 & 0 & 0 & 0 & 0 & 0 & 0 & 0 & 0 & 0 & 0 & \cdots \\
  0 & 0 & 0 & 0 & 0 & 0 & 0 & 1 & 0 & 0 & 0 & 0 & 0 & 0 & 0 & 0 & 0 & 0 & 0 & 0 & \cdots \\
  0 & 0 & 0 & 0 & 0 & 0 & 0 & 0 & 1 & 0 & 0 & 0 & 0 & 0 & 0 & 0 & 0 & 0 & 0 & 0 & \cdots \\
  0 & 0 & 0 & 0 & 0 & 0 & 0 & 0 & 0 & 1 & 0 & 0 & 0 & 0 & 0 & 0 & 0 & 0 & 0 & 0 & \cdots \\
  0 & 0 & 0 & 0 & 0 & 0 & 0 & 0 & 0 & 0 & 0 & 0 & 0 & 0 & 0 & 0 & 0 & 0 & 0 & 0 & \cdots \\
  0 & 0 & 0 & 0 & 0 & 0 & 0 & 0 & 0 & 0 & 0 & 0 & 0 & 0 & 0 & 0 & 0 & 0 & 0 & 0 & \cdots \\
  0 & 0 & 0 & 0 & 0 & 0 & 0 & 0 & 0 & 0 & 0 & 0 & 0 & 0 & 0 & 0 & 0 & 0 & 0 & 0 & \cdots \\
  0 & 0 & 0 & 0 & 0 & 0 & 0 & 0 & 0 & 0 & 0 & 0 & 0 & 0 & 0 & 0 & 0 & 0 & 0 & 0 & \cdots \\
  \vdots & \vdots &\vdots &\vdots &\vdots &\vdots &\vdots &\vdots &\vdots & \vdots & \vdots &\vdots &\vdots &\vdots &\vdots &\vdots &\vdots &\vdots &\vdots &\vdots & \ddots 
  \end{pmatrix}\ ,
\end{equation*}
\begin{equation} \label{matrix:L2:3D}
 \LL_2=\diag(0,\, 0,\, 0,\, 0,\, \tfrac13
              \begin{pmatrix}
               2 & 0 & 0 & -1 & 0 & -1 \\
               0 & 3 & 0 & 0 & 0 & 0 \\
               0 & 0 & 3 & 0 & 0 & 0 \\
               -1 & 0 & 0 & 2 & 0 & -1 \\
               0 & 0 & 0 & 0 & 3 & 0 \\
               -1 & 0 & 0 & -1 & 0 & 2 \\
              \end{pmatrix},\, 1,\, 1,\cdots)\ .
\end{equation}

To determine the hypocoercivity index of the BGK model in 3D,
 it is preferable to use the second basis with the linear ordering $\tilde{g}_m(v)$, $m\in\N_0$.
Again, we shall give the matrix representation of the BGK equation~\eqref{linBGK:Fourier:3D} in the second velocity basis only for the spatial modes $k=(\kappa,0)^\top$, $\kappa\in K$.
To obtain the corresponding matrices $\tilde\LL_1$ and $\tilde\LL_2$, % with respect to the second basis,
 we simply represent the linear basis transformation by the infinite matrix % $\Smatrix$
 \[
  \Smatrix = \diag (1,\ 1,\ 1,\, 1,\ 
                    \begin{pmatrix}
                     1/\sqrt3 & 0 & 0 & 1/\sqrt3 & 0 & 1/\sqrt3 \\
                     0 & 1 & 0 & 0 & 0 & 0 \\
                     0 & 0 & 1 & 0 & 0 & 0 \\
                     1/\sqrt3 & 0 & 0 & -(1+1/\sqrt3)/2 & 0 & (1-1/\sqrt3)/2 \\
                     0 & 0 & 0 & 0 & 1 & 0\\
                     1/\sqrt3 & 0 & 0 & (1-1/\sqrt3)/2 & 0 & -(1+1/\sqrt3)/2 \\
                    \end{pmatrix},\ 1,\ 1,\cdots)\ .
 \]
 which is self-inverse, i.e. $\Smatrix=\Smatrix^{-1}$.
Thus we compute $\tilde{\LL}_1 = \Smatrix^{-1} \LL_1 \Smatrix$ and $\tilde{\LL}_2 = \Smatrix^{-1} \LL_2 \Smatrix$, which yields
{\footnotesize
\begin{equation*}%\label{L1L2} \tfrac{\sqrt{2}}{\sqrt{3}}
  \tilde{\LL}_1= \begin{pmatrix}
  0 & 1 & 0 & 0 & 0 & 0 & 0 & 0 & 0 & 0 & 0 & 0 & 0 & 0 & 0 & 0 & 0 & 0 & 0 & 0 & 0 & \cdots \\
  1 & 0 & 0 & 0 & \tfrac{\sqrt{2}}{\sqrt{3}} & 0 & 0 & \tfrac{\sqrt{2}}{\sqrt{3}} & 0 & \tfrac{\sqrt{2}}{\sqrt{3}} & 0 & 0 & 0 & 0 & 0 & 0 & 0 & 0 & 0 & 0 & 0 & \cdots \\
  0 & 0 & 0 & 0 & 0 & 1 & 0 & 0 & 0 & 0 & 0 & 0 & 0 & 0 & 0 & 0 & 0 & 0 & 0 & 0 & 0 & \cdots \\
  0 & 0 & 0 & 0 & 0 & 0 & 1 & 0 & 0 & 0 & 0 & 0 & 0 & 0 & 0 & 0 & 0 & 0 & 0 & 0 & 0 & \cdots \\
  0 & \tfrac{\sqrt{2}}{\sqrt{3}} & 0 & 0 & 0 & 0 & 0 & 0 & 0 & 0 & 1 & 0 & 0 & \tfrac1{\sqrt{3}} & 0 & \tfrac1{\sqrt{3}} & 0 & 0 & 0 & 0 & 0 & \cdots \\
  0 & 0 & 1 & 0 & 0 & 0 & 0 & 0 & 0 & 0 & 0 & \sqrt{2} & 0 & 0 & 0 & 0 & 0 & 0 & 0 & 0 & 0 & \cdots \\
  0 & 0 & 0 & 1 & 0 & 0 & 0 & 0 & 0 & 0 & 0 & 0 & \sqrt{2} & 0 & 0 & 0 & 0 & 0 & 0 & 0 & 0 & \cdots \\
  0 & \tfrac{\sqrt{2}}{\sqrt{3}} & 0 & 0 & 0 & 0 & 0 & 0 & 0 & 0 & 1 & 0 & 0 & -\tfrac{3+\sqrt{3}}{6} & 0 & \tfrac{3-\sqrt{3}}{6} & 0 & 0 & 0 & 0 & 0 & \cdots \\
  0 & 0 & 0 & 0 & 0 & 0 & 0 & 0 & 0 & 0 & 0 & 0 & 0 & 0 & 1 & 0 & 0 & 0 & 0 & 0 & 0 & \cdots \\
  %%%%%%%%%%%%%%%%%%%%%%%%%%%%%%%%%%%%%%%%%%%%%%%%%%%%%%%%%%%%%%%%%%%%%%%%%%%%%%%%%%%%%%%%%
  0 & \tfrac{\sqrt{2}}{\sqrt{3}} & 0 & 0 & 0 & 0 & 0 & 0 & 0 & 0 & 1 & 0 & 0 & \tfrac{3-\sqrt{3}}{6} & 0 & -\tfrac{3+\sqrt{3}}{6} & 0 & 0 & 0 & 0 & 0 & \cdots \\
  0 & 0 & 0 & 0 & 1 & 0 & 0 & 1 & 0 & 1 & 0 & 0 & 0 & 0 & 0 & 0 & 0 & 0 & 0 & 0 & 2 & \cdots \\
  0 & 0 & 0 & 0 & 0 & \sqrt{2} & 0 & 0 & 0 & 0 & 0 & 0 & 0 & 0 & 0 & 0 & 0 & 0 & 0 & 0 & 0 & \cdots \\
  0 & 0 & 0 & 0 & 0 & 0 & \sqrt{2} & 0 & 0 & 0 & 0 & 0 & 0 & 0 & 0 & 0 & 0 & 0 & 0 & 0 & 0 & \cdots \\
  0 & 0 & 0 & 0 & \tfrac1{\sqrt{3}} & 0 & 0 & -\tfrac{3+\sqrt{3}}{6} & 0 & \tfrac{3-\sqrt{3}}{6} & 0 & 0 & 0 & 0 & 0 & 0 & 0 & 0 & 0 & 0 & 0 & \cdots \\
  0 & 0 & 0 & 0 & 0 & 0 & 0 & 0 & 1 & 0 & 0 & 0 & 0 & 0 & 0 & 0 & 0 & 0 & 0 & 0 & 0 & \cdots \\
  0 & 0 & 0 & 0 & \tfrac1{\sqrt{3}} & 0 & 0 & \tfrac{3-\sqrt{3}}{6} & 0 & -\tfrac{3+\sqrt{3}}{6} & 0 & 0 & 0 & 0 & 0 & 0 & 0 & 0 & 0 & 0 & 0 & \cdots \\
  0 & 0 & 0 & 0 & 0 & 0 & 0 & 0 & 0 & 0 & 0 & 0 & 0 & 0 & 0 & 0 & 0 & 0 & 0 & 0 & 0 & \cdots \\
  0 & 0 & 0 & 0 & 0 & 0 & 0 & 0 & 0 & 0 & 0 & 0 & 0 & 0 & 0 & 0 & 0 & 0 & 0 & 0 & 0 & \cdots \\
  %%%%%%%%%%%%%%%%%%%%%%%%%%%%%%%%%%%%%%%%%%%%%%%%%%%%%%%%%%%%%%%%%%%%%%%%%%%%%%%%%%%%%%%%%
  0 & 0 & 0 & 0 & 0 & 0 & 0 & 0 & 0 & 0 & 0 & 0 & 0 & 0 & 0 & 0 & 0 & 0 & 0 & 0 & 0 & \cdots \\
  0 & 0 & 0 & 0 & 0 & 0 & 0 & 0 & 0 & 0 & 0 & 0 & 0 & 0 & 0 & 0 & 0 & 0 & 0 & 0 & 0 & \cdots \\
  0 & 0 & 0 & 0 & 0 & 0 & 0 & 0 & 0 & 0 & 2 & 0 & 0 & 0 & 0 & 0 & 0 & 0 & 0 & 0 & 0 & \cdots \\
  \vdots & \vdots &\vdots &\vdots &\vdots &\vdots &\vdots &\vdots &\vdots & \vdots & \vdots &\vdots &\vdots &\vdots &\vdots &\vdots &\vdots &\vdots &\vdots &\vdots &\vdots &\ddots 
  \end{pmatrix}
\end{equation*}
}
\[ \tilde{\LL}_2=\diag(0,\, 0,\, 0,\, 0,\, 0,\, 1,\, 1,\, 1,\cdots)\ . \]
%This second basis representation makes it easy of 
To determine the hypocoercivity index of the BGK model in 3D,
we use Definition \ref{hyp-index} in its equivalent formulation \ref{cond:trivkernel}: 
With $\ker\tilde\LL_2=\spn\{e_0,\,e_1,\,e_2,\,e_3,\,e_4\}$, and the relations 
\[
  \tilde\LL_1 e_0=e_1\,,\quad
  \tilde\LL_1 e_1=e_0+\tfrac{\sqrt{2}}{\sqrt{3}} (e_4+e_7+e_9)\,,\quad
  \tilde\LL_1 e_2=e_5\,,\quad
  \tilde\LL_1 e_3=e_6\,,\quad 
  \tilde\LL_1 e_4=\tfrac{\sqrt{2}}{\sqrt{3}} e_1+e_{10} +\tfrac1{\sqrt{3}} (e_{13} +e_{15})\,,  
\]
we find the index $\tau=2$ (like in 2D). 
%At first glance this may come as a surprise, since the analogous 1D model has index 3. 
%But in 3D, 
Each of the three momentum-conservation modes (represented by $e_1$, $e_2$ and $e_3$) is directly coupled to a decreasing mode.\\ % -- a quadratic polynomial,
%but orthogonal to $|v|^2$, which corresponds to the (conserved) kinetic energy (cf.\ \eqref{Hermite:basis:3D}).\\

We define the matrices $\CC_\kappa := i\tfrac{2\pi}{L} \kappa\ \tilde\LL_1 + \tilde\LL_2$, $\kappa\in K\cup\{0\}$,
which determine the evolution of the spatial modes of the BGK equation in 3D, cf.\ \eqref{linap4:3D}.
Our next goal is to establish a spectral gap of $\CC_\kappa$, uniformly in $\kappa\in K$. 
This will prove Theorem \ref{linBGK-decay} in 3D.
To this end we make an ansatz for the transformation matrices: Following the detailed motivation from the 1D analog in \S\ref{sec:linBGK:1D}, let $\P_\kappa$, $\kappa\in K$ be the identity matrix 
whose upper-left $11\times 11$ block is replaced by
 \begin{equation}\label{P:3D}
  \begin{pmatrix}
  1 & -i \alpha/\kappa & 0 & 0 & 0 & 0 & 0 & 0 & 0 & 0 & 0 \\
  i \alpha/\kappa & 1 & 0 & 0 & 0 & 0 & 0 & -i \beta/\kappa & 0 & 0 & 0 \\
  0 & 0 & 1 & 0 & 0 & -i \gamma/\kappa & 0 & 0 & 0 & 0 & 0 \\
  0 & 0 & 0 & 1 & 0 & 0 & -i \omega/\kappa & 0 & 0 & 0 & 0 \\
  0 & 0 & 0 & 0 & 1 & 0 & 0 & 0 & 0 & 0 & -i \eta/\kappa \\
  0 & 0 & i \gamma/\kappa & 0 & 0 & 1 & 0 & 0 & 0 & 0 & 0 \\
  0 & 0 & 0 & i \omega/\kappa & 0 & 0 & 1 & 0 & 0 & 0 & 0 \\
  0 & i \beta/\kappa & 0 & 0 & 0 & 0 & 0 & 1 & 0 & 0 & 0 \\
  0 & 0 & 0 & 0 & 0 & 0 & 0 & 0 & 1 & 0 & 0 \\
  0 & 0 & 0 & 0 & 0 & 0 & 0 & 0 & 0 & 1 & 0 \\
  0 & 0 & 0 & 0 & i \eta/\kappa & 0 & 0 & 0 & 0 & 0 & 1 \\
  \end{pmatrix}
 \end{equation}
 with positive parameters $\alpha$, $\beta$, $\gamma$, $\omega$, and $\eta$ to be chosen below.
The distribution of the non-zero off-diagonal elements follows from the pattern in matrix $\tilde\LL_1$.

\begin{lemma} \label{lemma:decay:3D}
% Let $\ell:= \tfrac{2\pi}{L}$;
 If the matrices $\P_\kappa$ are chosen as~\eqref{P:3D} with $\beta =\sqrt3  \alpha$, $\gamma =\alpha$, $\omega=\alpha$, and $\eta=\alpha$ uniformly for all $\kappa\in K$,
 then there exists a positive $\alpha_+$ such that $\P_\kappa$ and $\CC_\kappa^*\P_\kappa + \P_\kappa\CC_\kappa$ are positive definite for all $\alpha\in(0,\alpha_+)$ and for all $\kappa\in K$.
Moreover,
\begin{equation}\label{goodbnd:3D}
\CC_\kappa^*\P_\kappa + \P_\kappa\CC_\kappa  \geq 2\mu \P_\kappa \qquad \text{ uniformly in } \kappa\in K \ , 
\end{equation}
with  
\begin{equation*} %\label{goodbnd2:3D:a}
\mu := \left( \frac{20}{32} \right)^{20} \frac{\delta_{21}(1,\alpha,\sqrt3  \alpha,\alpha,\alpha,\alpha)}{2 (1+2\alpha)} > 0 \ , 
\end{equation*}
 where $\delta_{21}(1,\alpha,\sqrt3  \alpha,\alpha,\alpha,\alpha):= \det \DD_{1,\alpha,\sqrt3  \alpha,\alpha,\alpha,\alpha}$ with $\DD_{\kappa,\alpha,\sqrt3  \alpha,\alpha,\alpha,\alpha}$ defined in~\eqref{Dka5a4aaa}.
\end{lemma}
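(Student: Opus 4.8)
The plan is to follow the proofs of Lemma~\ref{lem:mu:1D} and Lemma~\ref{lemma:decay:2D}, adapting the book-keeping to the larger blocks occurring in 3D. Write $\P_\kappa=\II+\tfrac1\kappa\A$ with $\A$ the $\kappa$-independent Hermitian matrix obtained from \eqref{P:3D} by dropping the factors $1/\kappa$; with the chosen ratios $\beta=\sqrt3\alpha$, $\gamma=\omega=\eta=\alpha$ the coupling graph of $\A$ is the path $e_0$--$e_1$--$e_7$ with weights $\alpha,\sqrt3\alpha$ together with the three disjoint edges $e_2$--$e_5$, $e_3$--$e_6$, $e_4$--$e_{10}$ of weight $\alpha$, whose nonzero eigenvalues are $\pm2\alpha$ and $\pm\alpha$; hence $\|\tfrac1\kappa\A\|=2\alpha/\kappa\le2\alpha$, and for $2\alpha<1$ one gets $(1-2\alpha)\II\le\P_\kappa\le(1+2\alpha)\II$ uniformly in $\kappa$. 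This makes each $\P_\kappa$ positive definite and yields the constants $c_3=(1+2\alpha_*)^{-1}$, $C_3=(1-2\alpha_*)^{-1}$ of Theorem~\ref{linBGK-decay}. Using $\CC_\kappa=i\tfrac{2\pi}{L}\kappa\tilde\LL_1+\tilde\LL_2$ with $\tilde\LL_1,\tilde\LL_2$ Hermitian, one computes
\[
 \CC_\kappa^*\P_\kappa+\P_\kappa\CC_\kappa = 2\tilde\LL_2 + i\tfrac{2\pi}{L}\bigl(\A\tilde\LL_1-\tilde\LL_1\A\bigr) + \tfrac1\kappa\bigl(\tilde\LL_2\A+\A\tilde\LL_2\bigr).
\]
Since $\A$ is supported on $\spn\{e_0,\dots,e_7,e_{10}\}$ and $\tilde\LL_1 e_{10}=e_4+e_7+e_9+2e_{20}$, the last two summands are supported on $V:=\spn\{e_0,\dots,e_{20}\}$; so $\CC_\kappa^*\P_\kappa+\P_\kappa\CC_\kappa$ acts as $2\II$ on $V^\perp$ and as the $21\times21$ Hermitian matrix $\DD_\kappa:=\DD_{\kappa,\alpha,\sqrt3\alpha,\alpha,\alpha,\alpha}$ of \eqref{Dka5a4aaa} on $V$. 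As $\P_\kappa=\II$ on $V^\perp$ and $\P_\kappa\le(1+2\alpha)\II$ on $V$, the estimate \eqref{goodbnd:3D} reduces — for any $\mu\le 1/(1+2\alpha)$ — to $\lambda_{\min}(\DD_\kappa)\ge2\mu(1+2\alpha)$ for all $\kappa\in K$.

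Next I would prove $\DD_\kappa>0$ uniformly in $\kappa$. Since $\A$ is linear in $\alpha$ with the ratios fixed (write $\A=\alpha\A_0$), $\DD_\kappa(\alpha)=2\tilde\LL_2|_V+\alpha\,\bigl(i\tfrac{2\pi}{L}[\A_0,\tilde\LL_1]+\tfrac1\kappa\{\tilde\LL_2,\A_0\}\bigr)\big|_V$ is an \emph{exactly affine} pencil in $\alpha$, with $\DD_\kappa(0)=2\tilde\LL_2|_V=\diag(0,0,0,0,0,2,\dots,2)$ having $5$-dimensional kernel $\ker\tilde\LL_2=\spn\{e_0,\dots,e_4\}$. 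With $\RR=(e_0,\dots,e_4)$ the relation $\tilde\LL_2\RR=0$ kills the anticommutator, so $\RR^*\bigl(i\tfrac{2\pi}{L}[\A_0,\tilde\LL_1]+\tfrac1\kappa\{\tilde\LL_2,\A_0\}\bigr)\RR=i\tfrac{2\pi}{L}\RR^*[\A_0,\tilde\LL_1]\RR$ is a fixed $5\times5$ real symmetric matrix, computed from $\tilde\LL_1 e_0=e_1$, $\tilde\LL_1 e_1=e_0+\tfrac{\sqrt2}{\sqrt3}(e_4+e_7+e_9)$, $\tilde\LL_1 e_2=e_5$, $\tilde\LL_1 e_3=e_6$, $\tilde\LL_1 e_4=\tfrac{\sqrt2}{\sqrt3}e_1+e_{10}+\tfrac1{\sqrt3}(e_{13}+e_{15})$. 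As in Theorems~\ref{th:P-admissible2D} and~\ref{th:p-ansatz-3}, the ratios $\beta=\sqrt3\alpha$, $\gamma=\omega=\eta=\alpha$ and the phases $\arg\lambda_j=-\pi/2$ (already built into \eqref{P:3D}) are chosen so that all leading principal minors of this matrix are positive — the $\alpha$-coupling joins the conserved mode $e_0$ to $e_1$, the $\beta$-coupling joins $e_1$ to the decaying mode $e_7$, and the $\gamma,\omega,\eta$-couplings join $e_2,e_3,e_4$ directly to the decaying modes $e_5,e_6,e_{10}$. By Lemma~\ref{lemma:ansatzP:nD} the five lowest eigenvalues of $\DD_\kappa$ are $\alpha\,\xi_j+o(\alpha)$ with $\xi_j>0$, the remaining sixteen staying near $2$, so $\DD_\kappa>0$ for small $\alpha$; since $\DD_\kappa$ depends on $\kappa$ only through $1/\kappa\in(0,1]$ and is jointly continuous there, a compactness argument makes the threshold $\alpha_+$ uniform in $\kappa\in K$. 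The Appendix~\ref{A1} computation makes this explicit via Sylvester's criterion: each of the $21$ leading principal minors of $\DD_\kappa$ is a polynomial in $\alpha$ and $1/\kappa$, and one checks it is minimized over $\kappa\ge1$ at $\kappa=1$; combined with the positivity of the leading minors of $\DD_1$ on $(0,\alpha_+)$ this gives $\DD_\kappa>0$ for all $\kappa\in K$ and, in particular, $\det\DD_\kappa\ge\det\DD_1=\delta_{21}(1,\alpha,\sqrt3\alpha,\alpha,\alpha,\alpha)>0$.

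Finally I would convert this into the quantitative gap via a trace identity. Because $\tilde\LL_2$ is diagonal and $\A$ has zero diagonal, $\tilde\LL_2\A+\A\tilde\LL_2$ has zero diagonal; and $\A\tilde\LL_1-\tilde\LL_1\A$, a commutator with the finite-rank $\A$, has vanishing trace (its diagonal entries at indices outside $V$ vanish since $\A$ maps into $V$), so $\tr\DD_\kappa=\tr(2\tilde\LL_2|_V)=2\cdot16=32$, independently of $\kappa$ and $\alpha$. For a positive definite Hermitian $21\times21$ matrix $\DD$ with eigenvalues $0<\lambda_1\le\dots\le\lambda_{21}$, AM--GM gives $\prod_{j\ge2}\lambda_j\le\bigl(\tfrac1{20}\sum_{j\ge2}\lambda_j\bigr)^{20}\le\bigl(\tfrac{\tr\DD}{20}\bigr)^{20}$, hence $\lambda_{\min}(\DD)=\det\DD/\prod_{j\ge2}\lambda_j\ge(20/32)^{20}\det\DD$. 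Applying this to $\DD_\kappa$ with $\det\DD_\kappa\ge\delta_{21}(1,\alpha,\sqrt3\alpha,\alpha,\alpha,\alpha)$ yields $\lambda_{\min}(\DD_\kappa)\ge(20/32)^{20}\delta_{21}(1,\alpha,\sqrt3\alpha,\alpha,\alpha,\alpha)$ uniformly in $\kappa\in K$, so the choice of $\mu$ in the statement makes $2\mu(1+2\alpha)$ equal to this bound; after shrinking $\alpha_+$ so that also $\mu\le 1/(1+2\alpha)$, the reduction above proves \eqref{goodbnd:3D}. The main obstacle is the $\kappa$-uniformity of the second step: Lemma~\ref{lemma:ansatzP:nD} by itself only yields positivity for large $\kappa$, so one must track the leading principal minors of $\DD_\kappa$ explicitly as functions of $\alpha$ and $1/\kappa$ and establish that they are minimized at $\kappa=1$ — this, together with verifying $\delta_{21}(1,\alpha,\dots)>0$, is the 3D analogue of the $3\times3$ (1D) and $11\times11$ (2D) determinant estimates carried out in Appendix~\ref{A1}; the rest is routine adaptation of the lower-dimensional arguments.
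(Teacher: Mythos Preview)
Your proposal is correct and follows essentially the same approach as the paper: establish the block structure so that $\CC_\kappa^*\P_\kappa+\P_\kappa\CC_\kappa$ equals $2\II$ outside the $21\times21$ block $\DD_\kappa$, verify $\DD_\kappa>0$ for all $\kappa\in K$ via Sylvester's criterion (using the paper's Lemma~\ref{lem:p} to show each leading minor is minimized at $\kappa=1$), compute $\tr\DD_\kappa=32$ and apply AM--GM to bound $\lambda_{\min}(\DD_\kappa)\ge(20/32)^{20}\delta_{21}(1,\alpha)$, and combine with the spectral bound $(1-2\alpha)\II\le\P_\kappa\le(1+2\alpha)\II$. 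Your conceptual front-end (the perturbation argument via Lemma~\ref{lemma:ansatzP:nD} and the compactness in $1/\kappa\in[0,1]$) is a helpful heuristic but, as you correctly note, does not by itself give the explicit $\alpha_+$ or the quantitative $\mu$---for that the paper, like you, falls back on the minor-by-minor analysis of Tables~\ref{table:minors:3D:1-14}--\ref{table:minors:3D:15-21}.
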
 

The proof of this lemma is deferred to Appendix~\ref{A1}.

\begin{remark}\label{rem:mu:3D}
\begin{enumerate}[label=(\alph*)]
\item Consider 
 \[ \alpha_* = \argmax_{\alpha\in[0,\alpha_+]} \frac{\delta_{21}(1,\alpha,\sqrt3 \alpha,\alpha,\alpha,\alpha)}{2 (1+2\alpha)} \ . \]
 Choosing $\P_\kappa$ with $\alpha=\alpha_*$, $\beta =\sqrt3  \alpha$, $\gamma =\alpha$, $\omega=\alpha$, and $\eta=\alpha$ uniformly for all $\kappa\in K$, yields~\eqref{goodbnd:3D} with the maximal constant
 \begin{equation}\label{goodbnd2:3D:b}
  \mu = \left( \frac{20}{32} \right)^{20} \frac{\delta_{21}(1,\alpha_*,\sqrt3 \alpha_*,\alpha_*,\alpha_*,\alpha_*)}{2 (1+2\alpha_*)}
      = \max_{\alpha\in[0,\alpha_+]} \left( \frac{20}{32} \right)^{20} \frac{\delta_{21}(1,\alpha,\sqrt3 \alpha,\alpha,\alpha,\alpha)}{2 (1+2\alpha)} \ .
 \end{equation}
% \item The ordering of $\alpha^{(4)}(k)$ and $\alpha^{(5)}_-(k)$ depends on the parameter $L$:
%  Evaluating $\delta_5(1,\alpha,\alpha,\alpha)$ at $\alpha=\alpha^{(4)}(k)$ yields
%  \[ \delta_5(\kappa,\alpha^{(4)}(k))
%      = 2 \big(\tfrac{2\pi}{L}\big)^3 \frac{(\alpha^{(4)}(k))^5}{q_1} 
%        \bigg( \big( q_3 q_1 - 16 (\sqrt{2}-1)(q_{2,1}-16(\sqrt{2}-1))\big) \big(\tfrac{2\pi}{L}\big)^2
%               - 16(\sqrt{2}-1) \frac1{\kappa^2} q_{2,2}
%        \bigg)
%  \]
%  with some positive constants $q_1$, $q_{2,1}$, $q_{2,2}$ and $q_3$ given in~\eqref{constants:q}.
%  In particular, $\delta_5(\kappa,\alpha^{(4)}(k))$ is positive for positive $L$ close to zero and negative for very large positive $L$.
\item \label{rem:mu:3D:2Pi}
 For $L=2\pi$, we compute $\alpha_+ = 0,214287873283229$.
 Moreover, the constant $\mu$ is determined as $\mu = 0,0001774540949$ with $\alpha_* = 0,1644256115$.
\item In the limit $L\to +\infty$, the matrix $\CC_\kappa^*\P_\kappa + \P_\kappa\CC_\kappa$ has zero eigenvalues,
 which is apparent from its upper left submatrix $\DD_{\kappa,\alpha,\sqrt3 \alpha,\alpha,\alpha,\alpha}$ defined in~\eqref{Dka5a4aaa}.
 Accordingly, $\alpha_+ \to 0$ % with $\alpha_+ =O(1/L)$
  in the limit $L\to \infty$.
 
 Moreover $\alpha_+ \to 0$ in the limit $L\to 0$. % with $\alpha_+ =O(L)$.
\end{enumerate}
\end{remark}

\begin{proof}[Proof of Theorem~\ref{linBGK-decay} in 3D]
We consider a solution $h$ of \eqref{linBGK:torus}, and the entropy functional $\cE(\tilde{f})$ defined by
\begin{equation}\label{entropy:3D}
  \cE(\tilde{f}) := \sum_{k\in \Z^3} \langle h_k(v), \P_{|k|} h_k(v)\rangle_{L^2(M_1^{-1})}\ ,
\end{equation}
with $\tilde{f}(t) := M_1 +h(t)$.
Here the matrices $\P_0 = \II$ and $\P_\kappa$ defined in \eqref{P:3D} for $\kappa=|k|\neq 0$
are regarded as bounded operators on $L^2(M_1^{-1})$.
Then 
\begin{equation}\label{e-inequality:3D}
  \ddt \cE(\tilde{f}) = -\sum_{k\in \Z^3} \langle h_k(v), (\CC_{|k|}^*\P_{|k|} + \P_{|k|}\CC_{|k|}) h_k(v)\rangle_{L^2(M_1^{-1})} 
   \leq - 2\min\{1,\mu\} \,\cE(\tilde{f}) \ , 
%   \leq - \tfrac1{25} \cE_\gamma(h) \ , 
\end{equation}
where $1$ is the decay rate of $h_0$, cf.~\eqref{h0-decay}.
This implies~\eqref{ineq:linBGK-decay} with $\lambda^3(L):=2\min\{1,\mu\}$ and $\mu$ from~\eqref{goodbnd2:3D:b}.

The constants $c_3$ and $C_3$ in the estimate \eqref{entropy-equiv} follow from \eqref{Pbound3}: 
\begin{equation*} %\label{norm-const:3D}
  c_3(L)=\left(1 +2 \alpha_* \right)^{-1}\,,\quad
  C_3(L)=\left(1 -2 \alpha_* \right)^{-1}\,.
\end{equation*}
This finishes the proof of Theorem \ref{linBGK-decay} in 3D.
\end{proof}

%%%%%%%%%%%%%%%%%%%%%%%%%%%%%%%%%%%%%%%%%%%%%%%%%%%%%%%%%%%%%%%%%%%%%%%%%%%%%%%%%

\section{Local exponential stability for the BGK equation in 3D}
\label{sec:6}

This analysis is an extension of \S4.5 in \cite{AAC16}. To make is self-contained we give the complete proof and not only the modification of the key steps.

For $\gamma \ge 0$, let $H^\gamma(\tilde\T^3)$ be the Sobolev space consisting of  the completion  of smooth functions $\varphi$ on $\tilde\T^3$ in the Hilbertian norm 
\[ \|\varphi\|_{H^\gamma}^2 :=   \sum_{k\in \Z^3} (1+|k|^2)^{\gamma}|\varphi_k|^2 \,, \]
where $\varphi_k$ ($k\in\Z^3$) is the $k$th Fourier coefficient of $\varphi$. Let $\mathcal{H}_\gamma$ denote the Hilbert space
$H^\gamma(\tilde\T^3)\otimes L^2(\R^3;M_1^{-1}(v)\d[v])$, where the inner product in $\mathcal{H}_\gamma$ is given by
\[ \langle f,g\rangle_{\mathcal{H}_\gamma} = \int_{\tilde\T^3} \int_{\R^3} \overline{f}(x,v) \left[\left(1 - \Delta_x \right)^{\gamma} g(x,v) \right]M_1^{-1}(v) \d[v] \d[\tilde x]  \,, \]
 where $\d[\tilde x]$ denotes the normalized Lebesgue measure on $\tilde\T^3$.
Then $\mathcal{H}_0$ is simply the weighted space $L^2(\tilde\T^3\times\R^3;M_1^{-1}(v)\d[\tilde x]\d[v])$.
% and, for all $\gamma\geq 0$,  ${\Q_2}$ is self-adjoint on $\mathcal{H}_\gamma$. %, $\LL f^\infty = 0$.

\begin{proof}[Proof of Theorem~\ref{BGK-decay}]
\ref{BGK-decay:a}
For any solution $h(t)$ to \eqref{linBGK:torus} with $\cE^d(h^I+M_1)<\infty$, normalized according to \eqref{mloc2},
 we consider the function $\tilde f(t):=h(t) +f^\infty$ with $f^\infty=M_1$.
We define a family of entropy functionals $\cE_\gamma$ ($\gamma\geq 0$) by
 \begin{equation} \label{entropy:gamma:3D}
  \cE_\gamma(\tilde f) := \sum_{k\in \Z^3} (1+|k|^2)^{\gamma} \langle h_k(v), \P_{|k|} h_k(v)\rangle_{L^2(M_1^{-1})}\ ,
 \end{equation}
 as an extension of the entropy $\cE(\tilde f)$ in~\eqref{entropy:3D}. %for $\gamma=0$.
For all $\gamma\geq 0$, the estimates
\begin{equation} \label{equiv:norm:entropy:gamma} 
 \tfrac34 \cE_\gamma(\tilde f)
 \leq \tfrac1{1+2\alpha_*} \cE_\gamma(\tilde f)
 \leq \| h\|_{\mathcal{H}_\gamma}^2
 \leq \tfrac1{1-2\alpha_*} \cE_\gamma(\tilde f)
 \leq \tfrac32 \cE_\gamma(\tilde f) 
\end{equation}
follow from~\eqref{Pbound3} and Remark \ref{rem:mu:3D}\ref{rem:mu:3D:2Pi}. 
Moreover, the second statement in Theorem~\ref{BGK-decay}\ref{BGK-decay:a} follows just as in the proof of Theorem~\ref{linBGK-decay} in 3D
 where the numerical values are chosen according to Remark \ref{rem:mu:3D}\ref{rem:mu:3D:2Pi}.

\ref{BGK-decay:b}
Let $f$ be a solution of the BGK equation~\eqref{bgk} with constant temperature $T=1$ 
%  \begin{equation} \label{bgk:torus2} 
%  \partial_t f(x,v,t) +  v\ f_x(x,v,t) = M_f(x,v,t) -  f(x,v,t)\ , \qquad t\geq 0\ , 
%  \end{equation}
%  already introduced in (\ref{bgk:torus2})
 and define $h(x,v,t) := f(x,v,t) - M_1(v)$ as in the introduction.
Moreover, let %$\rho$, $P$, 
$\sigma$, $\mu$ and $\tau$ be defined in terms of~$f$ as in \eqref{f:perturb}.  
For all $\gamma\geq 0$, $\|\sigma\|_{H^\gamma}^2  = \langle \sigma M_1, f - f^\infty\rangle_{\mathcal{H}_\gamma}$ with $f^\infty =M_1$.
Then by the Cauchy-Schwarz inequality,
\begin{equation}\label{locbound1}
\|\sigma\|_{H^\gamma}^2 \leq  \|\sigma M_1\|_{\mathcal{H}_\gamma} \| f-f^\infty\|_{\mathcal{H}_\gamma}  =
\|\sigma\|_{H^\gamma} \| f-f^\infty\|_{\mathcal{H}_\gamma} \ .
\end{equation} 
Likewise, $\|\mu\|_{H^\gamma}^2  = \langle \mu\cdot v M_1, f - f^\infty\rangle_{\mathcal{H}_\gamma}$, and hence
\begin{equation}%\label{locbound3}
\|\mu\|_{H^\gamma}^2 \leq  \|\mu\cdot v M_1\|_{\mathcal{H}_\gamma} \| f-f^\infty\|_{\mathcal{H}_\gamma}  \le
\sqrt{3} \|\mu\|_{H^\gamma} \| f-f^\infty\|_{\mathcal{H}_\gamma} \ ,
\end{equation}
as well as, $\|\tau\|_{H^\gamma}^2  = \langle \tau |v|^2 M_1, f - f^\infty\rangle_{\mathcal{H}_\gamma}$, and hence
\begin{equation}\label{locbound2}
\|\tau\|_{H^\gamma}^2 \leq  \|\tau |v|^2 M_1\|_{\mathcal{H}_\gamma} \| f-f^\infty\|_{\mathcal{H}_\gamma}  =
\sqrt{15} \|\tau\|_{H^\gamma} \| f-f^\infty\|_{\mathcal{H}_\gamma} \ .
\end{equation} 
%For $\gamma>3/2$, functions in $H^\gamma$ are H\"older continuous, and the $H^\gamma$ norm controls
%their supremum norm. Combining this with the estimates proved above, we see that for all  $\gamma>3/2$, there is a finite constant $C_\gamma$ such that the pressure and density satisfy
Using a Sobolev embedding (with $\gamma>3/2$) we can estimate the perturbations of the first 3 moments in $L^\infty(\tilde\T^3)$ as
\begin{equation}\label{locbound3}
\|\sigma\|_\infty%=\|\rho - 1\|_\infty 
\leq C_\gamma  \| f-f^\infty\|_{\mathcal{H}_\gamma}\,, \qquad
\|\mu\|_\infty\leq C_\gamma  \| f-f^\infty\|_{\mathcal{H}_\gamma} \quad
{\rm and}\qquad 
\|\tau\|_\infty %=3 \|P - 1\|_\infty 
\leq C_\gamma  \| f-f^\infty\|_{\mathcal{H}_\gamma} \ .
\end{equation}
Using these estimates it is a simple matter to control the approximation in (\ref{taylor}): For $s\in [0,1]$ and $(x,v) \in \tilde\T^3\times\R^3$, define (inspired by \eqref{taylor})
\begin{equation}\label{def-F}
   F(s,x,v) := \frac{(1+s\sigma)^{\frac{5}2}(x)}{\left(2\pi \big\{1+\frac13\big[s\tau(x)-\frac{s^2|\mu|^2(x)}{1+s\sigma(x)}\big]\big\}\right)^\frac{3}2} 
 \,\exp\Big\{-\frac{|v(1+s\sigma(x))-s\mu(x)|^2}{2\big(1+\frac13\big[s\tau(x)-\frac{s^2|\mu|^2(x)}{1+s\sigma(x)}\big]\big) (1+s\sigma(x))}\Big\} \ , 
\end{equation}
so that the gain term in the linearized BGK equation \eqref{linBGK:torus} is
${\displaystyle \partial_s F(0,x,v)}$.
In this notation,
\begin{align*}
 R_f(x,v)
  &:= M_f(x,v) - M_1(v) - \left[ \left( \frac52 - \frac{|v|^2}{2}\right)\sigma(x) +v\cdot\mu(x)
      + \left( -\frac{1}{2}+ \frac{|v|^2}{6}\right)\tau(x)\right] M_1(v) \\
  & = \int_0^1 \left[ \partial_s F(s,x,v) - \partial_s F(0,x,v)\right]{\rm d}s
      = \int_0^1 \int_0^s \left[ \partial_s^2 F(r,x,v)\right] {\rm d}r\, {\rm d}s\ .
\end{align*}
%$\partial_s^2 F(s,x,v)$ can be computed explicitly, but since the formula is overwhelming, we chose not to put it. 
%We compute 
%\begin{equation} \label{F2}
% \partial_s^2 F(s,x,v)
%       = \frac{(\tau/3-\sigma)^2}{(1+s\sigma)^3} 
%         \Bigg[ \frac{15}{4\theta_s^2} -\frac{|v|^2}2 \frac5{\theta_s^3} +\frac{|v|^4}4 \frac1{\theta_s^4} \Bigg] M_{\theta_s}(v) \ . %wrong formula - from 1D case !!
%\end{equation}
%Note that the r.h.s.~of \eqref{F2} 
To display the complicated expression for $\partial_s^2 F(s,x,v)$, we define
\[
 \rho_s := 1+s\sigma\ , \quad u_s := \frac{s}{\rho_s} \mu\ , \quad
 \mu_s := s\mu\ , \quad 
 P_s := 1 +\frac13 \Big(s\tau - \frac{|\mu_s|^2}{\rho_s}\Big)\ . 
\]
Then $\partial_s^2 F(s,x,v)$ reads
\begin{align*} %\label{F2}
\frac{\partial_s^2 F(s,x,v)}{F(s,x,v)}
&= 
 \left( \frac{15}{4 \ \rho_s^2} -\frac5{2\ P_s \ \rho_s} \ | v -u_s|^2
 \right) \sigma^2
%\\ &\quad
+\left( -\frac{15}{2\ P_s \ \rho_s}
  +\frac5{P_s^2} \, | v -u_s|^2
 \right) \sigma \ \partial_s P_s \\
&\quad  
+\left( \frac {5}{P_s \ \rho_s^2}
%  +\frac{2}{P_s \ \rho_s^2}
%  -\frac{2}{P_s \ \rho_s^2}
 \right) \sigma \ \left( ( v -u_s) \cdot \mu \right)
%\\ &\quad   
+\left( \frac{15}{4\ P_s^2}
  -\frac{5\ \rho_s}{2\ P_s^{3}}\ | v -u_s|^2
 \right) \left( \partial_s P_s \right)^2 \\
&\quad
+\left( -\frac5{P_s^2\ \rho_s}
 \right)\partial_s P_s \ \left( ( v -u_s) \cdot \mu \right)
%\\ &\quad 
+\left( 
% {\frac1{P_s \ \rho_s^3}}
% -\frac1{P_s \ \rho_s^3}
 -\frac1{3 P_s^2\ \rho_s^2} | v -u_s|^2
 \right) |\mu|^2 \\
&\quad
+ \left( {\frac {1}{P_s \ \rho_s} ( v -u_s) \cdot \mu }
    -\tfrac12 {\frac {\sigma}{P_s} | v -u_s|^2}
    +\tfrac12 {\frac {\rho_s}{P_s^2} | v -u_s|^2 \ \partial_s P_s }
\right)^2\ ,
\end{align*}
where $\partial_s P_s := \tfrac13 \left( \tau-2\,{\frac {s|\mu|^2}{s\sigma+1}}+{\frac {{s}^2|\mu|^2\sigma}{ ( s\sigma+1 )^2}} \right)$.
%Anyhow, one can verify that it 
One can now verify that $\partial_s^2 F(s,x,v)$ is of the order $O(\sigma^2+|\mu|^2+\tau^2)$, which will be related to $O((f-f^\infty)^2)$ due to the estimates \eqref{locbound1}--\eqref{locbound2}. 
%This is in analogy to the 1D analysis in \S4.5 of \cite{AAC16}.

Simple but cumbersome calculations now show that  
if $\gamma>3/2$ and $\norm{f-f^\infty}_{\mathcal{H}_\gamma}$ is sufficiently small, 
then there exists a finite constant $\tilde C_\gamma$ depending only on $\gamma$ such that for all $s\in [0,1]$,
\begin{equation*} %\label{F}
\left\Vert \partial_s^2 F(s,x,v) \right \Vert_{\mathcal{H}_\gamma} 
 \leq \tilde C_\gamma \| f-f^\infty\|_{\mathcal{H}_\gamma}^2\ ,
\end{equation*}
%$$\left\Vert \partial_s F(s,x,v) - \partial_s F(0,x,v)\right \Vert_{\mathcal{H}_\gamma}   \leq \tilde C_\gamma
%\| f-f^\infty\|_{\mathcal{H}_\gamma}^2\ ,$$
and hence
\begin{equation}\label{R-estimate}
%  \|{\bf R}_f\|_{\mathcal{H}_\gamma}\le \tilde C_\gamma
  \|R_f\|_{\mathcal{H}_\gamma}\le \tilde C_\gamma \| f-f^\infty\|_{\mathcal{H}_\gamma}^2\  .
\end{equation}
[The calculations are simplest for non-negative integer $\gamma$, in which case the Sobolev norms can be calculated by differentiation. 
For $\gamma>3/2$ and sufficiently small $\norm{f-f^\infty}_{\mathcal{H}_\gamma}$,
the estimates \eqref{locbound3} ensure for all $s\in [0,1]$
the boundedness of $0<\epsilon<\norm{1+s \sigma}_\infty\ ,\, \norm{1+\frac13(s \tau-\frac{s^2|\mu|^2}{1+s\sigma})}_\infty<\infty$ (i.e. the denominators in \eqref{def-F}) for some fixed $\epsilon>0$. They also ensure the $L^2(\R^3;M_1^{-1}(v)\d[v])$-integrability of $F(s,x,\cdot)$ by using
\[ \exp\big\{-\frac{|\rho_s v-\mu_s|^2}{2P_s\rho_s}\big\} \leq  e^{-|v|^2/3+1} \quad \text{for all } x\in\tilde\T^3\ . \]
%
%We compute
%\begin{equation} \label{F}
% \partial_s F(s,x,v)
%     = \bigg(\frac32 - \frac{|v|^2}{2\theta_s}\bigg) M_{\theta_s}(v)\sigma 
%       + \bigg( - \frac1{2\theta_s} +  \frac{|v|^2}{2\theta_s^2} \bigg) M_{\theta_s}(v)\tau   
%\end{equation}
%with the notations $\theta_s:=\frac{T+s\tau}{1+s\sigma}$ and $M_{\theta_s}(v) := \frac1{\sqrt{2\pi \theta_s}} e^{-|v|^2/(2\theta_s)}$. Using the triangle inequality in $\left\Vert \partial_s F(s,x,v) - \partial_s F(0,x,v)\right \Vert_{\mathcal{H}_\gamma}$ and the estimates \eqref{locbound1}-\eqref{locbound2}, the appearance of $\sigma$, $\tau$ and its derivatives as multiplicative factors (in contrast to $1+s\sigma$ and $T+s\tau$) translate into a factor $\norm{f-f^\infty}_{\mathcal{H}_\gamma}$. 
%Then \eqref{F} shows that in the estimate of $\left\Vert \partial_s F(s,x,v) - \partial_s F(0,x,v)\right \Vert_{\mathcal{H}_\gamma}$ depends at least quadratically on $\norm{f-f^\infty}_{\mathcal{H}_\gamma}$,
%whereas higher powers can be absorbed into the constant $\tilde C_\gamma$.
In \eqref{bgk}, higher powers of $\norm{f-f^\infty}_{\mathcal{H}_\gamma}$ (arising due to derivatives of $\sigma$, $\mu$ and $\tau$) can be absorbed into the constant of the quadratic term.] 
%between the neighboring integer-order Sobolev norms.)
 
% Now let $f$ be a solution of the BGK equation~\eqref{bgk} with constant temperature $T=1$ 
% %  \begin{equation} \label{bgk:torus2} 
% %  \partial_t f(x,v,t) +  v\ f_x(x,v,t) = M_f(x,v,t) -  f(x,v,t)\ , \qquad t\geq 0\ , 
% %  \end{equation}
% %  already introduced in (\ref{bgk:torus2})
%  and define $h(x,v,t) := f(x,v,t) - M_1(v)$ as in the introduction. 
Now define the linearized BGK operator
\begin{equation*}%\label{locstab21}
\Q_2 h(x,v,t)  :=  \left[ \left( \frac52 - \frac{|v|^2}{2}\right)\sigma(x) +v\cdot\mu(x)+ \left( -\frac{1}{2}+ \frac{|v|^2}{6}\right)\tau(x)\right] M_1(v)  - h(x,v,t)
\end{equation*}
where $\sigma$, $\mu$ and $\tau$ are determined by $h$, and hence $f$. %, and define
%\begin{equation}%\label{locstab21}
%{\bf R}_f(x,v,t) :=    M_f(x,v,t) - M_1(v) - \Q_2 h(x,v,t)\ .
%\end{equation}
For all $\gamma\geq 0$,  $\Q_2$ is self-adjoint on $\mathcal{H}_\gamma$. 
Then the nonlinear BGK equation \eqref{bgk} becomes
\begin{equation} \label{bgk:torus3} 
 \partial_t h(x,v,t) +  v\cdot \nabla_x h(x,v,t) =  \Q_2 h(x,v,t)  +  R_f(x,v,t)\ , \qquad t\geq 0\ , 
\end{equation}
which differs from the linearized BGK equation \eqref{linBGK:torus} only by the additional term $R_f$.

It is now a simple matter to prove local exponential stability. 
We shall use here exactly the entropy functional $\cE_\gamma(f)$ defined in \eqref{entropy:gamma:3D} with $f=M_1+h$.
%We introduce an entropy functional $\cE_\gamma(h)$ by
%$$\cE_\gamma(h) := \sum_{k\in \Z^3} (1+|k|^2)^{\gamma} \langle h_k(v), \P_k h_k(v))\rangle_{L^2(M_1^{-1})}\ ,$$
%where $\P_0=\II$ is the identity, and for $k\neq 0$, $\P_k$ is the bounded operator on $L^2(M_1^{-1})$ defined as in the previous subsection. 
Now assume that $h$ solves (\ref{bgk:torus3}). 
To compute $\frac{{\rm d}}{{\rm d}t} \cE_\gamma(f)$ we use an inequality like \eqref{e-inequality:3D} for the drift term and for $\Q_2 h$ in \eqref{bgk:torus3},
as well as $\|\P_{|k|}\|\le (1+2\alpha_*)$ and \eqref{R-estimate} for the term $R_f$.
This yields
\begin{equation} \label{e-decay1}
\frac{{\rm d}}{{\rm d}t} \cE_\gamma(f) \leq  -2\mu \,\cE_\gamma(f)  + 2(1+2\alpha_*) \tilde C_\gamma\| h\|_{\mathcal{H}_\gamma}^3\ ,
\end{equation}
(if $\|h\|_{\mathcal{H}_\gamma}$ is small enough)
where we have used the fact that $h = f-f^\infty$.
Due to~\eqref{equiv:norm:entropy:gamma}, 
it is now simple to complete the proof of Theorem~\ref{linBGK-decay}\ref{BGK-decay:b} for $L=2\pi$: 
In this case, the best decay rate $\mu = 0,0001774540949$ is attained for $\alpha_* = 0,1644256115$ (cf. Remark \ref{rem:mu:3D}(b)).
Estimate \eqref{e-decay1} shows that there is a $\delta_\gamma>0$ so that if the initial data $f^I(x,v)$ satisfies $\|f^I- f^\infty\|_{\mathcal{H}_\gamma} < \delta_\gamma$,
then the solution $f(t)$ satisfies
$$\cE_\gamma(f(t))  \leq e^{-t/2820}\cE_\gamma(f^I)\ .$$
Here we used that the linear decay rate in \eqref{e-decay1} is slightly better than $\frac{1}{2820}$, to compensate the nonlinear term.
\end{proof}

%%%%%%%%%%%%%%%%%%%%%%%%%%%%%%%%%%%%%%%%%%%%%%%%%%%%%%%%%%%%%%%%%%%%%%%%%%%%%%%%%
%%%%%%%%%%%%%%%%%%%%%%%%%%%%%%%%%%%%%%%%%%%%%%%%%%%%%%%%%%%%%%%%%%%%%%%%%%%%%%%%%
\appendix
\section{Appendix: Deferred proofs}\label{A1}
% 
% %%%%%%%%%%%%%%%%%%%%%%%%%%%%%%%%%%%%%%%%%%%%%%%%%%%%%%%%%%%%%%%%%%%%%%%%%%%%%%%%%
% \medskip

\begin{proof}[Proof of Lemma~\ref{lem:mu:1D}]
We  compute that $\CC_k^*\P_k + \P_k\CC_k$ is twice the identity matrix whose upper left $5\times 5$ block is replaced by 
\begin{comment}
\[ \DD_{k,\alpha,\beta,\gamma} = 
\begin{pmatrix}
\tfrac{4\pi}{L} \alpha & 0 & \tfrac{2\sqrt{2}\pi}{L} \alpha -\tfrac{2\pi}{L} \beta & 0 & 0 \\
0 & -\tfrac{4\pi}{L} \alpha +\tfrac{4\sqrt{2}\pi}{L} \beta & 0 & \tfrac{2 \sqrt{3}\pi}{L}\beta -\tfrac{2\sqrt{2}\pi}{L} \gamma & 0\\
\tfrac{2 \sqrt{2}\pi}{L}\alpha -\tfrac{2\pi}{L} \beta & 0 & -\tfrac{4\sqrt{2}\pi}{L}\beta +\tfrac{4\sqrt{3}\pi}{L}\gamma & -i\gamma/k & \tfrac{4\pi}{L}\gamma\\
0 & \tfrac{2 \sqrt{3}\pi}{L}\beta -\tfrac{2 \sqrt{2}\pi}{L}\gamma & i\gamma/k & 2-\tfrac{4\sqrt{3}\pi}{L}\gamma & 0\\
0 & 0 & \tfrac{4\pi}{L} \gamma & 0 & 2
\end{pmatrix} \ ,
\]
\end{comment}
\[ \DD_{k,\alpha,\beta,\gamma} = 
\begin{pmatrix}
2\ell\alpha & 0 & \ell(\sqrt2 \alpha -\beta) & 0 & 0 \\
0 & 2\ell(\sqrt{2}\beta-\alpha) & 0 & \ell (\sqrt{3}\beta - \sqrt{2}\gamma) & 0\\
\ell( \sqrt{2}\alpha -\beta) & 0 & 2\ell(\sqrt{3}\gamma-\sqrt2\beta) & -i\gamma/k & 2\ell\gamma\\
0 & \ell( \sqrt{3}\beta -\sqrt{2}\gamma) & i\gamma/k & 2-2\ell\sqrt{3}\gamma & 0\\
0 & 0 & 2\ell \gamma & 0 & 2
\end{pmatrix} \ ,
\]
where $\ell:= \tfrac{2\pi}{L}$.
We seek to choose $\alpha$, $\beta$ and $\gamma$ to make the matrices $\P_k$ and $\DD_{k,\alpha,\beta,\gamma}$ positive definite for all $k\in\Z\setminus\{0\}$.
% Franz 23 August 2016 end
% Franz 24 August 2016 start
Under the assumption $|\alpha|^2 +|\beta|^2 +|\gamma|^2 <1$,
 the matrix $\P_k$ will be positive definite for all $k\ne0$.
%The analysis will simplify, if we choose $\beta$ and $\gamma$ as multiples of $\alpha$, then the first, second, and third column will depend linearly on $\alpha$.
To simplify the analysis we shall now set $\beta =\sqrt{2} \alpha$ and $\gamma =\sqrt{3} \alpha$. On the one hand this will make the first three diagonal entries of $\DD_{k,\alpha,\beta,\gamma}$ equal and annihilate four off-diagonal elements. But, on the other hand, this will then lead to a reduced decay rate. But optimal decay rates are anyhow not our goal here -- due to considering only a simple ansatz for the transformation matrices $\P_k$. 
For $\beta =\sqrt{2} \alpha$ and $\gamma =\sqrt{3} \alpha$ we have
\begin{comment}
\begin{equation} \label{Dkaaa}
 \DD_{k,\alpha,\sqrt{2} \alpha,\sqrt{3} \alpha} = 
\begin{pmatrix}
\tfrac{4\pi}{L} \alpha & 0 & \tfrac{2\sqrt{2}\pi}{L} \alpha -\tfrac{2\pi}{L} \alpha & 0 & 0 \\
0 & -\tfrac{4\pi}{L} \alpha +\tfrac{4\sqrt{2}\pi}{L} \alpha & 0 & \tfrac{2 \sqrt{3}\pi}{L}\alpha -\tfrac{2\sqrt{2}\pi}{L} \alpha & 0\\
\tfrac{2 \sqrt{2}\pi}{L}\alpha -\tfrac{2\pi}{L} \alpha & 0 & -\tfrac{4\sqrt{2}\pi}{L}\alpha +\tfrac{4\sqrt{3}\pi}{L}\alpha & -i\alpha/k & \tfrac{4\pi}{L}\alpha\\
0 & \tfrac{2 \sqrt{3}\pi}{L}\alpha -\tfrac{2 \sqrt{2}\pi}{L}\alpha & i\alpha/k & 2-\tfrac{4\sqrt{3}\pi}{L}\alpha & 0\\
0 & 0 & \tfrac{4\pi}{L} \alpha & 0 & 2
\end{pmatrix} \ .
\end{equation}
\end{comment}
\begin{equation} \label{Dkaaa}
\DD_{k,\alpha,\beta,\gamma} = 
\begin{pmatrix}
2\ell\alpha & 0 & 0 & 0 & 0 \\
0 & 2\ell \alpha & 0 & 0 & 0\\
0 & 0 & 2\ell\alpha & -i\sqrt{3}\alpha/k & 2\sqrt{3}\ell\alpha\\
0 & 0 & i\sqrt{3}\alpha/k & 2-6\ell\alpha & 0\\
0 & 0 & 2\sqrt{3}\ell\alpha & 0 & 2
\end{pmatrix} \ .
\end{equation}
The positive definiteness of $\DD_{k,\alpha,\sqrt{2} \alpha,\sqrt{3} \alpha}$ will follow from Sylvester's criterion,
 by choosing $\alpha$ such that all minors of $\DD_{k,\alpha,\sqrt{2} \alpha,\sqrt{3} \alpha}$ will be positive. 
Let $\delta_j(k,\alpha)$ denote the determinant of the lower right $j\times j$ submatrix of $\DD_{k,\alpha,\sqrt{2} \alpha,\sqrt{3} \alpha}$ for $j=1,2,3,4,5$. 
For our choice $\beta =\sqrt{2} \alpha$ and $\gamma =\sqrt{3} \alpha$,
 the first minor $\delta_1(k,\alpha) = 2$ is always positive
 and the second minor $\delta_2(k,\alpha) = 4(1-3\ell\alpha)$ is positive for $\alpha<1/(3\ell)$.
The third minor satisfies
 \[
  \delta_3(k,\alpha)
    = \alpha \big( 72\ell^3 \alpha^2 -(48\ell^2 +\tfrac{6}{k^2})\alpha +8\ell \big)
    \ge \delta_3(1,\alpha)\quad \text{for all } k\ne0 \ ,
 \]
and the lower bound $\delta_3(1,\alpha)$ is positive if
 \begin{equation} \label{root:3:1D}
 0 < \alpha < \alpha^{(3)} := \frac{1+8\ell^2 -\sqrt{1+16\ell^2}}{24\ell^3}\,,\qquad
 \ell:=\frac{2\pi}{L} \ .
 \end{equation}
Moreover $0<\alpha^{(3)}<1/(3\ell)$ for all $\ell>0$ and $\max_{\ell>0}\alpha^{(3)}(\ell)<0.257$.
The fourth and fifth minor are multiples of the third minor:  
 \[ \delta_4(k,\alpha) = 2 \ell \alpha \delta_3(k,\alpha), \quad \delta_5(k,\alpha) = (2\ell \alpha)^2 \delta_3(k,\alpha)\,. \]
Hence, all minors are positive under assumption~\eqref{root:3:1D}.
%Consequently, matrix $\DD_{k,\alpha,\sqrt{2} \alpha,\sqrt{3} \alpha}$ is positive definite under assumption~\eqref{root:3:1D}.
 
Matrix $\DD_{k,\alpha,\sqrt{2} \alpha,\sqrt{3} \alpha}$ has a block diagonal structure.
Thus it has a double eigenvalue $2\ell\alpha$ and the eigenvalues of its lower right $3\times 3$-submatrix
\begin{equation} \label{Dka3}
\DD_{k,\alpha,\sqrt{2} \alpha,\sqrt{3} \alpha}^{(3)} = 
\begin{pmatrix}
 2\ell\alpha & -i\sqrt{3}\alpha/k & 2\sqrt{3}\ell\alpha\\
 i\sqrt{3}\alpha/k & 2-6\ell\alpha & 0\\
 2\sqrt{3}\ell\alpha & 0 & 2
\end{pmatrix} \ .
\end{equation}
Let $\{\lambda_1,\lambda_2,\lambda_3\}$ be the eigenvalues of $\DD_{k,\alpha,\sqrt{2} \alpha,\sqrt{3} \alpha}^{(3)}$ arranged in increasing order.   
We seek a lower bound on~$\lambda_1$. 
As long as $\DD_{k,\alpha,\sqrt{2} \alpha,\sqrt{3} \alpha}^{(3)}$ is positive definite, 
 the arithmetic-geometric mean inequality implies 
 \begin{align*}
 \lambda_1(k,\alpha) = \frac{\delta_3(k,\alpha)}{\lambda_2 \lambda_3}
   &\geq \delta_3(k,\alpha) \left( \frac{ \lambda_2 + \lambda_3}{2}\right)^{-2}\\
   &\geq \delta_3(k,\alpha) \left( \frac{\tr[\DD_{k,\alpha,\sqrt{2} \alpha,\sqrt{3} \alpha}^{(3)}]}{2}\right)^{-2} 
    = \frac{\delta_3(k,\alpha)}{4(1-\ell\alpha)^2} \ ,
 \end{align*}
 since $\tr[\DD_{k,\alpha,\sqrt{2} \alpha,\sqrt{3} \alpha}^{(3)}] =4(1-\ell\alpha)$.
Thus, if $\P_k$ is chosen with some $\alpha \in (0,\alpha^{(3)})$, $\beta =\sqrt{2} \alpha$, and $\gamma =\sqrt{3} \alpha$ uniformly for all $|k|\in\N$, then
 \begin{equation}\label{Pbnd}
 \CC_k^*\P_k + \P_k\CC_k
  %\geq \min\big\{2\ell\alpha, \frac{\delta_3(1,\alpha)}{4(1-\ell\alpha)^2} \big\} \ {\bf I} \qquad \text{ uniformly in } |k|\in\N \ ;
  \geq \frac{\delta_3(1,\alpha)}{4(1-\ell\alpha)^2} \ {\bf I} \qquad \text{ uniformly in } |k|\in\N \ ,
  \end{equation}
 since $\min\big\{2\ell\alpha, \frac{\delta_3(1,\alpha)}{4(1-\ell\alpha)^2} \big\} =\frac{\delta_3(1,\alpha)}{4(1-\ell\alpha)^2}$ for all $\alpha \in (0,\alpha^{(3)})$.
A simple computation shows that the eigenvalues of $\P_k$ are $1$, $1 \pm\alpha \sqrt{3+\sqrt 6}/k$, and $1 \pm\alpha \sqrt{3-\sqrt 6}/k$. 
These eigenvalues are positive for all $0\leq\alpha\leq\max_{\ell>0}\alpha^{(3)}(\ell)$, $L>0$ and $k\in\N$.
Hence, uniformly in $|k|$,
 \begin{equation}\label{Pbound}
 \big(1 -\alpha \sqrt{3+\sqrt 6}\big) {\bf I}  \leq \P_k \leq \big(1 +\alpha \sqrt{3+\sqrt 6}\big) {\bf I}\ .
 \end{equation}
Combining \eqref{Pbound} with \eqref{Pbnd} yields the result \eqref{goodbnd}.
\end{proof}
\bigskip
%<<<<<<<<<<<<<<<<<<<<<<<<<<<<<<<<<<<<<<<<<<<<<<<<<<<<<<<<<<<<<<<<<<<<<<<<<<<<<<<<<<<<<<<<<<<<<<<<<<<<<<

%>>>>>>>>>>>>>>>>>>>>>>>>>>>>>>>>>>>>>>>>>>>>>>>>>>>>>>>>>>>>>>>>>>>>>>>>>>>>>>>>>>>>>>>>>>>>>>>>>>>>>>
The following lemma will be needed in the proofs of Lemma~\ref{lemma:decay:2D} and Lemma~\ref{lemma:decay:3D}. 
\begin{lemma}\label{lem:p}
 Let $p(\kappa,\alpha)$ be a rational function of the form
 \begin{equation*} %\label{eq:p}
  p(\kappa,\alpha) = \big(p_0(\alpha) +p_1(\alpha) \tfrac1{\kappa^2}\big) \tfrac1{\kappa^2} + p_2(\alpha) \,,
 \end{equation*}
 where $p_0$, $p_1$, and $p_2$ are polynomials in $\alpha$.
 If there exists $\widetilde\alpha>0$ such that
 \[ %p_0(\alpha)<0 \,, \quad 
    0\le p_1(\alpha) \quad \text{and } \quad 
    p_0(\alpha) +2 p_1(\alpha) \le 0 \qquad
    \forall \alpha\in[0,\widetilde\alpha]\,,
 \]
 then $p(1,\alpha) \leq p(\kappa,\alpha)$ for all $\alpha\in[0,\widetilde\alpha]$ and $1\leq \kappa$.
\end{lemma}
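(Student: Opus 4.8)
The plan is to treat $p(\kappa,\alpha)$ as a function of the single real variable $s := 1/\kappa^2 \in (0,1]$ for each fixed $\alpha \in [0,\widetilde\alpha]$, and to show that on this interval the function $s \mapsto p_1(\alpha)s^2 + p_0(\alpha)s + p_2(\alpha)$ attains its minimum at the right endpoint $s=1$, i.e. at $\kappa = 1$. First I would write $q(s) := p_1(\alpha)s^2 + p_0(\alpha)s + p_2(\alpha)$, noting that $p(\kappa,\alpha) = q(1/\kappa^2)$. Since $1 \le \kappa$ corresponds exactly to $0 < s \le 1$, the claim $p(1,\alpha) \le p(\kappa,\alpha)$ is equivalent to $q(1) \le q(s)$ for all $s \in (0,1]$.

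The key step is a convexity/monotonicity argument for $q$ on $[0,1]$. Because $p_1(\alpha) \ge 0$ by hypothesis, $q$ is convex (weakly) in $s$, so on the compact interval $[0,1]$ its maximum is at an endpoint, but more to the point we want the minimum over $(0,1]$; since $q$ is convex, $q(1) \le q(s)$ for all $s \in [0,1]$ will follow once we know $q$ is non-increasing at $s=1$ from the left, i.e. $q'(1) \le 0$. We compute $q'(s) = 2p_1(\alpha)s + p_0(\alpha)$, so $q'(1) = p_0(\alpha) + 2p_1(\alpha) \le 0$ precisely by the second hypothesis. For a convex function, $q'(1) \le 0$ forces $q$ to be non-increasing on all of $(-\infty,1]$ (since $q'$ is non-decreasing, $q'(s) \le q'(1) \le 0$ for $s \le 1$); hence $q$ is non-increasing on $[0,1]$, which gives $q(1) \le q(s)$ for every $s \in [0,1]$, in particular for $s = 1/\kappa^2$ with $\kappa \ge 1$. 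This is exactly the assertion.

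I do not anticipate a serious obstacle here — the statement is essentially elementary calculus once the change of variable $s = 1/\kappa^2$ is made. The only mild subtlety is that $q$ need not be strictly convex when $p_1(\alpha) = 0$, but the argument via $q'(s) \le q'(1) \le 0$ for $s \le 1$ (monotonicity of $q'$, which holds with $p_1(\alpha) \ge 0$, strict or not) covers the degenerate case uniformly: in that case $q' \equiv p_0(\alpha) \le -2p_1(\alpha) = 0$, so $q$ is still non-increasing. One could alternatively give a direct one-line estimate: for $s \in (0,1]$,
\[
 q(s) - q(1) = p_1(\alpha)(s^2 - 1) + p_0(\alpha)(s-1) = (s-1)\big(p_1(\alpha)(s+1) + p_0(\alpha)\big),
\]
and since $s - 1 \le 0$ while $p_1(\alpha)(s+1) + p_0(\alpha) \le p_1(\alpha)\cdot 2 + p_0(\alpha) \le 0$ (using $p_1(\alpha) \ge 0$ and $s+1 \le 2$), the product is $\ge 0$, giving $q(s) \ge q(1)$. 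This factored form is the cleanest route and is what I would actually write in the paper, since it avoids any discussion of derivatives or convexity altogether.
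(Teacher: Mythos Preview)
Your proposal is correct and is essentially the same argument as the paper's, just organized more cleanly via the substitution $s=1/\kappa^2$. The paper works directly in $\kappa$: it rewrites the desired inequality as $p_0(\alpha)(\kappa^2-1)\le -p_1(\alpha)(\kappa^2-1)(\kappa^2+1)/\kappa^2$, divides by $\kappa^2-1$, and then uses $(\kappa^2+1)/\kappa^2\le 2$ together with $p_1\ge 0$ and $p_0+2p_1\le 0$ --- which is precisely your factored identity $q(s)-q(1)=(s-1)\big(p_1(s+1)+p_0\big)$ after the change of variables.
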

\begin{proof}
 We want to prove $p(1,\alpha) \leq p(\kappa,\alpha)$ for all $\alpha\in[0,\widetilde\alpha]$ and $1\leq \kappa$, or equivalently,
 \[ p_0(\alpha) +p_1(\alpha) \leq \big(p_0(\alpha) +p_1(\alpha) \tfrac1{\kappa^2}\big) \tfrac1{\kappa^2} \qquad \forall \alpha\in[0,\widetilde\alpha] \quad \forall 1\leq \kappa \,. \] 
 We multiply the inequality with $\kappa^2$
 \[ \big(p_0(\alpha) +p_1(\alpha)\big)\kappa^2 \leq p_0(\alpha) +p_1(\alpha) \tfrac1{\kappa^2} \]
 and rearrange the summands
 \[ p_0(\alpha)\ (\kappa^2-1) \leq p_1(\alpha)\ \big(\tfrac1{\kappa^2} -\kappa^2) = -p_1(\alpha) \tfrac{(\kappa^2 -1)(\kappa^2 +1)}{\kappa^2} \,. \]
 For $\kappa=1$ the inequality holds trivially.
 Therefore, we continue with $\kappa>1$ and divide the inequality by $(\kappa^2-1)$ to obtain 
 \[ p_0(\alpha) \leq -p_1(\alpha) \tfrac{\kappa^2 +1}{\kappa^2}\ . \]
 Under our assumptions this inequality holds since 
 \[ p_0(\alpha) \leq -2 p_1(\alpha) \leq -p_1(\alpha) \tfrac{\kappa^2 +1}{\kappa^2} \leq 0 \,. \]
 This finishes the proof.
\end{proof}
\

\setcounter{MaxMatrixCols}{21}
\begin{proof}[Proof of Lemma~\ref{lemma:decay:2D}]
We  compute that $\CC_\kappa^*\P_\kappa + \P_\kappa\CC_\kappa$ is twice the identity matrix whose upper left $11\times 11$ block is replaced by $\DD_{\kappa,\alpha,\beta,\gamma,\omega}$ given as
{\footnotesize
\[ 
\begin{pmatrix}
2\ell\alpha & 0 & 0 & \ell\alpha & 0 & \ell(\alpha-\beta) & 0 & 0 & 0 & 0 & 0 \\
0 & -2\ell(\alpha-\beta) & 0 & 0 & 0 & -i \beta/\kappa & \ell\tfrac{\sqrt{3}\beta-\sqrt{2}\omega}{\sqrt{2}} & 0 & -\ell\beta/\sqrt{2} & 0 & 0 \\
0 & 0 & 2\ell\gamma & 0 & -i \gamma/\kappa & 0 & 0 & \sqrt{2}\ell\gamma & 0 & 0 & 0 \\
\ell\alpha & 0 & 0 & \ell\sqrt{6}\omega & 0 & \ell(\sqrt{\tfrac32}\omega-\beta) & -i \omega/\kappa & 0 & 0 & 0 & 2\ell\omega \\
0 & 0 & i \gamma/\kappa & 0 & 2(1-\ell\gamma) & 0 & 0 & 0 & 0 & 0 & 0 \\
\ell(\alpha-\beta) & i \beta/\kappa & 0 & \ell(\sqrt{\tfrac32}\omega-\beta) & 0 & 2(1-\ell\beta) & 0 & 0 & 0 & 0 & 0 \\
0 & \ell\tfrac{\sqrt{3}\beta-\sqrt{2}\omega}{\sqrt{2}} & 0 & i \omega/\kappa & 0 & 0 & 2-\sqrt{6}\ell\omega & 0 & -\ell\omega/\sqrt{2} & 0 & 0 \\
0 & 0 & \sqrt{2}\ell\gamma & 0 & 0 & 0 & 0 & 2 & 0 & 0 & 0 \\
0 & -\ell\beta/\sqrt{2} & 0 & 0 & 0 & 0 & -\ell\omega/\sqrt{2} & 0 & 2 & 0 & 0 \\
0 & 0 & 0 & 0 & 0 & 0 & 0 & 0 & 0 & 2 & 0 \\
0 & 0 & 0 & 2\ell\omega & 0 & 0 & 0 & 0 & 0 & 0 & 2
\end{pmatrix} \ ,
\]}

\noindent
with $\ell := 2\pi/L>0$.
We seek to choose $\alpha$, $\beta$, $\gamma$ and $\omega$ such that the matrices $\P_\kappa$ and $\DD_{\kappa,\alpha,\beta,\gamma,\omega}$ are positive definite for all $\kappa\in K$.
The positive definiteness of $\DD_{\kappa,\alpha,\beta,\gamma,\omega}$ will follow from Sylvester's criterion, if all minors of $\DD_{\kappa,\alpha,\beta,\gamma,\omega}$ are positive.
This will yield restrictions on the choice of parameters $\alpha$, $\beta$, $\gamma$ and $\omega$.
The analysis will simplify, if we choose $\beta$, $\gamma$ and $\omega$ as multiples of $\alpha$,  because then the first four columns will depend linearly on $\alpha$ and, moreover, several terms will drop out.
For $\beta =2\alpha$, $\gamma =\alpha$ and $\omega=\sqrt{6}\alpha$, we compute $\DD_{\kappa,\alpha,2\alpha,\alpha,\sqrt{6}\alpha}$ as
{\footnotesize
\begin{equation} \label{Dka2aa3a}
% \DD_{k,\alpha,2\alpha,\alpha,\sqrt{6}\alpha} = 
\begin{pmatrix}
2\ell\alpha & 0 & 0 & \ell\alpha & 0 & -\ell\alpha & 0 & 0 & 0 & 0 & 0 \\
0 & 2\ell\alpha & 0 & 0 & 0 & -i 2\alpha/\kappa & 0 & 0 & -\ell\sqrt{2}\alpha & 0 & 0 \\
0 & 0 & 2\ell\alpha & 0 & -i \alpha/\kappa & 0 & 0 & \sqrt{2}\ell\alpha & 0 & 0 & 0 \\
\ell\alpha & 0 & 0 & \ell 6\alpha & 0 & \ell\alpha & -i \sqrt{6}\alpha/\kappa & 0 & 0 & 0 & 2\ell\sqrt{6}\alpha \\
0 & 0 & i \alpha/\kappa & 0 & 2(1-\ell\alpha) & 0 & 0 & 0 & 0 & 0 & 0 \\
-\ell\alpha & i 2\alpha/\kappa & 0 & \ell\alpha & 0 & 2(1-\ell 2\alpha) & 0 & 0 & 0 & 0 & 0 \\
0 & 0 & 0 & i \sqrt{6}\alpha/\kappa & 0 & 0 & 2-\ell 6\alpha & 0 & -\ell\sqrt{3}\alpha & 0 & 0 \\
0 & 0 & \ell\sqrt{2}\alpha & 0 & 0 & 0 & 0 & 2 & 0 & 0 & 0 \\
0 & -\ell\sqrt{2}\alpha & 0 & 0 & 0 & 0 & -\ell\sqrt{3}\alpha & 0 & 2 & 0 & 0 \\
0 & 0 & 0 & 0 & 0 & 0 & 0 & 0 & 0 & 2 & 0 \\
0 & 0 & 0 & 2\ell\sqrt{6}\alpha & 0 & 0 & 0 & 0 & 0 & 0 & 2
\end{pmatrix} \ .
\end{equation} }
Let $\delta_j(\kappa,\alpha,\beta,\gamma,\omega)$ denote the determinant of the upper left $j\times j$ submatrix of $\DD_{\kappa,\alpha,\beta,\gamma,\omega}$ for integers $j=1,2,\ldots,11$. 
For our choice $\beta =2\alpha$, $\gamma =\alpha$ and $\omega=\sqrt{6}\alpha$, the minors $\delta_j(\kappa,\alpha):=\delta_j(\kappa,\alpha,2\alpha,\alpha,\sqrt{6}\alpha)$ are given in Table~\ref{table:minors:2D}.
The first four minors are positive, if $\alpha$ is positive.

\renewcommand{\arraystretch}{1.5}
\begin{table}
\begin{tabular}{|l|l|}
 \hline 
 $\delta_1(\kappa,\alpha)$ &= $2\ell\alpha$ \\
 \hline 
 $\delta_2(\kappa,\alpha)$ &= $4\ell^2\alpha^2$ \\
 \hline 
 $\delta_3(\kappa,\alpha)$ &= $8\ell^3\alpha^3$ \\
 \hline 
 $\delta_4(\kappa,\alpha)$ &= $44\ell^4 \alpha^4$ \\
 \hline 
 $\delta_5(\kappa,\alpha)$ &= $22\ell^3 \alpha^4 (4\ell -4\ell^2 \alpha -{\alpha}/{\kappa^2})$ \\
 \hline 
 $\delta_6(\kappa,\alpha)$ &= $\delta_5(\kappa,\alpha) p_6(\kappa,\alpha)/\ell$ \\
  & with $p_6(\kappa,\alpha) :=-\tfrac{54}{11} \ell^2 \alpha +2\ell -2\alpha /{\kappa^2}$. \\
 \hline 
 $\delta_7(\kappa,\alpha)$ &= $\frac{2}{11\ell^2} \delta_5(\kappa,\alpha) p_7(\kappa,\alpha)$ \\
  & with $p_{7}(\kappa,\alpha) = \big(p_{7,0}(\alpha) +p_{7,1}(\alpha) \frac1{\kappa^2}\big) \frac1{\kappa^2} + p_{7,2}(\alpha)$, \\
  & $p_{7,0}(\alpha) = 93\ell^2 \alpha^2 -34\ell\alpha$, \quad
    $p_{7,1}(\alpha) = 12\alpha^2$, \quad
    $p_{7,2}(\alpha) = 162\ell^4 \alpha^2 -120\ell^3 \alpha +22\ell^2$. \\
 \hline 
 $\delta_8(\kappa,\alpha)$ &= $44 \ell^3 \alpha^4 \frac{\delta_7(\kappa,\alpha)}{\delta_5(\kappa,\alpha)} p_8(\kappa,\alpha)$ \\
  & with $p_8(\kappa,\alpha) = 2\ell^3 \alpha^2 -6\ell^2\alpha +4\ell -{\alpha}/{\kappa^2}$. \\
 \hline 
 $\delta_9(\kappa,\alpha)$ &= $8 \ell \alpha^4 p_8(\kappa,\alpha) p_9(\kappa,\alpha)$ \\
  & with $p_{9}(\kappa,\alpha) = \big(p_{9,0}(\alpha) +p_{9,1}(\alpha) \frac1{\kappa^2}\big)  \frac1{\kappa^2} + p_{9,2}(\alpha)$, \\
  & $p_{9,0}(\alpha) = -12\ell^3 \alpha^3 +198\ell^2 \alpha^2 -68\ell \alpha$, \quad
    $p_{9,1}(\alpha) = 24\alpha^2$, \\  
  & $p_{9,2}(\alpha) = -81\ell^5 \alpha^3 +411\ell^4 \alpha^2 -262\ell^3 \alpha +44\ell^2$. \\
 \hline 
 $\delta_{10}(\kappa,\alpha)$ &= $2 \delta_9(\kappa,\alpha)$, \\
 \hline 
 $\delta_{11}(\kappa,\alpha)$ &= $64 \ell \alpha^4 p_8(\kappa,\alpha) p_{11}(\kappa,\alpha)$ \\
  & with $p_{11}(\kappa,\alpha) = \big(p_{11,0}(\alpha) +p_{11,1}(\alpha) \frac1{\kappa^2}\big) \frac1{\kappa^2} + p_{11,2}(\alpha)$, \\
  & $p_{11,0}(\alpha) = -72\ell^4 \alpha^4 -300\ell^3 \alpha^3 +294\ell^2 \alpha^2 -68\ell\alpha$, \qquad
    $p_{11,1}(\alpha) = 24\alpha^2$, \\
  & $p_{11,2}(\alpha) = 162\ell^6 \alpha^4 -909\ell^5 \alpha^3 +963\ell^4 \alpha^2 -358\ell^3 \alpha +44\ell^2$. \\
 \hline 
\end{tabular}
\caption{Let $\delta_j(\kappa,\alpha,\beta,\gamma,\omega)$ denote the determinant of the upper left $j\times j$ submatrix of $\DD_{\kappa,\alpha,\beta,\gamma,\omega}$ for integers $j=1,2,\ldots,11$. 
For our choice $\beta =2\alpha$, $\gamma =\alpha$ and $\omega=\sqrt{6}\alpha$, the minors $\delta_j(\kappa,\alpha)=\delta_j(\kappa,\alpha,2\alpha,\alpha,\sqrt{6}\alpha)$ are given in this table.}
\label{table:minors:2D}
\end{table}

The fifth minor $\delta_5(\kappa,\alpha)$ satisfies for positive $\alpha$ the inequality $\delta_5(\kappa,\alpha)\geq \delta_5(1,\alpha)$ for all $\kappa\in K$. %; due to the sign of the $\kappa$-dependent factor.
Moreover, $\delta_5(1,\alpha)$ is positive for $\alpha\in(0,\alpha_{\delta_5})$ with $\alpha_{\delta_5} :={4\ell}/{(4\ell^2 +1)}$.
Thus the fifth minor $\delta_5(\kappa,\alpha)$ is positive for all $\kappa\in K$ if $\alpha\in(0,\alpha_{\delta_5})$.

The sixth minor $\delta_6(\kappa,\alpha)$ has a factorization as $\delta_6(\kappa,\alpha) = \delta_5(\kappa,\alpha) p_6(\kappa,\alpha)/\ell$.
The factor $p_6(\kappa,\alpha)$
satisfies for positive $\alpha$ the inequality $p_6(\kappa,\alpha)\geq p_6(1,\alpha)$ for all $\kappa\in K$.
%; due to the sign of the $\kappa$-dependent factor.
Moreover, $p_6(1,\alpha)$ is positive for $\alpha\in(0,\alpha_{p_6})$ with $\alpha_{p_6} :={22\ell}/{(54\ell^2 +22)}$.
Thus the sixth minor $\delta_6(\kappa,\alpha)$ is positive if $0<\alpha<\alpha_{\delta_6}$ with $\alpha_{\delta_6} :=\min\{\alpha_{\delta_5} , \ \alpha_{p_6} \} =\alpha_{p_6}$.

The seventh minor $\delta_7(\kappa,\alpha)$ has a factorization as $\delta_7(\kappa,\alpha) = 2 \delta_5(\kappa,\alpha) p_7(\kappa,\alpha)/{11\ell^2}$.
Due to Lemma \ref{lem:p}, the inequality $p_{7}(\kappa,\alpha) \geq p_{7}(1,\alpha)$ holds  for some positive $\widetilde\alpha_{p_7}$,
 and consequently $\delta_{7}(\kappa,\alpha)\geq \delta_{7}(1,\alpha)$
 holds for all $0\leq \alpha \leq \min\{\widetilde\alpha_{p_7},\alpha_{\delta_5}\}$ and $\kappa\in K$. 
The quadratic polynomial $p_7(1,\alpha)$ has two positive roots~$0<\alpha_{p_7,-}<\alpha_{p_7,+}$ 
% \[ \alpha_{p_7} := \frac{60\ell^2+17-\sqrt{36\ell^4 -6\ell^2 +25}}{3(54\ell^4 +31\ell^2 +4)}\ell \,. \]
 and is positive for all $0<\alpha<\alpha_{p_7}$ with $\alpha_{p_7} :=\alpha_{p_7,-}$.
Consequently, for $0<\alpha <\alpha_{\delta_7}$ with $\alpha_{\delta_7} := \min\{\alpha_{\delta_5}\,,\ \widetilde\alpha_{p_7}\,, \ \alpha_{p_7}\}$
 the seventh minor $\delta_{7}(\kappa,\alpha)$ is positive for all $\kappa\in K$.

The eighth minor $\delta_8(\kappa,\alpha)$ has a factorization. 
For positive $\alpha$, factor~$p_8$ satisfies the inequality $p_8(\kappa,\alpha) >p_8(1,\alpha)$ for all $\kappa\in K$. 
The quadratic polynomial $p_8(1,\alpha)$ has two positive roots~$0<\alpha_{p_8,-}<\alpha_{p_8,+}$ 
% \[ \alpha^{(8)}_\pm = \frac14 \frac{1 +6\ell^2 \pm\sqrt{4\ell^4 +12\ell^2 +1}}{\ell^3}\,. \]
 and is positive for all $0<\alpha<\alpha_{p_8}$ with $\alpha_{p_8} :=\alpha_{p_8,-}$.
Thus, the eighth minor $\delta_8(\kappa,\alpha)$ is positive for all $\kappa\in K$,
 if $0<\alpha< \alpha_{\delta_8}$ with $\alpha_{\delta_8} :=\min\{\alpha_{\delta_5}\,,\ \alpha_{\delta_7}\,,\ \alpha_{p_8} \}$.

The ninth minor $\delta_9(\kappa,\alpha)$ has a factorization as $\delta_9(\kappa,\alpha) = 8 \ell \alpha^4 p_8(\kappa,\alpha) p_9(\kappa,\alpha)$.
Due to Lemma~\ref{lem:p}, the inequality $p_{9}(\kappa,\alpha) \geq p_{9}(1,\alpha)$ holds for some positive $\widetilde\alpha_{p_9}$,
 and consequently $\delta_{9}(\kappa,\alpha)\geq \delta_{9}(1,\alpha)$
 holds for all $0\leq \alpha \leq \min\{\widetilde\alpha_{p_9},\alpha_{p_8}\}$ and $\kappa\in K$. 
The cubic polynomial $p_{9}(1,\alpha)$ is positive at $\alpha=0$ and $\lim_{\alpha\to\infty} p_9(1,\alpha)=-\infty$.
Hence, there exists a positive root $\alpha_{p_9}$ such that $p_9(1,\alpha)$ is positive for all $0<\alpha<\alpha_{p_9}$.
Consequently, for all $\alpha\in(0,\alpha_{\delta_9})$ with $\alpha_{\delta_9} :=\min\{\alpha_{p_9}\,,\ \widetilde\alpha_{p_9}\,,\ \alpha_{p_8}\}$,
the ninth minor $\delta_{9}(\kappa,\alpha)$ is positive for all $\kappa\in K$.

The tenth minor $\delta_{10}$ satisfies $\delta_{10}(\kappa,\alpha) = 2 \delta_9(\kappa,\alpha)$. 
Therefore the tenth minor $\delta_{10}(\kappa,\alpha)$ is positive for all $\kappa\in K$ if $\alpha\in(0,\alpha_{\delta_9})$. 

The eleventh minor $\delta_{11}(\kappa,\alpha)$ has a factorization as $\delta_{11}(\kappa,\alpha) = 64 \ell \alpha^4 p_8(\kappa,\alpha) p_{11}(\kappa,\alpha)$.
Due to Lemma \ref{lem:p}, the inequality $p_{11}(\kappa,\alpha) \geq p_{11}(1,\alpha)$ holds for some positive $\widetilde\alpha_{p_{11}}$,
 and consequently $\delta_{11}(\kappa,\alpha)\geq \delta_{11}(1,\alpha)$
 holds for all $0\leq \alpha \leq \min\{\widetilde\alpha_{p_{11}},\alpha_{p_8}\}$ and $\kappa\in K$. 
The quartic polynomial $p_{11}(1,\alpha)$ is positive at $\alpha=0$.
Hence, there exists a positive root $\alpha_{p_{11}}$ such that $p_{11}(1,\alpha)$ is positive for all $0<\alpha<\alpha_{p_{11}}$.
Consequently, for $\alpha\in(0,\alpha_{\delta_{11}})$ with $\alpha_{\delta_{11}} := :=\min\{\alpha_{p_{11}}\,,\ \widetilde\alpha_{p_{11}}\,,\ \alpha_{p_8}\}$,
the eleventh minor $\delta_{11}(\kappa,\alpha)$ is positive for all $\kappa\in K$.\\

Let $\{\lambda_1,\lambda_2,\ldots,\lambda_{11}\}$ be the eigenvalues of $\DD_{\kappa,\alpha,\beta,\gamma,\omega}$ arranged in increasing order.   
We seek a lower bound on $\lambda_1$. 
As long as $\DD_{\kappa,\alpha,\beta,\gamma,\omega}$ is positive definite, 
 the arithmetic-geometric mean inequality implies 
 \begin{align*}
 \lambda_1(\kappa,\alpha,\beta,\gamma,\omega) = \frac{\delta_{11}(\kappa,\alpha,\beta,\gamma,\omega)}{\prod_{j=2}^{11} \lambda_j}
   &\geq \delta_{11}(\kappa,\alpha,\beta,\gamma,\omega) \left( \frac{10}{\sum_{j=2}^{11} \lambda_j} \right)^{10}\\
   &\geq \delta_{11}(\kappa,\alpha,\beta,\gamma,\omega) \left( \frac{10}{\tr[\DD_{\kappa,\alpha,\beta,\gamma,\omega}]} \right)^{10} 
    \geq \left( \frac{10}{14} \right)^{10} \delta_{11}(\kappa,\alpha,\beta,\gamma,\omega) \ ,
 \end{align*}
 since $\tr[\DD_{\kappa,\alpha,\beta,\gamma,\omega}] =14$ independently of $\kappa$, $\alpha$, $\beta$, $\gamma$ and $\omega$.
 
A simple computation shows that the eigenvalues of $\P_\kappa$ are $1$, $1 \pm \alpha/\kappa$, $1 \pm \sqrt{5}\alpha/\kappa$, and $1\pm\sqrt{6}\alpha/\kappa$. 
Hence, uniformly in $\kappa\in K$,
 \begin{equation}\label{Pbound2}
 \big(1-\sqrt{6}|\alpha| \big) {\bf I}  \leq \P_\kappa \leq \big(1+\sqrt{6}|\alpha| \big) {\bf I}\ .
 \end{equation}
Thus, all matrices $\P_\kappa$ are positive definite, if $|\alpha|<\frac1{\sqrt{6}}$.
 
Finally, if $\P_\kappa$ is chosen with
 \begin{equation*} %\label{def:aplus:2D} 
   \alpha \in (0,\alpha_+)\,, \qquad \text{where }
   \alpha_+ := \min\{ 1/\sqrt{6}\,,\ \alpha_{\delta_5}\,,\ \alpha_{\delta_6}\,,\ \alpha_{\delta_7}\,,\ \alpha_{\delta_8}\,,\ \alpha_{\delta_9}\,,\ %\alpha_{\delta_{10}}\,,\ 
   \alpha_{\delta_{11}} \}\,,
 \end{equation*}
 $\beta =2\alpha$, $\gamma =\alpha$, and $\omega=\sqrt{6}\alpha$ uniformly for all $\kappa\in K$, then
 \begin{equation}\label{Pbnd2}
 \CC_\kappa^*\P_\kappa + \P_\kappa\CC_\kappa \geq \left( \frac{10}{14} \right)^{10} \delta_{11}(1,\alpha,2\alpha,\alpha,\sqrt{6}\alpha) \ {\bf I} \qquad \text{ uniformly in } \kappa\in K \ . 
 \end{equation}

Combining \eqref{Pbound2} with \eqref{Pbnd2} yields the result.
\end{proof}
\
%<<<<<<<<<<<<<<<<<<<<<<<<<<<<<<<<<<<<<<<<<<<<<<<<<<<<<<<<<<<<<<<<<<<<<<<<<<<<<<<<<<<<<<<<<<<<<<<<<<<<<<

%>>>>>>>>>>>>>>>>>>>>>>>>>>>>>>>>>>>>>>>>>>>>>>>>>>>>>>>>>>>>>>>>>>>>>>>>>>>>>>>>>>>>>>>>>>>>>>>>>>>>>>
\setcounter{MaxMatrixCols}{21}
\begin{proof}[Proof of Lemma~\ref{lemma:decay:3D}]
We  compute that $\CC_\kappa^*\P_\kappa + \P_\kappa\CC_\kappa$ is twice the identity matrix whose upper left $21\times 21$ block is replaced by $\DD_{\kappa,\alpha,\beta,\gamma,\omega,\eta}$ given as
{\tiny
\[
 \begin{pmatrix} %                    10th column                         19th column
  2\ell\alpha & 0 & 0 & 0 & \tfrac{\sqrt2}{\sqrt3} \ell\alpha & 0 & 0 & A & 0 & \tfrac{\sqrt2}{\sqrt3} \ell\alpha & 0 & 0 & 0 & 0 & 0 & 0 & 0 & 0 & 0 & 0 & 0 \\
  0 & B & 0 & 0 & 0 & 0 & 0 & - i\beta/k & 0 & 0 & -C & 0 & 0 & \tfrac{3+\sqrt3}{6} \ell\beta & 0 & \tfrac{3-\sqrt3}{6} \ell\beta & 0 & 0 & 0 & 0 & 0 \\
  0 & 0 & 2\ell \gamma & 0 & 0 & -i \gamma/k & 0 & 0 & 0 & 0 & 0 & \sqrt2 \ell \gamma & 0 & 0 & 0 & 0 & 0 & 0 & 0 & 0 & 0 \\
  0 & 0 & 0 & 2\ell\omega & 0 & 0 & -i \omega/k & 0 & 0 & 0 & 0 & 0 & \sqrt2 \ell \omega & 0 & 0 & 0 & 0 & 0 & 0 & 0 & 0 \\
  \tfrac{\sqrt2}{\sqrt3} \ell\alpha & 0 & 0 & 0 & 2\ell\eta & 0 & 0 & D & 0 & \ell\eta & -i \eta/k & 0 & 0 & 0 & 0 & 0 & 0 & 0 & 0 & 0 & 2\ell\eta \\
  0 & 0 & i \gamma/k & 0 & 0 & 2-2\ell\gamma & 0 & 0 & 0 & 0 & 0 & 0 & 0 & 0 & 0 & 0 & 0 & 0 & 0 & 0 & 0 \\
  0 & 0 & 0 & i \omega/k & 0 & 0 & 2-2\ell\omega & 0 & 0 & 0 & 0 & 0 & 0 & 0 & 0 & 0 & 0 & 0 & 0 & 0 & 0 \\
  A & i \beta/k & 0 & 0 & D & 0 & 0 & 2-2\tfrac{\sqrt2}{\sqrt3} \ell\beta & 0 & -\tfrac{\sqrt2}{\sqrt3} \ell\beta & 0 & 0 & 0 & 0 & 0 & 0 & 0 & 0 & 0 & 0 & 0 \\
  0 & 0 & 0 & 0 & 0 & 0 & 0 & 0 & 2 & 0 & 0 & 0 & 0 & 0 & 0 & 0 & 0 & 0 & 0 & 0 & 0 \\
  %%%%%%%%%%%%%%%%%%%%%%%%%%%%%%%%%%%%%%%%%%%%%%%%%%%%%%%%%%%%%%%%%%%%%%%%%%%%%%%%%%%%%%%%%
  \tfrac{\sqrt2}{\sqrt3} \ell\alpha & 0 & 0 & 0 & \ell\eta & 0 & 0 & -\tfrac{\sqrt2}{\sqrt3} \ell\beta & 0 & 2 & 0 & 0 & 0 & 0 & 0 & 0 & 0 & 0 & 0 & 0 & 0 \\
  0 & -C & 0 & 0 & i \eta/k & 0 & 0 & 0 & 0 & 0 & 2-2\ell\eta & 0 & 0 & -\tfrac{1}{\sqrt{3}} \ell\eta & 0 & -\tfrac{1}{\sqrt{3}} \ell\eta & 0 & 0 & 0 & 0 & 0 \\
  0 & 0 & \sqrt2 \ell\gamma & 0 & 0 & 0 & 0 & 0 & 0 & 0 & 0 & 2 & 0 & 0 & 0 & 0 & 0 & 0 & 0 & 0 & 0 \\
  0 & 0 & 0 & \sqrt2 \ell\omega & 0 & 0 & 0 & 0 & 0 & 0 & 0 & 0 & 2 & 0 & 0 & 0 & 0 & 0 & 0 & 0 & 0 \\
  0 & \tfrac{3+\sqrt3}{6} \ell\beta & 0 & 0 & 0 & 0 & 0 & 0 & 0 & 0 & -\tfrac{1}{\sqrt{3}} \ell\eta & 0 & 0 & 2 & 0 & 0 & 0 & 0 & 0 & 0 & 0 \\
  0 & 0 & 0 & 0 & 0 & 0 & 0 & 0 & 0 & 0 & 0 & 0 & 0 & 0 & 2 & 0 & 0 & 0 & 0 & 0 & 0 \\
  0 & \tfrac{3-\sqrt3}{6} \ell\beta & 0 & 0 & 0 & 0 & 0 & 0 & 0 & 0 & -\tfrac{1}{\sqrt{3}} \ell\eta & 0 & 0 & 0 & 0 & 2 & 0 & 0 & 0 & 0 & 0 \\
  0 & 0 & 0 & 0 & 0 & 0 & 0 & 0 & 0 & 0 & 0 & 0 & 0 & 0 & 0 & 0 & 2 & 0 & 0 & 0 & 0 \\
  0 & 0 & 0 & 0 & 0 & 0 & 0 & 0 & 0 & 0 & 0 & 0 & 0 & 0 & 0 & 0 & 0 & 2 & 0 & 0 & 0 \\
  %%%%%%%%%%%%%%%%%%%%%%%%%%%%%%%%%%%%%%%%%%%%%%%%%%%%%%%%%%%%%%%%%%%%%%%%%%%%%%%%%%%%%%%%%
  0 & 0 & 0 & 0 & 0 & 0 & 0 & 0 & 0 & 0 & 0 & 0 & 0 & 0 & 0 & 0 & 0 & 0 & 2 & 0 & 0 \\
  0 & 0 & 0 & 0 & 0 & 0 & 0 & 0 & 0 & 0 & 0 & 0 & 0 & 0 & 0 & 0 & 0 & 0 & 0 & 2 & 0 \\
  0 & 0 & 0 & 0 & 2\ell\eta & 0 & 0 & 0 & 0 & 0 & 0 & 0 & 0 & 0 & 0 & 0 & 0 & 0 & 0 & 0 & 2 \\
  %                                   10th column                         19th column
 \end{pmatrix} \ . 
\]
}
with $\ell:=2\pi/L>0$ and 
\[ A := \tfrac{\ell}{\sqrt3} (\sqrt2 \alpha -\sqrt3 \beta)\,,\quad B := \tfrac2{\sqrt3} \ell (\sqrt2 \beta -\sqrt3 \alpha)\,,\quad C := 
\tfrac{\ell}{\sqrt3} (\sqrt2 \eta -\sqrt3 \beta)\,,\quad D := \tfrac{\ell}{\sqrt3} (\sqrt3 \eta -\sqrt2 \beta)\,.
\]
We seek to choose $\alpha$, $\beta$, $\gamma$, $\omega$ and $\eta$ such that the matrices $\P_\kappa$ and $\DD_{\kappa,\alpha,\beta,\gamma,\omega,\eta}$ are positive definite for all $\kappa\in K$.
The positive definiteness of $\DD_{\kappa,\alpha,\beta,\gamma,\omega,\eta}$ will follow from Sylvester's criterion, if all minors of $\DD_{\kappa,\alpha,\beta,\gamma,\omega,\eta}$ are positive.
This will yield restrictions on the choice of parameters $\alpha$, $\beta$, $\gamma$, $\omega$ and $\eta$.
The analysis will simplify, if we choose $\beta$, $\gamma$, $\omega$ and $\eta$ as multiples of $\alpha$,
 because then the first six columns will depend linearly on $\alpha$.
For $\beta =\sqrt3 \alpha$, $\gamma =\alpha$, $\omega=\alpha$ and $\eta=\alpha$, we compute $\DD_{\kappa,\alpha,\sqrt3 \alpha,\alpha,\alpha,\alpha}$ as

\begin{align}
% \DD_{\kappa,\alpha,\sqrt3 \alpha,\alpha,\alpha,\alpha} = 
{\tiny \begin{pmatrix} %                    10th column                         19th column
  2\ell\alpha & 0 & 0 & 0 & \tfrac{\sqrt2}{\sqrt3} \ell\alpha & 0 & 0 & \tfrac{\sqrt2 -3}{\sqrt3} \ell \alpha & 0 & \tfrac{\sqrt2}{\sqrt3} \ell\alpha & 0 & 0 & 0 & 0 & 0 & 0 & 0 & 0 & 0 & 0 & 0 \\
  0 & 2 (\sqrt2 -1) \ell\alpha & 0 & 0 & 0 & 0 & 0 & - i\sqrt3 \alpha/\kappa & 0 & 0 & \tfrac{3-\sqrt2}{\sqrt3} \ell\alpha & 0 & 0 & \tfrac{\sqrt3+1}{2} \ell\alpha & 0 & \tfrac{\sqrt3-1}{2} \ell\alpha & 0 & 0 & 0 & 0 & 0 \\
  0 & 0 & 2\ell\alpha & 0 & 0 & -i \alpha/\kappa & 0 & 0 & 0 & 0 & 0 & \sqrt2 \ell \alpha & 0 & 0 & 0 & 0 & 0 & 0 & 0 & 0 & 0 \\
  0 & 0 & 0 & 2\ell\alpha & 0 & 0 & -i \alpha/\kappa & 0 & 0 & 0 & 0 & 0 & \sqrt2 \ell \alpha & 0 & 0 & 0 & 0 & 0 & 0 & 0 & 0 \\
  \tfrac{\sqrt2}{\sqrt3} \ell\alpha & 0 & 0 & 0 & 2\ell\alpha & 0 & 0 & (1-\sqrt2) \ell\alpha & 0 & \ell\alpha & -i \alpha/\kappa & 0 & 0 & 0 & 0 & 0 & 0 & 0 & 0 & 0 & 2\ell\alpha \\
  0 & 0 & i \alpha/\kappa & 0 & 0 & 2-2\ell\alpha & 0 & 0 & 0 & 0 & 0 & 0 & 0 & 0 & 0 & 0 & 0 & 0 & 0 & 0 & 0 \\
  0 & 0 & 0 & i \alpha/\kappa & 0 & 0 & 2-2\ell\alpha & 0 & 0 & 0 & 0 & 0 & 0 & 0 & 0 & 0 & 0 & 0 & 0 & 0 & 0 \\
  \tfrac{\sqrt2 -3}{\sqrt3} \ell\alpha & i \sqrt3 \alpha/\kappa & 0 & 0 & (1 -\sqrt2) \ell\alpha & 0 & 0 & 2-2\sqrt2 \ell \alpha & 0 & -\sqrt2 \ell \alpha & 0 & 0 & 0 & 0 & 0 & 0 & 0 & 0 & 0 & 0 & 0 \\
  0 & 0 & 0 & 0 & 0 & 0 & 0 & 0 & 2 & 0 & 0 & 0 & 0 & 0 & 0 & 0 & 0 & 0 & 0 & 0 & 0 \\
  %%%%%%%%%%%%%%%%%%%%%%%%%%%%%%%%%%%%%%%%%%%%%%%%%%%%%%%%%%%%%%%%%%%%%%%%%%%%%%%%%%%%%%%%%
  \tfrac{\sqrt2}{\sqrt3} \ell\alpha & 0 & 0 & 0 & \ell\alpha & 0 & 0 & -\sqrt2 \ell \alpha & 0 & 2 & 0 & 0 & 0 & 0 & 0 & 0 & 0 & 0 & 0 & 0 & 0 \\
  0 & \tfrac{3-\sqrt2}{\sqrt3} \ell\alpha & 0 & 0 & i \alpha/\kappa & 0 & 0 & 0 & 0 & 0 & 2-2\ell\alpha & 0 & 0 & -\tfrac{1}{\sqrt{3}} \ell\alpha & 0 & -\tfrac{1}{\sqrt{3}} \ell\alpha & 0 & 0 & 0 & 0 & 0 \\
  0 & 0 & \sqrt2 \ell\alpha & 0 & 0 & 0 & 0 & 0 & 0 & 0 & 0 & 2 & 0 & 0 & 0 & 0 & 0 & 0 & 0 & 0 & 0 \\
  0 & 0 & 0 & \sqrt2 \ell\alpha & 0 & 0 & 0 & 0 & 0 & 0 & 0 & 0 & 2 & 0 & 0 & 0 & 0 & 0 & 0 & 0 & 0 \\
  0 & \tfrac{1+\sqrt3}{2} \ell \alpha & 0 & 0 & 0 & 0 & 0 & 0 & 0 & 0 & -\tfrac{1}{\sqrt{3}} \ell\alpha & 0 & 0 & 2 & 0 & 0 & 0 & 0 & 0 & 0 & 0 \\
  0 & 0 & 0 & 0 & 0 & 0 & 0 & 0 & 0 & 0 & 0 & 0 & 0 & 0 & 2 & 0 & 0 & 0 & 0 & 0 & 0 \\
  0 & \tfrac{\sqrt3-1}{2} \ell \alpha & 0 & 0 & 0 & 0 & 0 & 0 & 0 & 0 & -\tfrac{1}{\sqrt{3}} \ell\alpha & 0 & 0 & 0 & 0 & 2 & 0 & 0 & 0 & 0 & 0 \\
  0 & 0 & 0 & 0 & 0 & 0 & 0 & 0 & 0 & 0 & 0 & 0 & 0 & 0 & 0 & 0 & 2 & 0 & 0 & 0 & 0 \\
  0 & 0 & 0 & 0 & 0 & 0 & 0 & 0 & 0 & 0 & 0 & 0 & 0 & 0 & 0 & 0 & 0 & 2 & 0 & 0 & 0 \\
  %%%%%%%%%%%%%%%%%%%%%%%%%%%%%%%%%%%%%%%%%%%%%%%%%%%%%%%%%%%%%%%%%%%%%%%%%%%%%%%%%%%%%%%%%
  0 & 0 & 0 & 0 & 0 & 0 & 0 & 0 & 0 & 0 & 0 & 0 & 0 & 0 & 0 & 0 & 0 & 0 & 2 & 0 & 0 \\
  0 & 0 & 0 & 0 & 0 & 0 & 0 & 0 & 0 & 0 & 0 & 0 & 0 & 0 & 0 & 0 & 0 & 0 & 0 & 2 & 0 \\
  0 & 0 & 0 & 0 & 2\ell\alpha & 0 & 0 & 0 & 0 & 0 & 0 & 0 & 0 & 0 & 0 & 0 & 0 & 0 & 0 & 0 & 2 \\
  %                                   10th column                         19th column
 \end{pmatrix} }\ . \nonumber \\[3mm]
  \label{Dka5a4aaa}
\end{align}
Let $\delta_j(\kappa,\alpha,\beta,\gamma,\omega,\eta)$ denote the determinant of the upper left $j\times j$ submatrix of $\DD_{\kappa,\alpha,\beta,\gamma,\omega,\eta}$ for integers $j=1,2,\ldots,21$. 
For our choice $\beta =\sqrt3 \alpha$, $\gamma =\alpha$, $\omega=\alpha$ and $\eta=\alpha$,
the minors $\delta_j(\kappa,\alpha) :=\delta_j(\kappa,\alpha,\sqrt3 \alpha,\alpha,\alpha,\alpha)$ are given in Tables~\ref{table:minors:3D:1-14}--\ref{table:minors:3D:15-21}.

\renewcommand{\arraystretch}{1.4}
\begin{table}
\begin{tabular}{|l|l|}
 \hline 
  $\delta_1(\kappa,\alpha)$ &= $2\ell\alpha$ \\
 \hline 
  $\delta_2(\kappa,\alpha)$ &= $4(\sqrt2 -1)\ell^2\alpha^2$ \\
 \hline 
  $\delta_3(\kappa,\alpha)$ &= $8(\sqrt2 -1)\ell^3\alpha^3$ \\
 \hline 
  $\delta_4(\kappa,\alpha)$ &= $16(\sqrt2 -1)\ell^4 \alpha^4$ \\
 \hline 
  $\delta_5(\kappa,\alpha)$ &= $\tfrac{80}{3} (\sqrt2 -1)\ell^5 \alpha^5$ \\  
 \hline 
  $\delta_6(\kappa,\alpha)$ &= $\tfrac{40}{3} (\sqrt2 -1)\ell^4 \alpha^5 p_6(\kappa,\alpha)$ \\
   & with $p_6(\kappa,\alpha):= -4\ell^2 \alpha -\frac{\alpha}{\kappa^2} +4\ell$. \\
 \hline 
  $\delta_7(\kappa,\alpha)$ &= $\tfrac{20}{3} (\sqrt2 -1)\ell^3 \alpha^5 p_6(\kappa,\alpha)^2$ \\
 \hline
  $\delta_8(\kappa,\alpha)$ &= $12 \ell^2\ \alpha^5\ p_6(\kappa,\alpha)^2\ p_8(\kappa,\alpha)$ \\
   & with $p_8(\kappa,\alpha) = \tfrac{2-3\sqrt2}{3} \ell^2 \alpha -\tfrac56 \frac{\alpha}{\kappa^2} +\tfrac{10}{9}(\sqrt2 -1)\ell$. \\
 \hline
  $\delta_9(\kappa,\alpha)$ &= $2\ \delta_8(\kappa,\alpha)$ \\
 \hline
  $\delta_{10}(\kappa,\alpha)$ &= $\tfrac43 \ell^2\ \alpha^5\ p_6(\kappa,\alpha)^2\ p_{10}(\kappa,\alpha)$ \\
   & with $p_{10}(\kappa,\alpha) = 9((\sqrt2 -1)\ell^2 +\frac1{\kappa^2})\ell \alpha^2 -6((8\sqrt2 -6)\ell^2 +\frac5{\kappa^2})\alpha +40(\sqrt2 -1)\ell$. \\
 \hline
  $\delta_{11}(\kappa,\alpha)$ &= $\tfrac29 \ell\ \alpha^5\ p_6(\kappa,\alpha)^2\ p_{11}(\kappa,\alpha)$ \\
   & with $p_{11}(\kappa,\alpha) = \big(p_{11,0}(\alpha) +p_{11,1}(\alpha) \frac1{\kappa^2}\big) \frac1{\kappa^2} + p_{11,2}(\alpha) \ell^2$, \\
   & $p_{11,0}(\alpha) = (54\sqrt2 -144)\ell^3 \alpha^3 +(672 -72\sqrt2)\ell^2 \alpha^2 -(216 +144\sqrt2)\ell\alpha$, \\
   & $p_{11,1}(\alpha) =18(6 -\ell\alpha) \alpha^2$, \\
   & $p_{11,2}(\alpha) =(9 -54\sqrt2)\ell^3 \alpha^3 +(456\sqrt2 -24) \ell^2 \alpha^2 +(472 -816\sqrt2) \ell \alpha + 480(\sqrt2 -1)$. \\
 \hline
  $\delta_{12}(\kappa,\alpha)$ &= $\delta_{11}(\kappa,\alpha) \frac{p_{12}(\kappa,\alpha)}{p_6(\kappa,\alpha)}$
   = $\tfrac29 \ell\ \alpha^5\ p_6(\kappa,\alpha)\ p_{11}(\kappa,\alpha)\ p_{12}(\kappa,\alpha)$ \\
   & with $p_{12}(\kappa,\alpha) = 4\ell^3 \alpha^2 -12\ell^2 \alpha +8\ell -\frac{2 \alpha}{\kappa^2}$. \\
 \hline
  $\delta_{13}(\kappa,\alpha)$ &= $\delta_{12}(\kappa,\alpha) \frac{p_{12}(\kappa,\alpha)}{p_6(\kappa,\alpha)} 
   = \delta_{11}(\kappa,\alpha) \Big(\frac{p_{12}(\kappa,\alpha)}{p_6(\kappa,\alpha)}\Big)^2
   = \tfrac29 \ell\ \alpha^5\ p_{11}(\kappa,\alpha) p_{12}(\kappa,\alpha)^2$ \\
 \hline
  $\delta_{14}(\kappa,\alpha)$ &= $\tfrac1{9\ (1+\sqrt3)^2} \ell\ \alpha^5\ p_{12}(\kappa,\alpha)^2\ p_{14}(\kappa,\alpha)$ \\
   & with $p_{14}(\kappa,\alpha) = \big(p_{14,0}(\alpha) +p_{14,1}(\alpha) \frac1{\kappa^2}\big) \frac1{\kappa^2} +\ell^2 p_{14,2}(\alpha)$, \\
   & $p_{14,0}(\alpha) =
   (-108\sqrt6 -72\sqrt3 -180\sqrt2 -144) \ell^4 \alpha^4
   +(360\sqrt6 -1824\sqrt3 +720\sqrt2 -3396) \ell^3 \alpha^3$ \\
   &\quad $+(-576\sqrt6 +5952\sqrt3 -1152\sqrt2 +11760) \ell^2 \alpha^2
   +(-1152\sqrt6 -1728\sqrt3 -2304\sqrt2 -3456) \ell \alpha$, \\
   & $p_{14,1}(\alpha) = 144\ (\sqrt3 +2)\ (6 -\ell \alpha)\ \alpha^2$, \\
   & $p_{14,2}(\alpha) = (1440 -180\sqrt6 +828\sqrt3 -324\sqrt2) \ell^4 \alpha^4
   -(9348 -336\sqrt6 -5400\sqrt3 -624\sqrt2) \ell^3 \alpha^3$ \\
   &\quad $+(11056 +3424\sqrt6 +6368\sqrt3 +6864\sqrt2) \ell^2 \alpha^2
   +(4192 -6528\sqrt6 +1856\sqrt3 -13056\sqrt2) \ell \alpha$ \\
   &\quad $+(3840\sqrt6 -3840\sqrt3 +7680\sqrt2 -7680)$. \\
  \hline
\end{tabular}
\caption{Let $\delta_j(\kappa,\alpha,\beta,\gamma,\omega,\eta)$ denote the determinant of the upper left $j\times j$ submatrix of $\DD_{\kappa,\alpha,\beta,\gamma,\omega,\eta}$ for integers $j=1,2,\ldots,21$. 
For our choice $\beta =\sqrt3 \alpha$, $\gamma =\alpha$, $\omega=\alpha$ and $\eta=\alpha$, the minors $\delta_j(\kappa,\alpha)=\delta_j(\kappa,\alpha,\sqrt3 \alpha,\alpha,\alpha,\alpha)$ for integers $j=1,2,\ldots,14$, are given in this table.}
\label{table:minors:3D:1-14}
\end{table}
The first five minors are positive if $\alpha$ is positive.

The sixth minor $\delta_6(\kappa,\alpha)$ 
 satisfies for positive $\alpha$ the inequality $\delta_6(\kappa,\alpha)\geq \delta_6(1,\alpha)$ for all $\kappa\in K$.
Moreover, the factor $p_6(1,\alpha)$ is positive for $\alpha\in(0,\alpha_{p_6})$ with $\alpha_{p_6} :={4\ell}/{(4\ell^2 +1)}$.
Thus the sixth minor $\delta_6(\kappa,\alpha)$ is positive for all $\kappa\in K$ if $0<\alpha<\alpha_{\delta_6}$ with $\alpha_{\delta_6} :=\alpha_{p_6}$.

%The seventh minor $\delta_7(\kappa,\alpha)$ has a factorization.
Following from the analysis of factor $p_6(\kappa,\alpha)$,
 the seventh minor $\delta_7(\kappa,\alpha)$ is positive for all $\kappa\in K$ if $0<\alpha<\alpha_{\delta_7} :=\alpha_{\delta_6}$.

The eighth minor $\delta_8(\kappa,\alpha)$ has a factorization.
For positive $\alpha$ and $\kappa\in K$, the inequalities $p_8(\kappa,\alpha) \geq p_8(1,\alpha)$ and consequently $\delta_8(\kappa,\alpha)\geq \delta_8(1,\alpha)$ hold. 
Moreover, the linear polynomial $p_8(1,\alpha)$ is positive for 
 $0<\alpha<\alpha_{p_8}$ with $\alpha_{p_8} := {20}{(\sqrt2 -1)\ell}/{3 ((6\sqrt2 -4)\ell^2 +5)}$.
Thus, for all $\kappa\in K$, the eighth minor $\delta_8(\kappa,\alpha)$ is positive if $0<\alpha<\alpha_{\delta_8}$ with $\alpha_{\delta_8} :=\min \big\{\alpha_{p_6}\,,\ \alpha_{p_8}\} =\alpha_{p_8}$.

The ninth minor satisfies $\delta_9(\kappa,\alpha) = 2\ \delta_8(\kappa,\alpha)$,
hence, it is positive for all $0<\alpha<\alpha_{\delta_9}:=\alpha_{\delta_8}$ and $\kappa\in K$.

The tenth minor~$\delta_{10}(\kappa,\alpha)$ has a factorization.
The factor $p_{10}(\kappa,\alpha)$ has the $\kappa$-dependent summand $(9\ell\alpha -30)\alpha /\kappa^2$, which is negative for $0<\alpha<{10}/{3\ell}$.
Under this assumption, the inequalities $p_{10}(\kappa,\alpha) \geq p_{10}(1,\alpha)$ and $\delta_{10}(\kappa,\alpha)\geq \delta_{10}(1,\alpha)$ hold for all $\kappa\in K$. 
The quadratic polynomial $p_{10}(1,\alpha)$ has two positive roots $0<\alpha^{(10)}_-<\alpha^{(10)}_+$ and is positive if $\alpha<\alpha_{p_{10}}$ with $\alpha_{p_{10}} :=\alpha^{(10)}_-$.
Thus, the tenth minor $\delta_{10}(\kappa,\alpha)$ is positive for all $\kappa\in K$, if $0<\alpha<\alpha_{\delta_{10}}$ with $\alpha_{\delta_{10}}:=\min\big\{\alpha_{p_6}\,,\ {10}/{3\ell}\,,\ \alpha_{p_{10}}\} =\min\big\{\alpha_{p_6}\,,\ \alpha_{p_{10}}\}$.

The eleventh minor $\delta_{11}(\kappa,\alpha)$ has a factorization.
%  The polynomial $p_{11,1}(\alpha)$ is positive if $0<\alpha<6/\ell$.
% Under this assumption, $p_{11,0}(\alpha) +p_{11,1}(\alpha) \frac1{\kappa^2} \leq p_{11,0}(\alpha) +p_{11,1}(\alpha)$ for all $\kappa\in K$.
% In particular, the quadratic polynomial 
%  \[ p_{11,0}(\alpha) +p_{11,1}(\alpha)
%      = \Big( (54\sqrt2 -144)\ell^3 \alpha^2 +(672 -72\sqrt2)\ell^2 \alpha -(216 +144\sqrt2)\ell + 18(6 -\ell\alpha) \alpha \Big) \alpha  
%  \]
%  is negative if $0<\alpha<\alpha^{(11,0)}$ with
%  \begin{equation}  \label{root:11:3D}
%     \alpha^{(11,0)}
%      = \frac{(120\sqrt2 +412)\ell^2 +27\sqrt2 +72 -\sqrt{(138928 +57480\sqrt2)\ell^4 +(56520 +28440\sqrt2)\ell^2 +6642 +3888\sqrt2}}{3\ell (46\ell^2 +3\sqrt2 +8)} \ .
%  \end{equation}
%Under the assumption $0<\alpha<\min\{\frac6\ell , \ \alpha^{(11,0)} \}$, 
Due to Lemma~\ref{lem:p},
the inequality $p_{11}(\kappa,\alpha) \geq p_{11}(1,\alpha)$ holds for some positive $\widetilde\alpha_{p_{11}}$, 
 and consequently $\delta_{11}(\kappa,\alpha)\geq \delta_{11}(1,\alpha)$ holds for all $0<\alpha<\widetilde\alpha_{p_{11}}$ and $\kappa\in K$. 
The polynomial $p_{11}(1,\alpha)$ is positive at $\alpha=0$,
% satisfies $p_{11}(1,0) =480(\sqrt2 -1)\ell^2 \geq 0$,
 hence there exists a positive number $\alpha_{p_{11}}$ such that $p_{11}(1,\alpha)$ is positive for $0<\alpha <\alpha_{p_{11}}$ and all $\kappa\in K$.
Consequently, for $0<\alpha <\alpha_{\delta_{11}}$ with $\alpha_{\delta_{11}} :=\min\{\alpha_{p_{11}}\,,\ \widetilde\alpha_{p_{11}}\,,\ \alpha_{p_6}\}$ the eleventh minor $\delta_{11}(\kappa,\alpha)$ is positive for all $\kappa\in K$.

The twelfth minor $\delta_{12}(\kappa,\alpha)$ has a factorization.
For positive $\alpha$, the inequalities $p_{12}(\kappa,\alpha) \geq p_{12}(1,\alpha)$ and $\delta_{12}(\kappa,\alpha)\geq \delta_{12}(1,\alpha)$ hold
 for all $0<\alpha<\min\{\alpha_{\delta_{11}}\,,\ \alpha_{p_6}\} =\alpha_{\delta_{11}}$ and $\kappa\in K$. 
The quadratic polynomial $p_{12}(1,\alpha)$ has two positive roots $0<\alpha^{(12)}_- \leq\alpha^{(12)}_+$ and is positive for $0<\alpha<\alpha_{p_{12}}$ with $\alpha_{p_{12}}:= \alpha^{(12)}_-$.
% In fact, the explicit formulas
% \begin{equation} \label{root:12:3D}
%  \alpha^{(12)}_\pm(1) = \frac{6\ell^2 +1 \pm\sqrt{4\ell^4 +12\ell^2 +1}}{4 \ell^3} 
% \end{equation}
% show that $p_{12}(1,\alpha)$ has two positive roots and is positive if $\alpha<\alpha^{(12)}_-$.
Thus, the twelfth minor $\delta_{12}(\kappa,\alpha)$ is positive for all $\kappa\in K$, if $0<\alpha<\alpha_{\delta_{12}}$ with $\alpha_{\delta_{12}} :=\min\{\alpha_{p_6}\,,\ \alpha_{p_{12}}\,,\ \alpha_{\delta_{11}}\} =\min\{\alpha_{p_{12}}\,,\ \alpha_{\delta_{11}}\}$.

The thirteenth minor satisfies $\delta_{13}(\kappa,\alpha) =\tfrac29 \ell\ \alpha^5\ p_{11}(\kappa,\alpha) p_{12}(\kappa,\alpha)^2$. 
Therefore the thirteenth minor $\delta_{13}(\kappa,\alpha)$ is positive for all $\kappa\in K$ if $0<\alpha<\alpha_{\delta_{13}}$ with $\alpha_{\delta_{13}}:=\min\{\alpha_{p_{11}}\,,\ \alpha_{p_{12}} \}$.

The fourteenth minor $\delta_{14}(\kappa,\alpha)$ has a factorization.
The polynomial $p_{14,1}(\alpha)$ is positive if $0<\alpha<6/\ell$.
Moreover, the quartic polynomial $p_{14,0}(\alpha) +2 p_{14,1}(\alpha)$ is zero at $\alpha=0$,
 having a negative derivative at $\alpha=0$.
% and the cubic polynomial $(p_{14,0}(\alpha) +2 p_{14,1}(\alpha))/\alpha$ is negative at $\alpha=0$.
Thus there exists a positive number $\alpha^{(14,0)}$ such that 
 $p_{14,0}(\alpha) +2 p_{14,1}(\alpha)$ is negative for $0<\alpha<\alpha^{(14,0)}$.
Due to Lemma~\ref{lem:p},
 the inequality $p_{14}(\kappa,\alpha) \geq p_{14}(1,\alpha)$ holds for $0\leq \alpha\leq \widetilde\alpha_{p_{14}}:=\min\{6/\ell\,,\ \alpha^{(14,0)} \}$, 
 and consequently $\delta_{14}(\kappa,\alpha)\geq \delta_{14}(1,\alpha)$ holds for all $0\leq \alpha\leq \widetilde\alpha_{p_{14}}$ and $\kappa\in K$. 
The polynomial $p_{14}(1,\alpha)$ is positive at $\alpha=0$,
 hence there exists a positive number $\alpha_{p_{14}}$ such that $p_{14}(\kappa,\alpha)$ is positive for $0<\alpha <\alpha_{p_{14}}$ and all $\kappa\in K$.
Consequently, for $0<\alpha<\alpha_{\delta_{14}}$ with $\alpha_{\delta_{14}} :=\min\{\alpha_{p_{12}}\,,\ \widetilde\alpha_{p_{14}}\,,\ \alpha_{p_{14}}\}$ the fourteenth minor $\delta_{14}(\kappa,\alpha)$ is positive for all $\kappa\in K$.

\renewcommand{\arraystretch}{1.4}
\begin{table}
\begin{tabular}{|l|l|}
 \hline 
 $\delta_{15}(\kappa,\alpha)$ &= $2\ \delta_{14}(\kappa,\alpha)$ \\
 \hline 
 $\delta_{16}(\kappa,\alpha)$ &= $\tfrac89 \tfrac{2+\sqrt3}{(1+\sqrt3)^2} \ell\ \alpha^5\ p_{12}(\kappa,\alpha)^2\ p_{16}(\kappa,\alpha)$ \\
  & with $p_{16}(\kappa,\alpha) = \big(p_{16,0}(\alpha) +p_{16,1}(\alpha) \frac1{\kappa^2}\big) \frac1{\kappa^2} +\ell^2 p_{16,2}(\alpha)$, \\
  & $p_{16,0}(\alpha) =
   -36 (\sqrt2 +2) \ell^4 \alpha^4
   +(144 \sqrt2 -744) \ell^3 \alpha^3$ \\
  &\quad $+(-288 \sqrt2 +2976) \ell^2 \alpha^2
   +(-576 \sqrt2 -864) \ell \alpha$, \\
  & $p_{16,1}(\alpha) = 72 (6 -\alpha \ell) \alpha^2$, \\
  & $p_{16,2}(\alpha) =
   27 \ell^5 \alpha^5
   +(-144 \sqrt2 +216) \ell^4 \alpha^4
   +(-24 \sqrt2 -2412) \ell^3 \alpha^3$ \\
  &\quad $+(1632 \sqrt2 +3104) \ell^2 \alpha^2
   +(-3264 \sqrt2 +928) \ell \alpha
   +1920 (\sqrt2 -1)$. \\
 \hline 
 $\delta_{17}(\kappa,\alpha)$ &= $2\ \delta_{16}(\kappa,\alpha)$ \\
 \hline 
 $\delta_{18}(\kappa,\alpha)$ &= $2^2\ \delta_{16}(\kappa,\alpha)$ \\
 \hline 
 $\delta_{19}(\kappa,\alpha)$ &= $2^3\ \delta_{16}(\kappa,\alpha)$ \\
 \hline 
 $\delta_{20}(\kappa,\alpha)$ &= $2^4\ \delta_{16}(\kappa,\alpha)$ \\
 \hline 
 $\delta_{21}(\kappa,\alpha)$ &= $\tfrac{256 (\sqrt3 +2) (24\sqrt2 +61)}{23121 (\sqrt3 +1)^2} \ell\ \alpha^5\ p_{12}(\kappa,\alpha)^2\ p_{21}(\kappa,\alpha)$ \\
  & with $p_{21}(\kappa,\alpha) = \Big(p_{21,0}(\alpha) +p_{21,1}(\alpha) \frac1{\kappa^2}\Big) \frac1{\kappa^2} +\ell^2 p_{21,2}(\alpha)$, \\
  & $p_{21,0}(\alpha) =
   (-1152 \sqrt2 +2928) \ell^5 \alpha^5
   +(-468 \sqrt2 -2664) \ell^4 \alpha^4
   +(75024 \sqrt2 -175272) \ell^3 \alpha^3$ \\
  &\quad $+(-130464 \sqrt2 +300768) \ell^2 \alpha^2
   +(-14400 \sqrt2 -25056) \ell \alpha$, \\
  & $p_{21,1}(\alpha) = (-1728 \sqrt2 +4392) (6 -\ell \alpha) \alpha^2$, \\
  & $p_{21,2}(\alpha) =
   7707 \ell^5 \alpha^5
   +(-25248 \sqrt2 +95000) \ell^4 \alpha^4
   +(89448 \sqrt2 -353228) \ell^3 \alpha^3$ \\
   &\quad $+(158880 \sqrt2 +38048) \ell^2 \alpha^2
   +(-417216 \sqrt2 +464416) \ell \alpha 
   +1920 (85\sqrt2 -109)$. \\ %+(163200 \sqrt2 -209280) \ .
 \hline 
\end{tabular}
\caption{Let $\delta_j(\kappa,\alpha,\beta,\gamma,\omega,\eta)$ denote the determinant of the upper left $j\times j$ submatrix of $\DD_{\kappa,\alpha,\beta,\gamma,\omega,\eta}$ for integers $j=15,\ldots,21$. 
For our choice $\beta =\sqrt3 \alpha$, $\gamma =\alpha$, $\omega=\alpha$ and $\eta=\alpha$, the minors $\delta_j(\kappa,\alpha)=\delta_j(\kappa,\alpha,\sqrt3 \alpha,\alpha,\alpha,\alpha)$ are given in this table.}
\label{table:minors:3D:15-21}
\end{table}
The fifteenth minor $\delta_{15}(\kappa,\alpha) = 2\ \delta_{14}(\kappa,\alpha)$
 is positive for all $\kappa\in K$ if $0<\alpha< \alpha_{\delta_{15}} :=\alpha_{\delta_{14}}$.

The sixteenth minor $\delta_{16}(\kappa,\alpha)$ has a factorization.
The polynomial $p_{16,1}(\alpha)$ is positive if $0<\alpha<6/\ell$.
Under this assumption, the quartic polynomial~$p_{16,0}(\alpha) +2 p_{16,1}(\alpha)$ is zero at $\alpha=0$,
 having a negative derivative at $\alpha=0$.
% and the cubic polynomial $(p_{16,0}(\alpha) +2 p_{16,1}(\alpha))/\alpha$ is negative at $\alpha=0$.
Thus there exists a positive number $\alpha^{(16,0)}$ such that 
 $p_{16,0}(\alpha) +2 p_{16,1}(\alpha)$ is negative for $0<\alpha<\alpha^{(16,0)}$.
Due to Lemma~\ref{lem:p},
 the inequality $p_{16}(\kappa,\alpha) \geq p_{16}(1,\alpha)$ holds for all $0\leq \alpha\leq\widetilde\alpha_{p_{16}}:=\min\{6/\ell\,,\ \alpha^{(16,0)} \}$,
 and $\delta_{16}(\kappa,\alpha)\geq \delta_{16}(1,\alpha)$ holds for all $0\leq \alpha\leq\widetilde\alpha_{p_{16}}$ and $\kappa\in K$. 
The polynomial $p_{16}(1,\alpha)$ is positive at $\alpha=0$,
%satisfies $p_{16}(1,0) =1920 (\sqrt2 -1) \ell^2 \geq 0$,
 hence there exists a positive number $\alpha_{p_{16}}$ such that $p_{16}(\kappa,\alpha)$ is positive for $0<\alpha <\alpha_{p_{16}}$ and all $\kappa\in K$.
Consequently, for $0<\alpha< \alpha_{\delta_{16}}$ with $\alpha_{\delta_{16}}:=\min\{\alpha_{p_{16}}\,,\ \widetilde\alpha_{p_{16}}\,,\ \alpha_{p_{12}} \}$ the sixteenth minor $\delta_{16}(\kappa,\alpha)$ is positive for all $\kappa\in K$.

The seventeenth to twentieth minors are multiples of the sixteenth minor. 
Therefore, these minors are positive for all $\kappa\in K$ under the same condition $0<\alpha<\alpha_{\delta_{16}}$.

The twenty-first minor~$\delta_{21}(\kappa,\alpha)$ has a factorization.
The polynomial $p_{21,1}(\alpha)$ is positive if $0<\alpha<6/\ell$.
The quintic polynomial~$p_{21,0}(\alpha) +2 p_{21,1}(\alpha)$ is zero at $\alpha=0$,
 having a negative derivative at $\alpha=0$.
% and the quartic polynomial $(p_{21,0}(\alpha) +2 p_{21,1}(\alpha))/\alpha$ is negative at $\alpha=0$.
Thus there exists a positive number $\alpha^{(21,0)}$ such that 
 $p_{21,0}(\alpha) +2 p_{21,1}(\alpha)$ is negative for $0<\alpha<\alpha^{(21,0)}$.
Due to Lemma~\ref{lem:p},  
 the inequality $p_{21}(\kappa,\alpha) \geq p_{21}(1,\alpha)$ holds for $0\leq\alpha\leq \widetilde\alpha_{p_{21}}:=\min\{6/\ell\,,\ \alpha^{(21,0)} \}$,
 and $\delta_{21}(\kappa,\alpha)\geq \delta_{21}(1,\alpha)$ holds
 for all $0\leq\alpha\leq\widetilde\alpha_{p_{21}}$ and $\kappa\in K$. 
The polynomial $p_{21}(1,\alpha)$ is positive at $\alpha=0$, 
% satisfies $p_{21}(1,0) =1920 (85\sqrt2 -109) \ell^2 \geq 0$,
 hence there exists a positive number $\alpha_{p_{21}}$ such that $p_{21}(1,\alpha)$ is positive for $0<\alpha <\alpha_{p_{21}}$.
Consequently, for $0<\alpha< \alpha_{\delta_{21}}$ with $\alpha_{\delta_{21}}=\min\{\alpha_{p_{21}}\,,\ \widetilde\alpha_{p_{21}}\,,\ \alpha_{p_{12}} \}$ the twenty-first minor $\delta_{21}(\kappa,\alpha)$ is positive for all $\kappa\in K$.

\medskip
Let $\{\lambda_1,\lambda_2,\ldots,\lambda_{21}\}$ be the eigenvalues of $\DD_{\kappa,\alpha,\beta,\gamma,\omega,\eta}$ arranged in increasing order.   
We seek a lower bound on $\lambda_1$. 
As long as $\DD_{\kappa,\alpha,\beta,\gamma,\omega,\eta}$ is positive definite, 
 the arithmetic-geometric mean inequality implies 
 \begin{align*}
 \lambda_1(\kappa,\alpha,\beta,\gamma,\omega,\eta) &= \frac{\delta_{21}(\kappa,\alpha,\beta,\gamma,\omega,\eta)}{\prod_{j=2}^{21} \lambda_j}
   \geq \delta_{21}(\kappa,\alpha,\beta,\gamma,\omega,\eta) \left( \frac{20}{\sum_{j=2}^{21} \lambda_j} \right)^{20}\\
   &\geq \delta_{21}(\kappa,\alpha,\beta,\gamma,\omega,\eta) \left( \frac{20}{\tr[\DD_{\kappa,\alpha,\beta,\gamma,\omega,\eta}]} \right)^{20} 
    = \left( \frac{20}{32} \right)^{20} \delta_{21}(\kappa,\alpha,\beta,\gamma,\omega,\eta) \ ,
 \end{align*}
 since $\tr[\DD_{\kappa,\alpha,\beta,\gamma,\omega,\eta}] =32$ independently of $\kappa$, $\alpha$, $\beta$, $\gamma$, $\omega$ and $\eta$.
 
A simple computation shows that the eigenvalues of $\P_\kappa$ are $1$, $1 \pm \alpha/\kappa$ (3-fold), and $1 \pm2\alpha/\kappa$. 
Hence for positive $\alpha$ 
 \begin{equation}\label{Pbound3}
 (1-2\alpha ) {\bf I}  \leq \P_\kappa \leq (1+2\alpha) {\bf I}
 \end{equation}
uniformly in $\kappa$.
Thus, all matrices $\P_\kappa$ are positive definite, if $0<\alpha<1/2$.
Finally, if $\P_\kappa$ is chosen with  
 \begin{equation*} %\label{def:aplus:3D} 
   \alpha \in (0,\alpha_+)\,, \qquad \text{where }
   \alpha_+ := \min\{ 1/2\,,\ \alpha_{\delta_6}\,,\ \alpha_{\delta_7}\,,\ \ldots\,,\ \alpha_{\delta_{21}} \}\,,
 \end{equation*}
$\beta =\sqrt3  \alpha$, $\gamma =\alpha$, $\omega=\alpha$, and $\eta=\alpha$ uniformly for all $\kappa\in K$, then
 \begin{equation}\label{Pbnd3}
 \CC_\kappa^*\P_\kappa + \P_\kappa\CC_\kappa \geq \left( \frac{20}{32} \right)^{20} \delta_{21}(1,\alpha) \ {\bf I} \qquad \text{ uniformly in } \kappa\in K \ . 
 \end{equation}
Combining \eqref{Pbound3} with \eqref{Pbnd3} yields the result.
\end{proof}

%%%%%%%%%%%%%%%%%%%%%%%%%%%%%%%%%%%%%%%%%%%%%%%%%%%%%%%%%%%%%%%%%%%%%%%%%%%%%%%%%
%%%%%%%%%%%%%%%%%%%%%%%%%%%%%%%%%%%%%%%%%%%%%%%%%%%%%%%%%%%%%%%%%%%%%%%%%%%%%%%%%
\bigskip
\bigskip

\textbf{Acknowledgement:}
The first author (FA) was supported by the FWF-funded SFB \# F65.
The second author (AA) was partially supported by the FWF-doctoral school ``Dissipation and dispersion in non-linear partial differential equations'' and the FWF-funded SFB \# F65.
The third author (EC) was partially supported by U.S. N.S.F. grant DMS 1501007.

%%%%%%%%%%%%%%%%%%%%%%%%%%%%%%%%%%%%%%%%%%%%%%%%%%%%%%%%%%%%%%%%%%%%%%%%%%%%%%%%%

\end{document}